\newtheorem{theorem}{Theorem}[section]
\newtheorem{lemma}[theorem]{Lemma}
\newtheorem{prop}[theorem]{Proposition}
\newtheorem{corollary}[theorem]{Corollary}
\newcounter{specialtheorem}
\newcommand{\ZZ}{\mathbb{Z}}
\newcommand{\CC}{\mathbb{C}}
\newcommand{\RR}{\mathbb{R}}
\newcommand{\NN}{\mathbb{N}}
\newcommand{\im}{\text{ Im }}
\newcommand{\into}{\hookrightarrow}
\newcommand{\tensor}{\otimes}
\newcommand{\End}{\text{End}}
\DeclareMathOperator{\Map}{Map}
\DeclareMathOperator{\Crit}{Crit}
\DeclareMathOperator{\id}{id}
\DeclareMathOperator{\vol}{vol}
\DeclareMathOperator{\grad}{grad}
\DeclareMathOperator{\Hess}{Hess}
\newcommand{\CP}{\mathbb{CP}}
\DeclareMathOperator{\val}{val}
\DeclareMathOperator{\Flux}{Flux}
\DeclareMathOperator{\Hull}{Hull}
\DeclareMathOperator{\avg}{avg}
\newcommand{\nw}{w}
\newcommand{\opm}
{{\bullet/\circ}}
\renewcommand{\Im}{\text{Im}}
\renewcommand{\Re}{\text{Re}}
    \DeclareFontFamily{U}{wncy}{}
    \DeclareFontShape{U}{wncy}{m}{n}{<->wncyr10}{}
    \DeclareSymbolFont{mcy}{U}{wncy}{m}{n}
    \DeclareMathSymbol{\Sh}{\mathord}{mcy}{"58}
\DeclareMathOperator{\Fuk}{Fuk}
\DeclareMathOperator{\Coh}{Coh}
\newcommand{\CF}{{CF^\bullet}}
\newcommand{\CM}{{CM^\bullet}}
\begin{document}
\title{Tropical Lagrangian Hypersurfaces are Unobstructed}
\author{Jeff Hicks}
\maketitle
\begin{abstract}
	We produce for each tropical hypersurface $V(\phi)\subset Q=\RR^n$ a Lagrangian $L(\phi)\subset (\CC^*)^n$ whose moment map projection is a tropical amoeba of $V(\phi)$. When these Lagrangians are admissible in the Fukaya-Seidel category, we show that they are unobstructed objects of the Fukaya category, and mirror to sheaves supported on complex hypersurfaces in a toric mirror. 
\end{abstract}

\section{Introduction}
\subsection{SYZ Fibrations and Mirror Symmetry}
\label{subsec:syz}
Mirror symmetry was proposed by physicists as a duality for Calabi-Yau manifolds which interchanges the symplectic geometry on a space $X$ with the complex geometry on a mirror space $\check X$ \cite{candelas1991pair}. 
One version of this duality comes from the homological mirror symmetry conjecture, which advances  that the categories $\Fuk(X)$ and $D^b\Coh(\check X)$ are equivalent as triangulated $A_\infty$ categories \cite{kontsevich1994homological}.
Here, $\Fuk(X)$ is the derived Fukaya category of $X$, whose objects are Lagrangian submanifolds $L$ and whose morphisms are given by Floer cochain groups $CF^\bullet(L_0, L_1)$.
On the mirror side, the complex space $\check X$ has the derived category of coherent sheaves $D^b\Coh(\check X)$.
Though the analytic and algebraic technicalities of working with Fukaya categories are substantial, this conjecture has been proven on a growing number of examples. 
A first example outside of Calabi-Yau manifolds are toric varieties $\check X_\Sigma$ with fan $\Sigma$, which are mirror to Landau Ginzburg models $((\CC^*)^n, W_\Sigma)$ \cite{hori2000mirror}. \cite{abouzaid2009morse}  proves that  $\Fuk((\CC^*)^n, W_\Sigma)$ is mirror to $D^b\Coh(\check X_\Sigma)$.

Separately from homological mirror symmetry, an approach to constructing mirror pairs $(X, \check X)$ was proposed by \citeauthor{strominger1996mirror}, who posited that mirror spaces $X$ and $\check X$ carry dual torus fibrations \cite{strominger1996mirror}. 
In this framework,  $X\to Q$ and $\check X \to Q$ are almost toric Lagrangian fibrations over a common base $Q$.
The data of an affine structure on $Q$ gives rise to a symplectic structure on $X$, and a complex structure on $\check X$.
These fibrations provide a mechanism for mirror symmetry where the symplectic geometry of $X$ and complex geometry of $\check X$ are mutually compared to the  tropical geometry of the base $Q$ in the so-called large complex structure limit \cite{gross2003affine,gross2013logarithmic}. 
From this perspective, both the symplectic geometry of $X$ and complex geometry on $\check X$ may be compared to tropical geometry on $Q$, recovering mirror symmetry. 

The correspondence between complex geometry on $\check X$ and tropical geometry on $Q$ can be understood by replacing the defining polynomials for an affine variety with the corresponding tropical polynomials \cite{mikhalkin2005enumerative, kontsevich2001homological}.
In particular, the image $\check{\val}(D)$ of complex subvarieties of $D\subset \check X$ can be described in certain examples as a tropical ``amoeba'' of a tropical variety $V(\phi)\subset Q.$

On the $A$-model, the lack of rigidity of Lagrangian submanifolds means that there is no reason for $\val (L)\subset Q$ to live near a tropical variety.
To obtain a well defined correspondence between Lagrangian submanifolds and tropical geometry one can use perspective from family Floer homology \cite{abouzaid2014family,fukaya2002Floer}, which provides a bridge between SYZ fibrations and the Fukaya category. 
Starting with the observation that fibers of the SYZ fibration $F_q\subset X$ are candidate mirrors to skyscraper sheaves of points on the space $\check X$,
family Floer theory associates to each Lagrangian submanifold $L\subset X$ a sheaf $\mathcal L$ on a rigid analytic mirror space $\check X^\Lambda$. 
The valuation of this sheaf gives us a tropical subvariety related to the original Lagrangian $L$. 
When $L$ is a section of $X\to Q$, the sheaf built is a line bundle.
The tropical support becomes all of $Q$. 

The recent parallel works of \cite{matessi2018Lagrangian,mikhalkin2018examples,mak2019tropically} provide methods for constructing \emph{tropical Lagrangian submanifolds} $L(\phi)\subset X$ whose image under $\val: X\to Q$ is nearby a tropical hypersurface $V(\phi)$.
In this paper, we show how such a tropical Lagrangian submanifold can be constructed via Lagrangian surgery. 
With this method of construction we show that tropical Lagrangians $L(\phi)$ are \emph{unobstructed by bounding cochain} and can be therefore considered as objects of the Fukaya category.
Provided an appropriate version of the Fukaya category exists, we additionally prove a homological mirror symmetry for these Lagrangians, thereby extending the intuition above to holomorphic sheaves supported on hypersurfaces.
\subsection{Summary of Results}

In \prettyref{sec:background}, we provide necessary background related to tropical geometry, Lagrangian cobordisms, and homological mirror symmetry for toric varieties.
The review of tropical geometry is included to fix notation, and introduce the non-self-intersecting property of a tropical hypersurface. 
The section on Lagrangian cobordisms may be safely skipped by experts who are familiar with the results of \cite{biran2013fukayacategories, fukaya2007chapter10}.
Our notation for Lagrangian cobordism follows \cite{haug2015torus}.
In \prettyref{subsec:toricbackground}, we review the monomial admissibility condition of \cite{hanlon2018monodromy}, and state homological mirror symmetry for toric varieties.

\prettyref{sec:hypersurfaces} associates to each tropical polynomial $\phi:Q=\RR^n\to \RR$ a Lagrangian submanifold $L(\phi)\subset X=(\CC^*)^n$ whose projection under the valuation map $\val: X\to Q$ lies near the tropical variety $V(\phi)\subset Q$.
These tropical Lagrangian submanifolds are constructed using Lagrangian surgery.
We prove that this construction is independent of choices made up to exact Lagrangian isotopy. 
In \prettyref{sec:unobstructedness} we show that the constructed tropical Lagrangians are unobstructed by bounding cochain, and are monomial-admissible Lagrangians.
Between these two sections, the main result of this paper can be summarized as (see \prettyref{thm:tropicalLagrangians} for details): \refstepcounter{specialtheorem} 
\begin{theorem*}[\Alph{specialtheorem}]
	\label{thm:existenceandunobstructedness}
	Let $V(\phi)$ be a tropical hypersurface of $\RR^n$ without self-intersections.  For every $\epsilon>0$ there exists a tropical Lagrangian $L(\phi)\subset (\CC^*)^n$ whose valuation projection is $\epsilon$-close to $V(\phi)$ in the Hausdorff metric.
	Furthermore, this Lagrangian is Floer-theoretically unobstructed.
\end{theorem*}

The proof of Theorem A is given modulo hypothesis stated in \prettyref{subsec:assumptions}, which relate to extending the definition of the pearly complex of \cite{charest2015floer} to the monomial-admissible setting.
We additionally include an outline of how domain-dependent perturbations or abstract perturbation techniques can be employed to remove these hypothesis.
In \prettyref{sec:tropicalmirrorsymmetry} we prove that the constructed Lagrangians $L(\phi)$ are mirror to structure sheaves of divisors in a mirror toric variety $\check X_\Sigma$ (see \prettyref{thm:tropicalLagrangianhms} for details)

\refstepcounter{specialtheorem}
\begin{theorem*}[\Alph{specialtheorem}]
	\label{thm:mirrorsymmetry}
	Let $\check X^\Lambda_\Sigma$ be a toric variety, and let $X=((\CC^*)^n, W_\Sigma)$ be its mirror Landau Ginzburg model.  Let $D$ be a base point-free divisor of $\check X^\Lambda_\Sigma$ with tropicalization given by the tropical variety $V(\phi)$.
	 The corresponding tropical Lagrangian $L(\phi)$ is homologically mirror to a  structure sheaf $\mathcal O_{D'}$, where $D$ and $D'$ are rationally equivalent.
\end{theorem*}

The construction of a bounding cochain for the tropical Lagrangian uses some general statements about filtered $A_\infty$ algebras and deforming cochains, a review of which is included in \prettyref{app:ainftyrefresher}.

This work is a portion of the author's PhD thesis, \cite{hicks2019tropical}, which additionally explores applications and examples of tropical Lagrangians in the context of homological mirror symmetry.
The tropical Lagrangians constructed in this paper are Hamiltonian isotopic to existing parallel constructions of tropical Lagrangians presented in \cite{matessi2018Lagrangian, mikhalkin2018examples,mak2019tropically}.
These constructions have been recently employed in the works of \cite{sheridan2018lagrangian,treumann2018kasteleyn}, which look at how tropical geometry can be used to obstruct the existence of certain unobstructed Lagrangian cobordisms, and how the combinatorics of dimers is related to these tropical Lagrangians.

\begin{acknowledgements}
I would like to thank my advisor Denis Auroux whose guidance and comments have helped me at every step of this project.
I would also like to thank Andrew Hanlon for walking me through the construction of the monomial admissible Fukaya-Seidel category, and an anonymous reviewer whose comments and suggestions greatly improved the exposition of this article. 
This project has also benefited from useful conversations with Diego Matessi, Nick Sheridan, and Ivan Smith.
Finally, I am especially grateful to Paul Biran and ETH Z\"urich for their hospitality while hosting me.

\end{acknowledgements}

\section{Background: Tropical Geometry, Lagrangian Cobordisms, and HMS for Toric Varieties}
\label{sec:background}
\subsection{Tropical Geometry}
\label{subsec:tropicalgeometry}
The tropical semiring $(\RR, \oplus , \odot)$ is the set $\RR\cup \{+\infty\}$ equipped with the following two binary operations
\begin{align*}
	x_1\oplus x_2 = & \min(x_1, x_2) \\
	x_1\odot  x_2 =  & x_1 + x_2.
\end{align*}
These two operations are called \emph{tropical plus} and \emph{tropical times} respectively, and they obey the distributive law.
Tropical polynomials of multiple variables describe  piecewise linear concave  functions $\phi: Q:=\RR^n\to \RR$ of rational slope.
It will frequently be useful for us to use the following characterization of tropical linear polynomials. 
\begin{prop}
	The tropical polynomials are exactly the piecewise linear concave functions with $d\phi(x)\in T^*_\ZZ\RR^n$ at all points where $\phi$ is differentiable.
\end{prop}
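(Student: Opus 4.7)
The plan is to verify both inclusions between the set of tropical polynomials and the set of piecewise linear concave functions with integral differential, treating the forward direction as essentially bookkeeping and devoting the real work to the converse.

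For the forward direction, I would start from the definition: any tropical polynomial can be written as
\[
\phi(x) = \bigoplus_{\alpha \in A}\bigl(c_\alpha \odot x^{\odot \alpha}\bigr) = \min_{\alpha \in A}\bigl(c_\alpha + \langle \alpha, x\rangle\bigr),
\]
where $A \subset \ZZ^n$ is finite and $c_\alpha \in \RR$. This is piecewise linear since it is a minimum of finitely many affine functions, concave as a minimum of concave functions, and wherever differentiable its differential coincides with one of the $\alpha \in \ZZ^n$, hence lies in $T^*_\ZZ \RR^n$.

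For the converse, let $\phi : \RR^n \to \RR$ be a piecewise linear concave function whose differential takes integral values wherever it exists. Decompose $\RR^n$ into the finitely many maximal regions of linearity $R_1,\ldots,R_k$, and on each $R_i$ write $\phi(x) = c_i + \langle \alpha_i, x\rangle$ with $\alpha_i \in \ZZ^n$ by the integrality hypothesis. The key step is to upgrade these local expressions to a global minimum. I would invoke the standard fact that for a concave function every tangent affine function is a global upper bound, so
\[
\phi(x) \leq c_i + \langle \alpha_i, x\rangle \quad \text{for all } x \in \RR^n \text{ and all } i.
\]
Combining with the trivial equality $\phi(x) = c_i + \langle \alpha_i, x\rangle$ whenever $x \in R_i$ yields $\phi = \min_i(c_i + \langle \alpha_i,x\rangle)$, which is precisely a tropical polynomial with monomials indexed by $\{\alpha_i\}$ and coefficients $\{c_i\}$.

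The only real subtlety, and the place I would be most careful, is the concavity-plus-integrality step that produces the global upper bound from the local slope data: one must argue that the affine extension of each linear piece really does dominate $\phi$ everywhere, using that a concave function lies below every supporting affine functional at each point of differentiability and that piecewise linearity lets one extend this supporting hyperplane uniquely. Given the standard convex-analysis statement, this is routine, but it is the only nonformal input to the argument; everything else is unpacking definitions.
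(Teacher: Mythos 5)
The paper states this proposition without proof, treating it as routine background, so there is no argument of theirs to compare against; your proof is the standard one and is logically correct in both directions. One small point worth flagging: when you pass to ``the finitely many maximal regions of linearity $R_1,\ldots,R_k$,'' you are implicitly using that ``piecewise linear'' means \emph{finitely many} pieces. This is indeed the convention in force here (a tropical polynomial has a finite monomial set, and the paper's surrounding discussion makes clear that is what is meant), but without that convention the converse would fail --- for instance the concave function on $\RR$ with slope $-k$ on $[k,k+1)$ for $k\geq 0$ and slope $k$ on $[-k-1,-k)$ is piecewise linear, concave, and has integral differential everywhere it is defined, yet is a minimum of infinitely many affine functions and hence not a tropical polynomial. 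Your identification of the supporting-hyperplane inequality as the only nonformal ingredient is exactly right; it would just be cleaner to say explicitly that finiteness of the linear regions is part of the hypothesis rather than a consequence.
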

One can approximate tropical polynomials with regular polynomials via logarithms and the estimates
\begin{align}
	-\log_{1/q}(q^{x_1}+q^{x_2})\sim & \min({x_1}, {x_2})=x_1\oplus x_2 \\
	-\log_{1/q}(q^{x_1}q^{x_2})=     & x_1+x_2= x_1\odot x_2
	\label{eq:tropical}
\end{align}
for $q$ goes to $0$. 
We'll frequently describe tropical polynomials of two variables in terms of their tropical varieties by drawing a planar graph whose faces describe the domains of linearity of $\phi$.  This graph generalizes in higher dimensions to a stratification of $Q$ which describes many of the combinatorial properties of a tropical polynomial.
\begin{definition}
	Let $\phi: Q \to \RR$ be a tropical polynomial. Each monomial term in $\phi$ can be labelled by its exponent $v\in \ZZ^n$.  The \emph{linearity stratification} of $Q$ is the stratification 
	\[
		\emptyset \subset Q_0\subset \cdots \subset Q_n,
	\]
	where $p\in Q_k$ if and only if there is no $k+1$ dimensional open subset of an affine subspace $A\subset Q$, with  $p\in A$ on which the restriction $\phi|_A$ is a $k+1$-affine map.
	 Each stratum will be denoted $\underbar U_{\{v_i\}}$, where $\{v_i\}$ is the collection of monomial terms which achieve their minimum along the strata. We define the \emph{tropical variety} of $Q$ to be $V(\phi)=Q_{n-1}$, which describes the locus of non-linearity of $\phi$.\footnote{The $V$ in $V(\phi)$ should either stand for valuation, or variety.}
\end{definition}
One interpretation to the approximation given in equation (\ref{eq:tropical}) is that when $f:(\CC^*)^n\to \CC$ is a Laurent polynomial, the tropical variety $V(\phi)$ provides a dominating term approximation of $\val(f^{-1}(0))$.

There is an involution on smooth concave functions $f$ with convex domains $\Delta$ given by the Legendre transform. An analogous involution exists in the setting of tropical polynomials.
\begin{definition}
	Let $\phi= \bigoplus_{v} a_v\odot x^v$. 
	Let $\Delta_\phi^\ZZ$ be the set of integer points $v$ for which $\phi$ matches the monomial $a_v\odot x^v$ on an open subset.
	Define the \emph{Newton polytope} $\Delta_\phi\subset T^*_0Q$ to be the convex hull of $\Delta_\phi^\ZZ$. We define the \emph{Legendre transform $\check \phi(v)$} to be the minimal-fit concave piecewise linear function  to the data
	\[
		\check \phi(v)= a_v \text{ for all $v\in \Delta_\phi^\ZZ$}.
	\] 
	We will denote the linearity stratification induced by $\check \phi$ on the Newton polytope as 
	\[
		\Delta_\phi^\ZZ= \Delta^n_\phi\subset \cdots \subset \Delta^0_\phi= \Delta_\phi.
	\] 
	We will denote\footnote{It's worth pointing out that this naming convention is very different than the one for the strata of $Q$, but will make notation a lot easier in the future.} the stratum with vertices $v_i$ by $\underbar U^{\{v_i\}}$. 
\end{definition}
See \prettyref{fig:sometropicalcurves} for examples.
The Newton polytope is a lattice polytope which is determined by the leading order behavior of the tropical polynomial $\phi$. It will also be convenient for us to interpret this Newton polytope as the image of $d\phi$ under the projection $\pi_0: T^*\RR^n\to T^*_0\RR^n$ 
\[
	\Delta_\phi=\text{Hull}(\{\pi_0\circ d\phi(x)\;|\; \text{$\phi$ is differentiable at $x$}\}).
\]
As the Legendre transform contains the data of the coefficients of $\phi$, the polynomials $\phi$ and $\check \phi$ determine each other completely.
This relation is also reflected in the duality between the stratifications  $\underbar U_{\{v_i\}}$ and $\underbar U^{\{v_i\}}$. 

\begin{definition}
	We say that a non-maximal stratum $\underbar U_{\{v_i\}}$ is smooth if $\underbar U^{\{v_i\}}$ is a standard simplex.
	The self-intersection number of a strata $\underbar U_{\{v_i\}}$ is defined as the number of interior lattice points of $\underbar U^{\{v_i\}}$.
\end{definition}
When $\phi$ has no self-intersections, then $\Delta_\phi\cap \ZZ^n= \Delta^\ZZ_\phi$. 
When a tropical variety comes as the tropicalization of a family of complex curves, the self-intersection number gives the genus of the family of curves which degenerate in the family.
In our constructions of tropical Lagrangians, this self-intersection number will give the number of self-intersection points of our tropical Lagrangian (see discussion following \prettyref{def:tropicallagrangian}). 

\begin{example}
	We now look at a few examples which demonstrate the difference between smooth, non-smooth, and self-intersecting tropical hypersurfaces.
	Consider the tropical polynomial
	\[
		\phi_{T^2}^0(x_1,x_2) =x_1\oplus x_2\oplus (x_1x_2)^{-1}.
	\]
	This has 3 domains of linearity, where each of the monomial terms dominate. Notice that the dual stratum $U^{x_1, x_2, (x_1x_2)^{-1}}$ is not a primitive simplex, and so this vertex is non-smooth (see \prettyref{fig:nonsmoothtropicalelliptic}).
	As $(0,0)\in U^{x_1, x_2, (x_1x_2)^{-1}}$ is an interior lattice point of the simplex, this stratum has 1-self intersection.

	If we modify the coefficients of the monomials in the tropical polynomial to
	\[
		\phi_{T^2}^c(x_1, x_2) = x_1\oplus x_2 \oplus( c\odot x_1^{-1}x_2^{-1})\oplus 0
	\]
	with $c>0$, we get a smooth tropical curve  instead (see \prettyref{fig:smoothtropicalelliptic}).
	The dual stratification is a triangulation of the Newton polytope by primitive simplices, so this is an example of a smooth tropical curve with no self intersections.
	\label{exam:nonsmoothpuncturedtorus}

	Finally, we look at an example which highlights the difference between self-intersections and smoothness.
	Consider the tropical polynomial
	\[
		\phi_+(x_1, x_2)=1\oplus x_1 \oplus x_2 \oplus x_1x_2
	\]
	The moment polytope and curve are drawn in \prettyref{fig:smoothtropicalcross}.
	The stratum $U^{1, x_1, x_2, x_1x_2}$ is not smooth, as it is not a simplex.
	However, the stratum does not contain any self intersections either, as its interior is lattice point free. 
\end{example}
\begin{figure}
	\begin{subfigure}{\linewidth}
		\centering
		\includegraphics[scale=1]{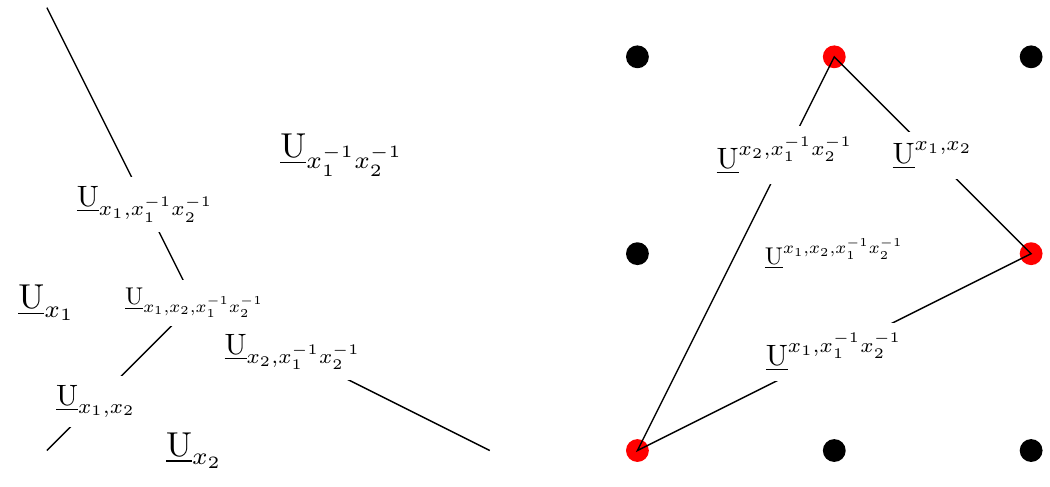}
		\caption{The non-smooth punctured immersed sphere with 3 punctures, $\phi_{T^2}^0(x_1,x_2) =x_1\oplus x_2\oplus (x_1x_2)^{-1}.$}
		\label{fig:nonsmoothtropicalelliptic}
	\end{subfigure}

	\begin{subfigure}{\linewidth}
		\centering
		\includegraphics[scale=1]{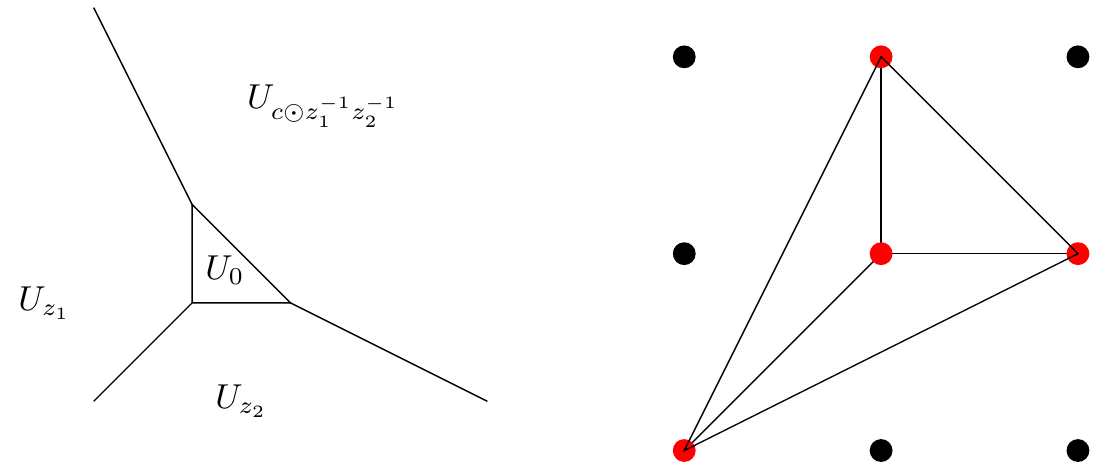}
		\caption{The smooth tropical 3-punctured torus, $\phi_{T^2}^c(x_1, x_2) = x_1\oplus x_2 \oplus( c\odot x_1^{-1}x_2^{-1})\oplus 0$. }
		\label{fig:smoothtropicalelliptic}
	\end{subfigure}

	\begin{subfigure}{\linewidth}
		\centering
		\includegraphics[scale=1]{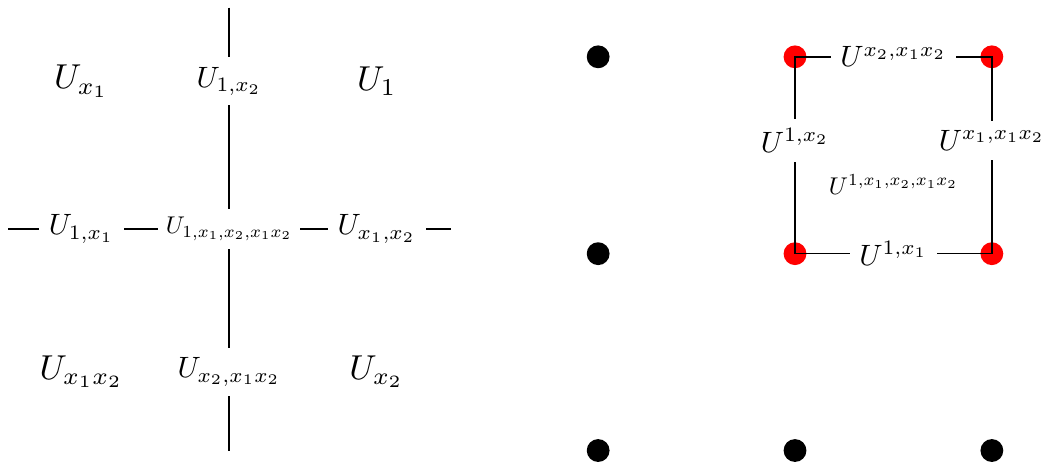}
		\caption{Non-smooth, self-intersection free sphere with 4 punctures, $\phi_+(x_1, x_2)=1\oplus x_1 \oplus x_2 \oplus x_1x_2$. }
		\label{fig:smoothtropicalcross}
	\end{subfigure}
	\caption{Some tropical curves, with $\Delta^\ZZ_\phi$ highlighted in red.}
	\label{fig:sometropicalcurves}
\end{figure}
\subsection{Lagrangian Surgery and Cobordisms}
\label{subsec:lagsurgery}
The bulk of this paper will revolve around constructing new Lagrangian submanifolds. Our most-used tool for these constructions is Lagrangian surgery, which allows us to make local modifications to a Lagrangian submanifold to obtain new Lagrangians.
By the Weinstein neighborhood theorem, we can locally model the transverse intersection of two Lagrangians by the zero section and cotangent fiber of $T^*\RR^n$. The model  \emph{ Lagrangian neck  }in $T^*\RR^n$ is the Lagrangian parameterized by
\begin{align*}
	I\times S^{n-1}\into & \RR^n\times \RR^n                                         \\
	(t, \hat u) \mapsto  & (r \cdot e^t \hat u,r \cdot  e^{-t}\hat u).
\end{align*}
Notice that this Lagrangian neck is asymptotic to the disjoint union of the zero section and $T^*_0\RR^n$. The parameter $r$ is called the \emph{neck radius} of the smoothing. Given a Lagrangian $L$ with a transverse self-intersection, one can replace a neighborhood of the self-intersection with a Lagrangian neck. In summary:
\begin{theorem}[\cite{polterovich1991surgery}]
	Let $L$ be a (not necessarily connected) Lagrangian submanifold with a transverse self-intersection point $p$. 
	Then there exists a smooth Lagrangian $L'_\nw$ which agrees with $L$ outside of the intersection point, and is modeled on the Lagrangian neck in a neighborhood of $p$.
\end{theorem}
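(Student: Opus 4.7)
The plan is to reduce the construction to a standard local model in $(T^*\RR^n,\omega_{\text{std}})$ and to exhibit the neck by hand. First I would apply Darboux's theorem to obtain a symplectomorphism $\psi$ from an open neighborhood of $p \in X$ onto a neighborhood of the origin in $T^*\RR^n$. Since $L$ has a transverse self-intersection at $p$, the image $\psi(L)$ is locally the union of two Lagrangian planes through the origin meeting transversally; a linear symplectic transformation of $T^*\RR^n$ then aligns these with the zero section $\RR^n$ and the cotangent fiber $T^*_0\RR^n$. The problem reduces to constructing inside this chart a smooth embedded Lagrangian, modeled on the neck, which is diffeomorphic to $\RR \times S^{n-1}$ and coincides with $\RR^n \cup T^*_0\RR^n$ outside a compact set.

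Next I would carry out the construction inside the family of radial parameterizations $\iota(t,\hat u)=(A(t)\hat u,\,B(t)\hat u)$ for smooth functions $A,B:\RR\to\RR$. A direct computation gives
\[
\iota^*\Bigl(\sum_i dq_i\wedge dp_i\Bigr)
= (A'B-AB')\Bigl(\sum_i \hat u_i\, d\hat u_i\Bigr)\wedge dt + AB\Bigl(\sum_i d\hat u_i\wedge d\hat u_i\Bigr),
\]
and both sums vanish on $S^{n-1}$: the first because $\sum_i \hat u_i\, d\hat u_i = \tfrac{1}{2} d(|\hat u|^2) = 0$, and the second by antisymmetry of the wedge. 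Hence $\iota$ parameterizes a Lagrangian for any radial profile $(A,B)$, and the specific choice $(A,B)=(re^t,re^{-t})$ from the theorem statement is just one instance. This flexibility is the key technical observation: one may instead use a profile that equals $(re^t,re^{-t})$ on a compact interval $|t| \leq T_0$, equals $(t,0)$ for $t \geq T_1$, and equals $(0,-t)$ for $t \leq -T_1$, while keeping $\iota$ an embedding throughout.

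With such a modified profile, the Lagrangian $\iota(\RR \times S^{n-1})$ coincides with $\RR^n \cup T^*_0\RR^n$ outside the ball of radius $T_1$, which is exactly where $\psi(L)$ also equals this union. Replacing $\psi(L)$ inside the chart by the new Lagrangian and pulling back along $\psi^{-1}$ produces a smooth Lagrangian $L'_\nw$ which agrees with $L$ outside a neighborhood of $p$ and is modeled on the standard neck inside it. The main obstacle is the one-dimensional interpolation: the profile $(A,B)$ must be chosen smoothly so that $\iota$ stays a smooth embedding on all of $\RR \times S^{n-1}$ and so that the modified portion fits strictly inside the Darboux chart. Since this reduces to a single-variable smoothing problem with fixed asymptotic data, it can be handled by a standard cutoff and bump-function argument, causing no difficulty beyond care with the bounds on $T_0$, $T_1$, and the size of the Darboux chart.
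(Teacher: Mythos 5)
Your key observation---that $\iota(t,\hat u)=(A(t)\hat u,B(t)\hat u)$ pulls $\omega_{\mathrm{std}}$ back to zero for \emph{any} smooth radial profile $(A,B)$, because $\sum_i \hat u_i\,d\hat u_i=\tfrac12 d|\hat u|^2$ vanishes on the unit sphere---is correct and is precisely the mechanism underlying Polterovich's construction. Interpolating between the exponential neck near $t=0$ and the two coordinate planes for $|t|\geq T_1$ is then a one-dimensional matter, handled by a cutoff exactly as you describe, provided the profile curve $(A(t),B(t))$ stays embedded and in the closed first quadrant away from the origin so that $\iota$ remains injective. That part of the proposal is sound.

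The gap is in the reduction to the local model. Darboux's theorem straightens the symplectic form, not the Lagrangians: after applying $\psi$ the two branches of $L$ become transversally intersecting Lagrangian \emph{submanifolds} through the origin whose \emph{tangent spaces} are transverse Lagrangian planes, but the branches themselves are generically curved, and a linear symplectomorphism can only align the tangent planes with $\RR^n$ and $T^*_0\RR^n$. It does not make $\psi(L)$ literally equal to $\RR^n\cup T^*_0\RR^n$, which is what your gluing step ``coincides with $\RR^n\cup T^*_0\RR^n$ outside the ball of radius $T_1$, which is exactly where $\psi(L)$ also equals this union'' requires. What you actually need is the normal form for a transverse pair of Lagrangians: a symplectomorphism of a neighborhood of $p$ onto a neighborhood of $0\in T^*\RR^n$ carrying one branch to the zero section and the other to $T^*_0\RR^n$. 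One way to produce it: first apply Weinstein's Lagrangian neighborhood theorem to place the first branch onto the zero section; the second branch, being transverse to the zero section at $0$, projects diffeomorphically onto $T^*_0\RR^n$ near $0$ and is therefore a graph $\{(\nabla g(p),p)\}$ for some function $g$ with $\nabla g(0)=0$ (closedness of the $1$-form $p\mapsto f(p)$ is exactly the Lagrangian condition); the shear $(q,p)\mapsto(q-\nabla g(p),p)$ is a symplectomorphism, fixes the zero section pointwise, and sends the second branch to $T^*_0\RR^n$. With this normal form in place the rest of your argument goes through as written.
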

There are many ways of doing this smoothing depending on the choice of neck inserted, however the Hamiltonian isotopy class of the surgery is dependent only on a single parameter $\nw$ called the neck width, which measures the flux swept by the Lagrangian isotopy as one decreases the neck radius to 0. 
Given Lagrangians $L_0$ and $L_1$ intersecting transversely at a single point $q$, we denote by $L_0\#_q^\nw L_1$ the Lagrangian connect sum, which is obtained by inserting a neck of width $\nw$ at a neighborhood of the intersection point $q$.
\footnote{Here, the neck we choose is consistent with the orientations from \cite{auroux2014beginner}, and opposite of the orientation chosen in \cite{biran2013lagrangian}.}
When the width of the neck inserted is unimportant, we will simply write $L_0\#_q L_1$. 
Lagrangian surgery also has an interpretation as an algebraic operation in the Fukaya category.
\begin{theorem}[\cite{fukaya2007chapter10}]
	Let $L_0, L_1$ be unobstructed Lagrangians intersecting at a unique point $q$. Then there is an exact triangle
	\[
		L_1\xrightarrow{q}L_0\to  L_0\#_q^\nw L_1
	\] 
	in the Fukaya category. \label{thm:surgerycone}
\end{theorem}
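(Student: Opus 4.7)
The plan is to realize the claimed triangle by constructing a Lagrangian cobordism $V\subset \CC\times X$ with three planar ends $L_0$, $L_1$, and $L_0\#_q^\nw L_1$, and then invoking the Biran-Cornea principle that an unobstructed cobordism between unobstructed ends induces an iterated cone decomposition in the Fukaya category.

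First I would construct the \emph{surgery trace cobordism} $V$. Working in a Weinstein neighborhood of $q$, identify $L_0,L_1$ with the zero section and a cotangent fiber in $T^*\RR^n$ and build a one-parameter family of Polterovich handles whose neck radius decreases from $\nw$ to $0$. The trace of this family gives a local Lagrangian in $\CC\times T^*\RR^n$ with three planar ends, two matching the disjoint pair $L_0\sqcup L_1$ and one matching the smoothing $L_0\#_q^\nw L_1$. Outside the neighborhood of $q$ one glues this model to the product of $L_0\cup L_1$ with a planar tripod whose legs run parallel to the real axis along three disjoint horizontal lines, producing a cobordism $V$ that is a product outside a compact piece and has the three prescribed ends.

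Next I would upgrade $V$ to an unobstructed object of the Fukaya category of $\CC\times X$. The bounding cochains $b_0, b_1$ pulled back from the ends extend across the product region of $V$, and the only additional holomorphic disks requiring control are those whose boundary enters the surgery handle; by the explicit local analysis of Polterovich handles this contribution is computable and can be absorbed into a deformation $b_V$ of $b_0\cup b_1$ by the standard obstruction theory for filtered $A_\infty$ algebras recalled in \prettyref{app:ainftyrefresher}. With $V$ unobstructed, the cobordism formula identifies the Yoneda module of $L_0\#_q^\nw L_1$ with the cone of a morphism $L_1 \to L_0$ in $\Fuk(X)$. A direct computation with the local model pins this morphism down as the degree-zero generator of $CF^\bullet(L_1,L_0)$ represented by the intersection point $q$, giving the triangle
\[
L_1 \xrightarrow{q} L_0 \to L_0\#_q^\nw L_1.
\]

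The hard part will be the second step: showing that the extra holomorphic disks introduced inside the surgery handle organize into a bounding cochain rather than contributing a genuine obstruction class, and simultaneously tracking signs and orientations carefully enough to identify the connecting morphism as the class $q$ itself (rather than a nonzero scalar multiple, or a class in a different degree). The geometric construction of $V$ and the verification that its ends match the required Lagrangians are essentially Weinstein-neighborhood manipulations, but the Floer-theoretic identification of the cone is where the real content lies.
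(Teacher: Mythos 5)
Your proposal takes a genuinely different route from the cited proof. The paper attributes this statement to Fukaya--Oh--Ohta--Ono and summarizes their argument as a direct comparison between holomorphic strips with boundary on the surgered Lagrangian $L_0\#_q^\nw L_1$ and holomorphic triangles with a corner at $q$: as the neck radius degenerates, a strip that passes through the neck region converges to a triangle with a corner at $q$, and a gluing argument furnishes the inverse bijection. This degeneration of moduli spaces simultaneously establishes the cone identification and pins down the connecting morphism as $q$, with no intermediate cobordism or separate unobstructedness step.

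Your route through the surgery trace cobordism and the cone decomposition theorem (\prettyref{thm:cobordisms}) is conceptually appealing, but there are two gaps that you should not dismiss as routine. First, \prettyref{thm:cobordisms} as stated in the paper requires the cobordism to be embedded and \emph{monotone}; neither holds in your general unobstructed setting. Extending Biran--Cornea's cone decomposition to unobstructed cobordisms equipped with bounding cochains is itself substantial foundational work, not something one can simply invoke. Second, your assertion that the disks introduced in the surgery handle ``organize into a bounding cochain'' is precisely the kind of statement that requires the neck-stretching/gluing analysis from FOOO --- the paper itself uses that same analysis (see \prettyref{prop:sequenceonsurgery}) as the key input when it needs to control disks on surgeries in its unobstructedness proof. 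So your argument risks circularity: the hard analytic input needed to make your second step rigorous is essentially the degeneration-of-strips-into-triangles analysis that proves the theorem directly along the paper's cited route. If you want to go the cobordism way, you should at minimum state clearly which version of the cone decomposition theorem for non-monotone unobstructed cobordisms you are relying on, and acknowledge that identifying the connecting morphism as exactly $q$ (not a scalar multiple, not shifted in degree) again requires a strip-counting computation on the cobordism that parallels the FOOO analysis.
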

The proof of this theorem comes from a comparison of holomorphic triangles with corner at $q$, and holomorphic strips in $L_0\#_q^\nw L_1$ which are obtained from rounding this corner.

The topological operation of surgery can be understood via cobordisms. In the symplectic world,  there is an analogous notion of \emph{Lagrangian cobordism} relating  Lagrangian surgeries.
\begin{definition}[\cite{arnol1980lagrange}]
	Let $\{L^+_{i}\}_{i=0}^{k-1}, L^-$ be Lagrangian submanifolds of $X$. A \emph{Lagrangian cobordism} between $\{L^+_i\}$ and  $L^-$ is a Lagrangian $K\subset X\times \CC$ which satisfies the following conditions:
	\begin{itemize}
		\item \emph{Fibered over ends:} There exists constants $\{c^+_i\}, c^-\in \RR$ with $c^+_i< c^+_{i+1}$, as well as constants $t^-< t^+ \in \RR$ such that 
		\[
			K\cap \{(x, z)\;|\; \Re(z)\geq t^+\}=\bigsqcup_{i} L^+_i\times \{(t+ic_i^+)\;|\; t\geq t^+\} 
		\]
		\[
			K\cap \{(x, z)\;|\; \Re(z)\leq t^-\}= L^-\times \{(t+ic^-)\;|\; t\leq t^-\}.
		\]

		\item\emph{Compactness:}  The projection $\Im_z: K\to i \RR \subset \CC$ is bounded. 
	\end{itemize}
	We denote such a cobordism $K:(L_0^+, \ldots, L_{k-1}^+)\rightsquigarrow L^-$.
	\label{def:cobordism}
\end{definition}
\begin{remark}
	We follow the \emph{cohomological grading} of \cite{haug2015torus}, where a cobordism between $(L_0^+, \ldots, L_{k-1}^+)\rightsquigarrow L^-$ has input ends $L_0^+, \ldots, L_{k-1}^+$ with positive real values $\{c_i^+\}$, and end $L^-$ with negative real value $c^-$.  
	This is opposite to the convention from \cite{biran2013fukayacategories}.
	This can be slightly confusing when considering Lagrangian cobordisms, as the ``domain'' end of the cobordism is on the right in its projection to $\CC$. 
	\label{rem:cobordismgrading}
\end{remark}
Some simple examples of Lagrangian cobordisms include the trivial cobordism $L\times \RR$, or the suspension of a Hamiltonian isotopy.  Given Lagrangians $L_0, L_1$ intersecting transversely at a single point $q$, there exists a surgery trace Lagrangian cobordism $(L_0, L_1)\rightsquigarrow L_0\#_q L_1$. Just as Lagrangian surgery gave us a way to understand $L_0\#_q L_1$ as a mapping cone, there is a broad-reaching theorem which tells us how to relate cobordant Lagrangians as objects of the Fukaya category.
\begin{theorem}[\cite{biran2013lagrangian}]
	Let $K:(L_i^+)_{i=0}^{k-1}\rightsquigarrow L^-$ be an embedded monotone Lagrangian cobordism.
	Then there are $k$ objects $Z_0, \ldots, Z_{k-1}$ in the Fukaya category, with $Z_0=L_0^+$ and $Z_{k}\simeq L^-$ which fit into $k$ exact triangles 
	\[
		L_i^+\to Z_{i-1}\to Z_i\to L_i^+[1].
	\]
	In particular when $k=2$, we have an exact triangle
	\[
		L_1^+\to L_0^+\to L^-.
	\]
	In the case where $k=1$, we have an isomorphism
	\[
		0 \to L^+\to L^-\to 0.	
	\]
	\label{thm:cobordisms}
\end{theorem}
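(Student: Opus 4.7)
The approach I would follow, due to Biran-Cornea, is to probe the cobordism $K \subset X \times \CC$ with auxiliary ``test'' Lagrangians of product form $L \times \gamma$, where $L \subset X$ is any unobstructed Lagrangian and $\gamma \subset \CC$ is an embedded arc with controlled behavior at infinity. The guiding principle is that as $\gamma$ is isotoped through the heights of the various ends $c_i^+$ and $c^-$, the induced continuation maps on $CF^\bullet(K, L \times \gamma)$ assemble into the iterated cone structure asserted by the theorem. Reading off the resulting relations against the generating test Lagrangians $L$ yields the statement in $\Fuk(X)$.

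The key Floer-theoretic computation is that when $\gamma$ is a horizontal ray asymptotic to a single end, $CF^\bullet(K, L \times \gamma)$ is quasi-isomorphic to $CF^\bullet(L_i^+, L)$ or to $CF^\bullet(L^-, L)$ according to which end $\gamma$ tracks. This is proved using the fibered product structure of $K$ at infinity together with an open mapping principle for the projection $X \times \CC \to \CC$, which confines Floer strips to a neighborhood of the relevant end; monotonicity of $K$ rules out the disc bubbling that would otherwise spoil the identification. I would then pick a one-parameter family $\{\gamma_s\}$ beginning as a ray at height above all $c_i^+$ and ending as a ray at height $c^-$, pushed downward past each $c_i^+$ in turn. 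Each crossing of a positive end yields a continuation morphism, and I would organize these by defining $Z_{-1}=0$, $Z_0 \simeq L_0^+$, and inductively $Z_i$ as the cone of the $i$-th continuation morphism, producing the triangles in the statement. Invariance of $HF^\bullet(K, L \times \gamma_s)$ throughout the deformation forces the terminal object of the iteration to be quasi-isomorphic to $L^-$; the special cases $k=2$ and $k=1$ then recover the two highlighted consequences directly.

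The principal obstacle is the analytic setup: establishing compactness and transversality for Floer theory on the \emph{non-compact} Lagrangians $K$ and $L \times \gamma$, and checking that the continuation maps are independent, up to homotopy, of the chosen family $\{\gamma_s\}$. Monotonicity of $K$ together with the product structure outside a compact subset of $X \times \CC$ provides uniform energy and $C^0$ bounds for holomorphic strips with boundary on these Lagrangians, reducing transversality to a standard problem on a compact region. Once these estimates are in place, the iterated cone construction is formal; the delicate work lies entirely in the a priori estimates that allow Floer theory on $K$ to be set up in the first place.
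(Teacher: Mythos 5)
The paper does not prove this theorem; it is quoted from Biran--Cornea, and the only hint given is that the proof computes $\CF(L\times\RR, K)$ for test objects $L$ in two different ways. Your sketch captures the overall Biran--Cornea strategy --- probe $K$ with products $L\times\gamma$, use the open mapping principle applied to the projection to $\CC$ together with the fibered structure at infinity to localize Floer strips near the ends, and invoke monotonicity to control bubbling. That much is correct and is essentially the mechanism being invoked.

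There is, however, a genuine inaccuracy in how you close the argument. You assert that $HF^\bullet(K, L\times\gamma_s)$ is \emph{invariant} as $\gamma_s$ is pushed downward past the positive ends, and that invariance identifies the terminal iterated cone with $L^-$. This cannot be right: if $\gamma_0$ lies above all positive end levels and below no others, $CF^\bullet(K,L\times\gamma_0)=0$, and invariance through the whole deformation would force $CF^\bullet(L,L^-)=0$ as well. The deformations you describe change the asymptotic heights of $\gamma_s$, and each time $\gamma_s$ crosses a level $c_i^+$ the Floer homology \emph{changes}; the correct relation at each such crossing is an exact triangle (a cone), not an isomorphism. What is actually invariant is $HF^\bullet(K,L\times\gamma)$ under isotopies of $\gamma$ that keep the asymptotic heights fixed; the cone decomposition is then read off a natural filtration (by position in $\CC$) of $CF^\bullet(K,L\times\gamma)$ for a fixed curve $\gamma$ crossing all the relevant end levels. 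A related point: a horizontal ray whose asymptotic height coincides exactly with $c_i^+$ has \emph{non-compact} intersection with $K$ at infinity, so the Floer complex is not defined for such $\gamma$; the asymptotic heights must be chosen strictly between consecutive end levels. Fixing the invariance claim (replace it with the filtration or triangle statement) and the choice of curves would bring your proposal in line with the cited argument.
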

This can be restated as a relation between the Lagrangian cobordism category of $X$, and a category which describes triangular decompositions of objects in the Fukaya category.
The proof of this theorem computes for test objects $L\in \Fuk(X)$ the Floer cohomology $\CF(L\times\RR,K)$ in two different ways.
\subsection{Mirror Symmetry for Toric Varieties}
\label{subsec:toricbackground}
Let $\check X_\Sigma$ be a toric variety given by the fan $\Sigma\subset \RR^n= Q$. We review some notation and concepts from \cite{cox2011toric} related to line bundles, divisors and tropical geometry.
Each lattice generator  $v$ of a ray of $\Sigma$ gives a torus equivariant divisor $D_v$ of $\check X_\Sigma$. In the setting where $\check X_\Sigma$ is smooth we may express any linear equivalence class of a divisor as a sum  $\sum_{v\in \Sigma} a_v D_v$.
An \emph{integral support function} for $\Sigma$ is a function $\phi: Q\to \RR$ which is linear on each cone of $\Sigma$, and  is integral on the fan in the sense that  $\phi(\Sigma \cap \ZZ^n)\subset \ZZ$.
To each divisor class $[D]$, we may associate an integral support function $\phi_{[D]}$ which is determined by the values
\[
	\phi_{[D]}(v)=a_v.
\]
Properties of the line bundle $\mathcal O(D)$ can be read from the support function: the line bundle is \emph{base point- free} whenever $\phi_{[D]}$ is concave, and \emph{ample} if and only if $\phi_{[D]}$ is strictly concave. The function $\phi_{[D]}$ is piecewise linear, so when  $D$ is base point free the function $\phi_{[D]}$ is a maximally degenerate tropical polynomial (in the sense that every stratum of the tropical variety contains the origin).
The tropical variety of the support function of a base point free line bundle can be related to the valuation projection of the corresponding divisor of the line bundle.  Let $D$ be a base point free divisor transverse to the toric anticanonical divisor, and let $\Delta_{\phi_{[D]}}$ be the Newton polytope of $\phi_{[D]}$. Then over the open torus of $\check X_\Sigma$, each choice of constants $c_v$ defines a polynomial
\begin{align*}
	f_D:(\CC^*)^n\to& \CC\\
	f_{D}:=&\sum_{v\in \Delta_{\phi_D}} c_v z^v
\end{align*}
which gives a section of the line bundle over the compactification $\mathcal O_D\to \check X_\Sigma \supset (\CC^*)^n.$ There exists a choice of constants so that $f_D^{-1}(0)=D$.
We now consider Laurent polynomials $f_D=\sum_{v\in \Delta_{\phi_D}} c_v z^v$, where the constants $c_v$ are elements of the Novikov field. The valuation projection of the variety $f_D=0$ is a tropical hypersurface defined by a tropical polynomial
$\phi_D$, which we call the tropicalization of $f_D$.  $\phi_D$ is a deformation of $\phi_{[D]}$.
In the complex setting the tropical variety $V(\phi_{D})\subset \RR^n$ approximates the image of $D$ under the moment map projection $\val: \check X_\Sigma \to \RR^n$.
The observation that the strata of the tropical variety $V(\phi)$ meet the toric boundary of the moment polytope of $X_\Sigma$ transversely is compatible with the existence of a compactification for the  variety $f^{-1}(0)\subset (\CC^*)^n$ inside of $\check X_\Sigma$.

Mirror symmetry for toric varieties is based on an understanding of how compactifications modify the mirror construction.
It is an expectation in mirror symmetry that compactification on $\check X$ corresponds to the incorporation of a superpotential $ W:  X\to \CC$ in the mirror and vice versa.
One proposed method for constructing mirror spaces for toric varieties is to consider $\check X_\Sigma$ as a compactification of $\check X_\Sigma\setminus D_\Sigma= (\CC^*)^n$, where $D_\Sigma$ is the toric anticanonical divisor.
A choice of symplectic form for $\check X_\Sigma$ picks out the coefficients of a Laurent polynomial,
\[
	W_\Sigma=\sum_{v\in \Sigma}c_v z^v: (\CC^*)^n\to \CC
\]
the \emph{Hori-Vafa} superpotential for $\check X_\Sigma$ \cite{hori2000mirror}.
\begin{notation}
	For the remainder of this paper, $X=\check X = (\CC^*)^n, Q=\RR^n$ , $\check X_\Sigma$ is a toric variety determined by a fan $\Sigma$, and $W_\Sigma: X\to \CC$ is the mirror Hori-Vafa superpotential.
\end{notation}
This Hori-Vafa superpotential provides a taming condition for non-compact Lagrangian submanifolds of $X$.
We use the notion of a \emph{monomial division} as a taming condition for its particularly clean description in $X$.
\begin{definition}[\cite{hanlon2018monodromy}]
	Let $W_\Sigma:X\to \CC$ be a Laurent polynomial whose monomials are indexed by the rays of a fan $\Sigma$. A \emph{monomial division} $\Delta_\Sigma$ for $W_\Sigma=\sum_{\alpha\in A} c_\alpha z^\alpha$ is an assignment of a closed set $C_\alpha \subset Q$ to each monomial in $W_\Sigma$ such that the following conditions hold:
	\begin{itemize}
		\item
		      The $C_\alpha$ cover the complement of a compact subset of $Q=\RR^n$
		\item
		      There exist constants $k_\alpha \in \RR_{>0}$ so that for all $z$ with $\val(z)\in C_\alpha$ the expression 
		      \[
			      \max_{\alpha\in A} (|c_\alpha z^\alpha|^{k_\alpha})
		      \]
		      is always achieved by $|c_\alpha z^\alpha|^{k_\alpha}$.
		\item
		      $C_\alpha$ is a subset of the open star of the ray $\alpha$ in the fan $\Sigma$.
	\end{itemize}
	A Lagrangian $L\subset X$ is \emph{$\Delta_\Sigma$-monomially admissible} if over $\val^{-1}(C_\alpha)$ the argument of $c_\alpha z^\alpha$ restricted to $L$ is zero outside of a compact set. \label{def:admissiblitycondition}
\end{definition}
Given $W_\Sigma$, there is often a preferred type of monomial division, the \emph{tropical division}, with covering regions defined by
\[C_\alpha:=\{|c_\alpha z^\alpha|\geq (1-\delta) \max_{\beta\in A}(|c_\beta z^\beta|)\]
for some fixed $\delta\in [0, 1]$.
The data of a monomial division allows the construction of a \emph{monomial admissible Fukaya-Seidel category}.
\begin{theorem}[\cite{hanlon2018monodromy}]
	Given $\Delta_\Sigma$ a monomial division for $W_\Sigma$, there exists an $A_\infty$ category $\Fuk_{\Delta_\Sigma}(X, W_\Sigma)$ whose objects are $\Delta_\Sigma$-admissible Lagrangians, and whose morphism spaces are defined by localizing an $A_\infty$ pre-category $\Fuk^\to(X)$ with morphisms:
	\[
	\hom(L_0, L_1)= \CF(L_0, \theta(L_1)).
	\]
	Here $\theta$ is an admissible Hamiltonian perturbation.
	The  higher composition maps $m^d$ in this precategory are given by counts of punctured holomorphic disks.
\end{theorem}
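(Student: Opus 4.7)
The plan is to construct $\Fuk_{\Delta_\Sigma}(X, W_\Sigma)$ by adapting the general template of a Fukaya-Seidel category, with the monomial admissibility condition of \prettyref{def:admissiblitycondition} replacing the standard linear admissibility at infinity. The construction breaks into three tasks: first, isolate a class of admissible Hamiltonian perturbations $\theta$ compatible with $\Delta_\Sigma$; second, define the Floer cochain complex $\CF(L_0, \theta(L_1))$ and higher operations $m^d$ by counts of punctured pseudoholomorphic disks, and verify that the relevant moduli spaces are compact; third, localize the resulting pre-category at continuation quasi-isomorphisms.

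For the first step, I would declare a Hamiltonian $H$ admissible when, outside a compact subset of each region $\val^{-1}(C_\alpha)$, it is a function of $\arg(c_\alpha z^\alpha)$ with strictly positive derivative. Its time-$1$ flow $\theta$ then takes $\Delta_\Sigma$-admissible Lagrangians to $\Delta_\Sigma$-admissible Lagrangians, and for admissible $L_0, L_1$ the intersection $L_0\cap \theta(L_1)$ lands in a compact subset of $X$. The interpolation of the local choices of $H$ across the compact overlaps between different $C_\alpha$ is possible because each $C_\alpha$ is constrained to lie in the open star of the ray $\alpha$ in $\Sigma$, so the relevant angular coordinates are consistently defined on overlaps.

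The principal obstacle is compactness of moduli of pseudoholomorphic curves in the non-compact manifold $X$. The key input is that on $\val^{-1}(C_\alpha)$ the function $\log|c_\alpha z^\alpha|^{k_\alpha}$ is the real part of a holomorphic function, so $|c_\alpha z^\alpha|^{k_\alpha}$ is plurisubharmonic. Given a punctured disk $u$ whose boundary lies on admissible Lagrangians and which enters $\val^{-1}(C_\alpha)$, the admissibility of these Lagrangians forces $\arg(c_\alpha z^\alpha)\circ u$ to be locally constant along each boundary arc outside a compact set; combined with a suitable barrier at infinity in $C_\alpha$, the open mapping principle applied to $c_\alpha z^\alpha \circ u$ shows that $u$ cannot realize an interior maximum of $|c_\alpha z^\alpha|^{k_\alpha}$. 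Consequently $u$ is confined to a prescribed compact subset of $\val^{-1}(C_\alpha)$. This argument is in the spirit of the confinement estimates in \cite{abouzaid2009morse}, and, once globalized by patching the regional barriers along the overlaps of the $C_\alpha$, yields compactness of the strip and disk moduli.

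With compactness established, standard codimension-one Gromov boundary analysis provides the $A_\infty$ relations for $m^d$, assembling the pre-category $\Fuk^\to(X)$ in which direct composition is defined only between compatibly ordered choices of perturbation data. A parametrized moduli argument with admissible homotopies then shows that continuation maps between different admissible perturbations are quasi-isomorphisms, and inverting these via localization yields the genuine $A_\infty$ category $\Fuk_{\Delta_\Sigma}(X, W_\Sigma)$. The hardest step throughout is the uniform maximum-principle argument when several Lagrangian boundary labels traverse a neighborhood of overlap of distinct monomial regions; the requirement that each $C_\alpha$ sit inside the open star of its ray is precisely what permits the regional plurisubharmonic barriers to be glued into a single global confinement estimate.
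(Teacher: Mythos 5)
The paper does not actually prove this statement; it is quoted verbatim as background and attributed to Hanlon (\cite{hanlon2018monodromy}), so there is no in-paper proof against which to compare your proposal. Evaluated on its own terms, your sketch does capture the right architecture of Hanlon's construction: monomially admissible perturbations built out of functions of $\arg(c_\alpha z^\alpha)$, a confinement estimate to control curves escaping to infinity, an ordered pre-category, and localization at continuation quasi-isomorphisms.

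There is, however, a genuine gap in the compactness step, and it is the one place where the real work happens. You invoke the open mapping principle for $c_\alpha z^\alpha \circ u$ and plurisubharmonicity of $|c_\alpha z^\alpha|^{k_\alpha}$ to confine curves, but this only makes sense when $u$ is holomorphic for an almost complex structure $J$ with respect to which $z^\alpha$ is still a holomorphic function — in practice, $J$ must agree with the standard integrable complex structure outside a compact subset of $X$, and any domain-dependent Hamiltonian or almost-complex perturbations used to achieve transversality of the Floer moduli spaces must also be trivial there. Your proposal never imposes this constraint on the perturbation data, and without it the composition $c_\alpha z^\alpha \circ u$ need not be holomorphic and the barrier argument collapses. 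The requirement that the $C_\alpha$ sit in open stars of rays controls the geometry of the Lagrangian boundary conditions across overlaps, but does nothing by itself to constrain the class of $J$'s and perturbations. A careful construction must set up the perturbation spaces so that the monomial functions remain $J$-holomorphic in the regions where the maximum principle is applied, and one must then separately check that transversality can still be achieved within this restricted class (this is precisely the kind of issue the present paper itself agonizes over in \prettyref{subsec:assumptions} for the pearly model). Without addressing this point your proposal remains a plausible outline but does not yet establish the theorem.
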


The Lagrangians considered in the setting of \cite{hanlon2018monodromy} do not bound holomorphic disks.
\begin{theorem}[\cite{hanlon2018monodromy}]
	Let $\check X_\Sigma$ be a smooth complete toric variety with Hori-Vafa superpotential $W_\Sigma$. Let $Tw^\pi \mathcal P_{\Delta_\Sigma}(X, W_\Sigma)$ be the category of twisted complexes generated by the full subcategory of $\Fuk_{\Delta_\Sigma}(X, W_\Sigma)$ consisting of tropical Lagrangian sections. The $A$ and $B$-models
	\[
		Tw^\pi\mathcal P_{\Delta_\Sigma}(X, W_\Sigma)\simeq D^b\Coh(\check X_\Sigma)
	\]
	are quasi-equivalent $A_\infty$  categories.
	\label{thm:mirrorfortoric}
\end{theorem}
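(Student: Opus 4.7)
The plan is to follow Abouzaid's strategy from \cite{abouzaid2009morse} for homological mirror symmetry of toric varieties, adapted to the monomial-admissible setting. First, for each torus-equivariant divisor class $[D]$ on $\check X_\Sigma$, I would exhibit a tropical Lagrangian section $L_{[D]}\subset X$ built from (a small perturbation of) the graph of $d\phi_{[D]}$, where $\phi_{[D]}$ is the integral support function for $[D]$. Since $\phi_{[D]}$ is linear on each cone of $\Sigma$, the argument of the dominant monomial $c_v z^v$ of $W_\Sigma$ restricted to $L_{[D]}$ is constant near infinity in each region $C_v$, so $L_{[D]}$ is automatically $\Delta_\Sigma$-monomially admissible for the tropical division of \prettyref{def:admissiblitycondition}.

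On the $B$-side, results of Bondal and Kawamata guarantee that any smooth complete toric variety admits a finite collection of torus-equivariant line bundles that split-generates $D^b\Coh(\check X_\Sigma)$. I would then define a candidate $A_\infty$-functor
\[
\mathcal F:\mathcal P_{\Delta_\Sigma}(X, W_\Sigma)\to D^b\Coh(\check X_\Sigma)
\]
sending $L_{[D]}\mapsto \mathcal O(D)$ on objects, and extend it to the idempotent-completed twisted envelope $Tw^\pi$ via its universal property.

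The central computation is that for each pair of tropical Lagrangian sections $L_{[D_0]}, L_{[D_1]}$, after an admissible Hamiltonian perturbation $\theta$, the Floer complex $\CF(L_{[D_0]}, \theta L_{[D_1]})$ is quasi-isomorphic to $R\Hom_{\check X_\Sigma}(\mathcal O(D_0), \mathcal O(D_1))$. Intersection points of the perturbed sections can be arranged to be in bijection with lattice points of the Newton polytope $\Delta_{\phi_{[D_1-D_0]}}$, which recovers the standard toric description of global sections when the difference class is base point-free; the general case reduces to this by twisting both sides by a common ample class. The higher $A_\infty$-operations $m^d$ should be matched with the cup product on toric sheaf cohomology via counts of punctured holomorphic polygons with boundary on the sections, whose tropicalizations encode the same combinatorial data as a sheaf-theoretic model.

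The main obstacle will be analytic control at infinity: one needs a maximum principle or a tailored admissible Hamiltonian perturbation scheme to prevent sequences of holomorphic polygons from escaping along the ends of $X$. This is precisely what the monomial admissibility condition is designed to achieve, since pinning the argument of each dominant monomial of $W_\Sigma$ along $L$ yields a confinement estimate via standard pseudoholomorphic curve techniques. Once $\mathcal F$ is shown to be cohomologically fully faithful on a split-generating collection of tropical Lagrangian sections, the quasi-equivalence on $Tw^\pi$ follows formally from split-generation on both sides.
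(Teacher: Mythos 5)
The paper does not prove this theorem itself; it is imported from \cite{hanlon2018monodromy}, with a one-line remark that the equivalence follows from \prettyref{thm:hmstorics} (the computation of Floer cohomology between tropical Lagrangian sections, matching $R\Hom$ of line bundles, including $A_\infty$ structure) together with the fact that line bundles generate $D^b\Coh(\check X_\Sigma)$ for smooth projective toric varieties. Your proposal correctly reconstructs exactly this Abouzaid--Hanlon strategy --- admissibility of the sections $\sigma_{\phi_{[D]}}$, a lattice-point count giving $\CF(L_{[D_0]},\theta L_{[D_1]})\simeq R\Hom(\mathcal O(D_0),\mathcal O(D_1))$ reduced to the base-point-free case by twisting, confinement of polygons via the monomial admissibility condition, and generation on both sides --- so it agrees with the approach the paper cites.
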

The equivalence is proven with  \prettyref{thm:hmstorics}, which characterizes the Floer cohomology between tropical Lagrangian sections (\prettyref{def:tropicallagrangiansection}).
\section{Construction of Tropical Lagrangians in $X$}
\label{sec:hypersurfaces}
The goal of this section is to  construct for each tropical polynomial $\phi: Q\to \RR$ a Lagrangian $L(\phi)$ whose projection $\val(L(\phi))$ is $\epsilon$-close to $V(\phi)$ in the Hausdorff metric. Our construction will be rooted in the language of Lagrangian cobordisms, giving us a path to prove a homological mirror symmetry statement for $L(\phi)$.
We additionally prove that the Lagrangian $L(\phi)$ is an unobstructed object of the Fukaya category. 
\subsection{Surgery Profiles}
We will need an explicit surgery profile to build $L(\phi)$.
\begin{prop}
	\label{prop:surgeryprofile}
	Let $f_0:\RR^n\to \RR$ be the constant function $f_0=0$, and let $f_1:\RR^n\to \RR$ be a smooth convex function.
	Let $U$ be the region where  $df_1=0$
	and suppose we have normalized $f_1$ so that $f_1(U)=0$. 
	Additionally, suppose that for given $\epsilon>0$, there exists a constant $c_\epsilon$ so that $f_1(x)< 2c_\epsilon$ implies that $x\in B_\epsilon(U)$. 

	Consider the Lagrangian sections $L_0=df_0$, and $L_1=df_1$ in $T^*\RR^n$. There exists a Lagrangian  $L_0\#_U L_1$  in a small neighborhood of the symmetric difference
	\[
		L_0\#_U L_1 \subset B_\epsilon( (L_0\sqcup L_1)\setminus (L_0 \cap L_1)).
	\]
	Furthermore, there exists a Lagrangian cobordism $K$ with ends $(L_0, L_1)\rightsquigarrow L_0\#_U L_1.$
	 We call $L_0\#_U L_1$ the \emph{Lagrangian surgery at $U$ }.
\end{prop}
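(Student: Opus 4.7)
The plan is to construct $L_0\#_U L_1$ by inserting a Polterovich-type neck along the clean intersection $U$, using the Weinstein neighborhood theorem to reduce to fiber-wise transverse surgery in the normal direction to $U$, and to build the cobordism by the corresponding $U$-family of standard surgery traces.

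First I would symplectically identify a tubular neighborhood of $U\times\{0\}\subset T^*\RR^n$ with $T^*U\times T^*\RR^k$, where $k$ is the codimension of $U$ in $\RR^n$. In this normal form $L_0$ is the product of zero sections, and $L_1$---being a Lagrangian disk bundle over $U$ transverse to $L_0$ along $U\times\{0\}$---can be Hamiltonian isotoped inside the neighborhood to the product of the zero section of $T^*U$ with the cotangent fiber at $0\in\RR^k$. In each $T^*\RR^k$ fiber over $U$ I then apply Polterovich's standard neck construction of width $\nw<\epsilon/2$, take the product with the zero section of $T^*U$, and glue outside the tubular neighborhood to the outer parts of $L_0\sqcup L_1$ (which under the hypothesis $\{f_1<2c_\epsilon\}\subset B_\epsilon(U)$ lie in the region where $L_0$ and $L_1$ are already disjoint) to obtain $L_0\#_U L_1$.

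The neighborhood containment $L_0\#_U L_1\subset B_\epsilon((L_0\sqcup L_1)\setminus(L_0\cap L_1))$ then follows because the Polterovich neck of width $\nw$ in $T^*\RR^k$ sits in an $\nw$-neighborhood of the symmetric difference of the model zero section and cotangent fiber; this bound is preserved under crossing with the $U$-zero section, while the outer parts literally lie in $(L_0\cup L_1)\setminus(L_0\cap L_1)$.

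For the cobordism $K:(L_0,L_1)\rightsquigarrow L_0\#_U L_1$ my plan is to take the $U$-family of Polterovich's transverse surgery trace cobordisms: in each $T^*\RR^k\times\CC$ fiber over $U$ the standard trace furnishes a Lagrangian cobordism from the pair $(\text{zero section},\,\text{cotangent fiber})$ to the neck, and taking the product with the zero section of $T^*U$ produces a local Lagrangian cobordism in $T^*\RR^n\times\CC$ near $U$ which glues to the trivial cylinders $L_0\times\gamma_0$ and $L_1\times\gamma_1$ over two positive rays at heights $c_0^+,c_1^+$. The main obstacle I expect is arranging the Weinstein normal form and the preliminary Hamiltonian isotopy of $L_1$ coherently across all of $U$, since the trivialization is canonical only up to Hamiltonian symplectomorphism; this reduces to Polterovich's invariance statement that different neck widths yield Hamiltonian-isotopic surgeries, which allows the fiber-wise construction to patch into a global embedded Lagrangian cobordism.
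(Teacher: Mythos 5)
Your approach differs genuinely from the paper's and, as written, has a gap that would block it in the main case of interest. The paper does not assume $U$ is a closed submanifold of fixed codimension, nor that $L_0$ and $L_1$ intersect cleanly along $U$: $U$ is the set where $df_1$ vanishes for a smooth convex $f_1$, i.e.\ the minimal locus of $f_1$, which is an arbitrary closed convex set. In the application to tropical Lagrangians (Definition \ref{def:tropicallagrangian} and Proposition \ref{prop:latticepointsareintersections}), the surgery regions $U_v$ with $v\in\Delta^{\ZZ}_\phi$ are precisely the \emph{top-dimensional} convex regions where $\sigma_0$ and $\sigma_{-\phi}$ agree on an open set, so the intersection $L_0\cap L_1$ is Lagrangian there and the normal direction $\RR^k$ in your splitting is zero-dimensional. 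Even in intermediate codimensions the Hessian of a merely convex $f_1$ can degenerate along $\partial U$, so the clean-intersection hypothesis (that $T_pU=T_pL_0\cap T_pL_1$ with nondegenerate normal Hessian) which you need to invoke the Weinstein normal form $T^*U\times T^*\RR^k$ and to isotope $L_1$ to a product of a zero section and a cotangent fiber is not available. Your construction thus recovers Polterovich surgery when $U$ is a point or a genuinely clean submanifold, but does not cover the situation the proposition is designed for.

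The paper sidesteps all of this by working \emph{globally with the convex primitive $f_1$} rather than with a normal form near $U$. The surgery is defined directly as the union of the graphs $d(r_\epsilon\circ f_1)$ and $d(s_\epsilon\circ f_1)$ over $\{f_1\geq c_\epsilon\}$, where $r_\epsilon,s_\epsilon$ are profile curves whose graphs concatenate smoothly and which equal the identity and zero respectively for $f_1\geq 2c_\epsilon$; convexity of $f_1$ and of $r_\epsilon$, concavity of $s_\epsilon$, are what make the charts match up into a smooth Lagrangian. No decomposition of a neighborhood of $U$ is needed, and no smoothness or codimension hypothesis on $U$ is imposed. The cobordism is likewise produced by running the same surgery construction one dimension higher: one sets $\tilde f_1(x,t)=f_1(x)+g(t)$ with $g$ convex, $dg=0$ for $t<-\epsilon$ and $dg=1$ for $t>0$, and takes $K=\tilde L_0\#_{\tilde U}\tilde L_1$; the $t$-slices of $K$ interpolate between $L_0\sqcup L_1$ and $L_0\#_U L_1$ automatically. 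If you want to pursue your fiber-wise picture, you would at minimum have to treat the case $k=0$ separately (where there is no transverse model to cross with $U$) and to control the surgery near $\partial U$ where the Hessian of $f_1$ degenerates; the profile-function formulation handles these uniformly because it never isolates a normal direction.
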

\begin{proof}
	We first give a description of a Lagrangian $L_0\#_U L_1$ which satisfies the desired properties.
	By convexity, $f_1>0$ on the complement of $U$.
	Let $r_\epsilon:\RR_{>\epsilon}\to \RR$ and $s_\epsilon: \RR_{>\epsilon}\to \RR$ be functions satisfying the following properties:
	\begin{itemize}
		\item
		      $r_\epsilon(t)=t$ for $t\geq 2c_\epsilon$ and $s_\epsilon(t)=0$ for $t\geq 2c_\epsilon$.
		\item
		      $r_\epsilon'(c_\epsilon)=s_\epsilon'(c_\epsilon)=\frac{1}{2}$
		\item
		      $r_\epsilon(t)$ is convex, while $s_\epsilon(t)$ is concave.
		\item
		      The concatenation of curves $(t, r_\epsilon'(t))$ and $(t, s_\epsilon'(t))$ is a smooth plane curve.
	\end{itemize}

	The profiles of these functions are drawn in \prettyref{fig:surgeryprofile}. 
	A quantity which we will later use is the \emph{neck width} of the surgery profile curves, which is defined as 
	\[\nw_{r_\epsilon,s_\epsilon}:=-r_{\epsilon}(c_{\epsilon})-s_{\epsilon}(c_\epsilon).\]
	Consider the Lagrangian submanifolds given by the graphs
	\[
		d(r_\epsilon \circ f_1) \;\;\;\;\;\;\; d(s_\epsilon  \circ f_1)
	\] defined as sections over the domain $f_1\geq c_\epsilon$.
	
	The union of these two charts is a smooth Lagrangian submanifold, which is our definition of $L_0\#_U L_1$.
	The profile is only defined where $f_1\geq c_\epsilon$, so  $L_0\#_U L_1$ is disjoint from the set $L_0\cap L_1$.
	As $r_\epsilon$ is the identity for $f_1>2c_\epsilon$, we have that $d(r_\epsilon \circ f_1)=L_1$ on the region where $f_1(x)>2c_\epsilon$.
	A similar statement can be made about $s_\epsilon$.
	These observations (along with the choice of $c_\epsilon$) give us that $L_0\#_U L_1$ is contained a small neighborhood of the symmetric difference. 
	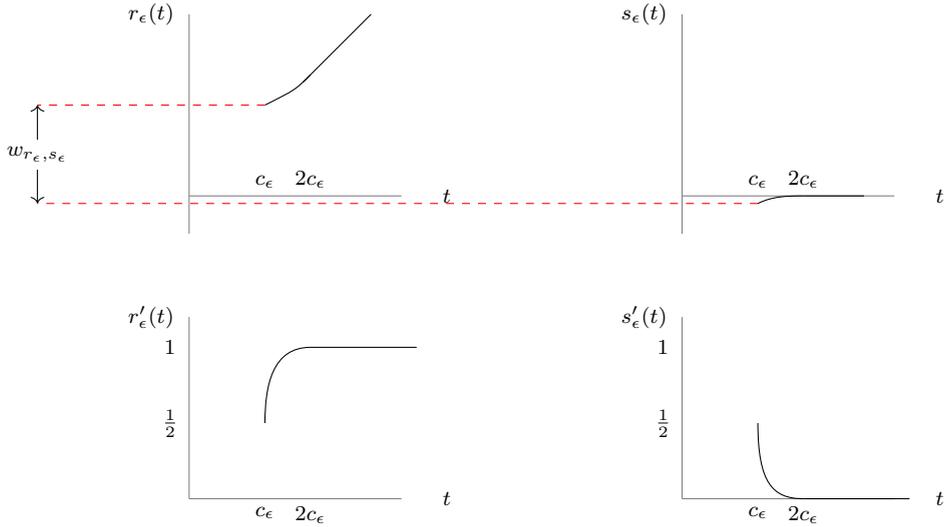
\begin{figure}
			\begin{tikzpicture}
	\begin{scope}[]
		\draw[gray] (-1, 0)-- (1.8,0);
		\draw[gray] (-1,2.4) -- (-1,-0.5);
		\node at (-1.5,2.4) {$s_\epsilon(t)$};
		\node at (2.4,0) {$t$};

		\node[above] at (0.6,0) {$2c_\epsilon$};
	
	\node[above]at (0,0) {$c_\epsilon $};
		
		\draw (1.4,0) -- (0.6,0);
	\draw (0.6,0) .. controls (0.4,0) and (0.2,0) .. (0,-0.1);
	
	\end{scope}
	\begin{scope}[shift={(0,-4)}]
		\draw[gray] (-1,0)-- (1.8,0);
		\draw[gray] (-1,2.4) -- (-1,0);
		\node at (-1.5,2.4) {$s_\epsilon'(t)$};
		\node at (2.4,0) {$t$};

		\node[below] at (0.6,0) {$2c_\epsilon$};
	
	\node[below] at (0,0) {$c_\epsilon$};
		
		\draw (2,0) -- (0.6,0);
	\draw (0.6,0) .. controls (0.2,0) and (0,0.2) .. (0,1);
	
	\end{scope}
	\node at (-1.25,-2) {1};
	\node at (-1.25,-3) {$\frac{1}{2}$};

	\begin{scope}[shift={(-6.5,0)}]
	
	\begin{scope}[]
		\draw[gray] (-1,0)-- (1.8,0);
		\draw[gray] (-1,2.4) -- (-1,-0.5);
		\node at (-1.5,2.4) {$r_\epsilon(t)$};
		\node at (2.4,0) {$t$};

		\node[above] at (0.6,0) {$2c_\epsilon$};
		
		\draw (1.4,2.4) -- (0.5,1.5);
	\draw (0.6,1.6) .. controls (0.4,1.4) and (0.4,1.4) .. (0,1.2);
	
	\node[above] at (0,0) {$c_\epsilon $};
	\end{scope}
	\begin{scope}[shift={(0,-4)}]
		\draw[gray] (-1,0)-- (1.8,0);
		\draw[gray] (-1,2.4) -- (-1,0);
		\node at (-1.5,2.4) {$r_\epsilon'(t)$};
		\node at (2.4,0) {$t$};

		\node[below] at (0.6,0) {$2c_\epsilon$};
		
	\node[below] at (0,0) {$c_\epsilon $};
		\draw (2,2) -- (0.6,2);
	\draw (0.6,2) .. controls (0.1,2) and (0,1.5) .. (0,1);
	
	\end{scope}
	\node at (-1.25,-2) {1};
	\node at (-1.25,-3) {$\frac{1}{2}$};
	
	\end{scope}
	\draw[dashed, red] (-6.5,1.2) -- (-9.5,1.2);
	\draw[dashed, red] (0,-0.1) -- (-9.5,-0.1);
	\draw[<->] (-9.5,1.2) -- (-9.5,-0.1);
	\node[fill=white] at (-9.5,0.55) {$\nw_{r_\epsilon,s_\epsilon}$};
	\end{tikzpicture}
		\caption{Some profiles for constructing $L_0\#_U^\epsilon L_1$}
		\label{fig:surgeryprofile}
	\end{figure}

	It remains to show that $L_0 , L_1$ and $L_0\#_U L_1$ fit into a cobordism. This cobordism will be constructed as a Lagrangian surgery in one dimension higher.  Let $\tilde f_0: \RR^{n+1}\to \RR$ be the constant zero function, and let function
	\begin{align*}
		\tilde f_1: \RR^n\times \RR \to & \RR          \\
		(x, t) \mapsto                  & f_1(x)+ g(t)
	\end{align*}
	where the function $g(t)$ satisfies the following properties:
	\begin{itemize}
		\item
		      $g(t)$ is convex
		\item
		      $dg|_{t<-\epsilon}=0$ and $dg|_{t>0}=1$.
	\end{itemize}
	We will now take the surgery of sections $\tilde L_0=  d\tilde f_0$ and $ \tilde L_1 =d\tilde f_1$.
	Let $U$ be the region where $df_1=0$.  $\tilde L_0$ and $\tilde L_1$ agree on the region $ \tilde U=U\times (-\infty, -\epsilon)$.
	As the intersection over this region is defined by the intersection of convex primitives, we may use our previous construction to define the surgery cobordism
	\[
		K:=\tilde L_0\#_{\tilde U} \tilde L_1.
	\]
	Since $K$ agrees with $\tilde L_0$ and $\tilde L_1$ outside of a small neighborhood of $\tilde U$,
	\[
		K|_{t>0}=  ((L_0\times \{0\} )\sqcup( L_1\times \{dt\}))\times \RR_{t>0}.
	\]
	As the function $g(t)$ is constant on each $t$-fiber, we obtain that $K|_{t=-\epsilon}=L_0\#_{U} L_1$ and conclude
	\[
		K|_{t<-\epsilon} =(L_0\#_U L_1)\times\{0\}\times \RR_{t<- \epsilon}.
	\]
	These are the conditions required for $K$ to be a Lagrangian cobordism between $L_0, L_1$ and $L_0\#_U L_1$.
\end{proof}
\begin{figure}
	\centering
	\includegraphics{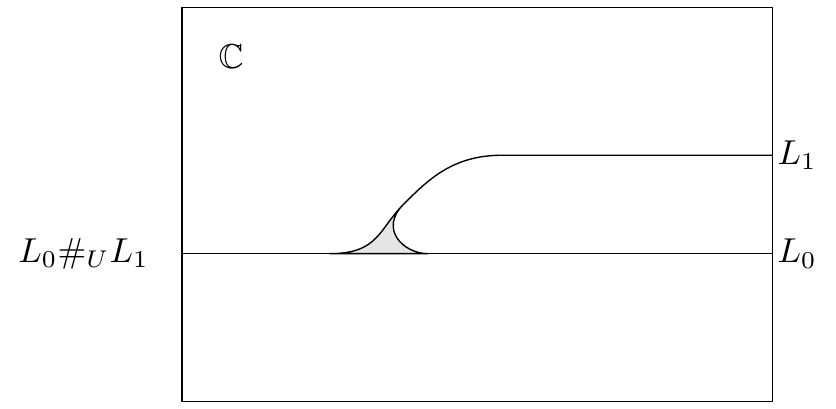}
	\caption[The projection of the surgery cobordism to the $\CC$-parameter.]{The projection of the surgery cobordism to the $\CC$-parameter.
	The curve defining the upper boundary of this projection is parameterized by $z=t+i\frac{dg}{dt}$.}
\end{figure}
\begin{remark}
	We could have chosen $f_1$ to be concave and still have had a surgery construction. However the existence of a surgery cobordism depends on the convexity of $f_1$.
	If instead we had started with a concave primitive $f_1$, the function
	\begin{align*}
		\tilde f_1: \RR^n\times \RR \to & \RR \\
		(x, y) \mapsto& f_1(x)+ g(t)
	\end{align*}
	is neither concave or convex, and we are unable to construct our cobordism using the surgery profile previously described.
	This manifests itself as an ordering on the ends of the cobordism.
	In particular, there is usually no Lagrangian cobordism with ends $(L_1, L_0)$ and $(L_0\#_UL_1)$. 
\end{remark}
In the setting where $U$ is a closed ball, this is nothing different than a special choice of neck on the standard Lagrangian connect sum cobordism. 
The construction above provides an alternative definition to the surgery trace cobordism considered in \cite{biran2013lagrangian}.
In the non-compact setting, we get an interpretation of what it means to take the connect sum of Lagrangians which agree on a non-compact set.

This operation was dependent on a number of choices--- in particular, a choice of profile function $r_\epsilon$ and $s_\epsilon$. 
Different choices for these parameters give isotopic Lagrangian submanifolds. 
However, just as in the setting of transverse surgery of Lagrangian submanifolds, the Hamiltonian isotopy class of these Lagrangians can be made independent of choices up to the neck width.

Recall, if $i_t:L\times [0, t_0]\to X$ is a Lagrangian isotopy, the flux of the isotopy is the class 
\begin{align*}
	\Flux_{i_t}\in H^1(L) & & \Flux_{i_t}(c):= \int_{S^1\times[0, t_0]}(i_t\circ c)^* \omega.
\end{align*}

We can use this to associate a flux class to a Lagrangian surgery. 
Fix choices of profile functions $r_{\epsilon_0}, s_{\epsilon_0}$.
Extend this to a smooth family of surgery profiles  $r_{\epsilon}, s_{\epsilon}$ , dependent on the choice of parameter $\epsilon\in(0, \epsilon_0)$.
As $\epsilon$ approaches zero, $r_\epsilon$ approaches the identity and $s_{\epsilon}$ approaches the constant zero function.
This smooth family of surgery profiles defines a Lagrangian homotopy $i_\epsilon:L_0\# L_1\times(0,\epsilon_0)\to X$.
\begin{prop}
	The flux class $\Flux_{i_\epsilon}$ is determined by the neck width $\nw_{r_\epsilon, s_\epsilon}$.
	\label{prop:neckwidth}
\end{prop}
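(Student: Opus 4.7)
The plan is to reduce the flux computation to a boundary integral via Stokes' theorem, using the explicit primitives of the Liouville form on each chart of $L_0\#_U L_1$, and then observe that the only surviving $\epsilon$-dependence in those boundary integrals is through the combination $\nw_{r_\epsilon,s_\epsilon}$.

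First I would record the chart-wise primitives. On the $r_\epsilon$-chart of $L_0\#_U L_1$, the Liouville form $\lambda=\sum p_i\,dq_i$ pulls back to $d(r_\epsilon\circ f_1)$ because the chart is the graph of $d(r_\epsilon\circ f_1)$; analogously on the $s_\epsilon$-chart it pulls back to $d(s_\epsilon\circ f_1)$. Both charts meet along the locus $\{f_1=c_\epsilon\}$, where the primitives take the constant values $r_\epsilon(c_\epsilon)$ and $s_\epsilon(c_\epsilon)$ respectively; the matching condition $r_\epsilon'(c_\epsilon)=s_\epsilon'(c_\epsilon)=\tfrac12$ guarantees that the two graphs glue smoothly there so the decomposition into two charts is canonical.

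Next, for any class $[c]\in H_1(L_0\#_U L_1)$, I would write the flux, by the definition $\Flux_{i_\epsilon}(c)=\int_{S^1\times (0,\epsilon_0)}(i_\epsilon\circ c)^*\omega$, as the symplectic area of the cylinder swept by a representative $c_\epsilon$ under the isotopy. By $\omega=d\lambda$ and Stokes' theorem this equals $\int_{c_{\epsilon_0}}\lambda-\int_{c_0}\lambda$. Decomposing each $c_\epsilon$ into arcs lying on the $r_\epsilon$- and $s_\epsilon$-charts with endpoints on the meeting circle, the integral along each such arc telescopes to the difference of the corresponding primitive at its two endpoints, each of which equals $r_\epsilon(c_\epsilon)$ or $s_\epsilon(c_\epsilon)$. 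Collecting and signing these contributions along the full cycle produces a topological count of signed chart-transitions multiplied by the combination $-r_\epsilon(c_\epsilon)-s_\epsilon(c_\epsilon)=\nw_{r_\epsilon,s_\epsilon}$; the values $r_\epsilon(c_\epsilon)$ and $s_\epsilon(c_\epsilon)$ never appear separately because entering and exiting the same chart contributes with opposite signs, leaving only their sum.

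The main obstacle is the bookkeeping at the chart transitions: one must verify that the only nonvanishing linear combination of $r_\epsilon(c_\epsilon)$ and $s_\epsilon(c_\epsilon)$ surviving in the sum is a topological multiple of $\nw_{r_\epsilon,s_\epsilon}$, rather than each of $r_\epsilon(c_\epsilon), s_\epsilon(c_\epsilon)$ independently. Once this combinatorial point is nailed down, the formula $\Flux_{i_\epsilon}(c)$ is visibly a function of $\nw_{r_{\epsilon_0},s_{\epsilon_0}}$ and $\nw_{r_{0},s_{0}}$ alone, which proves the proposition. It is worth noting as a sanity check that in the symmetric case (e.g.\ $f_1=|x|^2/2$ and a rotationally symmetric choice of profile) the cycle can be represented on the meeting locus, where $\lambda$ restricts to a constant-valued primitive and the flux vanishes; the proposition is then automatic, while the general dependence on $\nw_{r_\epsilon,s_\epsilon}$ captures the genuine non-Hamiltonian part of the surgery deformation.
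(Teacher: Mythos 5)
Your overall strategy — use $\omega = d\eta$, Stokes, and the chart-wise primitives $r_\epsilon\circ f_1$, $s_\epsilon\circ f_1$ — is the same as the paper's, but you apply it to the wrong cycles, and the computation you sketch actually yields zero rather than $\nw_{r_\epsilon,s_\epsilon}$.

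Here is the problem with the telescoping step. You decompose a closed cycle $c\in H_1(L_0\#_U L_1)$ into arcs lying in the $r$- and $s$-charts with endpoints on the meeting locus $\{f_1=c_\epsilon\}$. But an arc in the $r$-chart with \emph{both} endpoints on that locus contributes $r_\epsilon(c_\epsilon)-r_\epsilon(c_\epsilon)=0$ to $\int_c\eta$, and likewise for the $s$-chart arcs. There is no residue of the form $-r_\epsilon(c_\epsilon)-s_\epsilon(c_\epsilon)$ to collect: each arc already cancels internally, so $\int_{c_\epsilon}\eta=0$ for every $\epsilon$ and every closed $c$, and hence $\Flux_{i_\epsilon}(c)=0$. (Indeed $H^1(L_0\#_U L_1)=0$ for $n\geq 3$, and for $n=2$ the generating cycle is the meeting circle itself, on which $\eta$ restricts to $d(\text{const})$.) Your closing sanity check misdiagnoses this: the vanishing of $\int_c\eta$ on absolute cycles is not a feature of the rotationally symmetric case but holds universally, so it cannot serve as a consistency check that singles out the $\nw$-dependence.

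The step you are missing — and the one the paper's argument hinges on — is the reduction to \emph{relative} cohomology. Because $i_\epsilon$ is the constant isotopy outside the neck $N$, the flux form is supported in $N$ and therefore represents a class in $H^1(N,\partial N)$, which is generated (dually) by a path $\gamma$ running from one boundary component of the neck to the other. Crucially, the endpoints of $\gamma$ lie at $f_1=2c_\epsilon$, \emph{not} on the meeting locus $f_1=c_\epsilon$; since $r_\epsilon$ and $s_\epsilon$ agree with their $\epsilon$-independent values there, pairing the flux form with $\gamma$ and telescoping across the meeting circle produces the genuinely $\epsilon$-dependent quantity $-r_{\epsilon_0}(c_{\epsilon_0})+s_{\epsilon_0}(c_{\epsilon_0})$, which is what the proposition packages as a function of the neck width. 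Without passing to $H^1(N,\partial N)$ and evaluating on a relative cycle crossing the neck once, the Stokes computation has no way to see $\nw_{r_\epsilon,s_\epsilon}$. If you add this localization step (and the preliminary observation that the flux class depends only on the endpoints $(r_{\epsilon_0},s_{\epsilon_0})$ of the chosen family of profiles, not on the interpolating path), your argument becomes essentially the paper's.
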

\begin{proof}
	The flux cohomology class is only dependent on $r_{\epsilon_0}, s_{\epsilon_0}$, and not the extension of these to families $r_{\epsilon}, s_{\epsilon}$, as we can interpolate between any two choices of extensions, and the Flux class is invariant under isotopies of isotopies relative ends. 
	We now give an explicit computation of this Flux class in terms of the profile function.
	Recall that $L_0\#_U L_1$ is parameterized  the $d(s_{\epsilon}\circ f)$ and $d(r_{\epsilon}\circ f)$ charts which are identified with $\RR^n\setminus \{f_1< c_\epsilon\}$.
	We parameterize the neck of the surgery $N$   by specifying maps into these two charts. 
	Let 
	\begin{align*} \alpha_s :\partial U \times [-t_0, 0]\to\{c_\epsilon\leq f_1 \leq 2\epsilon\} && \alpha_r: \partial U \times [0, t_0]\to\{c_\epsilon\leq f_1 \leq 2\epsilon\}
	\end{align*}
	be diffeomorphisms so that 
	\begin{align*}
		\psi_\epsilon:\partial U\times [-t_0, t_0]\to L_0\#_U L_1\\
		(q, t)\mapsto \left\{ \begin{array}{cc}
			\alpha_s(q, t) & \text{ if $t\leq 0$}\\
			\alpha_r(q, t) & \text{ if $t >0$}
		\end{array}
		\right.
	\end{align*}
	gives us a parameterization of the neck of the surgery. 
	With this parameterization,
	\begin{align*}
		(f_1\circ \psi_\epsilon)|_{-t_0}=&2c_{\epsilon }\\
		(f_1\circ \psi_\epsilon)|_{t_0}=&2c_\epsilon \\
		(f_1\circ \psi_\epsilon)|_{0}=&c_\epsilon. 
	\end{align*}
	As the isotopy $i_\epsilon:L_0\#_U L_1\times (0, \epsilon)\to X$ is constant outside of surgery neck $N$, the Flux class descends to a class in $H^1(N, \partial N)$.
	In dimension greater than 2, $U$ is the zero set of a convex function and therefore $\partial U$ is simply connected.
	In this case, $H^1(N, \partial N)$ is generated by a single curve $\gamma:[-t_0, t_0]\to N$ with $\gamma(-t_0)\in \partial U\times \{-t_0\}$ and $\gamma(t_0)\in \partial U\times \{t_0\}$. 
	In dimension equal to 2, there is another generator $c\in H^1(N, \partial N)$, however the integral of the Louiville form on this cycle vanishes and so $\Flux_{i_\epsilon}(c)=0$.
	Therefore, the flux is characterized by $\Flux_{i_\epsilon}(\gamma)$. 

	Since we are in the exact setting (with $\omega=d\eta$), the flux on a path $c:[0, 1]\to L$ can be computed instead as $\Flux_{i_\epsilon}(\gamma):=\int_{[0, 1]}(i_{\epsilon_0}\circ c)^*\eta - \int_{[0, 1]}(i_{0}\circ c)^*\eta$.
	When the Lagrangian is parameterized by primitive $g$, so that $i(x)=(x, dg)$, then $\int_{i\circ c}\eta=g(c(1))-g(c(0))$.

	Using the decomposition of $N$ into two charts parameterized by $r_\epsilon \circ f_1\circ \psi_\epsilon|_{t\geq  0}$ and $s_\epsilon \circ f_1 \circ \psi_\epsilon|_{t\leq 0}$, we obtain 
	\begin{align*}
		\Flux_{i_\epsilon}(\gamma)=& \int_{[-t_0, t_0]}(i_{\epsilon_0}\circ \gamma)^*\eta - \int_{[-t_0, t_0]}(i_{0}\circ \gamma)^*\eta \\
		=&((r_{\epsilon_0} \circ f_1\circ \psi_\epsilon(t_0)-r_{\epsilon_0} \circ f_1\circ \psi_\epsilon(0))
		+(s_{\epsilon_0} \circ f_1\circ \psi_\epsilon(0)-s_{\epsilon_0} \circ f_1\circ \psi_\epsilon(-t_0)))\\
		&-((r_{0} \circ f_1\circ \psi_\epsilon(t_0)-r_{0} \circ f_1\circ \psi_\epsilon(0))
		+(s_{0} \circ f_1\circ \psi_\epsilon(0)-s_{0} \circ f_1\circ \psi_\epsilon(-t_0)))\\
		=&((r_{\epsilon_0}(2c_{\epsilon_0})-r_{\epsilon_0}(c_{\epsilon_0}))+(s_{\epsilon_0} (c_{\epsilon_0})-s_{\epsilon_0}(2c_\epsilon)))\\
		&-((r_{0}(2c_{\epsilon_0})-r_{0}  (0))+(s_{0} (0) -s_{0}  (2c_{\epsilon_0})))\\
		=&-r_{\epsilon_0}(c_{\epsilon_0})+s_{\epsilon_0}(c_{\epsilon_0})=\nw_{r_\epsilon, s_\epsilon}.
	\end{align*}
\end{proof}
\begin{corollary}
	If $L_0\#_U L_1$, $L_0\#'_U L_1$ are two surgeries defined with profiles $(r_\epsilon, s_\epsilon)$ and $(r'_\epsilon, s'_\epsilon)$ of matching neck width, then $L_0\#_U L_1$ is Hamiltonian isotopic to $L_0\#'_U L_1$.  
\end{corollary}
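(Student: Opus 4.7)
The plan is to connect the two surgery constructions by a Lagrangian isotopy and show that this isotopy has vanishing flux, so that it is in fact Hamiltonian. First, I would fix a smooth one-parameter family of surgery profiles $(r^\tau_{\epsilon}, s^\tau_{\epsilon})$ for $\tau \in [0,1]$ that interpolates between $(r_{\epsilon},s_{\epsilon})$ at $\tau=0$ and $(r'_{\epsilon},s'_{\epsilon})$ at $\tau=1$, subject to the single constraint that the neck width $\nw_{r^\tau_\epsilon,s^\tau_\epsilon}=-r^\tau_\epsilon(c_\epsilon)-s^\tau_\epsilon(c_\epsilon)$ is constant in $\tau$ (which is possible since both endpoints have the same neck width, and we may interpolate inside the space of profile pairs while preserving this scalar). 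Applying the construction of \prettyref{prop:surgeryprofile} fibrewise in $\tau$ then produces a Lagrangian isotopy $j_\tau: L_0\#_U L_1 \times [0,1] \to X$ from $L_0\#_U L_1$ to $L_0\#'_U L_1$.

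Next I would verify that $\Flux_{j_\tau}=0$. The computation in the proof of \prettyref{prop:neckwidth} shows that for the reference profile $(r_{\epsilon_0},s_{\epsilon_0})$, extending to a one-parameter family $r_{\epsilon}, s_{\epsilon}$ all the way down to $\epsilon=0$ produces an isotopy whose flux, evaluated on the generating cycle $\gamma$ of $H^1(N,\partial N)$, equals the neck width $\nw_{r_\epsilon,s_\epsilon}$. The identical argument applies to the primed profile, yielding an isotopy from $L_0\#'_U L_1$ down to the same degenerate configuration $L_0\cup L_1$ with flux equal to $\nw_{r'_\epsilon,s'_\epsilon}$. Concatenating the $j_\tau$ isotopy with one such family and running the other backwards gives a loop of Lagrangian embeddings for which the flux on $\gamma$ telescopes to $\nw_{r_\epsilon,s_\epsilon}-\nw_{r'_\epsilon,s'_\epsilon}=0$; since the flux is additive under concatenation and the contributions of the two shrinking families cancel, $\Flux_{j_\tau}(\gamma)=0$ as well. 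The only other potential generator of $H^1(N,\partial N)$ occurs when $\dim=2$, and the argument in \prettyref{prop:neckwidth} shows the Liouville form integrates to zero on that class, so we again get $\Flux_{j_\tau}=0$ on the entire relevant cohomology.

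Finally, I would invoke the standard fact that an exact Lagrangian isotopy $j_\tau$ with $\Flux_{j_\tau}=0$ can be extended to an ambient Hamiltonian isotopy of $X$ (since the vanishing of flux is exactly the condition for the generating vector field to be globally Hamiltonian, and the isotopy is supported in a neighborhood of the surgery neck, which is contained in a Weinstein neighborhood). This yields the desired Hamiltonian isotopy between $L_0\#_U L_1$ and $L_0\#'_U L_1$.

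The main obstacle will be the bookkeeping needed to run the interpolation entirely inside the class of valid surgery profiles while preserving neck width and smoothness; in particular, one must check that along the interpolation the matching conditions at $c_\epsilon$ and $2c_\epsilon$ continue to hold, so that each $(r^\tau_\epsilon, s^\tau_\epsilon)$ genuinely defines a smooth Lagrangian via \prettyref{prop:surgeryprofile}. Everything else follows by reapplying the computation already present in the proof of \prettyref{prop:neckwidth}.
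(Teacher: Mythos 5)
Your proof follows the same strategy as the paper: interpolate through a family of surgery profiles of constant neck width, and show the resulting Lagrangian isotopy has vanishing flux. The paper compresses this to a single sentence (path-connectedness of the space of fixed-neck-width profiles yields a flux-free isotopy), while you correctly fill in the detail that the flux of the interpolation vanishes by telescoping against the flux computation in \prettyref{prop:neckwidth}.
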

\begin{proof}
	This follows from the observation that the set of profiles  $(r_{\epsilon}, s_{\epsilon})$ of a fixed neck width is path connected, so we may always find a flux-free isotopy between two surgery profiles of the same neck width. 
\end{proof}
As the Hamiltonian isotopy class of the surgery is only dependent on the neck width of the surgery, we will write $L_0\#_U^\nw L_1$ to denote a surgery of $L_0$ and $L_1$ at $U$ with a choice of profile functions for the surgery with neck width $\nw$. 
By iterating \prettyref{prop:surgeryprofile} at each intersection point we get the following statement about symmetric differences of Lagrangians.
\begin{corollary}
	Let $L_0$ and $L_1$ be two Lagrangian submanifolds of $X$.
	Suppose that for each connected component $U_k$ of the intersection $U=L_0\cap L_1$, there is a neighborhood $U_k\subset V_k\subset L_0$ which may be identified with a subset $V_k\subset \RR^n$. Consider the Weinstein charts $B^*_\epsilon V_k\subset X$. 
	Suppose that $L_1$ restricted to this chart $B^*_\epsilon V_k$ is the graph of an exact differential form $df_k: V_k\to B^*V_k$ which vanishes on $U_k$.
	Suppose additionally that the primitives $f_k$ are all convex functions on $V_k$.

	There exists a Lagrangian  $L_0\#_{U_k}^{\underbar w} L_1$ in a small neighborhood of the symmetric difference
	\[
		L_0\#_{\{U_k\}}^{\underbar w} L_1 \subset B_\epsilon( (L_0\sqcup L_1)\setminus (L_0 \cap L_1)).
	\]
	and a Lagrangian cobordism $K:(L_0,L_1) \rightsquigarrow L_0\# L_1$.
\end{corollary}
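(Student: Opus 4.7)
The plan is to reduce this statement to an iterated application of Proposition~\ref{prop:surgeryprofile}, performed in each Weinstein neighborhood $B^*_\epsilon V_k$ separately, and then to glue the resulting local cobordisms into a single global one. The hypotheses have been set up precisely so that, after shrinking the $V_k$ if necessary, the pairs of Lagrangians appearing in each chart exactly match the local model of Proposition~\ref{prop:surgeryprofile}.

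First I would shrink each $V_k$ (keeping $U_k\subset V_k$) so that the Weinstein charts $B^*_\epsilon V_k\subset X$ are pairwise disjoint; this is possible because the $U_k$ are distinct connected components of a closed subset, and each sits in the interior of its own chart. Inside each $B^*_\epsilon V_k$, the zero section is identified with (a piece of) $L_0$, and by hypothesis $L_1$ is identified with the graph $df_k$ of a convex primitive $f_k:V_k\to\RR$ with $df_k=0$ on $U_k$. After normalizing $f_k$ so that $f_k(U_k)=0$, these are exactly the hypotheses of Proposition~\ref{prop:surgeryprofile} applied in the local model $T^*V_k$.

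Next I would invoke Proposition~\ref{prop:surgeryprofile} with a prescribed neck width $\nw_k$ at each $U_k$, obtaining in each chart both a smoothed Lagrangian and a local Lagrangian cobordism $K_k\subset B^*_\epsilon V_k\times\CC$ whose ends are $L_0|_{V_k}$, $L_1|_{V_k}$, and the local surgery. By the construction of Proposition~\ref{prop:surgeryprofile}, the smoothing is confined to the region $\{f_k\geq c_\epsilon\}$, so outside a neighborhood of $U_k$ the new Lagrangian coincides with $L_0\sqcup L_1$. Consequently the locally modified pieces glue together with the unchanged parts $(L_0\sqcup L_1)\setminus\bigsqcup_k V_k$ to form the global Lagrangian $L_0\#^{\underbar w}_{\{U_k\}}L_1$, which automatically lies in the $\epsilon$-neighborhood of the symmetric difference.

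For the cobordism, I would similarly take the disjoint union of the local cobordisms $K_k$ constructed as surgery traces in $B^*_\epsilon V_k\times\CC$, and glue them to the trivial cobordism $((L_0\sqcup L_1)\setminus\bigsqcup_k V_k)\times\Gamma$ over an appropriate curve $\Gamma\subset\CC$ chosen so that the ends over $\{\Re(z)\gg 0\}$ recover $L_0\sqcup L_1$ (separated in the $\Im(z)$ direction) and the end over $\{\Re(z)\ll 0\}$ recovers $L_0\#^{\underbar w}_{\{U_k\}}L_1$. The main (mild) obstacle is checking that these local and trivial pieces patch smoothly: the cobordism $K_k$ built in Proposition~\ref{prop:surgeryprofile} was arranged to agree with $\tilde L_0\sqcup\tilde L_1$ outside a neighborhood of $\tilde U_k=U_k\times(-\infty,-\epsilon)$, which is exactly what is needed for the gluing to be smooth and for the fibered-over-ends condition of Definition~\ref{def:cobordism} to hold. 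Disjointness of the charts ensures that the distinct local surgeries do not interact, and the resulting $K$ is the desired Lagrangian cobordism $(L_0,L_1)\rightsquigarrow L_0\#^{\underbar w}_{\{U_k\}}L_1$.
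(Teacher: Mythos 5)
Your proposal matches the paper's approach exactly: the paper gives no separate proof of this corollary, stating only that it follows ``by iterating Proposition~\ref{prop:surgeryprofile} at each intersection point.'' You have correctly identified that one shrinks the Weinstein charts to make them disjoint, applies Proposition~\ref{prop:surgeryprofile} locally (using the same interpolating function $g(t)$ in each chart so the ends line up), and glues the local cobordisms to the trivial product cobordism over the complement; the fact that each $K_k$ agrees with $\tilde L_0\sqcup\tilde L_1$ near $\partial(B^*_\epsilon V_k)$ is precisely what makes the gluing work.
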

Here, $\underbar w=\{\nw_k\}$ is the sequence of neck widths chosen at for each of the surgeries.

\begin{example}
	We compare our Lagrangian surgery with fixed neck in the non-compact setting to ordinary Lagrangian surgery as drawn in \prettyref{fig:differentsurgeries}. Let  $L_0, L_1 \subset T^*S^1$ be a cotangent fiber and its image under inverse Dehn twist around the zero section (see \prettyref{fig:differentsurgieries1} and \prettyref{fig:differentsurgieries2}).
	An application of \prettyref{prop:surgeryprofile} shows that $L_0\sqcup L_1$ is cobordant to the zero section of $T^*S^1$ by applying surgery on the overlapping regions outside a neighborhood of the zero section (see \prettyref{fig:differentsurgieries4}) \label{exam:basicsurgery}

	Let us compare this to the surgery obtained by first  perturbing $L_1$ by the wrapping Hamiltonian $\theta$ and then taking the Lagrangian connect sum. Then $L_0$ and $\theta(L_1)$ intersect at two points, which we can resolve in the usual way.
	The resulting Lagrangian $ L_0\#(\theta(L_1))$ has three connected components, two of which are non-compact (see \prettyref{fig:differentsurgieries3}). 
	Despite this, $L_0\#(\theta(L_1))$ and $L_0\#_UL_1$ agree as objects of the Fukaya category, as an additional argument shows that the non-compact components of $L_0\#(\theta(L_1))$ are trivial as objects of the Fukaya category.
	This example will become the simplest example of a construction of a tropical Lagrangian submanifold. 
	\label{exam:simplesurgery}
\end{example}
\begin{figure}
	\centering
	\begin{subfigure}{.24\linewidth}
		\centering
		\includegraphics{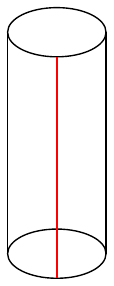}
		\caption{$L_0$}
		\label{fig:differentsurgieries1}
	\end{subfigure}
	\begin{subfigure}{.24\linewidth}
		\centering
		\includegraphics{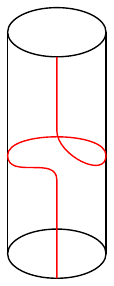}
		\caption{$L_1$}
		\label{fig:differentsurgieries2}
	\end{subfigure}	
	\begin{subfigure}{.24\linewidth}
		\centering
		\includegraphics{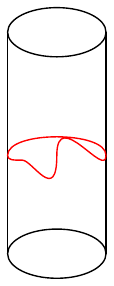}
		\caption{$L_0\#_U L_1$}
		\label{fig:differentsurgieries4}
	\end{subfigure}
	\begin{subfigure}{.24\linewidth}
		\centering
		\includegraphics{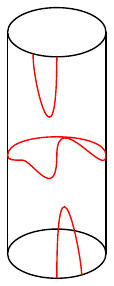}
		\caption{$L_0\#(\theta(L_1))$}
		\label{fig:differentsurgieries3}
	\end{subfigure}	
	\caption{The difference between surgery with neck $U$ and ordinary Lagrangian surgery.}
	\label{fig:differentsurgeries}
\end{figure}

In the setting of the cotangent bundle $T^*\RR^n$, the connect sum has image under the projection to $\RR^n$ which lives in a neighborhood of the complement of the regions $U$.
We now examine what the projection to the fiber of $T^*_0\RR^n$ of this surgery looks like.
Let $\arg: T^*\RR^n\to T^*_0\RR^n$ be projection to the cotangent fiber of zero, and for any set $C$, let $\arg(C)$ be the image of this set under the projection.
Suppose that $f$ has minimal value $0$. 
By the convexity of $f$, whenever $\{x\;|\; df(x)=0\}$ is compact and $c> 0$, then $0$ is an interior point of $\arg(\{df(x)\;|\; f(x)\leq c\})$, which is a ball containing the origin.

\begin{prop}
	Let $L_0$ and $L_1$ be sections of $T^*\RR^n$ as in \prettyref{prop:surgeryprofile}.
	Suppose that $\{x\;|\; df_1(x)=0\}$ is compact.
	Then for a choice of surgery parameter $\epsilon$ sufficiently small
	\[
		\arg(L_0\#_U L_1 )= \arg (L_1).
	\]
	\label{prop:argument}
\end{prop}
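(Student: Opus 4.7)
The plan is to combine the explicit two-chart parameterization of $L_0\#_U L_1$ from the proof of \prettyref{prop:surgeryprofile} with the observation (highlighted in the paragraph preceding the statement) that $df_1(\{f_1\leq c\})$ is a ``ball'' around the origin in $T^*_0\RR^n$ for every $c>0$. Since the surgery is the union of the graphs of $d(r_\epsilon\circ f_1)$ and $d(s_\epsilon\circ f_1)$ over the domain $\{f_1\geq c_\epsilon\}$, the argument projection is
\[
\arg(L_0\#_U L_1)=\{r_\epsilon'(f_1(x))\cdot df_1(x): f_1(x)\geq c_\epsilon\}\cup\{s_\epsilon'(f_1(x))\cdot df_1(x): f_1(x)\geq c_\epsilon\},
\]
which I would compare directly with $\arg(L_1)=df_1(\RR^n)$.

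I would then split the comparison into two regimes. Outside the shell, on $\{f_1\geq 2c_\epsilon\}$, the profile derivatives are $r_\epsilon'=1$ and $s_\epsilon'=0$, so the $r$-chart reproduces $df_1(\{f_1\geq 2c_\epsilon\})\subset\arg(L_1)$ exactly, while the $s$-chart contributes $\{0\}=df_1(U)\subset\arg(L_1)$. On the shell $\{c_\epsilon\leq f_1\leq 2c_\epsilon\}$ the two charts provide scalings $t\cdot df_1(x)$ with $t=r_\epsilon'(f_1(x))\in[\tfrac12,1]$ and $t=s_\epsilon'(f_1(x))\in[0,\tfrac12]$; these agree at $t=\tfrac12$ along $\{f_1=c_\epsilon\}$ and transition continuously to $t=1$ (from the $r$-chart) and $t=0$ (from the $s$-chart) along $\{f_1=2c_\epsilon\}$. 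The substantive claim is that as $x$ sweeps the shell, these scalings cover precisely the ``ball'' $df_1(\{f_1\leq 2c_\epsilon\})$, which is exactly the portion of $\arg(L_1)$ not already accounted for outside the shell.

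The main obstacle is justifying this last matching, which is a statement about how the gradient of a convex function behaves radially from its minimizing set. I would argue that convexity of $f_1$ together with compactness of $U$ makes the sublevel sets $\{f_1\leq t\}$ a nested family of convex neighborhoods of $U$, and that $df_1$ sends each line segment from a point of $U$ to a point on $\{f_1=2c_\epsilon\}$ to a continuous path in $T^*_0\RR^n$ starting at $0$ and terminating on $df_1(\{f_1=2c_\epsilon\})$. Taking $\epsilon$ sufficiently small forces the shell to be a thin annular neighborhood of $\partial U$ where this behavior is radially controlled; the continuous sweep of $r_\epsilon'$ and $s_\epsilon'$ through $[0,1]$ then matches the parameterization of $df_1(\{f_1\leq 2c_\epsilon\})$ by such scalings, closing both inclusions in $\arg(L_0\#_U L_1)=\arg(L_1)$.
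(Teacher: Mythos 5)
Your decomposition into the exterior region $\{f_1\geq 2c_\epsilon\}$ (where the $r$-chart reproduces $df_1$ and the $s$-chart gives $\{0\}$) and the shell $\{c_\epsilon\leq f_1\leq 2c_\epsilon\}$ matches the paper's setup exactly, and the easy inclusion $\arg(L_0\#_U L_1)\subset\arg(L_1)$ via $0\leq s',r'\leq 1$ is also fine. The gap is in the converse inclusion — the claim that the shell contributions $\{r'(f_1(x))\,df_1(x)\}\cup\{s'(f_1(x))\,df_1(x)\}$ cover $B:=\arg(df_1(\{f_1\leq 2c_\epsilon\}))$.

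Your ``radial sweep'' argument conflates two different curves in $T^*_0\RR^n$. For a segment $\gamma$ from $U$ out to $\{f_1=2c_\epsilon\}$, the path $df_1(\gamma(t))$ (which sweeps out $B$) is not the same as the path $t\cdot df_1(\gamma(1))$ (which is what the scaling factors $r',s'$ actually produce from the charts, with $x$ fixed and the scaling varying). The chart image over the shell is the union $\bigcup_{t\in[0,1]}\, t\cdot\arg(df_1|_{\text{some level set depending on }t})$ — a distorted cone, not literally $B$ — so there is no ``matching of parameterizations,'' and the argument implicitly requires something like star-shapedness of $B$ about $0$ plus injectivity of $df_1$ on level sets, neither of which you establish (nor should you need to).

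The paper resolves this by a degree-theoretic argument rather than a direct parameterization. It treats $C_r$ and $C_s$ as singular chains, observes that their inner boundaries (along $\{f_1=c_\epsilon\}$, where $r'=s'=\tfrac12$) agree, and glues them into a single chain $C_{r+s}$ with $\partial C_{r+s}=\partial B+\{0\}$. This chain is a null-homotopy of the sphere $\partial B$ down to the interior point $0$, all taking place inside $B_c(0)$; since such a contraction cannot avoid any interior point of $B$, the image of $C_{r+s}$ must contain $B$. That topological step is exactly what your proposal is missing — you would need to supply it (or a substitute) to close the argument.
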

\begin{proof}
	For this proof, let $f=f_1$, $r=r_\epsilon, s=s_\epsilon$.
	Recall that we have normalized $f$ so that $df(x)=0$ if and only if $f(x)=0$.  
	The decomposition of $L_0\#_U L_1$ into the $r$ and $s$ charts breaks $\arg(L_0\#_U L_1)$ into two components,
	\[
		\arg(\{d(r\circ f)(x)\;|\;\epsilon \leq f(x)\})\cup \arg(\{d(s\circ f)(x)\;|\;\epsilon \leq f(x)\}).
	\]
	Choose $c$ small enough so that $B_c(0)$ is an open ball in $\arg(df(\RR^n))$.  
	Choose $\epsilon$ small enough so that $f(x)\leq 2\epsilon$ implies $df(x)\subset B_c(0)$. 
	Consider the following three parameterized subsets of $T^*_0\RR^n$:
	\begin{align*}
		B:\{x\;|\; f(x)\leq 2\epsilon\}\xrightarrow{\arg(df)}                           & T^*_0\RR^n \\
		C_r:\{x\;|\; \epsilon \leq f(x)\leq 2\epsilon\}\xrightarrow{\arg(d(r \circ f))} & T^*_0\RR^n \\
		C_s:\{x\;|\; \epsilon \leq f(x)\leq 2\epsilon\}\xrightarrow{\arg(d(s\circ f))} & T^*_0\RR^n
	\end{align*}
	
	Since $r\circ f=f$ outside of $f(x)\leq 2\epsilon$,  to prove the proposition it suffices to show that  $B\subset C_r\cup C_s \subset B_c(0)$.

	Since $s', r' \leq 1$, the chain rule for the compositions $r\circ f, s\circ f$ gives the inclusion $( C_r \cup C_s)\subset B_c(0)$. 
	As topological chains, $C_r$ and $C_s$ have boundary components corresponding to where $f(x)= \epsilon$  and $f(x)=2\epsilon$. 
	The boundary components have the following identifications:
	\begin{align*}
		\partial_{2\epsilon} C_r= \partial_{2\epsilon} B = &\arg(df)|_{ f(x)=2\epsilon}           \\
		\partial_{\epsilon} C_r = \partial_{\epsilon} C_s =& \frac{1}{2} \arg(df)|_{f(x)=\epsilon} \\
		\partial_{2\epsilon} C_s=&0
	\end{align*}
	As the inner boundaries of $C_r$ and $C_s$ match, we may glue these two charts into a single chain $C_{r+s}$, with boundary $\partial(C_{r+s})= \partial B+ \{0\}$.
	The chain $C_{r+s}$ provides a contraction of the boundary sphere of $B$ to the point $0\in B$. 
	We therefore obtain that $B\subset C_{r+s}$, completing the proof.
\end{proof}
In the setting where $U$ is non-compact, we still obtain that for sufficiently small surgery parameters	
\[
	\arg(L_0\#_U L_1 )\subset \arg (L_1).
\]
In the non-compact setting one can obtain the result of \prettyref{prop:argument} by imposing additional control on the argument of $f$ over the non-compact region.  
Let $f: \RR^n\to \RR$ be a convex function.
Suppose that there exists a non-compact convex polytope $V_\infty\subset \RR^n$ which has a single vertex $v_\infty$.
We furthermore suppose that the vertex $v_\infty\in f^{-1}(0)$. 
We denote the faces of the polytope as $F< V_\infty$, and we will also consider $V_\infty$ as a face of this polytope.
The polytope $V_\infty$ will provide additional control on the argument of $f$ over the region $V_\infty$ by imposing the following additional two conditions:
\begin{itemize}
	\item We require that the intersection of every face $F\cap f^{-1}(\{x<2\epsilon\})$ is compact.
	\item At each face $F\leq V_\infty$, we require that $df|_{NF}=0$.
\end{itemize}
We will denote by $\arg(T^*_{v\infty}F)\subset T^*_0\RR^n$ the subspace parallel to $F$ in $\RR^n$. 
See \prettyref{fig:noncompact} for our setup.
For each $F\in V_\infty$ define the chains with domains restricted to faces by
\begin{align*}
	B_F:	\{x\;|\; f(x)\leq 2\epsilon, x\in F\}\xrightarrow{\arg(df)}                           & T^*_0\RR^n \\
	C_{r, F}:\{x\;|\; \epsilon \leq f(x)\leq 2\epsilon, x\in F\}\xrightarrow{\arg(d(r \circ f))}& T^*_0\RR^n \\
	C_{s, F}:\{x\;|\; \epsilon \leq f(x)\leq 2\epsilon, x\in F\}\xrightarrow{\arg(d(s \circ f))}& T^*_0\RR^n .
\end{align*}
If $F\neq v_\infty$, then $F$ contains a point on which $f>2\epsilon$, and so $C_{r, F}, C_{s, F}$ are nonempty. 

In addition to the analogues of the boundary components $\partial_{2\epsilon}C_{r, F}, \partial_{\epsilon} C_{r, F},\partial_{2\epsilon}C_{s, F}, \partial_{\epsilon} C_{s, F},$ and $\partial_{2\epsilon} B_F$ defined in \prettyref{prop:argument} the chains $B_F, C_{r, F}$ and $C_{s, F}$ have boundary components corresponding to the boundaries of $F$. 
\begin{align*}
	\partial_\infty(C_{r, F}):= \bigcup_{G< F} C_{r, G} &&
	\partial_\infty(C_{s, F}):= \bigcup_{G< F} C_{s, G} && \partial_\infty(B_{f}):= \bigcup_{G<F} B_G.
\end{align*}
Since $f|_F$ is a convex function, $\{x\;|\; f(x)\leq 2\epsilon, x\in F\}$ is a convex region on $F$, and $df|_F\in\arg(T^*_{v_\infty}F)$, the same argument from \prettyref{prop:argument} shows that $C_{r, F}+C_{s, F}$ is contained within $B_F$, and that $B_F$ is convex hull of its boundary.
To replicate the argument of \prettyref{prop:argument}, we need to show that the boundary of $C_{r, F}+C_{s, F}$ 
\[\partial_{2\epsilon} C_{r, F}+ \partial_{2\epsilon} C_{s, F} + \left(\sum_{G<  F} \left(C_{r, G}+ C_{s, G}\right)\right)\]
agrees with the boundary of $B_F$.  
\begin{prop}
	For functions $f$ and polytopes $V_\infty$ satisfying the requirements of the above discussion, and  $F\leq V_\infty$ a face with $\dim F\geq 1$ the chains $C_{r, F}+C_{s, F}$ and $B_F$ are homotopic inside $\arg(T^*_{v_\infty} F)$ and have matching image.
	\label{prop:noncompactargument}
\end{prop}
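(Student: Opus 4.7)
The plan is to induct on $\dim F$, with the inductive step modeled on the contractibility argument that concludes \prettyref{prop:argument}.

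\emph{Base case, $\dim F = 1$.} Here $F$ is a ray emanating from $v_\infty$, and by the first extra hypothesis $\{x \in F : f(x) \leq 2\epsilon\}$ is a compact segment with endpoints $v_\infty$ (where $df = 0$) and $x_*$ (where $f(x_*) = 2\epsilon$). Since $df|_{NF} = 0$ and $f|_F$ is convex, $\arg(df|_F)$ sends this segment monotonically onto a segment from $0$ to $\arg(df)(x_*)$ in the one-dimensional subspace $\arg(T^*_{v_\infty}F)$. The chain rule gives $d(r\circ f) = r'(f)\,df$ and $d(s\circ f) = s'(f)\,df$; since $r'$ runs from $\tfrac{1}{2}$ to $1$ and $s'$ runs from $\tfrac{1}{2}$ to $0$ monotonically on $f\in[\epsilon, 2\epsilon]$, the images of $C_{r,F}$ and $C_{s,F}$ glue along $\tfrac{1}{2}\arg(df)|_{f=\epsilon}$ to form precisely the segment from $0$ to $\arg(df)(x_*)$, matching the image of $B_F$. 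A linear reparameterization provides a homotopy to $B_F$ inside $\arg(T^*_{v_\infty}F)$.

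\emph{Inductive step.} Assume the proposition holds for all proper faces $G < F$ with $\dim G \geq 1$. One computes
\begin{align*}
\partial B_F &= \partial_{2\epsilon} B_F + \sum_{G<F} B_G,\\
\partial(C_{r,F}+C_{s,F}) &= \partial_{2\epsilon}B_F + \sum_{\substack{G<F\\ \dim G\geq 1}}(C_{r,G}+C_{s,G}),
\end{align*}
using $\partial_{2\epsilon}C_{s,F}=0$ (since $s'(2\epsilon)=0$), $\partial_{2\epsilon}C_{r,F}=\partial_{2\epsilon}B_F$ (since $r'(2\epsilon)=1$), and cancellation of the inner $\epsilon$-boundaries exactly as in \prettyref{prop:argument}. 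The two expressions differ formally only by the zero-dimensional face: $B_{v_\infty}=\{0\}$, while $C_{r,v_\infty}$ and $C_{s,v_\infty}$ are empty. This discrepancy is absorbed just as in \prettyref{prop:argument}, where the glued chain $C_{r+s}$ had boundary $\partial B + \{0\}$.

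By the inductive hypothesis, for each $G<F$ of positive dimension the chains $C_{r,G}+C_{s,G}$ and $B_G$ are homotopic inside $\arg(T^*_{v_\infty}G)\subseteq \arg(T^*_{v_\infty}F)$ with matching image. Patching these homotopies together along common sub-faces, one obtains a homotopy between $\partial(C_{r,F}+C_{s,F}) + \{0\}$ and $\partial B_F$ inside $\arg(T^*_{v_\infty}F)$. The convexity of $f|_F$ together with $df|_F \in \arg(T^*_{v_\infty}F)$ shows that the image of $B_F$ is a convex region of $\arg(T^*_{v_\infty}F)$ bounded by the image of $\partial B_F$, while the chain-rule bound $r', s' \in [0,1]$ confines the image of $C_{r,F}+C_{s,F}$ to the same convex region. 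The concluding contractibility argument of \prettyref{prop:argument} then identifies the two images and provides the required chain homotopy inside $\arg(T^*_{v_\infty}F)$.

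\emph{Main obstacle.} The delicate part is the bookkeeping of the boundary pieces across multiple faces: one must verify that the inductive homotopies associated with distinct faces $G_1, G_2 < F$ are compatible on their shared sub-faces so that they assemble into a global homotopy of the boundary chain, and that the leftover vertex contribution $B_{v_\infty}=\{0\}$ is exactly compensated by the termination of $C_{s,F}$ at $0$ after gluing with $C_{r,F}$, mirroring the role of $C_{r+s}$ in \prettyref{prop:argument}.
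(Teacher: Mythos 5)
Your proof matches the paper's almost step for step: both induct on face dimension, both treat the $\dim F = 1$ base case by observing the two chains are paths from $0$ to $\arg(df(v))$ inside the line $\arg(T^*_{v_\infty}F)$, and both handle the inductive step by matching boundary images (balancing $B_{v_\infty}$ against the degenerate chain $\partial_{2\epsilon}C_{s,F}$ to absorb the vertex discrepancy) and then invoking the convexity/contractibility argument of \prettyref{prop:argument}. The ``main obstacle'' you flag about assembling homotopies across shared sub-faces is exactly the implicit gluing the paper's proof relies on without further elaboration, so you have not departed from its approach.
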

\begin{figure}
	\centering
	\begin{subfigure}{.25\linewidth}\centering
	\includegraphics{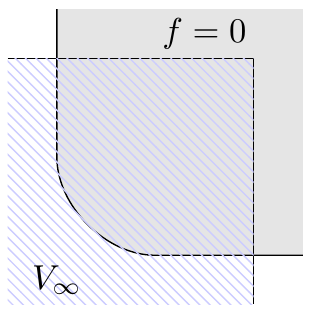}
	\caption{Set-up for \prettyref{prop:noncompactargument}.}
	\label{fig:noncompact}
	\end{subfigure}
	\begin{subfigure}{.7\linewidth}
		\centering
	\includegraphics{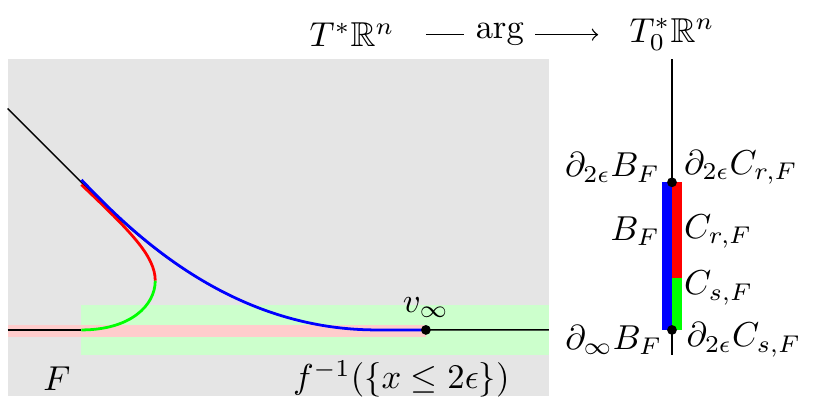}
	\caption{1-dimensional case of \prettyref{prop:noncompactargument}.}
	\label{fig:noncompact1dim}
	\end{subfigure}
	\caption{}
\end{figure}
\begin{proof}
As noted above, the image of $C_{r, F}+C_{s, F}$ is contained within $B_F$, so we need to prove the other inclusion. 
We argue by induction on the dimension of the faces.
The condition that $df|_{NF=0}$ implies that at each face, $B_F, C_{r, F}, C_{s, F}\subset \arg(T^*_{v_\infty} F)\subset \RR^n$, which will allow us to run our induction.

For the 1-dimensional case, let $F$ be any edge. Let $v\in F$ be the point where $f(v)=2\epsilon$.
The boundaries of the chains $B_F, C_{r, F}$ and $C_{s, F}$ are:
\begin{align*}
	\partial_\infty B_F= \arg(df(v_\infty))=0 && \partial_\infty C_{r, F}=\emptyset && \partial_\infty C_{s, F}=\emptyset\\
	\partial_{2\epsilon} B_F = \arg(df(v)) && \partial_{2\epsilon} C_{r, F}=\arg(df(v)) && \partial_{2\epsilon} C_{s, F}= 0.
\end{align*}
Both $C_{r, F}+ C_{s, F}$ and $B_F$ are paths which link $0$ and $df(v)$ inside of $\arg(T^*_{v_\infty}F)$, so they match after reparameterization. See \prettyref{fig:noncompact1dim}.

Now suppose that the induction hypothesis holds for all faces $G< F$. 
By the induction hypothesis the images of $B_{G}$ and $C_{r, G}+C_{s, G}$ match for all $v_\infty < G< F$. Additionally, $B_{v_\infty}=0$ has image matching $\partial_{\infty} C_{s, F}$. We obtain that the following chains have matching image:
\begin{align*}
	\partial_\infty B_F=\partial_\infty(C_{r, F}+C_{s, F})+\partial_{2\epsilon} C_{s, F}& & \partial_{2\epsilon} B_F = \partial_{2\epsilon} C_{r, F}.
\end{align*}
Therefore, the images  of $\partial(C_{r, F}+C_{s, F})$ and $\partial B_{F}$ agree. Furthermore, these cycles are homotopic through their common image.
 $C_{r, F}+C_{s, F}$ is a contraction of a sphere which is the boundary of a convex set $B_F$, and therefore the image of $B_F$ is contained in the image of $C_{r, F}+C_{s, F}$. 
\end{proof}
\subsection{Tropical Lagrangian Sections}
There is a nice collection of admissible Lagrangians inside of $\Fuk_{\Delta_\Sigma}(X, W_\Sigma)$ called the tropical Lagrangian sections of $X\to Q$ which we will use as building blocks in our construction. These were introduced in \cite{abouzaid2009morse}. For our model of the Fukaya Seidel category, we use the monomial admissible Fukaya Seidel category  $\Fuk_{\Delta_\Sigma}(X, W_\Sigma)$. This version of the FS category is due to \cite{hanlon2018monodromy}.

Let $\phi: Q \to \RR$ be a tropical polynomial.
Choose $\epsilon$ small enough so that for any point $q\in \underbar U_{\{v_i\}}$ the convex hull of $d\phi(B_\epsilon(q))$  is either all of $\underbar U^{\{v_i\}}$, or contains $\underbar U^{\{v_i\}}$ as a boundary component.
This means that $\epsilon$ is small enough that for all $q\in Q$ the induced stratification of  $B_\epsilon(q)$ from the tropical variety has at most one vertex. See \prettyref{fig:smallepsilon} for a non-example.

\begin{figure}
	\centering
	\includegraphics{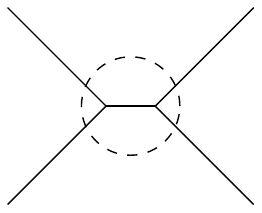}
	\caption{The kind of behavior we wish to rule out by making $\epsilon$ small.}
	\label{fig:smallepsilon}
\end{figure}
Define $\tilde \phi^\epsilon $ to be the smoothing of $\phi$ by convolution with a symmetric non-negative bump function $\rho_\epsilon$ with support $B_\epsilon(0)$, a small ball around the origin.

When we have fixed a size $\epsilon$, we will simplify notation and refer to this as a smoothing $\tilde \phi$. The smoothing $\tilde \phi$ enjoys many of the same properties of $\phi$.
\begin{prop}[(Properties of $\tilde \phi$)]
	The function $\tilde \phi$ satisfies the following properties.
	\begin{itemize}
		\item
		      (Nearly Tropical) The function  $\tilde \phi$ and $\phi$ agree outside of an $\epsilon$ neighborhood of $V(\phi)$
		\item
		      (Concavity)   $\tilde \phi$ is a concave function
		\item
		      (Newton Polytope) $\arg(d\tilde \phi(\RR^n))= \Delta_\phi\subset T^*\RR^n.$
	\end{itemize}
\end{prop}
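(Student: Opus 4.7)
All three properties follow from direct computation with the convolution formula $\tilde\phi(q)=\int\phi(q-y)\rho_\epsilon(y)\,dy$, taking $\rho_\epsilon$ to be normalized so $\int\rho_\epsilon=1$.

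For Nearly Tropical, if $\mathrm{dist}(q,V(\phi))>\epsilon$ then $B_\epsilon(q)$ lies in a single maximal stratum on which $\phi(x)=v\cdot x+c$ is affine, so
\[
\tilde\phi(q)=\int\left(v\cdot(q-y)+c\right)\rho_\epsilon(y)\,dy = v\cdot q+c - v\cdot\int y\,\rho_\epsilon(y)\,dy = \phi(q),
\]
where the last integral vanishes by the symmetry of $\rho_\epsilon$. For Concavity, apply pointwise concavity of $\phi$ under the integral: for $\lambda\in[0,1]$,
\begin{align*}
\tilde\phi(\lambda x+(1-\lambda)y)&=\int \phi(\lambda(x-z)+(1-\lambda)(y-z))\rho_\epsilon(z)\,dz\\
&\geq \lambda\tilde\phi(x)+(1-\lambda)\tilde\phi(y).
\end{align*}

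For the Newton Polytope identification I would prove both inclusions. Differentiating under the integral (valid since $d\phi$ is defined off the measure-zero set $V(\phi)$) gives $d\tilde\phi(q)=\int d\phi(q-y)\rho_\epsilon(y)\,dy$, exhibiting $d\tilde\phi(q)$ as a $\rho_\epsilon$-weighted convex combination of integer slopes $v\in\Delta_\phi^\ZZ$, so $\arg(d\tilde\phi(q))\in \Delta_\phi$. For the reverse inclusion, Nearly Tropical realizes each vertex of $\Delta_\phi$ as $\arg(d\tilde\phi(q))$ for $q$ lying in the corresponding maximal stratum of the linearity stratification. To obtain an interior point of a dual face $\underbar U^{\{v_i\}}$, I would fix a point $q^*$ in the smallest stratum $\underbar U_{\{v_i\}}$ in the closure. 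The small-$\epsilon$ hypothesis stated at the beginning of the subsection ensures that $\phi|_{B_\epsilon(q^*)}$ is piecewise linear with exactly the slope set $\{v_i\}$, so that the weights $w_i(q):=\int_{U_{v_i}}\rho_\epsilon(q-y)\,dy$ vary continuously as $q$ ranges over $B_\epsilon(q^*)$ and realize $d\tilde\phi(q)=\sum_i w_i(q)\,v_i$.

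The main technical obstacle is the surjectivity claim, namely that the $w_i(q)$ sweep out the full standard simplex as $q$ varies. A clean way to dispatch this is via Legendre duality for smooth concave functions: the gradient image of $\tilde\phi$ equals the effective domain of its concave conjugate $\tilde\phi^*$, and the asymptotic agreement $\tilde\phi=\phi$ outside a compact set (guaranteed by Nearly Tropical) forces $\tilde\phi^*$ to share its effective domain with $\check\phi$, which is precisely $\Delta_\phi$. Alternatively, one can induct on the codimension of strata: the weights $w_i(q)$ degenerate to $\delta$-functions at a vertex as $q$ is pushed deep into $U_{v_i}$, and by continuity together with the intermediate value theorem (applied to paths in $B_\epsilon(q^*)$ connecting the cells $U_{v_i}$) one sweeps out every interior convex combination of the $v_i$'s, covering $\underbar U^{\{v_i\}}$.
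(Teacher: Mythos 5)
Your proof is correct and has the same overall shape as the paper's --- verify the three bullets from the convolution formula, treating the forward Newton-polytope inclusion as a weighted-average statement about $d\phi$ --- but it diverges in two useful ways. For concavity, the paper works at the level of Hessians (the Hessian of a convolution is the convolution of the Hessian, and convolving a negative semi-definite matrix field against a nonnegative kernel preserves negative semi-definiteness); you instead integrate the pointwise concavity inequality against $\rho_\epsilon\geq 0$, which is more elementary and sidesteps the fact that $\phi$ is only piecewise linear, so the paper's smooth-Hessian computation really needs a distributional or approximation reading. For the reverse Newton-polytope inclusion, the paper dispatches it in one line by noting that each $v\in\Delta^\ZZ_\phi$ is realized on the nonempty set $U_v$ and then taking hulls, which as written only shows the extreme points of $\Delta_\phi$ are hit and leaves the remaining surjectivity implicit; you flag that surjectivity onto all of $\Delta_\phi$ is the real issue, which is right, and the Legendre-duality route you sketch is the clean way to close it: for any $w\in\Delta_\phi$ the concave function $\tilde\phi-\langle w,\cdot\rangle$ agrees with $\phi-\langle w,\cdot\rangle$ outside a compact set, so it is bounded above with recession function $\leq 0$ and attains a maximum, at which $d\tilde\phi=w$. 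Your alternative continuity-plus-IVT sketch, though, is not rigorous as stated --- the intermediate value theorem only controls one-dimensional targets, and surjectivity onto the interior of a $(k-1)$-simplex as $q$ varies needs a degree- or Brouwer-type argument keyed to the boundary behavior of the weight map --- so you should lean on the Legendre argument and drop or upgrade the IVT one.
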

\begin{proof}
	The first property comes from the preservation of linear functions under symmetric smoothing.

	The second property is a general statement about convolutions of concave functions against non-negative functions.
	Let $\psi$ be a smooth concave function.
	Then $A=\Hess(\psi)$ is the matrix with entries given by $a_{ij}=(\partial_i\partial_j \psi)$.
	Concavity of $\psi$ is equivalent to checking that $A$ is negative semi-definite i.e. for all vectors $v$, $v^T\cdot A \cdot v\leq 0$.  
	Let $\tilde \psi = \rho_\epsilon * \psi$, and let $\tilde A = \Hess(\tilde \psi)$. 
	Then $\tilde a_{ij}=\partial_i\partial_j(\rho_\epsilon*\psi)=\rho_\epsilon*(\partial_i\partial_j\psi)=\rho_\epsilon*(a_{ij})$. 
	By linearity, $v^T\cdot \tilde A \cdot v = \rho* (v^T\cdot \tilde A \cdot v)$. 
	Since $\rho\geq 0$, and the convolution of non-negative functions is again non-positive, we obtain that $\tilde \psi$ is concave. 
	\footnote{In the 1-dimensional setting, whenever $f(t)$ is a function of a real variable and $f''(t)\geq 0$, then $(f*\rho)''(t)=(f''*\rho)(t)\geq 0$. }
	
	The third property requires a lemma about convolution and the argument of a function.	
	\begin{lemma}
		Let $\phi:\RR^n \to \RR$ be a smooth function. 
		Let $\rho_\epsilon: B_\epsilon(0)\to \RR$ be a non-negative smoothing kernel.
		Then $d(\rho_\epsilon*\phi)(p_0)\in \overline \Hull(\arg(\{d\phi(x)\;|\;x\in B_\epsilon(p_0)\}))$. 
		\label{lemma:weightedaverage}
	\end{lemma} 
	We delay the proof of the lemma. 
	With the lemma, $\arg(d\tilde \phi(\RR^n))\subset \Hull(\arg(d\phi(x)))= \Delta_\phi$. 
	For the reverse direction, we remark that for every $v\in \Delta^\ZZ_\phi$, the set $U_{\{v_i\}}$ on which $d\tilde \phi=v$ is non-empty.
	Therefore, $\Delta_\phi=\Hull(\Delta^\ZZ_\phi)\subset \Hull(\{U_{v_i}\})\subset \arg(d\tilde \phi(\RR^n)).$
\end{proof}

\begin{proof}[of \prettyref{lemma:weightedaverage}]
	Let $A\subset \RR^n$ be a compact set. 
	Let $\rho:\RR^n\to \RR$ be a function with $\int_{\RR^n}\rho d\vol =1$ and $\rho\geq 0$. 
	The $\rho$-weighted center of mass of $A$ is $\avg_\rho(A):=(\int_A x_1\rho d\vol, \ldots , \int_A x_n \rho d\vol)$.
	Let $\overline{\{\avg_\rho(A)\}}$ be the closure over all $\rho$-weighted centers of mass over all $\rho$. 
	We first sketch an argument showing that  
	\[\Hull(A)=\overline{\{\avg_\rho(A)\}}\]
	To show that $\Hull(A)\subset \overline{\{\avg_\rho(A)\}}$, we use Carath\'eodory's theorem to write any $p\in \Hull(A)$ as $p=\sum_{i=1}^{n+1} \alpha_i p_i$ as a convex combination of points $p_i\in A$. 
	Let  $\rho_{B_\epsilon(x)}$ be a non-negative smooth bump function with support on $B_\epsilon(x)$ and total integral 1.  
	Let $\rho_{\epsilon, \alpha_i, p_i} :=\sum_{i=1}^{n+1} \alpha_i \rho_{B_\epsilon(p_i)}$ be a sum of such bump functions.
	Then $\lim_{\epsilon\to 0} \avg_{\rho_{\epsilon, \alpha_i, p_i}}(A)=p.$
	
	The reverse inclusion is obtained by taking a sequence of approximations for the weighting $\rho$ by 
	\[\rho|_A\approx \sum_{i=1}^k \alpha_i \rho_{B_\epsilon(p_i)}.\]
	for which 
	\[\avg_\rho(A)\approx  \sum_{i=1}^k \alpha_i \int x_i \rho_{B_\epsilon(p_i)} \approx  \sum_{i=1}^k \alpha_i p_i\]
	
	We now prove a generalization.
	Given $A\subset \RR^n$ a compact set, and $\eta: A\to \RR^k$ a function, define $\Hull(\eta(A))$ to be the convex hull of the image of $A$ under $\eta$. 
	We expand $\eta$ in coordinates as $\eta=(\eta_1, \ldots, \eta_k)$.
	Let $\rho:\RR^n\to \RR$ be a function with $\int_{\RR^n}\rho d\vol=1$.
	The $\rho$-weighted average of $\eta$ over $A$ is the value 
	\[
		\avg_\rho(\eta(A)):=\left(\int_A \eta_1\rho d\vol, \ldots, \int_A \eta_k \rho d\vol \right).
	\]
	We now show that $\overline{\Hull(\eta(A))}= \overline{\{\avg_\rho(\eta(A))\}}.$ 
	Let $p=\sum_{i=1}^k \alpha_i \eta(p_i)$ be a convex linear combination of points. 
	Then by again taking $\rho_{\epsilon, \alpha_i, p_i} :=\sum_{i=1}^{n+1} \alpha_i \rho_{B_\epsilon(p_i)}$ we obtain a sequence of $\rho$-weighted averages of $\eta$ which converges to $p$. 
	For the reverse inclusion, we approximate a weighting $\rho$ by a sum of bump functions with small support. 

	With this generalization we can prove \prettyref{lemma:weightedaverage}, as 

	\begin{align*}
		d(\rho_\epsilon*\phi)(p_0)=&\sum_{i=1}^n \frac{\partial (\rho_\epsilon*\phi)}{\partial x_i} dx_i
		=\sum_{i=1}^n \rho_\epsilon*\frac{\partial \phi}{\partial x_i} dx_i\\
	\end{align*}
	is a $\rho_\epsilon$ weighted average of the function 
	\begin{align*}
		\eta: \RR^n\to \RR^n\\
		x\mapsto \sum_{i=1}^n\frac{\partial \phi}{\partial x_i}dx_i.
	\end{align*}
	Therefore, $d(\rho_\epsilon*\phi)(p_0)\in \overline{\Hull(\arg(\{d\phi(x)\;|\;x\in B_\epsilon(p_0)\}))}.$
\end{proof}
For each $\{v_i\} \subset \Delta_\phi$ , let
\[
	U_{\{v_i\}}:=\{q \;|\; d(\tilde\phi)(q)\subset \text{Interior of the Convex Hull of $\{v_i\}$}\}.
\]
Note that if $\{v_i\}$ is just a single point $v$, then $U_v=\{q\;|\; d\tilde\phi(q)=v\}$. 
Each $ U_{\{v_i\}}$ is $\epsilon$-close to  $\underbar U_{\{v_i\}}$ discussed in \prettyref{subsec:tropicalgeometry}. These sets $U_{\{v_i\}}$ can be characterized in terms of the smoothing ball $B_\epsilon$ and the combinatorics of $\phi$.
\begin{prop}
	The set $ U_{\{v_i\}}$ is the set of all points $q\in Q$ such that $d\phi|_{B_\epsilon(q)}$ belongs to $\underbar U^{\{v_i\}}$, and is not contained in the boundary of $\underbar U^{\{v_i\}}$.
	\label{prop:intersectioncharacterization}
\end{prop}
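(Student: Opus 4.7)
The plan is to translate the statement into a combinatorial question about the weighted average $d\tilde\phi(q) = \rho_\epsilon * d\phi(q)$ and then apply Lemma \ref{lemma:weightedaverage}. First I would record the following geometric picture: on each open top-dimensional stratum $\underbar U_v\subset Q$, the differential $d\phi$ is constantly equal to the vertex $v\in\Delta_\phi^\ZZ$, while the complement $V(\phi) = Q_{n-1}$ has Lebesgue measure zero. Hence, viewed as an $L^\infty$ function valued in $\Delta_\phi$, the map $d\phi$ takes values in $\Delta_\phi^\ZZ$ almost everywhere, and its essential image on $B_\epsilon(q)$ is exactly the set of vertices $v$ for which $\underbar U_v\cap B_\epsilon(q)$ has positive measure.

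Next I would use the smallness assumption on $\epsilon$ preceding Figure \ref{fig:smallepsilon}: it guarantees that for every $q\in Q$, the collection of vertices $v$ for which $\underbar U_v$ meets $B_\epsilon(q)$ is contained in the vertex set of a single dual stratum $\underbar U^{\{w_j\}}$. This is precisely the combinatorial input that matches ``$d\phi|_{B_\epsilon(q)}$ belongs to $\underbar U^{\{v_i\}}$''. Moreover ``is not contained in the boundary of $\underbar U^{\{v_i\}}$'' is equivalent to saying that every vertex $v_i$ of $\underbar U^{\{v_i\}}$ is actually hit with positive measure by $d\phi$ on $B_\epsilon(q)$; one direction is tautological, and the other follows from the fact that, once $B_\epsilon(q)$ meets a top-dimensional cell $\underbar U_{v_i}$, it meets it in an open subset of positive measure.

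Now I would combine these observations with the integral formula for the convolution. Because $\rho_\epsilon$ is strictly positive on the open ball $B_\epsilon(0)$ and $\int\rho_\epsilon=1$, the differential
\[
d\tilde\phi(q) = \int_{B_\epsilon(q)}\rho_\epsilon(q-x)\,d\phi(x)\,dx
\]
is a strictly positive convex combination of precisely those vertices $v_i\in\Delta_\phi^\ZZ$ whose cells $\underbar U_{v_i}$ meet $B_\epsilon(q)$ with positive measure. A strictly positive convex combination of a finite set of affinely independent points lies in the relative interior of their convex hull, so $d\tilde\phi(q)$ lies in the interior of $\Hull(\{v_i\})$ if and only if $\{v_i\}$ is exactly the vertex set of $\underbar U^{\{v_i\}}$. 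This is the claimed characterization of $U_{\{v_i\}}$.

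I expect the only delicate step to be the combinatorial matching in the second paragraph: one must check that ``$d\phi|_{B_\epsilon(q)}$ belongs to $\underbar U^{\{v_i\}}$'' genuinely forces the positive-measure vertices to form a \emph{subset} of the vertices of a single dual stratum (so that their convex hull is a face of the Newton polytope, not a random subset), and that being ``not in the boundary'' picks out the full stratum rather than a proper face. Both follow from the smallness of $\epsilon$ together with the duality between the linearity stratification of $Q$ and the stratification $\underbar U^{\{v_i\}}$ of $\Delta_\phi$; once this is verified the rest is a direct application of Lemma \ref{lemma:weightedaverage}.
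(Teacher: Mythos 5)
Your main computational device — expressing $d\tilde\phi(q)$ as a strictly positive convex combination of the vertices hit with positive measure — is a clean way to make Lemma~\ref{lemma:weightedaverage} explicit, and the broad shape of the argument parallels the paper's. However, there is a genuine gap in the second paragraph. You assert that ``is not contained in the boundary of $\underbar U^{\{v_i\}}$'' is equivalent to ``every vertex $v_i$ of $\underbar U^{\{v_i\}}$ is hit with positive measure,'' and consequently that $d\tilde\phi(q)$ lies in the interior of $\Hull(\{v_i\})$ if and only if the set $W$ of hit vertices equals $\{v_i\}$. That equivalence holds only when $\underbar U^{\{v_i\}}$ is a simplex. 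The paper deliberately allows non-simplex dual strata — see Example~\ref{exam:nonsmoothpuncturedtorus}, where the dual cell of the vertex of $\phi_+=1\oplus x_1\oplus x_2\oplus x_1x_2$ is a square. The correct equivalence is: $d\tilde\phi(q)$ lies in the relative interior of $\Hull(\{v_i\})$ if and only if $W$ is not contained in the vertex set of any proper face of $\underbar U^{\{v_i\}}$, which is strictly weaker than $W=\{v_i\}$.

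Concretely, for the cross polynomial take $q$ slightly off the four-valent vertex in the direction $(1,1)$, with $|q|<\epsilon<\sqrt 2\,|q|$. Then $B_\epsilon(q)$ meets only three of the four quadrants, so $W$ consists of three of the four vertices of the square. Those three vertices lie in no edge of the square, so $d\tilde\phi(q)$ (a strictly positive combination of them) lies in the open square and $q\in U_{\{v_i\}}$, even though $W\neq\{v_i\}$; your claimed ``if and only if'' would wrongly exclude such $q$. Your aside that the points $\{v_i\}$ are ``affinely independent'' is a symptom of the same implicit simplicial assumption, and is also unnecessary: a strictly positive convex combination of any finite set of points lies in the relative interior of its hull, with or without affine independence. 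The paper's own proof avoids this trap by arguing that if $d\tilde\phi(q)$ were on a proper face $F$ of $\underbar U^{\{v_i\}}$, then every vertex appearing with positive weight would have to lie in $F$, i.e.\ $d\phi|_{B_\epsilon(q)}$ would be contained in $F$, contradicting the ``not contained in the boundary'' hypothesis — without ever claiming that all of $\{v_i\}$ must be hit.
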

\begin{proof}
	By \prettyref{lemma:weightedaverage}, if $d\phi|_{B_\epsilon(q)}$ belongs to  $\underbar U^{\{v_i\}}$ then  $d\tilde \phi(q)\in \underbar U^{\{v_i\}}$. 
	The only concern may be that $\tilde d\phi|_q$ is not in the interior of $\underbar U^{\{v_i\}}$, however the requirement that $d\phi|_{B_\epsilon(q)}$ is not contained in the boundary of $\underbar U^{\{v_i\}}$ rules out this possibility.
	Therefore, $q\in U_{\{v_i\}}$.

	Suppose now that $q\in U_{\{v_i\}}$.
	We would like to show that within an $\epsilon$-radius of $q$, $d\phi(q)$ belongs to $\underbar U^{\{v_i\}}$, and that at least one point is not contained in the boundary.
	We first show containment. Suppose for contradiction that $d\phi|_{B_\epsilon(q)}\not\subset \underbar U^{\{v_i\}}$. Then take additional vertices $w_k$ so that $d\phi|_{B_\epsilon(q)}\subset U^{\{v_i\}\cup\{w_k\}}$. We break into two cases.
	\begin{itemize}
		\item
		      The set $\{w_k\}$ only contains one element. In this case, the weighted average over arguments defining $d\phi(q)$ cannot possibly lie in $\underbar U^{\{v_i\}}$.
		\item
		      The set $\{w_k\}$ contains at least 2 elements. This contradicts our assumption on the size of $\epsilon$, as we now see top dimensional strata corresponding to two different boundaries of $\underbar U_{\{v_i\}}$.
	\end{itemize}
	This proves that $d\phi|_{B_\epsilon(q)}\subset \underbar U^{\{v_i\}}$.
	As the value of $d\tilde \phi(q)$ is an interior point of $\underbar U^{\{v_i\}}$ by the definition of $q\in U_{v_i}$, it cannot be the case all of $d\phi|_{B_\epsilon(q)}$ is contained in the boundary of $\underbar U^{\{v_i\}}$.
\end{proof}
This proposition gives us a  clean description of the sets $U_{\{v_i\}}$, and additional information on the restriction of $\phi$ to each of these subsets.
In the setting of top dimensional strata, we have an inclusion $ U_v\subset \underbar U_v$, and $d(\tilde \phi)|_{U_v}=v$.
The strata $U_v$ contains an open set if and only if $d(\tilde \phi)|_{U_v}\in \Delta^\ZZ_\phi$.

The graph of $d\tilde \phi$ is a Lagrangian in $T^*Q$ rather than in $X=(\CC^*)^n$, but after periodizing the cotangent bundle, we get sections of the SYZ fibration.
\begin{definition}[\cite{abouzaid2009morse}]
	The \emph{tropical Lagrangian section }$\sigma_\phi^\epsilon: Q\to X$ associated to $\phi$ is  the composition
	\[
		\begin{tikzcd}
			T^*Q\arrow{r}{ / T^*_\ZZ Q } & X\\
			Q\arrow{u}{d\tilde \phi ^\epsilon }
		\end{tikzcd}.
	\]
	\label{def:tropicallagrangiansection}
\end{definition}
When the smoothing radius $\epsilon$ is not important, we will drop it and simply write $\sigma_\phi$.
The key observation is that for each lattice point $v\in \Delta^\ZZ_\phi$
\begin{equation}
	\sigma_\phi|_{U_v}= \sigma_0|_{U_v}.
	\label{eq:keyobservation}
\end{equation}
Given a monomial admissibility condition $(W_\Sigma, \Delta_\Sigma)$, we say that $\phi$ is an admissible tropical polynomial if $\sigma_\phi$ is an $\Delta_\Sigma$-monomially admissible Lagrangian.
From here on out, we will only work with admissible tropical polynomials.

Take an admissible perturbation of $\sigma_{-\phi}$ so that $\arg(\sigma_{-\phi})$ is locally a diffeomorphism onto its image.
This can be arranged by adding a small strictly convex function to $\tilde\phi$.
After incorporating this Hamiltonian perturbation, the intersections between $\sigma_{-\phi}$ and $\sigma_0$ become transverse.
The intersections are in bijection with the points where $\arg(\sigma_{-\phi})=0$. 
Since $\arg(\sigma_{-\phi})$ is (after considering the small Hamiltonian perturbation) a slight enlargement of the Newton polytope $\Delta_\phi$, the intersections between $\sigma_{\phi}$ and $\sigma_0$ are identified with the lattice points of $\Delta_\phi$.

This observation, along with a characterization of holomorphic strips on convex functions allows one to describe the Floer cohomology of the tropical Lagrangian sections combinatorially.
\begin{theorem}[\cite{abouzaid2009morse,hanlon2018monodromy}]
	Let $\check X_\Sigma$ be a toric variety, and let $(X, W_\Sigma)$ be its mirror Landau-Ginzburg model.
	Let $\Delta_\Sigma$ be a monomial admissibility condition.
	Let $\phi_1, \phi_2$ be the support functions for line bundles $\mathcal O(\phi_1), \mathcal O(\phi_2)$ on $\check X_\Sigma$ (see \prettyref{subsec:toricbackground}).
	Then after appropriately localizing, there is a quasi-isomorphism
	\[
		\CF(\sigma_{\phi_1}, \sigma_{\phi_2})= \hom(\mathcal O(\phi_1), \mathcal O (\phi_2)).
	\]
	Furthermore, the $A_\infty$ structure on the subcategory of the Fukaya-Seidel category generated by tropical Lagrangian sections is quasi-isomorphic to the dg-structure on the dg-enhancement of the derived category of coherent sheaves on $\check X_\Sigma$.
	\label{thm:hmstorics}
\end{theorem}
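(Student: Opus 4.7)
The plan is to follow the strategy pioneered by Abouzaid and adapted by Hanlon--Hicks, reducing the Floer-theoretic computation to a combinatorial count governed by the Newton polytope $\Delta_{\phi_2-\phi_1}$, and then matching this combinatorial data with the algebraic structure on line bundles over $\check X_\Sigma$.

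First, I would identify intersection points with a basis of $\hom(\mathcal O(\phi_1),\mathcal O(\phi_2))=H^0(\check X_\Sigma,\mathcal O(\phi_2-\phi_1))$. After a small strictly convex Hamiltonian perturbation making $\sigma_{\phi_2-\phi_1}$ transverse to $\sigma_0$, the discussion preceding the theorem identifies $\sigma_{\phi_1}\cap\sigma_{\phi_2}$ with lattice points of $\Delta_{\phi_2-\phi_1}$. These lattice points are exactly the torus-weight basis of $H^0(\check X_\Sigma,\mathcal O(\phi_2-\phi_1))$ by the standard toric description of global sections. One must check that the $\Delta_\Sigma$-admissible wrapping used to define $\CF$ after localization does not introduce or cancel generators outside the polytope; this follows from the monomial admissibility condition forcing the argument of $\sigma_{\phi_2-\phi_1}$ to remain within the star of each ray.

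Next, I would compute the differential and higher products. The key tropical input is that holomorphic strips (and polygons) with boundary on graphs of $d\tilde\phi_i$ can be analyzed via the log--argument projection: applying a large-complex-structure rescaling, the projection of a holomorphic polygon to $Q$ converges to a tropical tree whose vertices are forced to lie at the intersection points, while the projection to the argument torus is governed by the combinatorics of $\Delta_{\phi_2-\phi_1}$. Strict convexity of the primitives $\tilde\phi_j-\tilde\phi_i$ (after the small perturbation) rules out nonconstant strips in the $Q$-direction, so the differential vanishes and $\CF(\sigma_{\phi_1},\sigma_{\phi_2})$ is free of rank $|\Delta_{\phi_2-\phi_1}\cap\ZZ^n|$. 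For $m^2$ and higher $m^d$, the tropical analysis of Abouzaid identifies contributing disks with tropical trees, and each such tree contributes a monomial of the form $z^{v_1+\cdots+v_d}$ matching the multiplication law for Čech cocycles representing global sections. The $A_\infty$ structure is then quasi-isomorphic to the Čech dg-model of $R\Hom$ on $\check X_\Sigma$, which is the standard dg-enhancement of $D^b\Coh(\check X_\Sigma)$ restricted to line bundles.

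The hardest step is verifying that the monomial-admissible wrapping prescribed by $\Delta_\Sigma$ does not alter the count of holomorphic polygons relative to Abouzaid's original computation on $(\CC^*)^n$. Concretely, one must show that the admissible Hamiltonian perturbation $\theta$ pushes intersection points far out along each ray $\alpha\in\Sigma$ in a way compatible with the tropical polygon count, so that disks straying into the admissibility regions $\val^{-1}(C_\alpha)$ are excluded by a maximum-principle or energy argument. This is precisely the content worked out in Hanlon's monomial-admissible setup, where the admissibility regions are designed so that the moduli spaces computing $m^d$ agree, after localization, with those computed on the unperturbed sections. Once this compatibility is established, the theorem follows by extending Abouzaid's quasi-isomorphism to the full subcategory generated by tropical Lagrangian sections, and passing to the dg-enhancement of $D^b\Coh(\check X_\Sigma)$ via the standard full and faithful embedding of line bundles.
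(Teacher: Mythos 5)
The paper does not prove this theorem; it is stated as background and cited to \cite{abouzaid2009morse,hanlon2018monodromy}, with only a short heuristic discussion preceding it (identifying intersections of $\sigma_0$ with a perturbed $\sigma_{-\phi}$ with lattice points of $\Delta_\phi$). Your sketch is a reasonable reconstruction of the strategy used in those references, and its broad outline — identify generators with lattice points via the argument projection of the difference section, analyze polygons tropically, check compatibility with the monomial-admissible wrapping — is on the right track.

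However, one claim in your second paragraph is not correct as stated and would not survive contact with the general case. You assert that strict convexity of $\tilde\phi_2 - \tilde\phi_1$ forces the Floer differential to vanish, so that $\CF(\sigma_{\phi_1},\sigma_{\phi_2})$ is free of rank $|\Delta_{\phi_2-\phi_1}\cap\ZZ^n|$ and matches $H^0(\mathcal O(\phi_2-\phi_1))$. But for arbitrary support functions $\phi_1,\phi_2$, the difference $\phi_2-\phi_1$ is neither concave nor convex, and the right-hand side $\hom(\mathcal O(\phi_1),\mathcal O(\phi_2))$ is the derived $\Hom$, which in general has nonzero $\Ext^{>0}$. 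Correspondingly, the Floer differential need not vanish: in Abouzaid's setup, the generators live on lattice points of a (possibly non-convex) region, they carry nonzero Morse degrees, and the differential reproduces a Morse/cellular model computing the full sheaf cohomology, not just $H^0$. The vanishing-of-differential argument only applies when $\phi_2-\phi_1$ is the support function of a sufficiently positive (base-point-free) bundle, which is a special case. To cover the general statement you would need to follow Abouzaid's Morse-theoretic bookkeeping of degrees and differentials on the polytope, rather than concluding freeness from convexity.
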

Assuming $\check X_\Sigma$ is smooth and projective, line bundles generate the derived category of coherent sheaves on $\check X_\Sigma$.
This proves that the subcategory of $\Fuk_{\Delta_\Sigma}(X, W_\Sigma)$ generated by tropical Lagrangian sections is equivalent to $D^b\Coh(\check X_\Sigma)$.

\subsection{Tropical Lagrangian Hypersurfaces}
For this section, we fix $(W_\Sigma, \Delta_\Sigma)$ some monomial division and work in the setting where \prettyref{thm:hmstorics} holds.

It is usually desirable for Lagrangians to have transverse intersections.
However, the highly non-transverse configuration of unperturbed tropical Lagrangian sections will work in our favor as locally the intersection of $\sigma_0$ and $\sigma_{-\phi}$ is given by the graphs of one-forms with convex primitives. This allows us to apply our surgery profile from  \prettyref{prop:surgeryprofile}.
\begin{prop}
	Let $\phi$ be a tropical polynomial on $\RR^n$. 
	The connected components of $\sigma_0\cap \sigma_{-\phi}$ are in bijection with the lattice points of the Newton polytope, $\ZZ^n\cap \Delta_\phi$.
	The connected components which contain an open ball are in bijection with the top-dimensional linearity strata $\Delta^\ZZ_\phi$. 
	\label{prop:latticepointsareintersections}
\end{prop}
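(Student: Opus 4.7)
The plan is to pull the intersection back to the base $Q=\RR^n$ via the SYZ projection. The zero section $\sigma_0$ is identified with $Q$, and a point $q$ lies in $\sigma_0\cap\sigma_{-\phi}$ precisely when $d\tilde\phi(q)\in T^*_\ZZ Q\cong\ZZ^n$. By the Newton polytope property of $\tilde\phi$, the value $d\tilde\phi(q)$ always lies in $\Delta_\phi$, so the intersection decomposes as a disjoint union over $v\in\Delta_\phi\cap\ZZ^n$ of fibers $F_v:=\{q\in\RR^n\mid d\tilde\phi(q)=v\}$. Each $F_v$ is nonempty because $\arg(d\tilde\phi(\RR^n))=\Delta_\phi$ by the same property, which accounts for every lattice point appearing.

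The key step is to prove that each $F_v$ is connected, so that the lattice points of $\Delta_\phi$ are in bijection with the connected components of the intersection. The idea I would use is to exploit the concavity of $\tilde\phi$: the auxiliary function $\psi_v(q):=\tilde\phi(q)-\langle v,q\rangle$ is then also concave, and $F_v$ is precisely its critical locus. For any smooth concave function every critical point is a global maximum, so $F_v$ coincides with the set of maximizers of $\psi_v$, which is a convex (and in particular connected) subset of $\RR^n$.

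For the second half of the statement, I would argue that $F_v$ contains an open ball if and only if $d\tilde\phi\equiv v$ on an open set, equivalently $\tilde\phi$ is affine with slope $v$ on that open set. Since $\tilde\phi$ agrees with $\phi$ outside an $\epsilon$-neighborhood of $V(\phi)$ and $\phi$ is piecewise affine whose slopes on top-dimensional domains of linearity are exactly the monomial exponents in $\Delta^\ZZ_\phi$, this happens precisely when $v\in\Delta^\ZZ_\phi$. Conversely, for any such $v$ the open stratum $U_v$ provided by \prettyref{prop:intersectioncharacterization} is nonempty and contained in $F_v$, yielding the refined bijection.

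The hard part will be making the connectedness argument airtight at lattice points $v$ that sit in the interior of a face of $\Delta_\phi$ larger than a vertex of $\Delta^\ZZ_\phi$: there $F_v$ may have positive codimension, and one must still verify it remains a single convex piece rather than fragmenting into disconnected components. The maximizer-of-a-concave-function description of $F_v$ handles all cases uniformly, so the remainder of the proof is essentially bookkeeping with the properties of $\tilde\phi$ already established in the preceding propositions.
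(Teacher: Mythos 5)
Your proof is correct and follows essentially the same route as the paper's: both identify the intersection with $\{q : d\tilde\phi(q)\in\ZZ^n\}$, fiber it over lattice points of $\Delta_\phi$, and deduce that each fiber $F_v$ is connected by recognizing it as the extremal set of the concave function $\tilde\phi - \langle v,\cdot\rangle$ (the paper phrases this as the minimal locus of the convex function $-\tilde\phi - x^v$, which is the same observation), then appeal to \prettyref{prop:intersectioncharacterization} for the open-ball refinement. Your extra care about lattice points interior to a face is well placed, and as you note it is handled uniformly by the convexity of the maximizer set.
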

\begin{proof}
	The set $\sigma_0\cap \sigma_{-\phi}$ is homeomorphic to $\arg(-d\tilde \phi)^{-1}(\ZZ^n)$.
	Since $-\tilde\phi^{-1}$ is convex, we can compute the set on which it has a fixed derivative as the minimal locus of an appropriately shifted convex function
	\[\arg(-d\tilde\phi)^{-1}(v)=\{x\;|\;-\tilde\phi-x^v \text{ is minimal}\}.\]
	Since the minimal locus of a convex function is contractible, $\arg(-d\tilde\phi)^{-1}(v)$ is a contractible set.
	Therefore, $\arg(-d\tilde \phi)^{-1}(\ZZ^n)$ is a union of disjoint contractible sets, one for each lattice point contained in the image of $\arg(-d\tilde \phi)$. 

	To show that the connected components which contain an open ball are in bijection with the top dimensional linearity strata, we use \prettyref{prop:intersectioncharacterization}.
\end{proof}
\begin{definition}
	Let $\phi$ be a tropical polynomial on $\RR^n$. Let $\{U_v\;|\; \dim U_v=n\}$ be the collection of intersections between $\sigma_{0}$ and $\sigma_{-\phi}$ corresponding to the smooth non-self intersecting monomials of $\phi$, i.e. points $v\in \Delta^\ZZ_\phi$. 
	For choice of sufficiently small $\epsilon$, let $\mathcal D_\epsilon=\{\rho_\epsilon, r_\epsilon, s_\epsilon\}$ be a choice of smoothing kernel and surgery profile curves.
	We define the \emph{ tropical Lagrangian associated to $\phi$ and necks $U_v$} by the surgery
	\[
		L^{\mathcal D_\epsilon}(\phi):=(\sigma_{0})\#_{\{U_v\}}(\sigma_{-\phi}),
	\]
	with the given smoothing kernel and surgery profile curves.
	\label{def:tropicallagrangian}
\end{definition}
\begin{example}
	The simple example of a surgery given in \prettyref{exam:simplesurgery} is the tropical Lagrangian. 
	In this setting $X=\CC^*$, and $Q=\RR$. 
	The tropical hypersurfaces of $Q$ are simply points, and so we expect that the tropical Lagrangian associated to a point will be an SYZ fiber. 
	The two sections $\sigma_0$ and $\sigma_{-(0\oplus x_1)}$ are the line and twisted line of \prettyref{fig:differentsurgeries} which are surgered together. 
	The resulting Lagrangian is Hamiltonian isotopic to the SYZ fiber with valuation 0. 
\end{example}
We will later show that the choice of data $\mathcal D_\epsilon$ does not change the exact isotopy class of $L$, and will therefore usually write $L(\phi)$ instead.
In the definition of a tropical Lagrangian submanifold, we've taken the connect sum along each strata of $U_v$ corresponding to the non-self-intersection strata of the tropical polynomial $\phi$.
As a result, a tropical variety with self-intersections only lifts to an immersed Lagrangian.
As an example, the Lagrangian lift of the tropical polynomial $\phi_{T^2}^0(x_1,x_2) =x_1\oplus x_2\oplus (x_1x_2)^{-1}$ drawn in \prettyref{fig:nonsmoothtropicalelliptic} is an immersed sphere with 3 punctures and 1 transverse self intersection.

These intersections may be transverse, but they need not be --- an example is $\phi=x_1\oplus x_2 \oplus(x_1x_2)^{-1}$ as a tropical function on $Q=\RR^3$, where the self-intersection  is a clean intersection $\RR\subset L(\phi)$.

The distinction in terminology between the smoothness and self-intersections becomes important here.
For example, the tropical curve exhibited in \prettyref{fig:smoothtropicalcross} lifts to an embedded tropical Lagrangian submanifold, as the tropical curve has no self-intersections. 
However the tropical curve from \prettyref{fig:smoothtropicalcross} is not an example of a smooth tropical curve.

\begin{theorem}
	Let $\phi$ be a tropical polynomial. The tropical Lagrangian hypersurface $L^{\mathcal D_\epsilon}(\phi)$ and corresponding cobordism $K^{\mathcal D_\epsilon}(\phi)$ satisfy the following geometric properties:
	\begin{itemize}
		\item
		      \textbf{Independence from Choices:} Different choices of parameters ${\mathcal D_\epsilon}$ in the construction of these Lagrangians produce exactly isotopic tropical Lagrangians.
		\item
		      \textbf{Valuation Projection:} As $\epsilon$ approaches zero, the valuation of our tropical Lagrangian $\val(L^{\mathcal D_\epsilon}(\phi))$ approximates the tropical hypersurface $V(\phi)$.
		\item
		      \textbf{Argument Projection:} The argument projection $\arg(L^{\mathcal D_\epsilon}(\phi))$ is the Newton polytope $\Delta_\phi$ associated to the line bundle $\mathcal O(-D)$.
		\item
			  \textbf{Topology:} When $V(\phi)$ is a smooth tropical variety, $L^{\mathcal D_\epsilon}(\phi)$ is embedded.
	\end{itemize}
	Additionally, the tropical Lagrangian submanifolds satisfy these technical requirements giving them well defined Floer cohomology.
	\begin{itemize}
		\item
		      \textbf{Admissibility:} Let $(W_\Sigma, \Delta_\Sigma)$ be a monomial admissibility condition as defined in \cite{hanlon2018monodromy}. Then whenever $\sigma_{-\phi}$ is monomially admissible, so are $L^{\mathcal D_\epsilon}(\phi)$ and $K^{\mathcal D_\epsilon}(\phi)$.
		\item
		      \textbf{Unobstructedness:} Assuming \prettyref{assum:welldefinedfloer} and \prettyref{assum:bottleneckideals}, there exists a bounding cochain for the Lagrangian cobordism $K(\phi):(\sigma_0, \sigma_{-\phi})\rightsquigarrow L(\phi)$
	\end{itemize}
	\label{thm:tropicalLagrangians}
\end{theorem}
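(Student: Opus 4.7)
The plan is to establish each listed property in turn, leveraging the surgery-profile machinery set up in \prettyref{prop:surgeryprofile} and its corollaries, and treating unobstructedness last as the genuine obstacle.

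For \textbf{Independence from Choices}, I would argue that the Lagrangian $L^{\mathcal D_\epsilon}(\phi)$ is built by iterated application of the local surgery of \prettyref{prop:surgeryprofile} at each connected intersection component $U_v$. By \prettyref{prop:neckwidth} and its corollary, the Hamiltonian isotopy class of each local surgery depends only on the neck width $\nw_{r_\epsilon,s_\epsilon}$, and as $\epsilon \to 0$ the neck width can be taken to zero; different choices of smoothing kernel $\rho_\epsilon$ can also be interpolated through a smooth family of convex smoothings, so the resulting tropical Lagrangian sections $\sigma_{-\phi}$ are isotopic. Combining these two interpolations yields an exact isotopy between any two choices $\mathcal D_\epsilon, \mathcal D_\epsilon'$.

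For \textbf{Valuation Projection}, \prettyref{prop:surgeryprofile} confines $L^{\mathcal D_\epsilon}(\phi)$ to a neighborhood of the symmetric difference $(\sigma_0 \sqcup \sigma_{-\phi}) \setminus (\sigma_0 \cap \sigma_{-\phi})$. Since $\val(\sigma_0) = \val(\sigma_{-\phi}) = Q$ and $\val$ removes the intersection regions $U_v$, which by \prettyref{prop:intersectioncharacterization} are $\epsilon$-close to the open top strata of the linearity stratification, the complement is the $\epsilon$-neighborhood of $V(\phi)$. For \textbf{Argument Projection}, I would apply \prettyref{prop:argument} locally at each compact neck (where $f_1 = -\tilde\phi$ is convex after restriction) and \prettyref{prop:noncompactargument} at each non-compact neck (reached by the faces of $V_\infty$ coming from the asymptotic cone directions of $\phi$), yielding $\arg(L^{\mathcal D_\epsilon}(\phi)) = \arg(\sigma_{-\phi}) = \Delta_\phi$. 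For \textbf{Topology}, I would observe that when $V(\phi)$ has no self-intersections each stratum $\underbar U^{\{v_i\}}$ is a standard simplex with no interior lattice points, so by \prettyref{prop:latticepointsareintersections} every connected component of $\sigma_0 \cap \sigma_{-\phi}$ corresponds to a smooth non-self-intersecting monomial and is included among the necks $\{U_v\}$; hence no residual self-intersections survive the surgery.

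For \textbf{Admissibility}, the surgery is supported in a compact neighborhood of $\bigcup_v U_v$, so outside this neighborhood $L^{\mathcal D_\epsilon}(\phi)$ agrees with $\sigma_0 \sqcup \sigma_{-\phi}$. Since $\sigma_0$ is tautologically monomially admissible and $\sigma_{-\phi}$ is admissible by hypothesis, the monomial admissibility condition of \cite{hanlon2018monodromy} is preserved; the same argument applied one dimension higher handles the cobordism $K^{\mathcal D_\epsilon}(\phi)$, whose asymptotic ends are admissible by the same local reasoning.

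The main obstacle is \textbf{Unobstructedness}. My approach is to work directly on the cobordism $K(\phi):(\sigma_0,\sigma_{-\phi})\rightsquigarrow L(\phi)$ from \prettyref{prop:surgeryprofile} and promote its existence to the construction of a bounding cochain on $L(\phi)$. Using \prettyref{assum:welldefinedfloer} and \prettyref{assum:bottleneckideals} to make sense of the pearly model in the monomial admissible setting, I would interpret the holomorphic disks on $L(\phi)$ as arising from rounding corners of triangles in the pre-surgery configuration, precisely as in the proof of \prettyref{thm:surgerycone}. The low-area contributions to the $m^0$ obstruction concentrate near the surgery necks $U_v$ and are controlled by the neck width; the higher $m^k$ operations on the candidate cochain $b$ then encode the iterated cone decomposition of $L(\phi)$ as the totalization of $\sigma_0 \to \sigma_{-\phi}$ with corrections. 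Solving the Maurer--Cartan equation $\sum_k m^k(b,\ldots,b) = 0$ order-by-order in the Novikov parameter is then reduced to an inductive cancellation argument for which the filtered $A_\infty$ formalism reviewed in \prettyref{app:ainftyrefresher} provides the framework; convergence follows from the fact that, as $\epsilon \to 0$, the neck widths can be made arbitrarily small, giving strictly positive valuation to each correction term.
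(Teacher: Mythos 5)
Your plan follows the paper's general architecture for the geometric properties, but three of the six items have real gaps, one of which concerns the central technical point of the theorem.

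\textbf{Independence from choices.} Citing \prettyref{prop:neckwidth} and its corollary only controls the choice of surgery profile $(r_\epsilon,s_\epsilon)$. The data $\mathcal D_\epsilon$ also includes the smoothing kernel $\rho_\epsilon$, and your claim that interpolating smoothing kernels ``gives an exact isotopy'' is precisely what needs to be proved: a smooth family of kernels gives a Lagrangian isotopy, and one must check that the flux vanishes. The paper's proof of \prettyref{prop:Hamiltonianisotopy} does this by decomposing $H_1(L(\phi))$ via Mayer--Vietoris into cycles coming from $H_1(L_r\cap L_s)\cong H_1(V(\phi))$ and conormal cycles $c_{\{v_i,v_j\}}$, and then exploits the odd symmetry of the smoothed tropical section on each edge to show the Liouville integral on $c_{\{v_i,v_j\}}$ is independent of $\rho_\epsilon, r_\epsilon, s_\epsilon$. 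That symmetry argument is the content you are missing.

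\textbf{Admissibility.} Your claim that ``the surgery is supported in a compact neighborhood of $\bigcup_v U_v$'' is false. The regions $U_v$ attached to boundary lattice points of $\Delta_\phi$ are non-compact, so $L(\phi)$ does not agree with $\sigma_0\sqcup\sigma_{-\phi}$ outside any compact set; you cannot conclude admissibility by comparing to the un-surgered configuration at infinity. The correct argument, and the one in \prettyref{prop:admissibility}, runs through the argument-projection result: \prettyref{prop:argument} and \prettyref{prop:noncompactargument} show that $\arg(L(\phi))$ is confined between $\arg(\sigma_0)$ and $\arg(\sigma_{-\phi})$, hence stays in the admissible subtorus $T^{n-1}_{\alpha^\perp}$ over each $C_\alpha$. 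Admissibility is a consequence of the argument-projection bullet, not of compact support.

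\textbf{Unobstructedness.} This is the heart of the theorem and your sketch does not contain the mechanism that makes it work. Saying ``solve Maurer--Cartan order by order'' and ``convergence follows because neck widths can be made small'' is where the real content lives, and nothing you have written explains why the order-$k$ obstruction ever vanishes. The paper's strategy (\prettyref{prop:unobstructed}) is qualitatively different: (i) replace $L(\phi)$ with a transversely-surgered, bottlenecked model $L^{tr}(\phi)$ so that $\CF(L(\phi))$ is a quotient of $\CF(L^{tr}(\phi))$ via \prettyref{assum:bottleneckideals}; (ii) build a sequence $L_\alpha^{tr}$ of Hamiltonian-isotopic Lagrangians with shrinking surgery neck radius, converging (away from the necks) to the immersed union $\sigma_0\cup\theta'(\sigma_{-\phi})$; (iii) use a neck-stretching compactness argument (\prettyref{prop:sequenceonsurgery}) to show any disk bubbling on the $L_\alpha^{tr}$ limits to a holomorphic polygon on $\sigma_0\cup\theta'(\sigma_{-\phi})$; (iv) rule out those polygons by an explicit index computation (\prettyref{prop:nodiskindex}) showing the expected dimension is negative for $n\geq 2$; and (v) invoke the ``eventually unobstructed'' machinery of \prettyref{lemma:unobstructedinlimit}, which pushes forward trivial bounding cochains along continuation maps and proves the limit converges. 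Your proposal is missing step (iv) entirely, and step (iv) is exactly why the valuations of the correction terms increase. Without it, there is no reason for your inductive cancellation to terminate.

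The valuation-projection and topology bullets match the paper's reasoning, and the argument-projection bullet is structurally right but glosses over the verification of the hypotheses of \prettyref{prop:noncompactargument} (the construction of the cone $V_{\infty,v}$, the reduction to a full-dimensional Newton polytope, and the check that $d\tilde\phi|_{NF}=0$), which the paper spells out.
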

\begin{proof}
	The proof of independence of choice is given by \prettyref{prop:Hamiltonianisotopy}. 
	The valuation projection is shown in  \prettyref{prop:surgeryprofile} and \prettyref{prop:intersectioncharacterization}.
	Embeddedness of liftings of smooth tropical varieties follows from \prettyref{def:tropicallagrangian}, as in this setting all intersections of $\sigma_0$ and $\sigma{-\phi}$ are surgered away.

	To prove that the argument projection matches, we use \prettyref{prop:argument} at each surgery portion where the surgery region is compact, and \prettyref{prop:noncompactargument} on the non-compact regions.
	To apply \prettyref{prop:noncompactargument}, we first assume we are in the case where the Newton polytope of $\phi$ is full dimensional (otherwise, we quotient out by translation symmetry to reduce to this case).
	Let $U_v$ be a non-compact intersection region.
	The vertex $v$ belongs to the boundary of the Newton polytope. Let $V_{\infty, v}$ to be the cone over a small neighborhood of $-v$ in $-\Delta_\phi$. This contains a single vertex $-v$.
	After taking an appropriate translation of $V_{\infty, v}$ placing the vertex $-v\in U_v$, each strata of $V(\phi)$ meets the dual face in $V_{\infty, v}$ orthogonally.
	Therefore at each face $F$, we have  $d\tilde \phi|_{NF}=0$ (as drawn in \prettyref{fig:noncompact}) and may apply \prettyref{prop:noncompactargument}.

	The the proof of admissibility is \prettyref{prop:admissibility}.
	The proof of unobstructedness is left to \prettyref{sec:unobstructedness}. 
\end{proof}

\subsubsection{Independence from choices}

In our definition of $L^{\mathcal D_\epsilon}(\phi)$, we've made  choices for the data $\mathcal D_\epsilon=(\rho_\epsilon, r_\epsilon, s_\epsilon)$.
Fortunately, these choices do not modify the exact isotopy class of $L(\phi)$.
\begin{prop}
	Assume that $V(\phi)$ has no self-intersections.
	For parameters $\epsilon_1, \epsilon_2$ sufficiently small, choose data $\mathcal D_{\epsilon_1}=\{\rho_{\epsilon_1}, r_{\epsilon_1}, s_{\epsilon_1}\}$ and $\mathcal D_{\epsilon_2}=\{\rho_{\epsilon_2}, r_{\epsilon_2}, s_{\epsilon_2}\}$.
	The tropical Lagrangians $L^{\mathcal D_{\epsilon_1}}(\phi)$ and $ L^{\mathcal D_{\epsilon_2}}(\phi)$	are Hamiltonian isotopic.
	\label{prop:Hamiltonianisotopy}
\end{prop}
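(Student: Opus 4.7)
The plan is to reduce the claim to the general fact that a smooth isotopy of exact Lagrangians in an exact symplectic manifold is Hamiltonian. First I would observe that for any admissible choice $\mathcal D_\epsilon=(\rho_\epsilon,r_\epsilon,s_\epsilon)$, the Lagrangian $L^{\mathcal D_\epsilon}(\phi)$ is exact as a submanifold of $X=(\CC^*)^n$. Indeed, by \prettyref{prop:surgeryprofile} the surgery is parametrized locally by the exact sections $d(r_\epsilon\circ\tilde\phi)$ and $d(s_\epsilon\circ\tilde\phi)$ spliced to the sections $\sigma_0$ and $\sigma_{-\tilde\phi}$, both of which admit global primitives; the resulting primitive on $L^{\mathcal D_\epsilon}(\phi)$ is well-defined once we record the action jump across each neck, which is precisely the neck width $\nw_{r_\epsilon,s_\epsilon}$ computed in \prettyref{prop:neckwidth}.

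Next I would connect $\mathcal D_{\epsilon_1}$ to $\mathcal D_{\epsilon_2}$ through a smooth path $\mathcal D_t=(\rho_{\epsilon_t},r_{\epsilon_t},s_{\epsilon_t})$ of admissible data. The space of nonnegative symmetric bump functions is convex, and the space of profile curves of a given neck width satisfying the constraints of \prettyref{prop:surgeryprofile} is path-connected (as was used in the corollary following \prettyref{prop:neckwidth}). Any discrepancy in neck widths can be absorbed by a smooth 1-parameter rescaling of the profile functions jointly with a shift of the primitive $\tilde\phi_t$ of matching integral. Because the surgery construction depends only on the values of the data on a neighborhood of each $U_v$, the output $L_t=L^{\mathcal D_t}(\phi)$ is a smooth 1-parameter family of Lagrangians, all of which are exact by the first step.

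Finally I would invoke the standard principle that a smooth family of exact Lagrangians in the exact symplectic manifold $(X,d\eta)$ is generated by a Hamiltonian isotopy of the ambient manifold: the vector field $v_t$ generating the family satisfies $\iota_{v_t}\omega|_{L_t}=dh_t$ for a primitive $h_t$ read off from the Lagrangian primitives, and any smooth extension of $h_t$ to $X$ generates the required ambient isotopy. Applying this to our family $L_t$ yields the desired Hamiltonian isotopy from $L^{\mathcal D_{\epsilon_1}}(\phi)$ to $L^{\mathcal D_{\epsilon_2}}(\phi)$.

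The main obstacle is the bookkeeping that ensures exactness is preserved throughout the interpolating family, in particular that the neck-width flux identified in \prettyref{prop:neckwidth} does not obstruct the family from being exact when compared globally on $L(\phi)$. This is handled by tuning the interpolation so that the primitive of $\tilde\phi_t$ absorbs the neck-width variation at each $U_v$; since $V(\phi)$ has no self-intersections, each $U_v$ contributes an independent degree of freedom and the matching can be done locally, avoiding any global cohomological obstruction.
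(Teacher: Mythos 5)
Your proposal hinges on the claim that $L^{\mathcal D_\epsilon}(\phi)$ is an \emph{exact} Lagrangian in $(\CC^*)^n$, but this does not follow from the exactness of the local charts: gluing exact pieces along a Lagrangian with nontrivial $H_1$ can produce a period of the Liouville form $\eta$. Your own phrase ``the resulting primitive \dots is well-defined once we record the action jump across each neck'' is self-undermining --- if the primitive jumps, it is not a primitive, and the Lagrangian is not exact. In fact the paper's proof does \emph{not} establish exactness of $L(\phi)$; it establishes something weaker and more targeted, namely that $\int_c\eta$ is \emph{independent of the choice of data $\mathcal D_\epsilon$} for each generator $c$ of $H_1(L(\phi))$. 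That is exactly the flux-vanishing condition needed for the interpolating Lagrangian isotopy to be Hamiltonian, and it is compatible with $\int_c\eta\ne 0$.

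The second gap is that the mechanism by which you propose to make the Liouville integrals match --- ``tuning the interpolation so that the primitive of $\tilde\phi_t$ absorbs the neck-width variation at each $U_v$'' --- is not available in the form you describe, and the independence claim (``each $U_v$ contributes an independent degree of freedom'') is wrong. The generating cycles $c_{\{v_i,v_j\}}$ on which flux must be controlled are indexed by codimension-one strata (edges) of $V(\phi)$, not by the vertices $U_v$, and the Mayer--Vietoris computation in the paper shows these cycles satisfy relations in $H_1(L(\phi))$ together with the $H_1(V(\phi))$ classes; adjusting a single neck width moves several cycle integrals simultaneously, so there is a potential global obstruction to matching. The ingredient that actually resolves this --- and which your sketch has no substitute for --- is the reflection symmetry of the smoothed tropical section across each facet, equation (\ref{eq:sectionsymmetry}), which is used to show that $\int_{c_{\{v_i,v_j\}}}\eta$ equals the $\eta$-integral on a reference conormal cycle $\hat c_{\{v_i,v_j\}}$ that is manifestly independent of $\mathcal D_\epsilon$. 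Without this symmetry computation, neither your exactness step nor your ``absorption'' step can be completed.
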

\begin{proof}
	These two Lagrangians are Lagrangian isotopic as we can smoothly interpolate between the parameters $\mathcal D_{\epsilon_1}$ and $\mathcal D_{\epsilon_2}$ in our construction.
	Let $L^{\mathcal D_{\epsilon_t}}(\phi)$ be a Lagrangian isotopy between the choices of data.
	Since $(\CC^*)^n$ is exact, we can prove the proposition by showing that the integral of the Liouville form $\eta$ (satisfying $d\eta=\omega$) on cycles of $L^{\mathcal D_{\epsilon_t}}(\phi)$ is independent of the choices that we've made.
	For simplicity, let $L(\phi)$ be the tropical Lagrangian constructed with any choice of data $\mathcal D=\{\rho, s, r\}$, and we'll show that the Liouville form on $H_1(L(\phi))$ only depends on the choice of $\phi$.
	For this computation, we will choose a convenient spanning set for $H_1(L(\phi))$.

	We now assume that $L(\phi)$ is connected, and that $0$ is an lattice point of $\Delta_\phi$.
	To compute $H_1(L(\phi))$, decompose $L(\phi)=L_r\cup L_s$, where $L_r$ and $L_s$ are the charts corresponding to the profiles $r,s$ in \prettyref{fig:surgeryprofile}.
	These charts are homotopic to the tropical amoeba $\RR^n\setminus\{U_v\}$, so $H_1(L_s)=H_1(L_r)=H_1(V(\phi))$.
	The inclusion $i_s: L_s\cap L_r\to L_s$ is also an isomorphism on the first homology. 
	The Meyer-Vietoris  sequence  allows us to decompose the homology of $L(\phi)$ as
	\begin{align*}
	\cdots\to& H_1(L_r\cap L_s)\xrightarrow{(i_r,i_s)_{1}} H_1(L_r)\oplus H_1(L_s)\xrightarrow{(j_r)_{1}-(j_s)_{1}} H_1(L(\phi))\xrightarrow{\partial_{1}}\\&  H_0(L_r\cap L_s)\xrightarrow{(i_r,i_s)_0}  H_0(L_r)\oplus H_0(L_s)\to\cdots
	\end{align*}
	The map $(i_r,i_s)_{1}$ is injective, so the kernel of $(j_r)_{1}-(j_s)_{1}$ has dimension $b_1(L_r\cap L_s)$. 
	Therefore, $\im((j_r)_{1}-(j_s)_{1})=\im(j_r\circ i_r)_1$, identifying a copy of $H_1(L_r\cap L_s)\subset H_1(L(\phi))$.
	To characterize $H_1(L(\phi))/H_1(L_r\cap L_s)$, note that $\dim \ker((i_r,i_s)_0)= b_0(L_r\cap L_s)-1$, so $\dim(\im(\partial_1))=b_0(L_r\cap L_s)-1$. 
	Therefore, $\dim(H_1(L(\phi))/H_1(L_r\cap L_s))=b_0(L_r\cap L_s)-1$. 

	The connected components of  $L_r\cap L_s$ are in bijection with the surgery regions $\{U_v\}$ for $\sigma_0$ and $\sigma_{-\phi}$, which (by \prettyref{prop:latticepointsareintersections}) are in bijection with the non-self intersection lattice points $\Delta^\ZZ_\phi$. 
	This makes $\{e_v\}_{v\in \ZZ^n\cap \Delta_\phi}$ a basis for $H_0(L_r\cap L_s)$.
	The kernel of $(i_r, i_s)_0$, which may be thought of as the image of $\partial_1$, is spanned by pairs $\{e_{v_i}-e_{v_j}\}$.
	
	At each top strata $\underbar U_{\{v_i,v_j\}}$ of our tropical hypersurface, let $c_{\{v_i, v_j\}}$ be conormal fiber to the $\underbar U_{\{v_i, v_j\}}$ strata of our tropical Lagrangian.
	The cycles $c_{\{v_i, v_j\}}\subset H_1(L(\phi))$ and satisfy the property that $\partial_1(c_{\{v_i, v_j\}})=e_{v_i}-e_{v_j}$.
	Since the $\partial_1 (c_{\{v_i, v_j\}})$ span $\im(\partial_1)$, they generate $H_1(L(\phi))/H_1(L_r\cap L_s)$.

	We take the cycles from $H_1(L_r\cap L_s)$ and cycles $\{c_{\{v_i, v_j\}}\}$ to be a generating set for $H_1(L(\phi))$, and compute the integral of $\eta$ on these generators.

	To compute the integral of the Liouville form on a cycle $c$ included via  $i:H_1(L_r\cap L_s)\into H_1(L(\phi))$, we may always select a representative for $c$  which lies completely inside of the chart $L_r$. Since $L_r$ is exact, the Liouville form vanishes, $\eta(c)=0$ and we may conclude $\eta(i(H_1(V(\phi)))=0$.
	Therefore, the integral of the Liouville form on this portion of homology is independent of the choices.

	To compute the integral of the Liouville form on cycles which are of the form $c_{\{v_i, v_j\}}$, we give a local description of $L(\phi)$ containing the cycle $c_{\{v_i, v_j\}}$. We now need to make an assumption on the size of $\epsilon$,  so that at each facet $\underbar U_{\{v_i, v_j\}}$ there exists a point $p\in U_{\{v_i, v_j\}}$ with a sufficiently small neighborhood
	\[
		B_{2\epsilon}(p)\subset \left(\underbar U_{\{v_i, v_j\}}\cup\underbar U_{\{v_i\}}\cup\underbar U_{\{v_j\}} \right).
	\]
	We assume that $\epsilon_1, \epsilon_2$ are chosen to be smaller than this $\epsilon$.

	When restricted to $B_\epsilon(p)$, the tropical polynomial $\phi$ has only 2 domains of linearity, and so the restriction may be written as $\phi|_{B_\epsilon}= a_{v_i}x^{v_i}\oplus a_{v_j}x^{v_j}$.
	Because of the extra $\epsilon$ of room from the  larger neighborhood $B_{2\epsilon}(p)$, the smoothing $\tilde \phi|_{B_\epsilon}$ is only dependent on the local combinatorics of our strata.
	The function $\tilde \phi|_{B_\epsilon}$ is invariant with respect to translations in the  $(v_i-v_j)^\bot$ hyperplane.
	This means that the function $\tilde \phi|_{B_\epsilon}$ factors as $\tilde \psi(x^{v_i-v_j})$ where $\tilde\psi$ is the smoothing of a tropical function $a_0\oplus a_1t$.
	This function has the symmetry
	\begin{equation}
		d\tilde \psi(t)=- d\tilde \psi(-t)+1
		\label{eq:sectionsymmetry}
	\end{equation}

	We now will construct a cycle representing the homology class $[c_{\{v_i, v_j\}}]$. Consider the holomorphic cylinder $C$ which is the lift of the line
	\[
		\ell=t(v_i-v_j)+p\subset B_\epsilon
	\]
	to a complex curve $C=T^*\ell/T^*_{\ZZ}\ell\subset \val^{-1} B_\epsilon$.
	The intersection of $C$ with $L(\phi)|_{B_\epsilon}$ is a representative for the class $c_{\{v_i, v_j\}}$.
	Consider now the cycle $\hat {c}_{\{v_i, v_j\}}\subset C$ which is given by the  intersection  $C \cap N^*_{U_{\{v_i, v_j\}}}/(N^*_{U_{v_i,v_j}})_\ZZ$.
	If we can show that $\eta(c_{\{v_i, v_j\}})=\eta(\hat c_{\{v_i, v_j\}})$ , we will be finished as we will have shown that $\eta(c_{\{v_i, v_j\}})$ is independent of choices of $\alpha_i, \beta_i$.
	The difference between these two quantities $\int_{c_{\{v_i, v_j\}})-\hat c_{\{v_i, v_j\}}}\eta$ is now equated with the symplectic area between these two cycles on the holomorphic cylinder $C$.
	\begin{figure}
		\centering
		\begin{subfigure}{.4\linewidth}
			\includegraphics{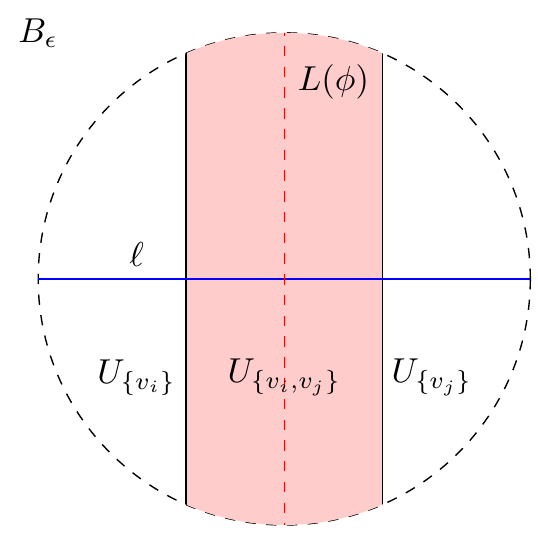}
			\caption{The projection of $L(\phi)$, $N^*U/N^*_\ZZ U$ and $C$ to the base $B_\epsilon$.}
		\end{subfigure}
		\begin{subfigure}{.4\linewidth}
			\includegraphics{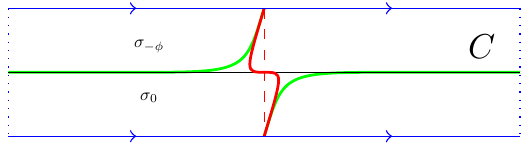}
			\caption{The restriction of $\sigma_0, \sigma_{-\phi}, c_{\{v_i, v_j\}}$ and $\hat c_{\{v_i, v_j\}}$ to the curve $C$. The curve $c_{\{v_i, v_j\}}$ is drawn in red, and $\hat c_{\{v_i, v_j\}}$ is dashed in red.} 
		\end{subfigure}
		\caption{Distinguished cycles for computing flux.
		}
		\label{fig:localsymmetry}
	\end{figure}
	We now recenter our coordinates so that $p$ is at the origin. In these coordinates, the symmetry from equation (\ref{eq:sectionsymmetry}) translates into the odd symmetry of the tropical section
	\[
		\sigma_\phi(t\cdot(v_i-v_j))=-\sigma_\phi(-t(v_i-v_j)).
	\]
	The cycle $c_{\{v_i, v_j\}}\subset C$ also inherits this symmetry.
	We may use this symmetry to decompose the integral
	\[
		\int_{c_{\{v_i, v_j\}}-\hat c_{\{v_i, v_j\}}}\eta
	\]
	into two equal cancelling components, and conclude that  $\eta(c_{\{v_i, v_j\}})=\eta(\hat c_{\{v_i, v_j\}})$.
\end{proof}
	A more detailed computation would show that by following the proof of \prettyref{prop:neckwidth} one can choose different surgery profiles at each region as long as all of the surgery profiles chosen have the same neck width.

\prettyref{prop:Hamiltonianisotopy} shows we can take an exact isotopy to bring  $\val (L(\phi))$ arbitrarily close to $V(\phi)$ if desired.
A difference between the valuation of a complex hypersurface (an amoeba) and the valuation of $L(\phi)$ is that the tentacles of the Lagrangian do not winnow off to zero radius as we increase valuation.

\subsubsection{Admissibility of Tropical Lagrangians}
In order for us to study this object in the Fukaya category, we will have to show that it satisfies the admissibility conditions of $\Fuk_{\Delta_\Sigma}(X, W_\Sigma)$, and show that this Lagrangian is unobstructed.
\begin{prop}
	Suppose that the section $\sigma_{-\phi}$ is $\Delta_\Sigma$ admissible. Then $L(\phi)$ is $\Delta_\Sigma$ admissible.  \label{prop:admissibility}
\end{prop}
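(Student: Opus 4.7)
My plan is to reduce the admissibility of $L(\phi)$ to the admissibility of its two constituent sections, $\sigma_0$ and $\sigma_{-\phi}$, together with a local analysis of each surgery collar where the two sections are smoothly interpolated. The section $\sigma_0$ is the zero section and hence trivially $\Delta_\Sigma$-admissible for any monomial division, while $\sigma_{-\phi}$ is admissible by hypothesis. By \prettyref{prop:surgeryprofile}, at each surgery region $U_v$ the Lagrangian $L(\phi) = \sigma_0 \#_{\{U_v\}} \sigma_{-\phi}$ agrees with $\sigma_{-\phi}$ in the $r_\epsilon$-chart and with $\sigma_0$ in the $s_\epsilon$-chart on the locus $f_v \geq 2 c_\epsilon$, where $f_v = -\tilde\phi + \langle v, \cdot \rangle$ is the local convex primitive controlling the surgery. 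Consequently the admissibility condition over $\val^{-1}(C_\alpha)$ is automatic except possibly on the collar regions $\{c_\epsilon \leq f_v \leq 2c_\epsilon\}$.

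For each interior lattice point $v$ of $\Delta_\phi$ the region $U_v$ is compact by \prettyref{prop:latticepointsareintersections}, so its associated collar projects to a compact subset of $Q$; admissibility is a condition outside \emph{some} compact set and so is unaffected. This reduces the problem to the non-compact collars corresponding to vertices $v$ lying on the boundary of $\Delta_\phi$, whose underlying regions $U_v$ extend to infinity along a cone direction prescribed by the combinatorics of $\phi$.

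On such a collar the Lagrangian is the union of the graphs of $d(r_\epsilon \circ f_v)$ and $d(s_\epsilon \circ f_v)$, so its argument projection at any point is $r'_\epsilon(f_v) \cdot df_v$ or $s'_\epsilon(f_v) \cdot df_v$, each a rescaling of $df_v$ by a factor in $[0,1]$. As one travels to infinity inside the collar along the non-compact face directions of $V_\infty$, the differential $df_v = v - d\tilde\phi$ tends to zero, because $\tilde\phi$ becomes asymptotically linear of slope $v$ along $U_v$. Hence both rescaled arguments vanish in the limit, and a fortiori $\arg(c_\alpha z^\alpha)|_{L(\phi)} \to 0$ at infinity inside $C_\alpha$, yielding the admissibility condition.

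The main obstacle will be verifying the last limit statement is compatible with the monomial division: one must show that the collar's directions of escape to infinity lie inside $\val^{-1}(C_\alpha)$ for a ray $\alpha$ whose dual pairs correctly with the degeneration of $df_v$. This is exactly what the admissibility hypothesis on $\sigma_{-\phi}$ enforces, as it aligns the non-compact strata of $V(\phi)$ with the open stars of rays in $\Sigma$. Combined with the argument projection control $\arg(L(\phi)) \subset \Delta_\phi$ already established in \prettyref{prop:argument} and \prettyref{prop:noncompactargument}, this closes the argument.
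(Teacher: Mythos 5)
Your overall structure — reduce to the surgery collars, observe that $L(\phi)$ coincides with $\sigma_0$ or $\sigma_{-\phi}$ away from the collars, dispose of compact collars since admissibility is a condition at infinity, and express the argument on a collar as a rescaling of $df_v$ by $r'_\epsilon$ or $s'_\epsilon$ — is sound and close in spirit to the paper's proof. But there is a concrete error in the third paragraph: the claim that $df_v = v - d\tilde\phi$ \emph{tends to zero} at infinity along a non-compact collar is false. The collar is the shell $\{c_\epsilon \leq f_v \leq 2c_\epsilon\}$, not the set $U_v$ where $df_v = 0$. Along a non-compact collar sitting over an edge of $V(\phi)$ shared between the vertices $v$ and $w$ of $\Delta_\phi$, the tropical polynomial is translation-invariant in the edge direction, so $d\tilde\phi$ is \emph{constant} along the escape-to-infinity direction, taking some value strictly between $v$ and $w$. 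Hence $df_v$ is a nonzero constant vector parallel to $v-w$; it does not degenerate. Consequently $\arg(L(\phi))$ does not tend to $0$ at infinity in the collar, and your proposed limit argument breaks down.

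What saves the proposition — and what the paper invokes via Propositions \ref{prop:argument} and \ref{prop:noncompactargument} together with the admissibility of $\sigma_{-\phi}$ — is not that the argument vanishes, but that it stays inside the subtorus $T^{n-1}_{\alpha^\bot}$ over $\val^{-1}(C_\alpha)$. Unwinding your own rescaling observation: on the collar, $\arg(L(\phi))$ is a real scalar multiple of $df_v = v - d\tilde\phi$, which is parallel to $v - w$. The admissibility of $\sigma_{-\phi}$ over $C_\alpha$ forces $\alpha\cdot d\tilde\phi \in \ZZ$ there, which on a collar interpolating between slopes $v$ and $w$ forces $\alpha\cdot(v-w) = 0$; thus $\alpha\cdot\arg(L(\phi)) = 0$, not $\arg(L(\phi)) \to 0$. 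Your final appeal to $\arg(L(\phi)) \subset \Delta_\phi$ does not close this gap either, since containment in the Newton polytope is unrelated to perpendicularity with $\alpha$. You should replace the limit claim with this perpendicularity argument (or cite Propositions \ref{prop:argument} and \ref{prop:noncompactargument} as the paper does) to show $\arg(L(\phi))$ remains in $T^{n-1}_{\alpha^\bot}$.
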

\begin{proof}
	To show that $L(\phi)$ is admissible, we need show that over each region $C_\alpha$ from \prettyref{def:admissiblitycondition}, the argument of $c_\alpha z^\alpha$ is zero outside of a compact set.
	If we associate the exponent $\alpha$ to a vector $\alpha\in \ZZ^n$, the admissibility condition can be restated as the argument of $L$ lying inside the $T^{n-1}_{\alpha^{\bot}}$ subtorus of the fibers $F_q$ of $(\CC^*)^n$ over the regions of $C_\alpha$.
	Both $\sigma_{-\phi}$ and $\sigma_0$ are constrained within this subtorus as $\arg(\sigma_{-\phi}), \arg(\sigma_0)\subset T^{n-1}_{\alpha^\bot}$ on the region $C_\alpha$.
	\prettyref{prop:argument} allows us to conclude that $\arg(\sigma_{0}\#_U\sigma_{-\phi}	)= \arg(L(\phi))$ is similarly contained within the region $C_\alpha$.
\end{proof}

\section{Unobstructedness of Tropical Lagrangians}
\label{sec:unobstructedness}
The Lagrangian $L(\phi)$ is not necessarily monotone.
However, as a corollary of \prettyref{prop:Hamiltonianisotopy}, the only disks which may appear on our tropical Lagrangian have Maslov index 0.
\begin{corollary}
	The Maslov class of $L(\phi)$ is trivial in $H^1(L, \ZZ)$.
\end{corollary}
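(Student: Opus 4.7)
The plan is to exhibit a continuous real-valued lift $\tilde\theta_L : L(\phi) \to \RR$ of the phase function $\theta_L : L(\phi) \to S^1$ induced by the holomorphic volume form $\Omega = \bigwedge_{j=1}^n d\log z_j$ on $X = (\CC^*)^n$. The existence of such a lift is equivalent to the triviality of the Maslov class $\mu_L = [d\theta_L/2\pi] \in H^1(L,\ZZ)$. Since $\mu_L$ is invariant under Hamiltonian isotopy, \prettyref{prop:Hamiltonianisotopy} lets me fix any convenient choice of surgery data $\mathcal{D}_\epsilon$.

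First I would record the Maslov triviality of the building blocks. For any Lagrangian section $\sigma_\psi = \{(x, d\psi(x))\}$ of the SYZ fibration $X \to \RR^n$, the canonical oriented tangent frame $\{\partial_{x_j} + \sum_k \Hess(\psi)_{jk} \partial_{\theta_k}\}$ evaluates $\Omega$ to $\det(I + i\Hess(\psi))$. Since $\Hess(\psi)$ is real symmetric, this determinant lies in $\CC^*$ with phase $\sum_j \arctan(\lambda_j) \in (-n\pi/2, n\pi/2)$, so it canonically lifts to a real-valued function. In particular, both $\sigma_0$ and $\sigma_{-\phi}$ have trivial Maslov class.

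I would then apply this computation to the decomposition $L(\phi) = L_r \cup L_s$ from the proof of \prettyref{prop:Hamiltonianisotopy}. Each chart is locally the graph of the exact $1$-form $d(r_\epsilon \circ f)$ or $d(s_\epsilon \circ f)$ in a Weinstein neighborhood, so the same formula produces real-valued lifts $\tilde\theta_r$ on $L_r$ and $\tilde\theta_s$ on $L_s$. At the fold $L_r \cap L_s$, the tangent space of $L(\phi)$ is well-defined by smoothness of the surgery profile; both lifts extend continuously and agree there after a consistent orientation choice on the two charts. Gluing produces a continuous global real-valued function $\tilde\theta_L : L(\phi) \to \RR$ projecting to $\theta_L$, which yields $\mu_L = 0$.

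The main obstacle is reconciling the orientation conventions at the fold. The parametrizations $x \mapsto (x, d(r_\epsilon \circ f)(x))$ and $x \mapsto (x, d(s_\epsilon \circ f)(x))$ over the common domain $\{f \geq c_\epsilon\}$ induce opposite orientations at the fold, because $r_\epsilon''$ and $s_\epsilon''$ blow up with opposite signs so that the Hessians diverge to $\pm\infty$ in the $\nabla f$ direction. A careful local computation at the fold is needed to verify that once orientations are matched against the natural orientation of $L(\phi)$, the discrepancy between $\tilde\theta_r$ and $\tilde\theta_s$ at the fold is a universal constant in $2\pi\ZZ$ and can be absorbed into the choice of lift, giving the desired continuous global real-valued lift.
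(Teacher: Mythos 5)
Your approach is correct but genuinely different from the paper's, and the flagged step does work out. Let me explain the comparison and verify the step you postponed.

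The paper argues by \emph{evaluating} the Maslov class on a generating set of $H_1(L(\phi))$. It imports the Mayer--Vietoris analysis from the proof of \prettyref{prop:Hamiltonianisotopy}: $H_1(L(\phi))$ is generated by cycles coming from $H_1(V(\phi))\cong H_1(L_r\cap L_s)$ together with the conormal cycles $c_{\{v_i,v_j\}}$. Cycles of the first kind can be homotoped into a region where $L(\phi)$ coincides with the section $\sigma_0$, on which the Maslov class vanishes; for the cycles $c_{\{v_i,v_j\}}$, the paper invokes a Hamiltonian isotopy (also extracted from \prettyref{prop:Hamiltonianisotopy}) bringing $L(\phi)$ to the special Lagrangian conormal torus $N^*U_{\{v_i,v_j\}}/N^*_\ZZ U_{\{v_i,v_j\}}$ in a neighborhood of the cycle, so $\mu_{L(\phi)}(c_{\{v_i,v_j\}})=0$ as well. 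This is short but depends on the explicit cycle generating set and on the isotopy to a special Lagrangian model. Your proof instead \emph{trivializes} the Maslov class directly by producing a global continuous lift of the phase, which is more self-contained: it needs neither the $H_1$-computation nor the special-Lagrangian normalization, only the surgery profile and the observation that a graph of a closed one-form $d\psi$ carries the canonical $\RR$-valued lift $\sum_j\arctan(2\pi\lambda_j(\Hess\psi))$.

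The key issue you flagged — what happens at the fold $\{f_1=c_\epsilon\}$ — is exactly the right thing to worry about, and it resolves as follows. Writing $\Hess(r_\epsilon\circ f_1)=r_\epsilon''\,\nabla f_1\otimes\nabla f_1+r_\epsilon'\Hess f_1$ (and likewise for $s_\epsilon$), as $f_1\to c_\epsilon^+$ one has $r_\epsilon'\to \tfrac12$, $r_\epsilon''\to+\infty$, $s_\epsilon'\to\tfrac12$, $s_\epsilon''\to-\infty$. Hence one eigenvalue of the $r$-chart Hessian goes to $+\infty$ and one eigenvalue of the $s$-chart Hessian goes to $-\infty$, while the eigenvalues transverse to $\nabla f_1$ have a common limit $\tfrac12\mu_j$. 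Therefore $\tilde\theta_r\to \tfrac{\pi}{2}+\sum_j\arctan(\pi\mu_j)$ and $\tilde\theta_s\to-\tfrac{\pi}{2}+\sum_j\arctan(\pi\mu_j)$, so the graph-orientation lifts differ by exactly $\pi$ at every connected component of the fold — not $2\pi$ as you wrote. This is precisely the orientation discrepancy: over the common base $\{f_1\geq c_\epsilon\}$, the graph parametrizations of $L_r$ and $L_s$ induce opposite orientations on $L(\phi)$ near the fold, and reversing orientation shifts the phase by $\pi$. Working with the orientation-independent squared phase $\arg\Omega^{\otimes 2}$ (so that the Maslov class is honestly detected), the discrepancy becomes the universal constant $2\pi$, and setting $\tilde\theta_L=\tilde\theta_r$ on $L_r$ and $\tilde\theta_L=\tilde\theta_s+\pi$ on $L_s$ gives a globally continuous real-valued lift, proving $\mu_{L(\phi)}=0$ in $H^1(L,\ZZ)$. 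So your proposal is sound once this computation is carried out; the one imprecision is that the raw discrepancy is $\pi$, with the $2\pi\ZZ$ statement only holding after the orientation adjustment you anticipated.
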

\begin{proof}
	The cycles in homology which arise from $H_1(V(\phi))$ will all live in the zero section, and near these cycles $L(\phi)$ agrees with the zero section and therefore has Maslov class zero. For the other generators of homology, take a Hamiltonian isotopy of our tropical Lagrangian so that in a neighborhood of a cycle $c_{\{v_i, v_j\}}$ the Lagrangian $L(\phi)$ agrees with $N^*U_{\{v_i,v_j\}}/N^*_\ZZ U_{\{v_i, v_j\}}$ (see the proof of \prettyref{prop:Hamiltonianisotopy}). This is a special Lagrangian, and so the Maslov class of $c_{\{v_i, v_j\}}$ is zero as well.
\end{proof}
Hence, every disk with boundary on $L(\phi)$ has Maslov index zero.
As a result, in low dimensions disks will show up in negative dimensional families, and therefore do not appear for regular $J$.
\begin{corollary}
	For $n=1, 2$, the Lagrangians $L(\phi)\subset (\CC^*)^n$ bound no holomorphic disks for regular perturbations.  \label{cor:lowdimensionnodisk}
\end{corollary}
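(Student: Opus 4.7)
The plan is to apply the standard transversality dimension count for $J$-holomorphic disks, leveraging the vanishing of the Maslov class from the previous corollary. First I would reduce to somewhere-injective disks: any non-constant holomorphic disk $u$ with boundary on $L(\phi)$ factors as $u = u' \circ \varphi$ for some somewhere-injective disk $u'$ and a holomorphic branched cover $\varphi$ of the disk of degree $k \geq 1$, and the Maslov index is multiplicative in the sense that $\mu(u) = k\cdot \mu(u')$. The previous corollary forces $\mu(u) = 0$, so $\mu(u') = 0$ as well, and it suffices to rule out somewhere-injective disks of Maslov index zero.

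Next I would invoke the standard transversality theorem for somewhere-injective $J$-holomorphic disks, applicable in the present non-compact setting via the perturbation framework for the monomial admissible Fukaya category of \cite{hanlon2018monodromy}. For a generic $\omega$-tame $J$, the unparametrized moduli space of somewhere-injective $J$-holomorphic disks with boundary on $L(\phi)$ in a class $\beta$ is a smooth manifold of expected dimension
\[
n + \mu(\beta) - 3 = n - 3.
\]
For $n = 1$ or $n = 2$ this equals $-2$ or $-1$ respectively, forcing the moduli space to be empty. Combined with the reduction to somewhere-injective disks, $L(\phi)$ bounds no non-constant holomorphic disks for regular $J$.

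The argument is essentially a one-line dimension count once the Maslov vanishing is in hand, so no step presents a serious obstacle; the mildest care needed is only to confirm that the standard transversality result applies to our non-compact, monomially admissible Lagrangians, which is built into the framework already cited. The dimension restriction $n \leq 2$ is sharp for this approach: for $n \geq 3$ the expected moduli dimension $n-3$ becomes non-negative, and ruling out the Floer-theoretic effect of disks requires the bounding cochain construction carried out in \prettyref{sec:unobstructedness}.
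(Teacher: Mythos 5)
Your argument is essentially the paper's own proof: the paper simply observes that with vanishing Maslov class, disks appear in negative-dimensional families ($n + \mu - 3 = n - 3 < 0$ for $n \le 2$) and are therefore absent for regular $J$. One small imprecision: for disks (unlike spheres) a non-constant $J$-holomorphic map does not in general factor as a branched cover of a single somewhere-injective disk; the correct statement is the Lazzarini/Kwon--Oh decomposition, which produces a collection of simple disks whose images cover $\operatorname{Im}(u)$. This does not affect your conclusion, since the triviality of the Maslov class forces \emph{every} disk with boundary on $L(\phi)$ to have index zero, so the multiplicativity step is not even needed---it suffices that some simple disk would exist, and simple disks are ruled out by the dimension count.
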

In the general case, we cannot hope for this kind of unobstructedness result.
In \cite{hicks2019tropical}, we show that there exists almost complex structures so that $L(\phi)$ bounds disks of Maslov index 0 by finding a mutation structure on Lagrangian submanifolds.
See \cite[Remark 1.4]{sheridan2018lagrangian} for a similar discussion on the presence of holomorphic disks on tropical Lagrangians.
Instead, we show that the pearly $A_\infty$ algebra of this Lagrangian cobordism is unobstructed by bounding cochain.

\subsection{Pearly Model for Lagrangian Floer Theory}
\label{subsec:assumptions}
To define a bounding cochain, one needs to associate to a Lagrangian $L\subset X$ a filtered $A_\infty$ algebra $\CF(L)$.
We choose to use the pearly model for the $A_\infty$ algebra, where the Morse $A_\infty$ algebra $\CM(L, h)$ is deformed by insertion of holomorphic disks at the vertices of flow trees \cite{fukaya1993morse}.
The construction of this algebra has been carried out with a variety of conditions placed for Lagrangians $L$ and symplectic manifolds $X$ \cite{biran2008lagrangian,charest2015floer,lipreliminary}. 
Unfortunately, the setting which we are interested in --- pearly Floer cohomology for non-monotone and non-compact Lagrangian submanifolds --- has not to our knowledge been constructed in the literature. 
Rather, existing definitions of the pearly complex each cover a portion of these requirements. 

\subsubsection*{Existing Constructions and Their Properties:}
We give an example of a model where all the properties of the Floer cohomology we will need have been checked, albeit with stricter requirements on the Lagrangian submanifolds than we will have in our application.
In \cite{biran2008lagrangian} and the related work \cite{biran2013lagrangian}, a quantum homology algebra $QH_\bullet(L, h)$ is constructed for Lagrangians $L$ equipped with Morse functions $h$.
We will use cohomological indexing $QH^\bullet(L, h)$ as opposed to homological indexing to remain consistent with the notations of this paper.
These Lagrangians $L\subset X$ are allowed to be non-compact, provided that the Lagrangians $L$ are monotone and convex at infinity.
The  Lagrangian Floer theory that they construct is well defined in Lefschetz fibrations, and exhibits a decomposition rule.
\begin{definition}
	Let $W:X\to \CC$ be a symplectic Lefschetz fibration, and $L\subset X$ a Lagrangian submanifold.
	We say that $L$ is \emph{bottlenecked} at $z_0\in \CC$ if
	\begin{itemize}
		\item $W(L)\cap B_\epsilon(z_0)$ is an embedded curve in $B_\epsilon(z_0)\subset \CC$ which passes through $z_0$,
		\item The fiber $L_{z_0}:=L\cap W^{-1}(z_0)$ disconnects $L$, so that we may write $L\setminus L_{z_0}=L^-\sqcup L^+$. 
		Furthermore, the projections of these components are disjoint:
		\[W(L^-)\cap W(L^+)=\emptyset\]
	\end{itemize}
	We say that a Morse function $h:L\to \RR$ has a bottleneck at $z_0$ if, after identifying $L\cap W^{-1}(B_\epsilon(z_0))= L_{z_0}\times I$,
	\begin{itemize}
		\item The gradient of $h$ points outward along the boundaries of $L_{z_0}\times I\subset L$,
		\item The critical points of $h$ contained in the bottleneck region all project to the bottleneck value, \[W(\Crit(h)\cap(L_{z_0}\times I))=z_{0},\]
		\item Every flow line $\gamma(t)$ of $\grad(h)$ either flows out of the pinched region,
			\[\lim_{t\to\infty} W(\gamma(t))\not\in B_\epsilon(z_0)\]
			or has constant projection under $W$,
			\[W(\gamma(t))=z_0.\]
	\end{itemize}
\end{definition}
Bottlenecks constrain holomorphic disks which pass through the bottleneck region to lie in fibers of $W$, and bottlenecked Morse functions similarly have flow lines constrained to either $L_{z_0}, L^+$ or $L^-$.
\begin{lemma}[(Paraphrasing Lemma 4.2.1 of \cite{biran2013lagrangian})]
		Suppose that $W:X\to \CC$ is a symplectic Lefschetz fibration. 
		Let $L\subset X$ be a monotone Lagrangian submanifold with bottleneck at  $z_0$.
		There exists a regularizing perturbation for the Cauchy-Riemann equation so that all solutions to the perturbed pseudo-holomorphic disk equation $u: (D^2, \partial D^2)\to (X, L)$ are regular and will either have  $W(u)\subset \CC\setminus B_\epsilon(z_0)$, or $W(u)=z$, a point. 
		\label{lemma:constrainedtofiber}
\end{lemma}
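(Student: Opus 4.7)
The plan is to exploit the Lefschetz fibration structure by choosing an almost complex structure $J$ on $X$ for which $W$ becomes pseudo-holomorphic over the bottleneck region $W^{-1}(B_\epsilon(z_0))$, i.e.\ $dW \circ J = j \circ dW$ where $j$ denotes the standard complex structure on $\CC$. For any such $J$, the composition $v := W \circ u : D^2 \to \CC$ of a $J$-holomorphic disk $u : (D^2, \partial D^2) \to (X, L)$ is a classical holomorphic map with $v(\partial D^2) \subset W(L)$. This reduces the dichotomy to a planar complex-analytic statement about holomorphic disks with boundary on the embedded arc $W(L) \cap B_\epsilon(z_0)$. The class of $W$-compatible $J$ is large — the fiberwise components may be varied freely — so a Sard--Smale argument in the style of \cite{lipreliminary, charest2015floer} still gives the necessary abundance of regular $J$.

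For the dichotomy itself I would argue as follows. After a small generic perturbation we may assume $u(\partial D^2) \cap L_{z_0} = \emptyset$, so the boundary loop of $u$ lies in $L \setminus L_{z_0} = L^- \sqcup L^+$ and, by connectedness, is contained entirely in one side, say $L^-$. Then $v(\partial D^2) \subset W(L^-) \subset \CC \setminus \{z_0\}$. The bottleneck hypothesis says $W(L) \cap B_\epsilon(z_0)$ is an embedded arc through $z_0$ with $W(L^-)$ and $W(L^+)$ lying on opposite half-arcs, so locally $W(L^-)$ is confined to one side of this separating curve and, by the disjointness $W(L^-) \cap W(L^+) = \emptyset$, the winding number of $v(\partial D^2)$ around $z_0$ must vanish. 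By the argument principle, $v$ does not attain the value $z_0$ in $\text{int}(D^2)$, so $\log|v - z_0|$ is harmonic on $D^2$ and attains its minimum on $\partial D^2$. Choosing the neck radius sufficiently small relative to the distance from $v(\partial D^2)$ to $z_0$, the minimum principle yields $v(D^2) \cap B_\epsilon(z_0) = \emptyset$, which is the first alternative. The degenerate case in which $v(\partial D^2)$ approaches $z_0$ is handled by Schwarz reflection: straightening the arc to a segment of the real axis by a local biholomorphism, $\mathrm{Im}(v)$ vanishes on a portion of $\partial D^2$ and extends holomorphically across; the identity theorem then forces $v$ to be constant, placing $u$ inside a single fiber $W^{-1}(z)$, which is the second alternative.

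The principal analytic obstacle is the vanishing of the winding number of loops in $W(L^-)$ around $z_0$. This is not a purely topological fact about the image but requires the full geometric hypothesis that $L_{z_0}$ separates $L$ and that $W(L^-)$ is locally confined to one side of the arc $\gamma = W(L) \cap B_\epsilon(z_0)$ — essentially a Lagrangian version of the one-sidedness used in the cut-off arguments of Section 4 of \cite{biran2013lagrangian}. Finally, regularity of both classes of solutions follows from a two-stage genericity argument: vary $J$ among $W$-compatible almost complex structures on $X$ to achieve transversality for non-fiberwise disks, and separately vary $J$ within the regular fibers of $W$ to regularize disks that lie in a single fiber.
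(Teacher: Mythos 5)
Your overall framework—choosing a $W$-compatible $J$ over the bottleneck region so that $v = W\circ u$ is honestly holomorphic, then running a planar argument—is the right reduction, and it is consistent with the paper's remark that the proof rests on the open mapping principle applied to $v$. The planar argument itself, however, has gaps. The winding-number step is not justified: the disjointness $W(L^-)\cap W(L^+)=\emptyset$ by itself does not control the winding of a loop in $W(L^-)$ around $z_0$, because the arc $\gamma = W(L)\cap B_\epsilon(z_0)$ only separates locally, and $v(\partial D^2)$ can leave $B_\epsilon(z_0)$ and wrap around $z_0$ through the large-scale topology of $W(L^-)$. In the applications at hand, $L^+$ is the non-compact end with $W(L^+)\subset\RR_{\geq 0}$ unbounded, so that $\overline{W(L^+)}\cup\{z_0\}$ is a closed connected set from $z_0$ to infinity disjoint from $W(L^-)$; that is what actually kills the winding number, and it has to be invoked explicitly.

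The minimum-principle step fails independently. Granting $v(D^2)\not\ni z_0$, the minimum principle gives only $|v(w)-z_0|\geq \min_{\partial D^2}|v-z_0|$, and this quantity is not under your control: $\overline{W(L^-)}$ contains a half-arc of $\gamma$ accumulating to $z_0$, so boundary values of $v$ can be arbitrarily close to $z_0$ while staying in $W(L^-)$. The radius $\epsilon$ is fixed by the bottleneck data; you cannot shrink it relative to a boundary distance that can itself be smaller than any chosen $\epsilon$, so the conclusion $v(D^2)\cap B_\epsilon(z_0)=\emptyset$ does not follow from missing the single point $z_0$. Likewise the Schwarz reflection step concludes too quickly: extending $v$ across a boundary arc does not by itself force $v$ to be constant without an additional maximum-principle or open-mapping input. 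The intended argument replaces these three steps by a single open-mapping analysis: if $v$ is non-constant, every connected component of $v^{-1}(B_\epsilon(z_0)\setminus\gamma)$ has boundary meeting $\partial B_\epsilon(z_0)$ (a component with boundary entirely on $\gamma$ would force $v$ constant by the maximum principle applied to $\mathrm{Im}(v)$ after straightening $\gamma$), and the openness of $v$ together with the one-sidedness near $z_0$ rules out the disk descending into the bottleneck. Rewriting your proof around that dichotomy on components, rather than the argument principle plus minimum principle, would close the gaps.
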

The proof uses the open mapping principle.
In the setting where one uses a Hamiltonian perturbed Cauchy-Riemann equation, one must show that this perturbation can be chosen to preserve the open mapping principle in a region of the bottlenecks. 
\begin{corollary}
	If $L\subset X$ is compact, monotone, and bottlenecked at $z_0$ so that $L=L_-\sqcup L^+\sqcup L_{z_0}$, there exists a choice of  Hamiltonian perturbation so that every perturbed pseudo-holomorphic disk with boundary on $L$ is regular and the boundary is contained in $L_-, L_{z_0}$ or $L_+$. 
	\label{cor:bottleneckdiskboundary}
\end{corollary}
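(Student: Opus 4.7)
The strategy combines \prettyref{lemma:constrainedtofiber} with the topological disjointness hypothesis $W(L^-)\cap W(L^+)=\emptyset$ and the connectedness of $\partial D^2$. The lemma already asserts that every perturbed disk $u$ is regular and that either $W\circ u(D^2)\subset \CC\setminus B_\epsilon(z_0)$ or $W\circ u$ is constant, so the task is to translate this dichotomy into a statement about $u(\partial D^2)$.

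First I would invoke \prettyref{lemma:constrainedtofiber}, taking care to choose the Hamiltonian perturbation $H_t$ to be supported away from $W^{-1}(B_{\epsilon/2}(z_0))$. On that bottleneck neighborhood $u$ then satisfies the unperturbed pseudo-holomorphic equation, so $W\circ u$ is honestly holomorphic there and the open mapping argument used to prove the lemma applies verbatim. Regularity of the moduli spaces can still be achieved by choosing $H_t$ generically on the complement of the bottleneck.

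For the case analysis: if $W(u(D^2))\subset \CC\setminus B_\epsilon(z_0)$, then
\[
u(\partial D^2)\subset L\cap W^{-1}(\CC\setminus B_\epsilon(z_0))=L\setminus (L_{z_0}\times I)\subset L^-\sqcup L^+,
\]
where the last inclusion uses $L_{z_0}\subset L_{z_0}\times I$. Since $u(\partial D^2)$ is a continuous image of the connected set $\partial D^2$, its image $W\circ u(\partial D^2)$ is a connected subset of the disjoint union $W(L^-)\sqcup W(L^+)$, hence is contained in exactly one of them; pulling back, $u(\partial D^2)\subset L^-$ or $u(\partial D^2)\subset L^+$. If instead $W\circ u\equiv z$ is constant, then $u(\partial D^2)\subset L\cap W^{-1}(z)$: for $z=z_0$ this is $L_{z_0}$, and for $z\neq z_0$ the disjointness forces $z$ to belong to exactly one of $W(L^\pm)$, so the boundary lies in the corresponding piece.

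The main obstacle is the compatibility step: one must verify that $H_t$ can be chosen to vanish near $W^{-1}(B_\epsilon(z_0))$ while simultaneously regularizing every moduli space of disks with boundary on $L$. This is the place where genuine care is required, since disks constrained to fibers in the bottleneck region must be regularized through the fiber-direction perturbation alone. The monotonicity hypothesis on $L$ combined with the fibered description of such disks from \prettyref{lemma:constrainedtofiber} should allow one to treat fibered disks and non-fibered disks separately in a standard Biran--Cornea style argument.
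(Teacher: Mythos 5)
Your proposal is correct and matches the paper's intended argument --- the corollary is stated without a separate proof as an immediate consequence of Lemma~\ref{lemma:constrainedtofiber}, and the reasoning is exactly your dichotomy on $W(u)$ combined with the connectedness of $\partial D^2$. Two minor remarks: the regularizing perturbation with the stated dichotomy is already part of the conclusion of Lemma~\ref{lemma:constrainedtofiber}, so your closing paragraph about choosing $H_t$ supported away from the bottleneck is re-deriving the lemma rather than adding a needed step for the corollary; and the detour through the ``disjoint union'' $W(L^-)\sqcup W(L^+)\subset\CC$ is slightly fragile, since mere disjointness of two subsets of $\CC$ does not force a connected set in their union to lie in one piece --- it is cleaner to note directly that $u(\partial D^2)$ is a connected subset of $L\setminus L_{z_0}$, whose components $L^-$ and $L^+$ are open, so the boundary lies in exactly one of them.
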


The application of this lemma is two-fold.
The first is to show that the pearly-complex for $QC^\bullet(L,h)$ is well defined for an admissible class of non-compact Lagrangian submanifolds and Morse functions.

\begin{definition}
	Let $W:(\CC^*)^n\to \CC$ be a symplectic Lefschetz fibration.
	A Lagrangian $L$ and Morse function $h:L\to \RR$ is \emph{$W$-admissible} if 
	\begin{itemize}
	\item There exists a compact subset $V\subset L$ outside which $W(L\setminus V)\subset \RR_{\geq 0}$. 
	\item The Morse function $h$ is bottlenecked at $z_\infty\in W(L\setminus V)$
	\item $W(\Crit(h))\subset W(V)\cup\{z_\infty\}$.
	\end{itemize}
	\label{def:wadmissible}
\end{definition}
The corollary can be used to prove a Gromov-compactness result for $W$-admissible monotone Lagrangian submanifolds of $X$, as every pearly trajectory with output on a critical point will have holomorphic disk insertions with boundary contained in $W(V)\cup[0, z_\infty]$.
\begin{theorem}[(Paraphrasing Theorem 2.3.1 of \cite{biran2008lagrangian})]
	For $W$-admissible monotone Lagrangian submanifolds, the quantum chain complex $QC^\bullet(L, h)$ is a deformation of the Morse complex $CM^\bullet(L, h)$.
	The homotopy type of the complex is independent of choices made during the construction of $QC^\bullet(L, h)$, including the Morse function.
\end{theorem}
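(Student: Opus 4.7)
My plan is to import the Biran--Cornea construction of the pearly quantum complex, then localize every argument that used compactness of $L$ to a compact subset produced by the $W$-admissibility hypothesis. The structure of the proof will closely follow Theorem 2.3.1 of \cite{biran2008lagrangian}, and the whole issue will be to establish the Gromov-type compactness and transversality statements needed to define $m^1$, verify $(m^1)^2 = 0$, and construct continuation maps, all under the weaker hypothesis that $L$ is only compact after the bottleneck cutoff. The bottleneck condition is tailor-made to do exactly this.

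First I would fix a $W$-admissible pair $(L,h)$ with bottleneck at $z_\infty$, pick a compatible almost complex structure $J$ (cylindrical on the relevant ends), and write down the pearly moduli spaces $\mathcal P(p_-, p_+; A)$ consisting of tuples of negative gradient flow segments of $h$ joined by perturbed $J$-holomorphic disks of total class $A\in H_2(X,L)$, with asymptotic critical points $p_\pm \in \Crit(h)$. By \prettyref{def:wadmissible} every critical point lies either in the relatively compact set $V$ or on the bottleneck fiber $L_{z_\infty}$; in particular $\Crit(h)$ is finite. Monotonicity together with $\ind(p_-) - \ind(p_+) = \deg A$ bounds the symplectic area of disks appearing in rigid trajectories, so a priori only finitely many classes $A$ contribute in each degree.

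The main step is compactness of the $0$- and $1$-dimensional strata. Here is where the bottleneck does all the work. \prettyref{lemma:constrainedtofiber} and \prettyref{cor:bottleneckdiskboundary} force every (perturbed) holomorphic disk either to stay in the compact region $W^{-1}(W(V)\cup[0,z_\infty])\cap L$, or to lie entirely in the fiber $L_{z_\infty}$; in either case the image of the disk sits in a fixed compact subset of $X$. Simultaneously, the bottleneck condition on $h$ forces every flow line whose endpoints are either critical points of $h$ or boundary-marked points of admissible disks to remain in the precompact region $V\cup (L_{z_\infty}\times I)$: a flow line cannot escape into the cylindrical end $W(L\setminus V)\subset \RR_{\ge 0}$ because $\grad h$ points outward at $\partial(L_{z_\infty}\times I)$ and there are no critical values beyond $z_\infty$ to flow into. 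Combining these two bounds, every pearly configuration in $\mathcal P(p_-,p_+;A)$ is supported in a fixed compact subset of $L$; standard Gromov compactness then gives a compactification $\overline{\mathcal P}(p_-,p_+;A)$ by broken pearly trajectories, exactly as in the compact monotone case. Transversality is handled by a standard perturbation of $J$ (or of the Hamiltonian perturbation used in \prettyref{cor:bottleneckdiskboundary}) supported away from the bottleneck fiber, which is compatible with the constraints because the disks forced to lie in $L_{z_\infty}$ have their own regularity scheme; monotonicity plus $\dim L\le $ (whatever) excludes disk bubbling in codimension one as in \cite{biran2008lagrangian}.

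With these inputs, the definition of $m^1$ by counting rigid pearly configurations and the identification of the boundary of $1$-dimensional moduli with the usual Morse/disk breaking configurations goes through verbatim, proving $(m^1)^2 = 0$ and that the associated spectral sequence is a deformation of $CM^\bullet(L,h)$. For independence of choices, I would run the standard continuation-map argument: given two $W$-admissible pairs $(J_0,h_0)$ and $(J_1,h_1)$, interpolate through a path of pairs each of which is $W$-admissible (the bottleneck condition is preserved under compactly supported deformations, and interpolations of Morse functions bottlenecked at the same $z_\infty$ remain bottlenecked). The resulting parametrized pearly moduli spaces satisfy the same compactness and transversality statements by the bottleneck argument above, yielding a chain homotopy equivalence. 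The only real obstacle is the $J$-dependent regularization of disks lying in the bottleneck fiber $L_{z_\infty}$, which is a monotone Lagrangian in its own right and can be handled by Biran--Cornea's existing scheme. This completes the extension.
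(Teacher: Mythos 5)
Your proposal is correct and takes essentially the same approach as the paper: the paper gives no formal proof, only a one-sentence sketch preceding the theorem ("every pearly trajectory with output on a critical point will have holomorphic disk insertions with boundary contained in $W(V)\cup[0,z_\infty]$"), and your write-up fills in exactly the expected details of that sketch — using \prettyref{lemma:constrainedtofiber} and the bottleneck on the Morse flow to confine pearly configurations to a compact region, then importing Gromov compactness, transversality, and continuation arguments from the compact monotone case of \cite{biran2008lagrangian}.
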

The second application of bottlenecks is an algebraic decomposition of the Lagrangian quantum cohomology.
Let $L$ be a $W$-admissible Lagrangian submanifold bottlenecked at $z_0$.
The condition of being bottlenecked means that our Lagrangian $L$ looks like the concatenation of two Lagrangian cobordisms in a neighborhood of $z_{0}$.
Let $L\setminus L_{z_0}= L^-\cup  L^+$.
Now, pick $h$ a Morse function for $L$  bottlenecked at $z_0'\in W(L^-)\cap B_\epsilon(z_0)$.
Recall that if $L^+$ is a manifold whose closure has boundary, the Morse cohomology relative to the boundary of the closure is computed by taking a Morse function whose flow is transverse and points inwards from the boundary $\partial \bar  L^+$. 
The Morse complex of  $(L, h)$ splits as a vector space
\[
	\CM(L, h)=\CM(L^-, h|_{L^-})\oplus \CM(L^+, \partial L^+,h|_{L^+}).
\]
In this setup, the flow of $h$ is transverse to the fiber $L_{z_0}$, and flows from the $L^-$ component to the $L_+$ component. 
As a result, $\CM(L^+, \partial L^+)$ is a subcomplex (in fact, an $A_\infty$ ideal) of $\CM(L)$, and
\[
	\CM(L^-, h|_{L^-})= \CM(L, h)/\CM(L^+, \partial L^+, h|_{L^+}).
\]
Because \prettyref{cor:bottleneckdiskboundary} constrains disk boundaries inside $L^+$, every treed disk which intersects $L^+$ must have boundary completely contained within $L^+$.
This extends the Morse decomposition of the Lagrangian quantum cohomology :
\begin{prop}
	Let $L\subset X$ be a monotone $W$-admissible Lagrangian submanifold.
	Then for the Morse function $h$ described above we have a splitting of the pearly complex as a vector space:
	\[
		QC^\bullet(L, h)=QC^\bullet(L^-, h|_{L^-})\oplus QC(L^+, \partial L^+, h|_{L^+}).
	\]
	Furthermore, $QH^\bullet(L^+, \partial L^+,h|_{L^+})$ is an ideal of $QH(L,h)$. 
	\label{prop:knownsplitting}
\end{prop}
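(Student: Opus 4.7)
The plan is to bootstrap off a straightforward Morse-theoretic vector space decomposition and then upgrade it to compatibility with the pearly operations using the two bottleneck constraints: the bottleneck of $h$ controls Morse flow lines, while the bottleneck of $L$ at $z_0$ (via \prettyref{cor:bottleneckdiskboundary}) controls disk boundaries.

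First I would establish the chain-level vector space splitting. By hypothesis every critical point of $h$ inside the bottleneck region $L_{z_0}\times I$ projects to $z_0'\in W(L^-)$, so in particular no critical points lie on the separating fiber $L_{z_0}$. Consequently each critical point of $h$ belongs to either $L^+$ or $L^-$, and the Morse chain complex decomposes as a direct sum of graded vector spaces
\[
\CM(L,h)=\CM(L^-, h|_{L^-})\oplus \CM(L^+, \partial L^+, h|_{L^+}),
\]
where the second summand is the relative Morse complex, well defined because $\grad h$ points transversely outward along $\partial L^+=L_{z_0}$. Since the pearly complex has the same underlying generators as the Morse complex, this decomposition lifts to $QC^\bullet(L,h)$ as a splitting of graded vector spaces.

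Next I would show that the second summand is an $A_\infty$ ideal. A generator of the operation $m^d$ is a treed pseudo-holomorphic disk whose edges are flow segments of $-\grad h$ on $L$ and whose interior nodes are pseudo-holomorphic disks with boundary on $L$. The claim is that any such tree with at least one input at a critical point of $L^+$ has output in $L^+$. The two mechanisms by which a tree could ``cross'' the fiber $L_{z_0}$ between $L^+$ and $L^-$ are (i) a Morse flow segment of $-\grad h$ crossing $L_{z_0}$, or (ii) a disk whose boundary components lie partly in $L^+$ and partly in $L^-$. Mechanism (i) is excluded by the bottleneck definition of $h$, whose flow lines in the bottleneck region either remain in the fiber $L_{z_0'}$ or exit without switching sides, so a flow line entering from $L^+$ cannot terminate in $L^-$. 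Mechanism (ii) is excluded by \prettyref{cor:bottleneckdiskboundary} applied to the bottleneck of $L$ at $z_0$, which constrains every perturbed pseudo-holomorphic disk boundary to lie entirely in one of $L^-$, $L_{z_0}$, or $L^+$.

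Traversing the tree from an input in $L^+$, each Morse segment remains in $L^+$, each disk node has boundary in $L^+$ so its outgoing marked point is again in $L^+$, and by induction the entire tree lies in $L^+$; in particular the output critical point belongs to $L^+$. This proves the ideal property: $\CM(L^+,\partial L^+, h|_{L^+})$ is preserved by the pearly operations whenever one of the inputs lies in it, and the quotient is identified with $\CM(L^-, h|_{L^-})$ with its inherited quotient $A_\infty$ structure, whence the splitting passes to cohomology. The main obstacle I anticipate is purely technical: verifying that the regularizing Hamiltonian perturbations underlying \prettyref{lemma:constrainedtofiber} can be chosen to simultaneously respect the bottleneck of $h$, so that both containment arguments apply at once. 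This reduces to arranging that the perturbation data is supported away from the bottleneck region in $\CC$, which is compatible with the perturbation scheme recalled from \cite{biran2013lagrangian}.
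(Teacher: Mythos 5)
Your proposal follows essentially the same approach as the paper: obtain the vector-space splitting from the bottlenecked Morse function via relative Morse theory, then invoke \prettyref{cor:bottleneckdiskboundary} to confine disk boundaries to one side of $L_{z_0}$, and deduce the ideal property by tracing through the pearly trajectory. One small imprecision: the reason Morse flow segments do not cross $L_{z_0}$ in the wrong direction is not directly the ``stay in the fiber or exit'' clause governing the region near $z_0'$, but the fact that placing the Morse bottleneck at $z_0'\in W(L^-)$ forces $\grad h$ to cross the fiber $L_{z_0}$ transversely from $L^-$ into $L^+$, which is what makes $\CM(L^+,\partial L^+,h|_{L^+})$ a subcomplex in the first place; apart from this attribution the argument matches the paper's.
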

\begin{remark}
	In fact, the decomposition gives a slightly stronger result.
	The restriction $L_{z_0}\subset W^{-1}(z_0)$ is a Lagrangian submanifold of the fiber, and the decomposition of \prettyref{cor:bottleneckdiskboundary} states that there is an inclusion of chain complexes $i_{z_0'}: QC^\bullet(L_{z_0}, h|_{L^-})\to QC^\bullet(L, h)$, which induces a map on the quantum cohomology algebras.
\end{remark}

With these examples from the monotone setting in mind, we take a look at constructions of the pearly model in the non-monotone setting. These constructions are generally more involved, as global geometric perturbations of Floer data are insufficient to achieve transversality of the perturbed Cauchy-Riemann equation.
We will look at the approach to the pearly model which achieves regularity of moduli spaces of holomorphic treed disks through the use of domain dependent perturbations. 

\begin{theorem}(Paraphrasing Theorem 4.1 of \cite{charest2015floer})
	Let $X$ be a compact symplectic manifold with rational symplectic form. Let $L\subset X$ be an embedded Lagrangian submanifold, and $h:L\to \RR$ a Morse function.
	There exists a filtered $A_\infty$ algebra $\CF(L, h)$ whose construction involves choices of stabilizing divisor and domain dependent perturbation data.
	Different choices of perturbation data and divisors produce filtered $A_\infty$ homotopic algebras.
	\label{thm:compactfloerdefinition}
\end{theorem}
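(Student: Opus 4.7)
The plan is to follow the Cieliebak--Mohnke style construction adapted to the open string setting, which is the framework of Charest--Woodward. First I would invoke Donaldson's approximately holomorphic sections theorem to produce, for sufficiently large $k$, a stabilizing divisor $D \subset X \setminus L$ Poincar\'e dual to $k[\omega]$ and disjoint from $L$, arranged so that $L$ is exact in the complement $X \setminus D$. The exactness is the key Gromov compactness input: the symplectic area of any pseudoholomorphic disk $u$ with boundary on $L$ is bounded by $\omega(u) = k^{-1} u \cdot D$, so disks of bounded energy have bounded intersection with $D$.

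Second, I would set up the moduli spaces $\overline{\mathcal M}_{d,m}(L,\beta)$ of stable treed pseudoholomorphic disks with $d$ ordered boundary leaves, $m$ interior markings constrained to map into $D$, and class $\beta \in H_2(X,L)$. Each disk component must be non-constant and meet $D$ transversely, while each tree edge is a (possibly broken) Morse flow line of $h$. Transversality is arranged by a domain-dependent almost complex structure: a perturbation datum is chosen as a section over the universal treed disk depending smoothly on the position in the Deligne--Mumford stratum of the domain, and coherent with respect to the boundary maps coming from edge breaking and disk bubbling. An inductive selection over combinatorial type, following the Cieliebak--Mohnke argument with Fukaya--Oh--Ohta--Ono-type domain dependent perturbations on the boundary, yields a regular coherent perturbation system.

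Third, I would define the $A_\infty$ operations: the input spaces are generated by Morse critical points of $h$ (or chains on $L$), and $m^d(x_1,\ldots,x_d)$ is the signed count of rigid elements of $\overline{\mathcal M}_{d,m}(L,\beta)$ with the appropriate asymptotic conditions, weighted by the Novikov variable $T^{\omega(\beta)}$ and by the symmetry factor $1/m!$ accounting for the unordered interior markings mapping to $D$. The filtered $A_\infty$ relations follow from analysis of the codimension-one boundary of the $1$-dimensional components of these moduli spaces, which consists exactly of Morse edge breakings and disk-node formations that pair up into the quadratic $A_\infty$ relation $\sum m^{d_1+1}(\cdots, m^{d_2}(\cdots),\cdots)=0$.

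Finally, independence of auxiliary data: two sets of coherent perturbation data with the same divisor are joined by a one-parameter family, and the corresponding parametrized moduli spaces assemble into an $A_\infty$ homotopy between the two algebras. Independence of the divisor is the main obstacle; here one chooses $D_0, D_1$ both stabilizing and compares them through a common auxiliary divisor $D'$ of higher degree containing both (after a small Hamiltonian isotopy arranging mutual transversality), using a forgetful morphism on the interior markings and a stabilization-by-extra-marked-points argument to identify the resulting $A_\infty$ structures up to homotopy. The technical heart is the comparison of Gromov compactifications as the divisor changes, since multiplicities of tangencies to $D$ must be accounted for coherently across the parameter family; once this comparison is established, transitivity of $A_\infty$ homotopy yields the asserted well-definedness up to filtered $A_\infty$ quasi-isomorphism.
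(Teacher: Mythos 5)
This statement is cited, not proved, in the paper: it is Theorem~4.1 of Charest--Woodward \cite{charest2015floer}, imported as a black box. What the paper actually argues in this vicinity is the conjectural extension of the compact result to non-compact, $W$-admissible Lagrangians, recorded as Assumptions~\ref{assum:welldefinedfloer} and~\ref{assum:bottleneckideals}, together with a sketch of how $W$-admissible perturbation data and the open mapping principle make that extension plausible. So there is no in-paper proof of the statement you chose for your sketch to be compared against.

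Taken on its own as a summary of the Charest--Woodward construction, your outline hits the main beats correctly: a Donaldson-type stabilizing divisor $D$ in the complement of $L$; the energy bound from positivity of intersection with $D$, which bounds the number of interior markings on a disk of bounded area; stable treed disks with interior markings constrained to $D$ and Morse flow edges; coherent domain-dependent perturbation data chosen inductively over combinatorial type; filtered $A_\infty$ relations from codimension-one boundary strata; and $A_\infty$ homotopy invariance via one-parameter families of perturbation data. The step you treat most loosely is divisor independence, which is genuinely the hardest part of the theorem; the ``auxiliary divisor comparing $D_0$ and $D_1$'' idea is in the right spirit (one relates the two via moduli spaces adapted to both divisors at once), but setting this up carefully --- controlling tangencies, degenerations onto the divisor, and the forgetful maps between the constrained moduli spaces --- occupies a substantial portion of \cite{charest2015floer} and of the Cieliebak--Mohnke work it builds on, and your single sentence is more of a pointer than a proof sketch there. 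Since the paper delegates all of this to the reference, the concretely useful thing for the present article would be to scrutinize the justification offered for Assumptions~\ref{assum:welldefinedfloer} and~\ref{assum:bottleneckideals}, not to re-derive the compact case.
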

A key ingredient in the construction is the stabilizing divisor $D\subset X\setminus L$, which has the property that every disk with positive symplectic area and boundary on $L$ intersects the divisor $D$.
This provides interior marked points on every disk with boundary on $L$, which can be used to stabilize the domain of the disk and construct a domain dependent perturbed Cauchy-Riemann equation.
This allows \cite{charest2015floer} to achieve transversality, as the space of domain dependent perturbations is much larger (and in particular can handle the cases of non-injective points). 

\subsubsection*{Needed constructions and properties of $\CF(L, h)$:}
However, to our knowledge, an analogue of \prettyref{lemma:constrainedtofiber} has not been proved in the non-monotone setting, nor has stabilizing divisor perturbation method been extended to the non-compact setting.
As the construction of this category is beyond the scope of this paper, we will provide a set of properties of the pearly model which we will be using throughout this paper, and an argument sketching how to augment the proof of \cite[Theorem 4.1]{charest2015floer} to these assumptions.

\begin{assumption}(Extension of Theorem 4.1 of \cite{charest2015floer})
	Let $(X, W)$ be a symplectic Lefschetz fibration, and let $(L, h)$ be $W$-admissible. 
	The pearly Floer complex is a filtered $A_\infty$ algebra (\prettyref{def:filteredainfty}) which satisfies the following properties:
	\begin{itemize}
		\item $\CF_{=0}(L, h)$, the zero-valuation portion of the $A_\infty$ algebra, is chain isomorphic to $\CM(L, h)$, the Morse cochain complex of $L$.
		The valuation of this deformation is bounded below by the symplectic area of the smallest holomorphic disk with boundary on $L$ for the choice of regularizing perturbation.
		\item If $(L', h')$ is also $W$-admissible, and $L'$ is Hamiltonian isotopic to $L$, then there exists a continuation $A_\infty$ homotopy equivalence $f:\CF(L, h)\to \CF(L', h')$, which is a deformation of the continuation map for Morse cohomology.
		The valuation of this deformation is bounded below by the symplectic area of the smallest holomorphic disk with boundary on $K_H$, the suspension cobordism of the Hamiltonian isotopy between $L$ and $L'$. 
		\item The $A_\infty$ homotopy equivalence class of $\CF(L, h)$ is independent of choices made in the construction for domain dependent perturbation datum. 
	\end{itemize}
	\label{assum:welldefinedfloer}
\end{assumption}

\begin{assumption}(Extension of \prettyref{prop:knownsplitting} )
	Suppose that $L\subset X$ is a $W$-admissible Lagrangian, with bottleneck and Morse function chosen as in \prettyref{prop:knownsplitting}.
	Then there exists a splitting of vector spaces (arising from the decomposition of Morse theory)
	\[
		\CF(L, h)=\CF(L^-, h|_{L^-})\oplus \CF(L^+, \partial L^+,h|_{L^+}).
	\]
	Furthermore, $\CF(L^+, \partial L^+,h|_{L^+})$ is an $A_\infty$ ideal of $\CF(L,h)$. 
	\label{assum:bottleneckideals}
\end{assumption}

\emph{Justification of \prettyref{assum:welldefinedfloer}.}
We outline how the compactness result  \cite[Theorem 4.27]{charest2015floer} could be extended to the case where $(L, h)$ is admissible and $X$ is non-compact.
Pick a connected  subset $V'\subset \CC$ such that $V\subset V'$, $z_\infty\in V'$, and the gradient of $h$ points outward along $\partial V$. 
A \emph{perturbation system} \cite[following Definition 4.26]{charest2015floer} for a combinatorial type $\Gamma$ of flow tree is a choice of domain-dependent Morse functions
\[F_\Gamma:\overline{\mathcal T}_{\circ, \Gamma}\times L\to \RR\]
and domain-dependent almost complex structures 
\[J_\Gamma:\overline{\mathcal TS}_{\circ, \Gamma}\times L\to \End(TX).\]
We say that a perturbation system is $W$-admissible if (in addition to satisfying the requirements of \cite[Definition 4.10]{charest2015floer}) these perturbations are trivial outside of the compact set $V'$, 
\begin{align*}  
	F_\Gamma((C,x),z)|_{W(z)\not\in V'}=\pi_2^*h && J_\Gamma((C, x),z)|_{W(z)\not\in V'}=\pi^*_2J.
\end{align*}
where $\pi_2$ is projection onto the $L$-factor of $\overline{\mathcal T}_{\circ, \Gamma}\times L$ or $\overline{\mathcal TS}_{\circ, \Gamma}\times L$.
The space of such perturbations is denoted  $\mathcal P^l_\Gamma(X, D)$.
As outside of the region $V'$ the perturbations are not domain dependent, the open mapping principle may still be applied to show that the  disk components of treed disks may not escape the region $V'$.
As the flow of $h$ points outward of $V'$, we obtain that every $W$-admissibly perturbed holomorphic treed disk with boundary on $L$ has projection contained within $V'$. 

It remains to show that we can extend the results of \cite[Theorem 4.19]{charest2015floer} to find a co-meager subset of $W$-admissible perturbation data for which the domain-perturbed Cauchy-Riemann equations are regular.
The idea, following the original proof, is to consider the space of maps of treed disks and $W$-admissible perturbation datum,
\[\mathcal B^i_{k,p,l, \Gamma}=\mathcal M_\Gamma^i\times \Map^{k,p}_\Gamma(C, X, L, D)\times \mathcal P^l_\Gamma(X, D)\]
where $\Map^{k,p}_\Gamma(C, X, L, D)$ is the space of maps with prescribed incidences at boundaries and interior marked points and $W^{k,p}$ Sobelov regularity. 
$\mathcal M_\Gamma^i$ is an open set of $\mathcal M_\Gamma$, the moduli space of treed disks.
We then consider a Banach bundle $\mathcal E^i_{k,p,l, \Gamma}\to \mathcal B^{i}_{k,p,l,\Gamma}$ with a section $\bar\partial_\Gamma: \mathcal B^{i}_{k,p,l,\Gamma}\to  \mathcal E^{i}_{k,p,l,\Gamma}$ encoding the $(F, J)$-perturbed Cauchy-Riemann and Morse equations. 
The \emph{local universal moduli space} \cite[Equation 4.15]{charest2015floer} is the zero locus of $\bar \partial^{-1}(\mathcal B^{i}_{k,p,l,\Gamma})$.
The requirement that our perturbations be $W$-admissible implies that all points $(m, u, J, F)\in \bar \partial^{-1}(\mathcal B^{i}_{k,p,l,\Gamma})$ will correspond to maps $u$ with image   $W(u)\subset V'$ by the open mapping principle.
Regularity for a co-meager set of perturbations is equivalent to the local universal moduli space being cut out transversely. 
At points $(m, u, J, F)\in \bar \partial^{-1}(\mathcal B^{i}_{k,p,l,\Gamma})$ the check of transversality proceeds as in the original proof of \cite[Theorem 4.19]{charest2015floer}, as  achieving surjectivity of the linearized $\bar \partial$ operator is possible using perturbations with support in a neighborhood of the image of $u$.
\endproof

Provided that $W$-admissible perturbations can be used to give an extension of \cite[Theorem 4.1]{charest2015floer}, we can give a version of \prettyref{cor:bottleneckdiskboundary}.
\begin{definition}
	Consider a $W$-admissible Lagrangian submanifold $L\subset X$, with choice of Morse function as in the setup of \prettyref{assum:bottleneckideals}. We say that a labelling $\{y;x_1, \ldots, x_k\}\subset \Crit(h)$ is a \emph{split labelling} if $y\in L^-$ and at least one of the $x_i$ are in $L^+$.
\end{definition}
\begin{prop}
	There exists a choice of coherent domain-dependent perturbations so that whenever $\underbar x = \{y;x_1, \ldots, x_k\}$ is a split labelling for a treed disk $\Gamma$, the moduli space of treed disks $\mathcal M_\Gamma(L, D)$ is empty.
\end{prop}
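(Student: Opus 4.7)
The plan is to choose perturbations that not only satisfy $W$-admissibility (so that the open mapping principle applies outside the compact perturbation region) but also leave the geometry of the bottleneck completely undeformed. Concretely, pick a neighborhood $V'' \subset V'$ of the bottleneck region $L_{z_0}\times I$ whose projection $W(V'')$ sits inside $B_\epsilon(z_0)$, and further restrict $\mathcal{P}^l_\Gamma(X,D)$ to perturbation data $(F_\Gamma, J_\Gamma)$ that equal $\pi_2^* h$ and $\pi_2^* J$ not only on $W(z)\notin V'$ but also on $W(z)\in B_\epsilon(z_0)$. The transversality argument from the justification of \prettyref{assum:welldefinedfloer} goes through verbatim, since surjectivity of the linearized operator only requires the freedom to perturb in a neighborhood of the image of a non-constant treed disk map, and (by the open mapping principle) such maps never have the bulk of their image in the bottleneck region.

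With such perturbations fixed, the first step is to prove a rigidity statement for the disk components: for any pseudo-holomorphic disk $u$ in a treed configuration, $W\circ u$ is either constant with value $z_0$, or its image avoids $B_\epsilon(z_0)$. This is exactly the extension of \prettyref{lemma:constrainedtofiber} / \prettyref{cor:bottleneckdiskboundary} to the $W$-admissible setting, and it follows because inside $B_\epsilon(z_0)$ the almost complex structure is unperturbed, so $W\circ u$ is holomorphic; together with the disjointness $W(L^-)\cap W(L^+)\cap B_\epsilon(z_0)=\{z_0\}$ and the fact that $W(u(\partial D))\subset W(L)$, the open mapping principle forces $W\circ u$ to be constant equal to $z_0$ or to miss $B_\epsilon(z_0)$ entirely. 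In either case, the boundary $u(\partial D)$ is contained in a single one of $L^-$, $L_{z_0}$, or $L^+$.

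The second step is to propagate this decomposition along the whole treed disk. Label each vertex (disk component) of $\Gamma$ by the piece $L^-$, $L_{z_0}$, or $L^+$ containing its boundary, and label each Morse edge by the piece containing its flow line. Since the Morse function $h$ is bottlenecked at $z_0'\in W(L^-)\cap B_\epsilon(z_0)$, every non-constant flow line of $\nabla h$ either stays inside $L_{z_0'}\subset L^-$ (if it lies over $z_0'$) or leaves $B_\epsilon(z_0)$ and hence stays on one side of the fiber $L_{z_0}$; thus each Morse edge has a well-defined label. At an internal node where edges and a disk meet, all incident labels must agree, because the attaching critical points lie both on the relevant Morse segments and on $\partial u$ of some disk, which are constrained to a single piece. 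Thus the label is locally constant on $\Gamma$, and since $\Gamma$ is connected, all leaves carry the same label as the root $y$.

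The final step assembles the contradiction. The output $y$ lies in $L^-$, so every leaf must be labelled $L^-$; but a split labelling by hypothesis has some $x_i\in L^+$, which forces the label $L^+$ somewhere in $\Gamma$. This incompatibility shows $\mathcal{M}_\Gamma(L,D)=\emptyset$. The main obstacle I expect is the technical compatibility: ensuring that the class of perturbation data with the added constraint (trivial on $W^{-1}(B_\epsilon(z_0))$) is still large enough to achieve regularity for all combinatorial types simultaneously, in a coherent way with respect to strata of $\overline{\mathcal{M}}_\Gamma$. This amounts to checking that on each stratum, at every regular point of the linearized Cauchy--Riemann operator the image of the map lies away from the forbidden region, so that unrestricted perturbations supported in the complement of $W^{-1}(B_\epsilon(z_0))$ still suffice for surjectivity — a routine but unavoidable adaptation of the universal moduli space argument in the justification of \prettyref{assum:welldefinedfloer}.
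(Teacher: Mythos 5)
Your approach diverges from the paper's in a structural way. The paper's argument exploits the iterative structure of the coherent perturbation scheme: with the \emph{unperturbed} data the split-labelled moduli spaces are empty (by the open mapping principle together with the directionality of the Morse flow), hence trivially transverse; the split-label condition is downward closed under the iteration order on combinatorial types (a breaking of a split-labelled $\Gamma$ has at least one split-labelled component); so one builds the coherent perturbation by assigning the trivial data to split labels and freely choosing generic $W$-admissible data for the non-split trees, with no additional constraint on perturbations near the bottleneck. You instead impose a \emph{global} restriction on the entire perturbation class --- trivial also on $W^{-1}(B_\epsilon(z_0))$ --- and then argue by propagating a label along each tree.

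This creates a genuine gap. Your justification that transversality survives the extra restriction --- that non-constant treed disk maps ``never have the bulk of their image in the bottleneck region'' --- fails for disk components with constant projection $W\circ u\equiv z_0$, which is exactly the second branch of the open mapping dichotomy. Such disks lie entirely inside $W^{-1}(z_0)\subset W^{-1}(B_\epsilon(z_0))$, and your constrained class cannot perturb anywhere near their image. These disks can occur in non-split trees (for instance with all inputs and output in $L_{z_0}$ or $L^-$) and in general need regularization; the paper sidesteps this entirely by leaving the perturbation for non-split trees unconstrained in the bottleneck region, relying only on the downward-closedness of the split condition for consistency. A secondary imprecision is in your label-propagation step: the Morse bottleneck condition does not prevent a gradient flow line from crossing $L_{z_0}$ from $L^-$ to $L^+$ --- it only ensures each flow line eventually leaves $B_\epsilon(z_0)$ or has constant projection --- so ``each Morse edge has a well-defined label'' and the ensuing local constancy of the label on $\Gamma$ are not correct as written. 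What actually holds, and what the paper's ideal/subcomplex structure encodes, is that labels are \emph{monotone} along the direction of the Morse flow; that is enough to exclude the split configuration $y\in L^-$ with some $x_i\in L^+$, but it is a weaker statement than constancy and needs to be phrased accordingly.
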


\emph{Sketch of Proof.}
	Suppose that $\Gamma$ has a split labelling. Then, the open mapping principle again implies that for the standard choice of Morse function and almost complex structure, $\mathcal M_\Gamma(L, D)$ is empty, and (therefore trivially!) cut out transversely.
	It remains to show that this can be extended to a coherent choice of perturbations for the entire moduli space. 
	The construction of a coherent perturbation is iterative, using the order induced on the $\Gamma$'s by graph morphisms, (see statement of \cite[Theorem 4.19]{charest2015floer}).
	Whenever $\Gamma'\prec \Gamma$ in this ordering, a split labelling on $\Gamma$ induces a split labelling on $\Gamma'.$
	The most important of these cases proves that perturbation produces coherent boundary strata: if $\Gamma$ is obtain from $\Gamma'$ by gluing at a breaking, either the portion of $\Gamma'$ above the break or the portion of $\Gamma'$ below the break has split label.  
	Since the split label condition is downward closed under the iteration order used to construct coherent perturbation schemes, we can construct a coherent perturbation by initially choosing the trivial perturbation for all the split labels, and then proceeding to choose perturbations for the trees whose labels are not split using \cite[Theorem 4.19]{charest2015floer}.
\endproof
We will use this proposition as a substitute for the stronger statement \prettyref{cor:bottleneckdiskboundary} to prove an analogue of \prettyref{prop:knownsplitting}.

\emph{Justification of \prettyref{assum:bottleneckideals}.}
	The condition that $\CF(L^+, \partial L^+, h|_{L^+})$ be an $A_\infty$ ideal (\prettyref{def:ideal}) is that $m^k(x_1, \ldots, x_k)\in \CF(L^+, \partial L^+, h|_{L^+})$ whenever at least one of the $x_i\in \CF(L^+, \partial L^+, h|_{L^+})$. 
	This can be rephrased as
	\[\langle m^k(x_1, \ldots, x_k), y\rangle=0\]
	whenever $\{y;x_1, \ldots, x_k\}$ is a split label. 
	This is equivalent to the condition that the moduli space of treed disks with split labels is empty. 
\endproof
\begin{remark}
On the abstract perturbation side, the work of \cite{lipreliminary} and \cite{fukaya2000Lagrangian} construct methods for regularizing the moduli space of holomorphic disks using polyfolds and Kuranishi structures respectively. 
Both of these approaches associate to a general compact Lagrangian submanifold $L\subset X$ an $A_\infty$ algebra.
In the second approach, the $A_\infty$ homotopy class of the algebra is shown to be free of choices and invariant under Hamiltonian isotopy. 
In both constructions, the $A_\infty$ relations are a result of a coherent choice of abstract perturbations. 
As in the stabilizing divisor case, these coherent abstract perturbation data are constructed iteratively (see, for instance, \cite[Section 6.2]{filippenko2018polyfold} or \cite[Section 11.5]{lipreliminary},) which should allow the strategy of proof outlined above for the domain-dependent case to be extended to these regularization techniques as well.
The employment of these techniques to prove \prettyref{assum:welldefinedfloer} and \prettyref{assum:bottleneckideals} is an ongoing project of the author. 
\end{remark}
When the Morse function is unimportant, we will simply write $\CF(L)$.

\subsection{Proof of unobstructedness}
As noted before, to prove that the pearly algebra is unobstructed, we will require the extension of the pearly model to the non-compact setting with properties satisfying \prettyref{assum:welldefinedfloer}. 
We will also need the extension of the splitting of Floer theory in symplectic fibrations, as stated by \prettyref{assum:bottleneckideals}.
\begin{prop}
	$\CF(L(\phi))$, the Floer algebra defined by \prettyref{assum:welldefinedfloer} and satisfying \prettyref{assum:bottleneckideals},  is unobstructed by bounding cochain.
	\label{prop:unobstructed}
\end{prop}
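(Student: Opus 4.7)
The plan is to realize $\CF(L(\phi))$ as an ideal summand of the $A_\infty$ algebra of the surgery cobordism $\CF(K(\phi))$, and to bootstrap a bounding cochain for the cobordism from the trivial bounding cochains on the unobstructed input ends $\sigma_0$ and $\sigma_{-\phi}$.

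First I would verify that the two input ends are unobstructed. Both $\sigma_0$ and $\sigma_{-\phi}$ are exact Lagrangian sections diffeomorphic to $\RR^n$: they are graphs of closed $1$-forms on $Q$. Stokes applied to the Liouville primitive shows that any $J$-holomorphic disk with boundary on either section has vanishing symplectic area and is therefore constant. Hence their pearly algebras reduce to $CM^\bullet(\RR^n)$, concentrated in degree $0$, and admit $b=0$ as a trivial bounding cochain.

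Next, after a compactly supported Hamiltonian isotopy that bends each of the three ends of $K(\phi)\subset (\CC^*)^n\times\CC$ so that the projection to $\CC$ traverses a distinguished value $z_{\sigma_0}$, $z_{\sigma_{-\phi}}$, $z_{L(\phi)}$, I would equip $K(\phi)$ with a Morse function simultaneously bottlenecked at all three values. Iteratively applying \prettyref{assum:bottleneckideals} produces the splitting
\[
\CF(K(\phi))=\CF(K_0,\partial K_0)\oplus \CF(\sigma_0)\oplus \CF(\sigma_{-\phi})\oplus \CF(L(\phi)),
\]
in which each end-summand is a filtered $A_\infty$ ideal, with $K_0$ the compact body of the cobordism. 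Consequently, any bounding cochain for $\CF(K(\phi))$ restricts via projection to a bounding cochain for $\CF(L(\phi))$.

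I would then solve the Maurer--Cartan equation for $\CF(K(\phi))$ by the iterative Novikov-valuation procedure recalled in \prettyref{app:ainftyrefresher}. Starting from $b_0=0$, at each stage the obstruction class sits in $HF^2(K(\phi))$ at strictly positive Novikov valuation, the positivity being forced by the Maslov-zero property of $L(\phi)$ established earlier. Invoking the continuation homotopy-invariance of \prettyref{assum:welldefinedfloer} to compare $K(\phi)$ with the trivial cobordism on its input ends identifies $HF^\bullet(K(\phi))$ with the Morse cohomology of a disjoint union of contractible manifolds, so the degree-$2$ obstruction vanishes and the iteration extends. Convergence in the Novikov filtration is ensured by the fact that the bottleneck neck widths provide a uniform positive lower bound on the symplectic area of any disk contributing to the obstruction, so successive obstruction classes have strictly increasing valuation.

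The main obstacle I expect is the homotopy-invariance step identifying $HF^\bullet(K(\phi))$ with the cohomology of its contractible input ends. Since $K(\phi)$ is obtained by Lagrangian surgery with nontrivial neck width and is not a priori exact in $(\CC^*)^n\times\CC$, one must arrange the domain-dependent perturbations so that the continuation map from the surgery cobordism to the trivial cobordism on the ends is a filtered $A_\infty$ quasi-isomorphism in the non-compact setting. This is precisely where the extension hypotheses \prettyref{assum:welldefinedfloer} and \prettyref{assum:bottleneckideals} are doing the heavy lifting, and verifying their inputs in the presence of the neck region is where most of the technical work of the proof will live.
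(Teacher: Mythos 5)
Your general framing---pass the unobstructedness of the exact input ends $\sigma_0,\sigma_{-\phi}$ through the cobordism $K(\phi)$ to reach $L(\phi)$, with the bottleneck splitting of \prettyref{assum:bottleneckideals} doing the organizational work---has the right flavor, and the observation that the sections are tautologically unobstructed because they are exact graphs is correct and used implicitly in the paper. But the engine of your argument fails at the central step, and the failure is not a technical gap that more work would fill.

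The key problem is the claimed vanishing of $HF^2(K(\phi))$. You propose to ``compare $K(\phi)$ with the trivial cobordism on its input ends'' via the continuation maps of \prettyref{assum:welldefinedfloer}, but that assumption only supplies continuation equivalences between \emph{Hamiltonian isotopic} Lagrangians, and the surgery trace $K(\phi):(\sigma_0,\sigma_{-\phi})\rightsquigarrow L(\phi)$ is not Hamiltonian isotopic to $(\sigma_0\sqcup\sigma_{-\phi})\times\RR$---it is not even of the same diffeomorphism type (it has three cylindrical ends, and as a manifold it carries the topology of the surgery handle, so $H^1(K(\phi))$ is generally large, not that of a disjoint union of contractible pieces). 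So the cohomological input to your obstruction-theoretic iteration is simply false, and with it the conclusion that the degree-$2$ obstruction vanishes at each stage. Relatedly, the positivity of the valuation of $m^0$ is not ``forced by the Maslov-zero property'' of $L(\phi)$; it is an axiom of every filtered $A_\infty$ algebra (\prettyref{def:filteredainfty}) and gives you no leverage, and there is no ``iterative Novikov-valuation procedure'' for solving Maurer--Cartan in \prettyref{app:ainftyrefresher}; what the appendix provides is the pushforward of bounding cochains along filtered $A_\infty$ homomorphisms (\prettyref{lemma:pushforwardmad}). Finally, the convergence claim that ``the bottleneck neck widths provide a uniform positive lower bound on the symplectic area of any disk'' is unjustified; the energy gap exists, but a uniform lower bound does not by itself produce a convergent bounding cochain unless you already know obstructions vanish, which is precisely what was in question.

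The paper takes a different route that sidesteps these difficulties. It replaces $L(\phi)$ by a transverse-surgery model $L^{tr}(\phi)$ which agrees with $L(\phi)$ across the bottleneck (\prettyref{prop:ltr}), so that $\CF(L(\phi))$ is a quotient of $\CF(L^{tr}(\phi))$ via the bottleneck ideal. It then constructs a sequence of Hamiltonian isotopic Lagrangians $L^{tr}_\alpha$ whose surgery necks shrink to zero, proves via a Gromov-compactness/neck-stretching argument (\prettyref{prop:sequenceonsurgery}) that bounded-energy disks on the $L^{tr}_\alpha$ would produce polygons on the limiting union $\sigma_0\cup\theta'(\sigma_{-\phi})$, rules those polygons out by an index computation (\prettyref{prop:nodiskindex}), and concludes that the sequence is \emph{eventually unobstructed}. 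The bounding cochain for $L^{tr}_0$ is then obtained as a Cauchy limit of pushforwards of the trivial cochain along continuation maps (\prettyref{lemma:unobstructedinlimit})---this is where \prettyref{lemma:pushforwardmad} is genuinely used, but on an eventually-unobstructed sequence, not on an obstruction-theoretic degree-$2$ vanishing. If you want to salvage your outline, the piece to replace is the appeal to $HF^2(K(\phi))=0$: you would instead need the degeneration-to-transverse-intersection mechanism and the index calculation that shows the limiting configuration bounds no polygons.
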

The proof is in three steps. 
We first describe a geometric relation between $L(\phi)$ and $L^{tr}(\phi)$, the Lagrangian obtained by taking a surgery between $\sigma_0$ and $\sigma_{-\phi}$ after applying a perturbation to make intersections transverse (see \prettyref{prop:ltr}).
We then show that $\CF(L(\phi))$ may be expressed as a quotient of the Floer theory of $\CF(L^{tr}(\phi))$.
Finally, we show that $\CF(L^{tr}(\phi))$ is unobstructed by bounding cochain.
\subsection*{Unobstructedness: some tools}
Before proving \prettyref{prop:unobstructed}, we look at three tools largely independent from the discussion of tropical Lagrangians.
The first is a comparison between our tropical Lagrangians and the geometry of symplectic fibrations.
The second is a statement about bottlenecked Lagrangians and their bounding cochains.
The third is an existence result for bounding cochains on sequences of Lagrangians converging to tautologically unobstructed Lagrangians.
\subsubsection*{Tropical Symplectic Fibrations.}
\label{par:tropicalsymplectic} 
We now summarize a discussion in \cite{hanlon2018monodromy} relating the monomial admissibility condition (\prettyref{def:admissiblitycondition}) the bottleneck condition \prettyref{def:wadmissible}  by a construction from \cite{abouzaid2009morse} called the tropically localized superpotential. 
Starting with an ample divisor $D=\sum v_\alpha D_\alpha$ on $\check X_\Sigma$ with polytope $P=\Delta_{\phi_D}$, the Newton polytope of the support function $\phi_D$, Abouzaid constructs a family of superpotentials $W_{t, 1}: (\CC^*)^n\to \CC$ for $t\geq 1$ and $s\in [0,1]$ such that $W_{\Sigma}=W_{1, 0}$. 
The fiber $M_{t, 1}:=W_{t, 1}^{-1}(1)$ has valuation projection $\val(M_{t, 1})$ which lives close to a tropical variety for $t$ sufficiently large.
Additionally, $Q\setminus \val(M_{t, 1})$ has a distinguished connected component $\mathcal P_{t, 1}$ which can be rescaled to lie close to $P$. 
Near the boundary of $P$, the tropically localized superpotential can be explicitly written as
\[
	W_{t, 1}=\sum_{\alpha} t^{-v_\alpha}(1-\rho_\alpha(z))z^\alpha
\]
where the $\rho_\alpha$ are smooth non-negative real-valued functions. The functions $\rho_\alpha(z)$ only depend on $\val(z)$, and are constructed so that 
\begin{itemize}
	\item Whenever $\val(z)$ is outside a small neighborhood of the dual facet of $\alpha$, $\rho_\alpha(z)=1$. 
	\item Whenever $\val(z)$ is nearby the dual face of $\alpha$, $\rho_\alpha(z)=0$. 
\end{itemize}
The upshot of working with the tropically localized superpotential is the following.
With the usual superpotential, the fiber $W_\Sigma^{-1}(1)$ should roughly have a decomposition into regions where the subsets of monomials of $W_\Sigma$ codominate the other terms. On each of these regions, $W_\Sigma$ is approximately equal to the dominating monomials. 
With the tropically localized superpotential the hypersurface $M_{t, 1}$ similarly admits a decomposition, however $W_{t, 1}$ honestly matches the dominating monomials on each region of domination.
We call $W_{t, 1}$ the tropically localized $W_\Sigma$. 

The monomial admissibility condition for $W_\Sigma=\sum c_\alpha z^\alpha$ only requires that each monomial term $z_\alpha$ dominates in the region $C_\alpha$ after possibly being raised to some power $k_\alpha$. We may assume that the $k_\alpha$ are rational, and therefore find an integer $N$ and rescalings $c_\alpha^N$ of $c_\alpha$ defining a new Laurent polynomial 
\[
	\tilde W_{N\Sigma} := \sum_{\alpha} c_\alpha^N z^{\alpha \cdot k_\alpha\cdot  N}.
\]
Associated to this $\tilde W_{N\Sigma}$,  we obtain a Newton polytope $P_{N}\subset Q$ containing the valuation of points $\val(\tilde W_{N\Sigma}^{-1}(B_1(0)))$.
As we increase $N$, the polytopes $P_N$ scale to cover all of $Q$. 
Therefore, we may additionally assume that $N$ is chosen large enough so that a given monomially admissible Lagrangian $L$ satisfies the monomial admissibility condition in a neighborhood of the boundary of $\val^{-1}(P_N)$. 

Set $W_{t, 1}$ to be the tropically localized $W_{N\Sigma}$. Since the tropically localized superpotential only involves the monomials $z^\alpha$ for which $\val(z)\in C_\alpha$, and $z^\alpha(L)\in \RR_+$ over the region where $L$ meets $\val^{-1}(P_N)$, we may conclude:

\begin{lemma}[(Section 4.4 of \cite{hanlon2018monodromy})]
	Suppose that $L$ is a monomial admissible (in the sense of \prettyref{def:admissiblitycondition}) submanifold.
	Then $L\cap \val^{-1}(P)$ is a $W_{t, 1}$-admissible (in the sense of \prettyref{def:wadmissible}) submanifold with boundary on $M_{t, 1}$. 
	\label{lemma:equivadmissibility}
\end{lemma}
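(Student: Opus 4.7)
The plan is to verify the defining conditions of $W_{t,1}$-admissibility directly from the explicit formula for the tropically localized superpotential, using monomial admissibility as the key hypothesis. First I would unpack the expression
\[
W_{t, 1}=\sum_{\alpha} t^{-v_\alpha}(1-\rho_\alpha(z))z^\alpha
\]
near the boundary of $P$: by construction of the cutoff functions $\rho_\alpha$, for $\val(z)$ in the region $C_\alpha$ associated to the dual facet of $\alpha$, we have $\rho_\beta(z)=1$ for every $\beta\neq \alpha$, so only the single term $t^{-v_\alpha}(1-\rho_\alpha(z))z^\alpha$ survives. Moreover the coefficient $t^{-v_\alpha}(1-\rho_\alpha(z))$ is a non-negative real number.

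Next I would use the monomial admissibility of $L$: by \prettyref{def:admissiblitycondition}, over $\val^{-1}(C_\alpha)$ the argument of $c_\alpha z^\alpha$ along $L$ vanishes outside a compact subset $V_\alpha\subset L\cap \val^{-1}(C_\alpha)$. Since the $C_\alpha$ cover the complement of a compact set in $Q$, the union $V=V_\Sigma\cup\bigcup_\alpha V_\alpha$, where $V_\Sigma$ is a compact set covering the missing region, gives a compact subset of $L\cap\val^{-1}(P)$ outside of which every point lies in some $\val^{-1}(C_\alpha)$ with $\arg(z^\alpha)|_L=0$. Combined with the previous paragraph this shows $W_{t,1}(L\cap\val^{-1}(P)\setminus V)\subset \RR_{\geq 0}$, which is the first requirement of \prettyref{def:wadmissible}.

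For the boundary condition I would argue that by Abouzaid's construction the distinguished component $\mathcal P_{t,1}$ of $Q\setminus\val(M_{t,1})$ may, after rescaling, be taken to coincide with $P$, so that $\val(\partial P)\subset \val(M_{t,1})$; on a neighborhood of $\partial P$ the formula for $W_{t,1}$ shows that the fiber over $1$ meets $\val^{-1}(P)$ exactly at its topological boundary, giving $\partial(L\cap\val^{-1}(P))\subset L\cap M_{t,1}$. The Morse function and bottleneck value $z_\infty=1$ required by \prettyref{def:wadmissible} can then be chosen on $L\cap\val^{-1}(P)$ compatibly with this boundary identification by a local Morse-theoretic modification near $\partial P$, placing all critical points outside a collar neighborhood of the boundary and making $h$ look like a trivial family over an interval in the collar.

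The main obstacle I expect is the bookkeeping in the third step: one must take $N$ (and hence the polytope $P_N$) large enough that the monomial admissibility condition holds in a neighborhood of $\partial P$, so that the preceding argument applies uniformly over the entire boundary collar, and simultaneously confirm that the rescaling identifying $\mathcal P_{t,1}$ with $P$ does not destroy the positivity of $W_{t,1}|_L$. All other parts are direct consequences of the two definitions and the explicit form of $W_{t,1}$.
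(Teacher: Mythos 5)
The paper does not give a proof of this lemma at all — it cites Section~4.4 of Hanlon's paper as the source, and the only justification the text offers is the one-sentence remark immediately preceding the lemma: that the tropically localized superpotential only involves the monomials $z^\alpha$ for which $\val(z)\in C_\alpha$, and $z^\alpha(L)\in\RR_+$ over the region where $L$ meets $\val^{-1}(P_N)$. Your proof is an honest expansion of exactly this idea, so in that sense it tracks the paper's intended argument.

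One local step, however, is stated too strongly. You assert that for $\val(z)\in C_\alpha$ we have $\rho_\beta(z)=1$ for every $\beta\neq\alpha$, so that only a \emph{single} term of $W_{t,1}$ survives. This fails near the lower-dimensional strata of $\partial P$: the sets $C_\alpha$ are subsets of open stars of rays and must overlap, and the cutoff $1-\rho_\beta$ has support in a neighborhood of the dual \emph{facet} of $\beta$, so near the intersection of two facets one can be in $C_\alpha\cap C_\beta$ with both $1-\rho_\alpha$ and $1-\rho_\beta$ nonzero. The correct (and sufficient) statement is the one the paper makes: every monomial $z^\beta$ that survives at such a $z$ satisfies $\val(z)\in C_\beta$, provided $N$ is large and the supports of $1-\rho_\beta$ are narrow enough. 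Since each such term is a non-negative real multiple of a $z^\beta$ whose argument vanishes on $L$ there by monomial admissibility, the full sum $W_{t,1}|_L$ is a sum of elements of $\RR_{\geq 0}$, and the conclusion is unchanged. You in fact already record the positivity of each coefficient $t^{-v_\alpha}(1-\rho_\alpha(z))$, so the fix is small — replace ``only one term survives'' with ``each surviving term has vanishing argument on $L$ over the region where it contributes.''

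On the boundary condition, you treat the identification of $\mathcal P_{t,1}$ with $P$ as exact after rescaling, whereas Abouzaid's construction only gives that $\mathcal P_{t,1}$ lies close to $P$; matching the collar of $L\cap\val^{-1}(P)$ with $M_{t,1}$ therefore requires an additional isotopy or a choice of $t$ sufficiently large, which you leave implicit. You correctly flag as the ``main obstacle'' the bookkeeping that $N$ be taken large enough for monomial admissibility to persist near $\partial P_N$ and that the rescaling preserve positivity. This is indeed where the actual work lies, and the paper itself offers no detail here either, deferring to Hanlon. Overall your proof is sound in approach and fills in more detail than the paper provides, modulo the two points above.
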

See \prettyref{fig:tropicaladmissibility} for a diagram of $W_{t, 1}(\sigma_{-\phi})$ and $W_{t, 1}(L(\phi))$. 

\subsubsection*{Bottlenecked Lagrangians and Bounding Cochains.}
Bottlenecks not only provide a method for producing admissible Lagrangians in the non-compact setting, but they also give a several useful decompositions of the Floer cohomology which can be used to produce bounding cochains.
This is where we will employ the expected property of the pearly algebra stated in \prettyref{assum:bottleneckideals}.
Here are three observations on bottlenecked Lagrangians and bounding cochains. 
\begin{prop}
	Suppose that $L\subset X$ is a $W$-admissible Lagrangian, bottlenecked at $z\in \CC$, giving us a decomposition of $L=L_-\cup L_{z_0}\cup L_+$. 
	Take a Morse function as described in \prettyref{assum:bottleneckideals} which gives us a decomposition 
	\[
		\CF(L, h)=\CF(L^-, h|_{L^-})\oplus \CF(L^+, \partial L^+,h|_{L^+}).
	\]
	Suppose that $L^+=L_{z_0}\times \RR_{>0}$. Then $\CF(L, h)/\CF(L^+, \partial L^+,h|_{L^+})$ is isomorphic to $\CF(L, h)$.
	\label{prop:bottlenecktrivialneck}
\end{prop}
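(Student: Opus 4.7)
The plan is to reduce the statement to the outright vanishing of the ideal $\CF(L^+, \partial L^+, h|_{L^+})$ for a carefully chosen Morse function, and then transfer the conclusion to arbitrary admissible Morse functions via the invariance clause of \prettyref{assum:welldefinedfloer}.

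First, I would use the product structure of $L^+ = L_{z_0}\times \RR_{>0}$ to pick $h|_{L^+}$ of the form $h_0(p)+g(t)$, where $h_0$ is a Morse function on $L_{z_0}$ and $g:\RR_{>0}\to\RR$ is strictly increasing (for example $g(t)=t$). A direct verification shows that this is compatible with both the bottleneck prescription of \prettyref{prop:knownsplitting} and the $W$-admissibility condition of \prettyref{def:wadmissible}: the gradient points outward into $L^+$ along the interior boundary $\partial L^+ = L_{z_0}$, every flow line on $L^+$ escapes any fixed neighborhood of $z_\infty$ along the $t$-direction, and all critical points of $h$ lie in $L^-$ or at the Morse bottleneck. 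Crucially, $h|_{L^+}$ has no critical points in the interior of $L^+$, so the relative Morse cochain complex $\CM(L^+, \partial L^+, h|_{L^+})$ has no generators and is identically zero.

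Next, by the first bullet of \prettyref{assum:welldefinedfloer}, the valuation-zero part of $\CF(L^+, \partial L^+, h|_{L^+})$ is chain-isomorphic to $\CM(L^+, \partial L^+, h|_{L^+})=0$. Since the underlying vector space of the filtered $A_\infty$ algebra is zero, the entire algebra vanishes. Substituting into the splitting of \prettyref{assum:bottleneckideals} collapses $\CF(L,h)=\CF(L^-, h|_{L^-})\oplus 0$, and the quotient $\CF(L,h)/\CF(L^+, \partial L^+, h|_{L^+})=\CF(L,h)/0=\CF(L,h)$ is then tautological.

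Finally, for a general Morse function $h$ satisfying the hypotheses of \prettyref{assum:bottleneckideals}, I would invoke the independence clause of \prettyref{assum:welldefinedfloer}, which guarantees that the filtered $A_\infty$ homotopy type of $\CF(L,h)$ (and therefore of the quotient by the ideal) is unaffected by the specific admissible choice of Morse function. Combined with the vanishing established for the preferred choice, this yields the isomorphism for arbitrary admissible $h$. The main technical obstacle I expect is checking that the product-form Morse function on $L^+$ can be glued to a globally bottlenecked Morse function on $L$ in a way that is compatible with the domain-dependent (or abstract) perturbation data used to define $\CF(L,h)$; this ought to follow from the fact that the bottleneck conditions on the two sides are imposed independently in \prettyref{prop:knownsplitting}, together with the coherent-perturbation scheme underlying the justification of \prettyref{assum:bottleneckideals} sketched in \prettyref{subsec:assumptions}.
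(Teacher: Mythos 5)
Your proof is correct and follows the same route as the paper: the paper's entire argument is the single observation that $L^+ = L_{z_0}\times\RR_{>0}$ admits a Morse function with no interior critical points, from which the vanishing of $\CF(L^+, \partial L^+, h|_{L^+})$ and hence the triviality of the quotient are immediate. Your write-up just fills in the surrounding checks (product-form Morse function, compatibility with the bottleneck conditions, and the transfer to general admissible $h$ via the invariance clause). One small phrasing issue: the vanishing of the valuation-zero quotient $\CF_{=0}$ does not by itself force the whole filtered module to vanish — the cleaner statement is that the underlying module of the pearly complex is generated by $\Crit(h|_{L^+})$, which is empty, so the complex is zero outright; that is what the paper's one-line proof is implicitly using.
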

\begin{proof}
	If $L^+=L_{z_0}\times \RR_{>0}$, the $L^+$ can be given a Morse function with no critical points.
\end{proof}
\begin{prop}
	Let $L_1, L_2$ be two Lagrangian submanifolds which are bottlenecked at the same point $z$. 
	Suppose that $L_1^-=L_2^-$, and the Morse functions $h_1, h_2$ for $L_1$ and $L_2$ agree over that region. 
	Then
	\[
		\CF(L_1, h_1)/ \CF(L^+_1, \partial L^+_1,h_1|_{L_1^+})= \CF(L_2, h_2)/ \CF(L^+_2, \partial L^+_2,h_2|_{L^+_2}).
	\]
	\label{prop:agreebottleneck}
\end{prop}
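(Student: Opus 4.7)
The plan is to apply \prettyref{assum:bottleneckideals} to each of $L_1$ and $L_2$, obtaining vector-space splittings
\[\CF(L_i,h_i) = \CF(L_i^-, h_i|_{L_i^-}) \oplus \CF(L_i^+, \partial L_i^+, h_i|_{L_i^+})\]
in which the second summand is an $A_\infty$ ideal. The two quotients in the statement are thereby identified as vector spaces with $\CF(L_i^-, h_i|_{L_i^-})$, and by the hypothesis that $L_1^- = L_2^-$ with $h_1|_{L_1^-} = h_2|_{L_2^-}$, these underlying complexes are literally equal.

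It then suffices to verify that the induced $A_\infty$ operations on the two quotients agree. The quotient operation $\bar m^k$ sends inputs $x_1,\ldots,x_k \in \CF(L^-, h|_{L^-})$ to the $L^-$-component of $m^k(x_1,\ldots,x_k)$; since the ideal property forbids contributions to an $L^-$ output from any input in $L^+$, this count is taken over pearly trees all of whose inputs and output are labelled by critical points in $L^-$. For such a labelling, the Morse-function choice (flow transverse to $L_{z_0}$ and pointing inward into $L^+$, as in the setup preceding \prettyref{prop:knownsplitting}) ensures that every gradient flow line ending at a critical point of $L^-$ remains inside $L^-$, so the flow-tree data of the contributing trajectories lies in the shared region.

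For the pseudoholomorphic disk components I would invoke the open mapping principle applied to $W\circ u$, following \prettyref{lemma:constrainedtofiber}. The boundary of any such disk is attached at a marked point to a flow line lying in $L^-$, so $W(u(\partial D^2))$ meets $W(L^-)$ in the bottleneck neighborhood. Since $W(L_i)$ is locally an embedded curve through $z$, either $W\circ u$ is constant (so $u$ lies in a fiber of $W$ and its boundary lies in $L_{z_0}$) or its image is on the $L^-$-side of the bottleneck with boundary contained in $L^-$. The fiber $L_{z_0}$ is the topological frontier of $L^-$ and hence common to $L_1$ and $L_2$; in either case the disks involved depend only on data shared by the two Lagrangians, so the moduli spaces defining $\bar m^k$ coincide.

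The principal obstacle I anticipate is that the $A_\infty$ structures are defined via coherent domain-dependent perturbations, and the above identification requires such perturbations to be chosen to agree over a neighborhood of $L^-$ for both $L_1$ and $L_2$. This is a compatibility statement about the inductive perturbation construction outlined after \prettyref{assum:welldefinedfloer} rather than a genuinely geometric difficulty: one fixes a perturbation scheme for all trees whose labels lie entirely in $L^-$ first, and then independently extends over the regions $L_1^+$ and $L_2^+$, consistent with the downward-closed ordering used in the coherent perturbation argument.
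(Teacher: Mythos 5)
Your proposal is correct and takes essentially the same approach as the paper's proof: identify the quotients as vector spaces via \prettyref{assum:bottleneckideals} and then argue that a coherent perturbation scheme can be chosen so that the moduli spaces of treed disks with labels entirely in $L^-_1 = L^-_2$ coincide, thanks to the downward-closed iterative structure of the perturbation construction. The paper's own proof is terser (three sentences); your additional geometric discussion of the open mapping principle and fiber disks spells out what the paper leaves implicit, which is consistent with the paper's machinery.
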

\begin{proof}
	Following the justification of \prettyref{assum:bottleneckideals}, pick perturbations for both $\CF(L^1)$ and $\CF(L^2)$ by first picking perturbations for the non-split labels, and then for the split labels.
	Since $L_1, L_2$ and $h_1, h_2$ match completely upon restriction to $L^-_1=L^-_2$, we can choose regularizing perturbations for non-split labels belonging completely to $\Crit(h)|_{L^-_1}=\Crit(h)|_{L^+_1}$ so that the moduli spaces of disks exactly match.
	Therefore, the $A_\infty$ operations agree for $\CF(L_1, h_1)$ and $\CF(L_2, h_2)$ when restricted to inputs which lie entirely in the negative components. 
\end{proof}
\begin{prop}
	Suppose that $L\subset X$ is bottlenecked at $z\in \CC$. 
	Then if $L$ is unobstructed, $\CF(L, h)/ \CF(L^+, \partial L^+,h|_{L^+})$ is unobstructed as well.
\end{prop}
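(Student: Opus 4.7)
The plan is to observe that this is a purely algebraic consequence of the hypothesis that $\CF(L^+, \partial L^+, h|_{L^+})$ is an $A_\infty$ ideal of $\CF(L,h)$, together with the standard fact that $A_\infty$ homomorphisms push bounding cochains to bounding cochains. I expect the main content to be a clean translation of \prettyref{assum:bottleneckideals} into the language of the appendix on filtered $A_\infty$ algebras; no further holomorphic curve analysis should be required.

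First I would recall that for any $A_\infty$ algebra $A$ and any $A_\infty$ ideal $I \subset A$ (in the sense of \prettyref{def:ideal}), the quotient $A/I$ inherits the structure of a filtered $A_\infty$ algebra, with operations $\bar m^k(\bar x_1, \ldots, \bar x_k) := \overline{m^k(x_1, \ldots, x_k)}$; the ideal condition is exactly what is needed for $\bar m^k$ to be well-defined. Furthermore, the projection
\[
\pi : \CF(L,h) \longrightarrow \CF(L,h)/\CF(L^+, \partial L^+, h|_{L^+})
\]
is then a strict filtered $A_\infty$ homomorphism, i.e.\ $\pi^1 = \pi$ and $\pi^k = 0$ for $k \geq 2$. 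Applying this to the decomposition guaranteed by \prettyref{assum:bottleneckideals} produces the quotient algebra that appears in the statement and exhibits $\pi$ as such an $A_\infty$ morphism.

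Next I would push the bounding cochain forward. By assumption $L$ is unobstructed, so there exists $b \in \CF(L,h)$ of positive valuation satisfying the Maurer--Cartan equation $\sum_{k \geq 0} m^k(b, \ldots, b) = 0$ (up to a unit term, following the convention of \prettyref{app:ainftyrefresher}). For a strict $A_\infty$ homomorphism, the standard pushforward formula collapses to $\pi_*(b) = \pi(b)$, and valuations are preserved, so $\pi(b)$ has positive valuation. Applying $\pi$ to the Maurer--Cartan equation and using that $\pi$ commutes with $m^k$ on the nose yields
\[
\sum_{k \geq 0} \bar m^k(\pi(b), \ldots, \pi(b)) = \pi\!\left(\sum_{k \geq 0} m^k(b, \ldots, b)\right) = 0
\]
in the quotient. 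Hence $\pi(b)$ is a bounding cochain for $\CF(L,h)/\CF(L^+,\partial L^+, h|_{L^+})$, which is therefore unobstructed.

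The only conceivable obstacle is bookkeeping with units and curvature terms: if one works with a weakly-unobstructed $b$ where $\sum_k m^k(b,\ldots,b) = W \cdot e_L$ rather than $0$, the same argument goes through provided the unit $e_L$ survives in the quotient (which it does, since $e_L \notin \CF(L^+,\partial L^+, h|_{L^+})$ when $L^-$ is nonempty, as the unit is represented by a minimum in $L^-$). I would note this parenthetically but otherwise treat the proof as a one-paragraph application of the general quotient construction for filtered $A_\infty$ algebras.
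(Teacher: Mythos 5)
Your proof is correct and follows exactly the paper's route: the paper's own proof is a one-liner invoking \prettyref{lemma:pushforwardmad} applied to the projection $\CF(L,h) \to \CF(L,h)/\CF(L^+,\partial L^+, h|_{L^+})$, which is precisely the pushforward argument you spell out (your added observation that the projection is a strict morphism, so $\pi_*(b) = \pi(b)$, is a harmless simplification of the general pushforward formula). The parenthetical remark about weak unobstruction is unnecessary here since the paper's Maurer--Cartan equation in Appendix A is $\sum_k m^k(b^{\otimes k}) = 0$ with no unit term, but it does no damage.
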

\begin{proof}
	Follows from \prettyref{lemma:pushforwardmad}, which states that if the domain of an $A_\infty$ filtered homomorphism in unobstructed, then the codomain can be unobstructed by a pushforward bounding cochain. 
	Applying this to the projection map from $\CF(L_1, h_1) \to \CF(L_1, h_1)/ \CF(L^+, \partial L^+_1,h_1|_{L_1^+})$ proves the proposition.
\end{proof}
\begin{corollary}
	Suppose that $L$ is bottlenecked, and $\CF(L)$ admits a bounding cochain.
	Then $\CF(L^-)$ admits a bounding cochain. 
	\label{cor:bottleneckunobstructed}.
\end{corollary}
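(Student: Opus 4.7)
The plan is to reduce the corollary directly to the proposition stated immediately above it, so the work is to identify $\CF(L^-)$ with the quotient $\CF(L,h)/\CF(L^+,\partial L^+,h|_{L^+})$. Since \prettyref{assum:bottleneckideals} makes $\CF(L^+,\partial L^+,h|_{L^+})$ a filtered $A_\infty$ ideal of $\CF(L,h)$, the quotient inherits a filtered $A_\infty$ structure and the projection is a strict filtered $A_\infty$ homomorphism. Applying \prettyref{lemma:pushforwardmad} to this projection then pushes the given bounding cochain for $\CF(L)$ forward to a bounding cochain on the quotient. So all that remains is to recognize the quotient as a model for $\CF(L^-)$.

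To carry out this identification I would construct an auxiliary Lagrangian $L'= L^-\cup (L_{z_0}\times \RR_{\geq 0})$ that agrees with $L$ on the non-positive side of the bottleneck but replaces the positive cap of $L$ by a trivial Lagrangian cylinder over $L_{z_0}$, together with a Morse function $h'$ that agrees with $h|_{L^-}$ on $L^-$ and has no critical points on the cylindrical end. Then $L'$ is still $W$-admissible and bottlenecked at $z_0$, so \prettyref{prop:agreebottleneck} provides the isomorphism
\[
\CF(L,h)/\CF(L^+,\partial L^+,h|_{L^+})\;\cong\; \CF(L',h')/\CF(L'^+,\partial L'^+,h'|_{L'^+}),
\]
and \prettyref{prop:bottlenecktrivialneck} collapses the right-hand quotient to $\CF(L',h')$ itself because the cylindrical end carries no critical points. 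Since by construction $\CF(L',h')$ is supported on $\Crit(h|_{L^-})$, it furnishes the model of $\CF(L^-)$ that appears in the statement, and the bounding cochain produced in the first step transfers to it under these identifications.

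The main obstacle I anticipate is keeping track of bounding cochains through this sequence of identifications. The quotient construction produces a bounding cochain on the nose, but \prettyref{prop:agreebottleneck} is phrased as an identification of underlying complexes coming from a careful choice of coherent domain-dependent perturbations on the two sides, so one has to verify that a single coherent perturbation datum can be chosen simultaneously for $L$ and $L'$ that trivializes the $A_\infty$ operations on the positive ends and identifies the residual operations on $L^-$ on both sides. If the identification only holds up to filtered $A_\infty$ homotopy equivalence, one instead invokes the standard transfer of bounding cochains along such equivalences from the appendix on filtered $A_\infty$ algebras, which is enough to conclude.
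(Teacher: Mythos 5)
Your proof is correct, but you have made it harder than the paper intends. The paper does not give $\CF(L^-)$ an independent definition that would then need to be matched up with the quotient: \prettyref{assum:bottleneckideals} directly supplies the vector-space splitting $\CF(L,h)=\CF(L^-,h|_{L^-})\oplus\CF(L^+,\partial L^+,h|_{L^+})$ with the second summand an $A_\infty$ ideal, so $\CF(L^-,h|_{L^-})$ \emph{is} the quotient $\CF(L,h)/\CF(L^+,\partial L^+,h|_{L^+})$ equipped with the inherited $A_\infty$ structure, and this is what the notation $\CF(L^-)$ means throughout (compare how $\CF(L^-(\phi))$ is used in \prettyref{prop:trivialbottleneck} and in the chain of maps following it). Thus the corollary is literally a restatement of the proposition that precedes it, with the pushforward of the bounding cochain along the projection doing all the work via \prettyref{lemma:pushforwardmad}.

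Your detour through the auxiliary Lagrangian $L'=L^-\cup(L_{z_0}\times\RR_{\geq 0})$, together with \prettyref{prop:agreebottleneck} and \prettyref{prop:bottlenecktrivialneck}, is a sound way to produce a geometric model of the quotient as the Floer algebra of an honest Lagrangian with a cylindrical cap; this is a pleasant conceptual check and is in fact the same sequence of identifications the paper later uses in the displayed chain of curved $A_\infty$ homomorphisms leading into the unobstructedness argument. But it is not needed to prove the corollary, and the worry you raise in your final paragraph---about tracking the bounding cochain through the sequence of identifications---is created entirely by taking that unnecessary detour. If you simply read $\CF(L^-)$ as the quotient algebra, the bounding cochain produced by the pushforward lands in the right object on the nose and there is nothing further to track.
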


\subsubsection*{Eventually Unobstructed Lagrangians and Bounding Cochains.}
This is the portion of the proof where we use the continuation map property of \prettyref{assum:welldefinedfloer}.
\begin{definition}
	Let $\{L_\alpha\}_{\alpha\in \NN}$ be a sequence of Hamiltonian isotopic Lagrangian submanifolds.
	We say that this sequence is \emph{eventually unobstructed} if for every energy level $\lambda$, there exists $\alpha_\lambda$ so that $\beta\geq \alpha_\lambda$ implies that $L_{\beta}$ bounds no holomorphic disks of energy less than $\lambda$ belonging to treed disks contributing to $\CF(L_\beta)$.
\end{definition}
\begin{lemma}
	Let $\{L_\alpha\}_{\alpha\in \NN}$ be a sequence of Hamiltonian isotopic Lagrangian submanifolds, and let  $\{K_{\alpha,\alpha+1}\}_{\alpha\in \NN}$ be the sequence of suspension cobordisms corresponding to choices of Hamiltonian isotopies between $L_\alpha$ and $L_{\alpha+1}$.
	Suppose both $\{L_\alpha\}_{\alpha\in \NN}$ and $\{K_{\alpha,\alpha+1}\}_{\alpha\in \NN}$ are eventually unobstructed sequences of Lagrangian submanifolds. 
	Then $L_0$ is unobstructed by bounding cochain.
	\label{lemma:unobstructedinlimit}
\end{lemma}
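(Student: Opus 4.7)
The plan is to construct a bounding cochain on $\CF(L_0)$ as a Novikov-convergent limit of approximate bounding cochains, each obtained by transporting the trivial element $0 \in \CF(L_{\alpha_n})$ for $\alpha_n$ sufficiently large via composites of the continuation $A_\infty$ equivalences supplied by \prettyref{assum:welldefinedfloer}.

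First I would combine the two eventual-unobstructedness hypotheses to extract a sequence $\alpha_1 < \alpha_2 < \cdots$ of indices such that for every $n$ and every $\beta \geq \alpha_n$, neither $L_\beta$ nor the suspension cobordism $K_{\beta,\beta+1}$ bounds a treed-disk of energy less than $n$.  Since the only contribution to the curvature $m^0$ on $\CF(L_{\alpha_n})$ comes from holomorphic disks with boundary on $L_{\alpha_n}$, this choice makes $b = 0$ a bounding cochain on $\CF(L_{\alpha_n})$ modulo valuation $n$.  Next, \prettyref{assum:welldefinedfloer} provides a continuation $A_\infty$ homotopy equivalence $c_n : \CF(L_0) \to \CF(L_{\alpha_n})$, namely the composite of continuation maps across the $\alpha_n$ consecutive Hamiltonian isotopies; let $h_n$ be a chosen filtered quasi-inverse and define $b_n := (h_n)_*(0) = h_n^0 \in \CF(L_0)$.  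The pushforward-of-bounding-cochain lemma recalled in \prettyref{app:ainftyrefresher} then shows that $b_n$ solves the Maurer--Cartan equation on $\CF(L_0)$ modulo valuation $n$; its valuation is strictly positive because, by \prettyref{assum:welldefinedfloer}, the higher-order terms of any continuation morphism do.

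The core step is to show that the sequence $(b_n)$ is Cauchy, that is, $b_{n+1} - b_n$ has valuation at least $n$.  The composite continuation $\CF(L_{\alpha_n}) \to \CF(L_{\alpha_{n+1}})$ passes only through cobordisms $K_{\beta,\beta+1}$ with $\beta \geq \alpha_n$, so by the valuation estimate of \prettyref{assum:welldefinedfloer} each of its higher terms has valuation $\geq n$; modulo this valuation it therefore coincides with the strict Morse continuation, which is a genuine chain isomorphism at the level of cohomology.  A filtered $A_\infty$ obstruction argument then lets me choose the quasi-inverses $h_n$ coherently so that $h_{n+1} \equiv h_n \circ (c_{n,n+1})^{-1}$ modulo valuation $n$, which gives $b_{n+1} \equiv b_n \pmod{\text{valuation } n}$.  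The Cauchy sequence $(b_n)$ then converges in the Novikov-complete space $\CF(L_0)$ to some $b_\infty$, and continuity of the $A_\infty$ operations for the Novikov filtration upgrades the approximate Maurer--Cartan equations to an exact one, showing $b_\infty$ is a genuine bounding cochain on $\CF(L_0)$.

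The main obstacle I expect is the coherent construction of the quasi-inverses $h_n$: at each stage the obstruction to extending $h_n$ to a compatible $h_{n+1}$ modulo valuation $n+1$ is a Hochschild-type class in a filtered $A_\infty$ bimodule complex, and it is the joint eventual-unobstructedness of both the $L_\beta$ and the $K_{\beta,\beta+1}$ that forces this class to vanish: the hypothesis on $L_\beta$ supplies the trivial starting bounding cochain, while the hypothesis on the cobordisms is what keeps the continuation morphisms Morse-like above filtration level $n$, so that the incompatibility between successive choices can always be absorbed into a higher-energy homotopy.  Carefully bookkeeping these homotopies so that the construction is both coherent and filtration-preserving is, I expect, where the argument demands the most care.
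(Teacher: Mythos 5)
Your outline captures the right global structure---build a Cauchy sequence of approximate bounding cochains on $\CF(L_0)$ by pushing forward $0$ from far-out $L_\alpha$'s, then use Novikov completeness---but you introduce a serious and, it turns out, entirely avoidable complication. You orient your continuation maps $c_n:\CF(L_0)\to\CF(L_{\alpha_n})$ \emph{away} from the target algebra and then construct quasi-inverses $h_n$, which forces you to grapple with a coherence problem for the $h_n$'s that you yourself flag as the crux of the difficulty. The paper simply uses continuation maps oriented the other way, $f_{\alpha,0}:\CF(L_\alpha)\to\CF(L_0)$, defined by composing the step maps $f_{\alpha+1,\alpha}$ associated to the individual suspension cobordisms, so that the compatibility $f_{\beta,\gamma}\circ f_{\alpha,\beta}=f_{\alpha,\gamma}$ holds by construction. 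With that choice the pushforward $b_\alpha=(f_{\alpha,0})_*(0)=f^0_{\alpha,0}$ requires no inversion at all, and the Cauchy estimate
\[
f^0_{\alpha+1,0}-f^0_{\alpha,0}=\sum_{k\geq 1}f^k_{\alpha,0}\bigl((f^0_{\alpha+1,\alpha})^{\otimes k}\bigr)
\]
falls out of the quadratic composition formula for filtered $A_\infty$ morphisms together with the valuation bound $\val(f^0_{\alpha+1,\alpha})\to\infty$ furnished by the eventual unobstructedness of the cobordisms. Your obstruction-theoretic coherence step is therefore not just extra work: it is a genuine gap in the argument as presented, since you sketch rather than prove that the relevant Hochschild-type classes vanish, and the paper's direct route shows that no such argument is needed. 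If you want to salvage your formulation, the cleanest repair is to drop the quasi-inverses entirely and run the continuation maps from $\CF(L_\alpha)$ toward $\CF(L_0)$ as above; the rest of your estimates then go through verbatim.
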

\begin{proof}
	The suspension cobordism $K_{\alpha, \beta}$ given by the concatenation of our Lagrangian cobordisms correspond to continuation maps 
	\[
		f_{\alpha,\beta}:\CF(L_\alpha)\to \CF(L_\beta)
	\]
	as defined in \prettyref{assum:welldefinedfloer}, which satisfy the property that
	\[
		f_{\beta,\gamma}\circ f_{\alpha,\beta}= f_{\alpha,\gamma}.
	\]
	We note that the valuation of $f_{\alpha+1, \alpha}^0$ goes to infinity. 
	This is because the  valuation of $f_{\alpha+1, \alpha}^0$ can be bounded below by the energy of the smallest holomorphic disk which occurs in the Hamiltonian suspension cobordism $K_{\alpha,\alpha+1}$ between $L_\alpha$ and $L_{\alpha+1}$. 
	By hypothesis of the lemma, the minimal energy of these holomorphic disk goes to $\infty$ as $\alpha$ goes to $\infty$.

	For Lagrangian $L_\alpha$, we let $m^k_\alpha$ be the $A_\infty$ structure on $\CF(L_\alpha)$.
	For a deforming chain $d\in \CF(L_\alpha)$, we let $(m^k_\alpha)_d$ be the deformed curved $A_\infty$ structure.
	By our assumption, for each $\lambda$ there is a $\alpha_\lambda$ so that $\beta\geq \alpha_\lambda$ implies that  $\val(m^0_\beta)$ will be greater than $\lambda$. 

	Given a deformation $b_\alpha\in \CF(L_\alpha)$ and a filtered $A_\infty$ homomorphism $f_{\alpha,\beta}$ as above, we get a pushforward map on deformations
	\[
		b_\beta=(f_{\alpha,\beta})_*(b_\alpha):= \sum_{k\geq 0} f^k_{\alpha,\beta}(b_\alpha^{\tensor k}).
	\]
	If $b_\alpha$ is a bounding cochain for $\CF(L_\alpha)$, then this pushforward is again a bounding cochain.
	The same is true for deformations which are bounding cochains up to a low valuation.
	\begin{prop}
		Suppose that $(m_\alpha)_{b_\alpha}^0$, the $b_\alpha$ deformed curvature term of $\CF(L_\alpha)$, has valuation greater than $\lambda$. Let $b= (f_{\alpha,\beta})_*(b_\alpha)$.  Then $(m^0_\beta)_{b}$ has valuation greater than $\lambda$.
	\end{prop}
	In the simplest example, we define $b_\alpha=(f_{\alpha,0})_*(0)\in CF^\bullet(L_0)$ to be the pushforward of the trivial deformation of $L_\alpha$. This deformation may be rewritten using the quadratic $A_\infty$ relations as
	\begin{align*}
		(m_0)_{b_\alpha}^0=\sum_{k}m^k_0(((f_{\alpha, 0})_*(0))^{\tensor k}= \sum_k m^k_0((f^0_{\alpha,0})^{\tensor k}) = f^1_{\alpha, 0} m^0_\alpha.
	\end{align*}
	The condition that our Lagrangians successively only bound disks of increasing energy means that the $m^0_\alpha$ have increasing valuation, so the sequence of cochains $(m_0)^0_{b_\alpha}=(f^0_{\alpha,0})_*(0)$ unobstruct $CF(L_0)$ to higher and higher valuations.
	We now show that this sequence $\{f^0_{\alpha,0}\}$ of deforming cochains converge to an actual bounding cochain.

	From the quadratic relation for composition of filtered $A_\infty$ homomorphisms $f_{\alpha+1, 0}= f_{\alpha, 0}\circ f_{\alpha + 1, \alpha}$ we obtain
	\begin{align*}
		f^0_{\alpha+1, 0}= \sum_{k\geq 0} f^k_{\alpha, 0}((f_{\alpha+1, \alpha}^0)^{\tensor k})
		= & f^0_{\alpha, 0}+\sum_{k\geq 1} f^k_{\alpha, 0}((f_{\alpha+1, \alpha}^0)^{\tensor k}).
	\end{align*}
	To prove the convergence, it suffices to show that the differences 
	\[
		f^0_{\alpha+1,0}-f^0_{\alpha, 0}= \sum_{k\geq 1} f^k_{\alpha, 0}((f_{\alpha+1, \alpha}^0)^{\tensor k})
	\]
	converge to zero (as we are proving convergence in an ultrametric space). 
	As the valuation of the $f_{\alpha, \alpha+1}^0$ goes off to infinity,
    the sequence of cochains $(f_{\alpha, 0})_*(0)$ converge in $\CF(L_0)$. 
\end{proof}

\subsection{Unobstructedness: returning to the proof}

We now compare the surgery profile defined in \prettyref{prop:surgeryprofile}, and the standard transversal surgery. 
\begin{prop}
	Suppose that $U= L_0\cap L_1$ is a compact convex region, satisfying the conditions for taking the generalized Lagrangian surgery as in \prettyref{prop:surgeryprofile}.
	Then there exists another Lagrangian $L_1^1$ which  intersects $L_0$ transversely at a unique point $q$.
	Furthermore, $L_0\#_q L_1^1$ is Lagrangian isotopic to $L_0\#_U L_1$. 
	\label{prop:surgerycomparison}
\end{prop}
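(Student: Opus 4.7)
The plan is to produce $L_1^1$ by a convex-preserving perturbation of the primitive $f_1$ that collapses the flat locus $U$ to a single non-degenerate minimum, and then to realize $L_0\#_q L_1^1$ and $L_0\#_U L_1$ as two ends of a one-parameter family of generalized surgeries, all performed with a common neck profile. Since the family itself will consist of embedded Lagrangians, it will provide the desired Lagrangian isotopy.

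\textbf{Step 1 (construction of $L_1^1$).} Pick an interior point $q\in U$ and a strictly convex quadratic $Q(x)=\delta|x-q|^2$, with $\delta>0$ small enough that $Q$ is strictly less than $f_1$ on the boundary of some small neighborhood $V\Supset U$. Then $f_1^1:=\mathrm{max}(f_1,Q-c)$ is convex for appropriate $c>0$, agrees with $f_1$ outside $V$, and has $q$ as its unique minimum. A standard convexity-preserving mollification $\tilde f_1^1$ (convolution against a non-negative symmetric kernel, as in the proof that $\tilde\phi$ is concave) then produces a smooth convex primitive with a single non-degenerate critical point $q$. Its graph $L_1^1:=d\tilde f_1^1\subset T^*\RR^n$ meets $L_0$ transversely at $q$, so the classical Polterovich surgery $L_0\#_q L_1^1$ is defined; we perform it with a neck whose width $\nw$ equals the neck width used for $L_0\#_U L_1$.

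\textbf{Step 2 (interpolation of primitives).} Consider the family
\[
f_1^s := \mathrm{max}\bigl(f_1,\; sQ - sc\bigr), \qquad s\in[0,1],
\]
suitably mollified so that $f_1^s$ is smooth and convex for every $s$, with $f_1^0=f_1$ and $f_1^1=\tilde f_1^1$. The zero locus $U_s:=\{df_1^s=0\}$ is a convex compact set that shrinks continuously from $U_0=U$ to $U_1=\{q\}$; in particular each $U_s$ admits an $\epsilon$-neighborhood on which the hypotheses of \prettyref{prop:surgeryprofile} are met for a fixed pair of profile functions $(r_\epsilon,s_\epsilon)$ with neck width $\nw$. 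Applying the generalized surgery with this fixed profile data yields a smooth one-parameter family
\[
L(s) := L_0 \#_{U_s}^{\nw} (d f_1^s), \qquad s\in[0,1],
\]
which is a Lagrangian isotopy between $L(0)=L_0\#_U L_1$ and $L(1)$, the generalized surgery of $L_0$ with $L_1^1$ performed at the single-point neck $\{q\}$.

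\textbf{Step 3 (matching at $s=1$).} It remains to identify $L(1)$ with $L_0\#_q L_1^1$ up to Lagrangian isotopy. When $U_1$ is a single point, the concatenation of the $r_\epsilon$- and $s_\epsilon$-charts over $\{f_1^1\ge c_\epsilon\}$ produces exactly the model Lagrangian neck of \cite{polterovich1991surgery}, because both profiles are defined over an annular region in $\RR^n$ centered at the unique critical value and glued along the sphere $\{f_1^1=c_\epsilon\}$. Hence $L(1)$ agrees with a classical Polterovich surgery of $L_0$ and $L_1^1$ at $q$, and the choice of profile with neck width $\nw$ makes this surgery Hamiltonian isotopic to $L_0\#_q L_1^1$ by \prettyref{prop:neckwidth}. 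Composing this Hamiltonian isotopy with the isotopy of Step~2 gives the required Lagrangian isotopy $L_0\#_U L_1 \simeq L_0\#_q L_1^1$.

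\textbf{Anticipated obstacle.} The delicate point is verifying that the family of mollified primitives $f_1^s$ remains jointly smooth in $(x,s)$ while $U_s$ collapses, so that the surgery construction varies smoothly across $s=1$ where the codimension of $U_s$ inside $V$ drops. I would handle this by choosing the mollification parameter $\eta(s)$ to depend smoothly on $s$ and by verifying that the explicit $r_\epsilon,s_\epsilon$ formulas in the proof of \prettyref{prop:surgeryprofile} extend continuously to the degenerate case $U_s=\{q\}$, using the fact that the relevant charts are parametrized over level sets $\{f_1^s\ge c_\epsilon\}$, which remain diffeomorphic to $S^{n-1}\times[0,1]$ throughout the homotopy.
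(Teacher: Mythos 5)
Your strategy matches the paper's: both arguments build a one-parameter family of convex primitives interpolating from $f_1$ (flat on $U$) to a function with a single non-degenerate critical point, run the generalized surgery of \prettyref{prop:surgeryprofile} fiberwise with a fixed profile to get a Lagrangian isotopy, and identify the endpoint with the classical Polterovich surgery by matching neck widths. The paper does this by abstractly postulating a family $h^\alpha$ of convex functions supported in a collared neighborhood of $U$, with $h^0$ equal to the original primitive and $h^1$ quadratic near $q$, and gluing $dh^\alpha$ into $L_1$; you instead give the explicit interpolant $\max(f_1,sQ-sc)$.

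That explicit formula has two concrete defects. First, with $c>0$ the shift makes $Q-c<0$ on the ball $\{Q<c\}\ni q$, so $\max(f_1,Q-c)=f_1=0$ on all of $U\cap\{Q<c\}$: the minimum locus of $f_1^1$ is that full positive-dimensional set, not the single point $q$, and mollifying a convex function with a flat minimum patch does not collapse the patch. You want $c=0$ (or $c\le 0$), so that on $U$ one has $\max(f_1,Q)=Q$, which vanishes only at $q$. Second, a genuine quadratic $Q=\delta|x-q|^2$ eventually dominates any convex $f_1$ of linear growth, so $\max(f_1,Q)$ need not agree with $f_1$ outside $V$; replace $Q$ by a convex function that is quadratic near $q$ but linear at large radius (e.g.\ $\delta\,g(|x-q|)$ with $g(t)=t^2$ for $t\le R$ and $g(t)=2Rt-R^2$ for $t>R$), or, as the paper does, confine the modification to a precompact collar around $U$ with boundary-matching conditions. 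With either fix, the remaining overstatement that $U_s$ ``shrinks continuously'' (for $s>0$ it is already $\{q\}$) is harmless, since the surgery charts depend on the primitive $f_1^s$, which does vary in $C^1$, and a single $c_\epsilon$ may be chosen uniformly for $s\in[0,1]$.
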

\begin{proof}
We first describe a family of Lagrangians $L^\alpha_1$. Let $U_1\supset U_0\supset U$ be small collared neighborhoods of $U$, and let $h^\alpha:[0, 1]_\alpha\times U_1\to \RR$ be a family of smooth functions satisfying the following:
\begin{itemize}
	\item 
	As a section, $dh^0= L_1$ on all of $U_1$. 
	\item
	As a section, $dh^\alpha=L_1$ on $U_1\setminus U_0$ for all $\alpha$.
	\item 
	$dh^1(x)= (\text{dist}_q(x))^2$ in a small neighborhood of $q$. 
	\item
	$h^\alpha$ is convex for every $\alpha$. 
\end{itemize}	
Let $L_1^\alpha$ be the Lagrangian obtained by removing the portion of $L_1$ which lives above $U_0$, and gluing in $dh^\alpha$ instead.
Clearly $L_1^1$ and $L_1$ are Lagrangian isotopic.
By construction $L_1^1$ and $L_0$ intersect transversely.
For each $\alpha$, the Lagrangians $L_0$ and $L_1^\alpha$ have convex intersection region $U_\alpha$.
We may construct the surgeries $L_0\#_{U^\alpha} L_1^\alpha$ in a smooth family.
Since (with appropriate choices of surgery neck) $L_0\#_{U^1} L_1^1=L_0\#_q L_1^1$, we may conclude that there is Lagrangian isotopy between our generalized Lagrangian surgery and the standard transverse surgery. 
\end{proof}

We will use this comparison for our tropical Lagrangians. 
We define the set
\[C_{1}:=W^{-1}_{t, 1}(\{z \text{ such that } |z|\leq 1\}).\]
By \prettyref{lemma:equivadmissibility} the monomially admissible Lagrangian $L(\phi)$ is bottlenecked by the symplectic fibration $W_{t, 1}$.
See \prettyref{fig:bottleneck3}.
The negative portion of this Lagrangian is  $L^-(\phi)= L(\phi)\cap C_1$. 
The positive portion of the bottleneck in $L^+(\phi)=L(\phi)\setminus C_1$.
Topologically, $L^+(\phi)$ is $(W^{-1}_{t, 1}(1)\cap L(\phi))\times \RR\geq 1$.
\begin{prop}
	$\CF(L(\phi))$ and  $\CF(L^-(\phi))$ are isomorphic as curved $A_\infty$ algebras.
	\label{prop:trivialbottleneck}
\end{prop}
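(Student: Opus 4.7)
The plan is to combine the three main results about bottlenecked Lagrangians assembled above: \prettyref{assum:welldefinedfloer} (invariance of $\CF$ under choice of Morse data), \prettyref{assum:bottleneckideals} (the ideal decomposition at a bottleneck), and \prettyref{prop:bottlenecktrivialneck} (triviality of the ideal when the positive end is a cylinder).

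First I would record the key geometric observation made immediately before the proposition: by \prettyref{lemma:equivadmissibility} the Lagrangian $L(\phi)$ is $W_{t,1}$-admissible and is bottlenecked by the symplectic fibration $W_{t,1}$ along the fiber $L_{z_0}(\phi) = L(\phi)\cap W_{t,1}^{-1}(1)$, with positive end $L^+(\phi) = L_{z_0}(\phi)\times \RR_{\geq 1}$ a trivial cylinder. This is exactly the hypothesis of \prettyref{prop:bottlenecktrivialneck}.

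Next I would use \prettyref{assum:welldefinedfloer} to choose a specific $W_{t,1}$-admissible Morse function $h$ on $L(\phi)$ that is bottlenecked at $z_0$ in the sense required by \prettyref{assum:bottleneckideals}, and whose restriction $h|_{L^+(\phi)}$ has no critical points. Such an $h$ exists because $L^+(\phi)$ is a product $L_{z_0}(\phi)\times \RR_{\geq 1}$; one can take $h|_{L^+(\phi)}$ to be (a small smoothing of) the pullback of the $\RR_{\geq 1}$-coordinate, so that $\grad(h)$ points outward from $L_{z_0}(\phi)$ into the cylinder and has no zeros. Consequently the relative Morse complex $\CM(L^+(\phi), \partial L^+(\phi), h|_{L^+(\phi)})$ vanishes, and since $\CF(L^+(\phi),\partial L^+(\phi), h|_{L^+(\phi)})$ is a deformation of this zero Morse complex, it is itself zero.

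Now I would apply \prettyref{assum:bottleneckideals}: the splitting
\[
\CF(L(\phi),h)=\CF(L^-(\phi),h|_{L^-(\phi)})\oplus \CF(L^+(\phi),\partial L^+(\phi),h|_{L^+(\phi)})
\]
together with the vanishing of the second summand gives an equality of underlying vector spaces $\CF(L(\phi),h) = \CF(L^-(\phi),h|_{L^-(\phi)})$. Because the second summand is an $A_\infty$ ideal that is trivial, the quotient map is the identity on the first summand, and all $A_\infty$ operations $m^k$ (including the curvature $m^0$) preserve the first summand. Hence the splitting is an isomorphism of curved $A_\infty$ algebras, and \prettyref{assum:welldefinedfloer} gives an identification of this with $\CF(L^-(\phi))$ independent of the choice of $h$.

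The only real subtlety is ensuring that the chosen $h$ is simultaneously $W_{t,1}$-admissible, bottlenecked in the sense of \prettyref{assum:bottleneckideals}, and critical-point-free on the cylindrical end; this is straightforward because all three conditions refer to the flow of $\grad(h)$ near the bottleneck fiber $L_{z_0}(\phi)$ and are compatible with a function whose restriction to $L^+(\phi)$ is (a perturbation of) projection to the $\RR_{\geq 1}$ factor. Once this Morse function is in hand, the statement of the proposition is essentially a restatement of \prettyref{prop:bottlenecktrivialneck} upgraded from ``quotient by a trivial ideal'' to ``split off a trivial summand.''
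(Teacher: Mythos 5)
Your proof is correct and follows essentially the same route as the paper: observe that $L^+(\phi)$ is a trivial cylinder, choose a bottlenecked Morse function with no critical points on $L^+(\phi)$ so that the $A_\infty$ ideal $\CF(L^+(\phi),\partial L^+(\phi),h|_{L^+(\phi)})$ vanishes, and invoke \prettyref{prop:bottlenecktrivialneck} together with \prettyref{assum:bottleneckideals}. Your write-up simply makes explicit the compatibility check on the Morse data and the deformation argument for vanishing of the ideal, both of which the paper leaves implicit.
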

This follows from observing that the gradient flow of a monomially admissible Morse function at the boundary $\partial L^-(\phi)$ points outward, and that one can pick Morse function for $L(\phi)$ which only has critical points in the overlapping region with $L^-(\phi)$ and applying \prettyref{prop:bottlenecktrivialneck}.

\begin{prop}
	Suppose that $\phi$ is a smooth tropical polynomial. Let $W_{t, 1}$ be a tropically localized superpotential so that $L(\phi)|_{P_N}$ is $W_{t, 1}$ admissible.
	There exists a monomial admissible Hamiltonian wrapping isotopy (see \prettyref{def:admissiblitycondition}) $\theta$ so that $\sigma_0$ and $\theta(\sigma_{-\phi})$ have transverse intersections $q_v$ for each $v\in \Delta_\phi\cap \ZZ^n$.  
	Furthermore there exists a Lagrangian $L^{tr}(\phi)$ satisfying the following properties:
	\begin{itemize}
		\item
		$L^{tr}(\phi)$ is admissibly Lagrangian isotopic to $\sigma_0\#_{\{q_v\}_{v\in \Delta_\phi^\ZZ}} (\theta(\sigma_{-\phi}))$.
		Note here that we are only performing Lagrangian surgery at the transverse intersections corresponding to non-self intersection points of $\phi$.
		\item 
		$L^{tr}(\phi)$ agrees with $L(\phi)$ on the set $C_{ 1}$. 
	\end{itemize}
	\label{prop:ltr}
\end{prop}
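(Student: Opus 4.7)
The plan is to first construct $\theta$ using the standard perturbation trick already invoked earlier in the paper, and then construct $L^{tr}(\phi)$ by splicing the transverse-surgery model at infinity with the original tropical Lagrangian $L(\phi)$ on the compact bottleneck region $C_1$; the isotopy between the two models on $C_1$ is produced by a local application of \prettyref{prop:surgerycomparison} at each surgery region.

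First I would construct the wrapping isotopy. Pick a small strictly convex Morse function $g:Q\to\RR$, chosen monomially admissibly with respect to $\Delta_\Sigma$ (e.g.\ a quadratic in a coordinate patch which is extended to match the behavior of $\tilde\phi$ at infinity). Replacing the primitive $-\tilde\phi$ of $\sigma_{-\phi}$ by $-\tilde\phi - g$ produces a Lagrangian section $\theta(\sigma_{-\phi})$ whose argument map $\arg\circ d(-\tilde\phi-g)$ is a local diffeomorphism; this is precisely the perturbation recalled just before \prettyref{thm:hmstorics}. The pair $(\sigma_0,\theta(\sigma_{-\phi}))$ is therefore transverse, with intersection set in bijection with $\Delta_\phi\cap\ZZ^n$, giving the points $q_v$. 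Since $\phi$ is smooth we have $\Delta_\phi\cap\ZZ^n=\Delta_\phi^\ZZ$, so the transverse intersections labelled by $v\in\Delta_\phi^\ZZ$ correspond exactly to the surgery regions $U_v$ from the construction of $L(\phi)$.

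Next I would apply \prettyref{prop:surgerycomparison} locally at every compact surgery region. Inside $C_1$, the monomial admissibility and bottleneck structure provided by \prettyref{lemma:equivadmissibility} imply that every component $U_v\cap C_1$ is a compact convex subset of $Q$, and that $\sigma_0$ and $\sigma_{-\phi}$ restrict over it to graphs of exact one-forms with convex primitives. \prettyref{prop:surgerycomparison} then produces, for each such $v$, a family of Lagrangian sections $\sigma^\alpha_{-\phi}$ (with $\alpha\in[0,1]$) interpolating between $\sigma_{-\phi}$ at $\alpha=0$ and the transversely intersecting section $\theta(\sigma_{-\phi})$ at $\alpha=1$, together with a corresponding family of surgeries $\sigma_0\#_{U^\alpha_v}\sigma^\alpha_{-\phi}$ realizing a Lagrangian isotopy between the generalized surgery $L(\phi)|_{C_1}$ and the transverse surgery $(\sigma_0\#_{q_v}\theta(\sigma_{-\phi}))|_{C_1}$. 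By choosing $g$ and all interpolating data supported away from the neighborhood of $\partial C_1$ where $L(\phi)$ has the bottleneck product structure $L_{z_0}\times[1-\delta,1+\delta]$, the isotopy is monomially admissible throughout.

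Finally I would assemble $L^{tr}(\phi)$ by running this isotopy in reverse from the transverse surgery. Concretely, start with the embedded Lagrangian $\sigma_0\#^{\underbar w}_{\{q_v\}}\theta(\sigma_{-\phi})$ and apply the compactly supported, admissibility preserving Hamiltonian isotopy produced above, which modifies it only inside a collar neighborhood of $C_1$ so as to match $L(\phi)$ on $C_1$ itself. The resulting Lagrangian $L^{tr}(\phi)$ is by construction admissibly Lagrangian isotopic to $\sigma_0\#_{\{q_v\}}\theta(\sigma_{-\phi})$ and equal to $L(\phi)$ on $C_1$, as required. The main obstacle is organizational rather than analytic: one must verify that the local surgery comparisons given by \prettyref{prop:surgerycomparison} can be made simultaneously at all $v\in\Delta_\phi^\ZZ$ in a way compatible with both the monomial admissibility at infinity and the bottleneck collar structure on $\partial C_1$; once it is checked that the perturbation $g$ and the surgery profiles can be chosen supported inside $C_1$ (and, for unbounded $U_v$, restricted to their compact intersection with $C_1$), the rest of the argument is a direct concatenation of the cited propositions.
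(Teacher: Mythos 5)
Your outline captures the right general strategy (perturb, apply \prettyref{prop:surgerycomparison}, splice), but it misses the single technical device that makes the paper's proof clean, and as written the assembly step has a gap.

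The paper does not use a globally defined transversality perturbation. Instead it introduces $\theta_{C_1}$, a monomially admissible wrapping isotopy \emph{modified to be the identity on $C_1$}. This has two consequences you do not reproduce. First, because $\theta_{C_1}$ is trivial on $C_1$, the pair $(\sigma_0,\theta_{C_1}(\sigma_{-\phi}))$ coincides with $(\sigma_0,\sigma_{-\phi})$ there, so the generalized surgery $\sigma_0\#_{\tilde U_v}\theta_{C_1}(\sigma_{-\phi})$ agrees with $L(\phi)$ on $C_1$ for free --- no isotopy is needed inside $C_1$, and hence no ``undoing'' of a perturbation. In your version, $\theta(\sigma_{-\phi})=\sigma_{-\phi}-dg$ already differs from $\sigma_{-\phi}$ on $C_1$, so forcing $L^{tr}(\phi)|_{C_1}=L(\phi)|_{C_1}$ requires a nontrivial deformation reaching all the way to $\partial C_1$, and an isotopy ``compactly supported in a collar neighborhood of $C_1$'' cannot simultaneously vanish at the outer edge of that collar and fix up the behavior on $\partial C_1$; the paper even flags explicitly that $L^{tr}(\phi)$ and $L(\phi)$ have different topology, so the discrepancy cannot be waved away as a small compactly supported correction. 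Second, and more fundamentally, the role of the wrapping at infinity in the paper is to \emph{compactify} the non-compact intersection components $U_v$ for $v\in\partial\Delta_\phi$: after wrapping, $\sigma_0\cap\theta_{C_1}(\sigma_{-\phi})=\bigsqcup_v\tilde U_v$ is a disjoint union of compact regions agreeing with $U_v$ inside $C_1$. This is what makes \prettyref{prop:surgerycomparison} (which requires $U=L_0\cap L_1$ to be compact and convex) applicable. Your substitute --- restricting attention to $U_v\cap C_1$ --- does not satisfy the hypotheses of \prettyref{prop:surgerycomparison}, since $U_v\cap C_1$ is a truncation, not the full intersection set, and the surgery neck for $L(\phi)$ at a boundary $v$ does not close off at $\partial C_1$ but runs cylindrically to infinity. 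You flag the compatibility at $\partial C_1$ as merely ``organizational,'' but it is exactly the content of the paper's $\theta_{C_1}$ trick: wrap at infinity so the necks cap off past the bottleneck, define the generalized surgery there, then deform that compact surgery to a point surgery via \prettyref{prop:surgerycomparison}.

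A smaller concern: you propose taking $\theta$ to be addition of a globally strictly convex function $g$, but this is the perturbation the paper uses in the context of \prettyref{thm:hmstorics}, not the ``monomial admissible Hamiltonian wrapping isotopy'' invoked in \prettyref{prop:ltr}; a globally strictly convex $g$ produces unbounded argument at infinity and is not obviously compatible with monomial admissibility without further restriction. The paper sidesteps this by using the built-in wrapping of \cite{hanlon2018monodromy}, which respects the admissibility regions by design.
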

\begin{proof}
	First observe that $L^{tr}(\phi)$ and $L(\phi)$ \emph{do not} have the same topology! 
	Consider $\theta_{C_1}$, the monomial admissible Hamiltonian wrapping isotopy which has been modified to be the identity on $C_1$. 
	Outside of $C_1$ the monomial admissible Hamiltonian wrapping isotopy increases the controlled arguments over each monomial admissibility region, so that $\sigma_0$ and $\theta_{C_1}(\sigma(\phi))$ are disjoint outside a sufficiently large compact region.
	As a result, $\sigma_0$ and $\theta_{C_1}(\sigma_{-\phi})$ now intersect at compact regions $\sigma_0\cap \theta_{C_1}(\sigma_{-\phi})=\bigsqcup_{v\in \Delta^\ZZ_\phi} \tilde U_v$, which agree with our original intersections over the compact set $C_1$,
	\[\tilde U_v\cap C_1=U_v.\]
	Consider the Lagrangian 
	\[\sigma_0\#_{\tilde U_v}\theta_{C_1}(\sigma_{-\phi})\]
	which agrees with $L(\phi)$ over the set $C_1$. 
	We then define $L^{tr}(\phi)$ using \prettyref{prop:surgerycomparison}.
\end{proof}
The Lagrangians $L^{tr}(\phi)$ and $L(\phi)$ are compared in 
\prettyref{fig:surgerytruncation}.
\begin{figure}
	\centering
	\begin{subfigure}{.4 \linewidth}
		\centering
		\includegraphics[scale=.6]{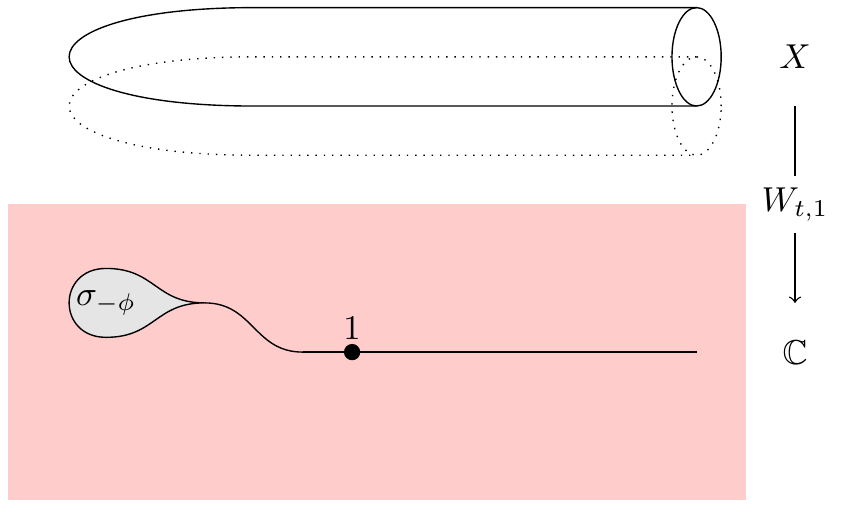}
		\caption{}
		\label{fig:tropicaladmissibility}
	\end{subfigure}\;\;\;\;\;
	\begin{subfigure}{.4 \linewidth}
		\centering
		\includegraphics[scale=.6]{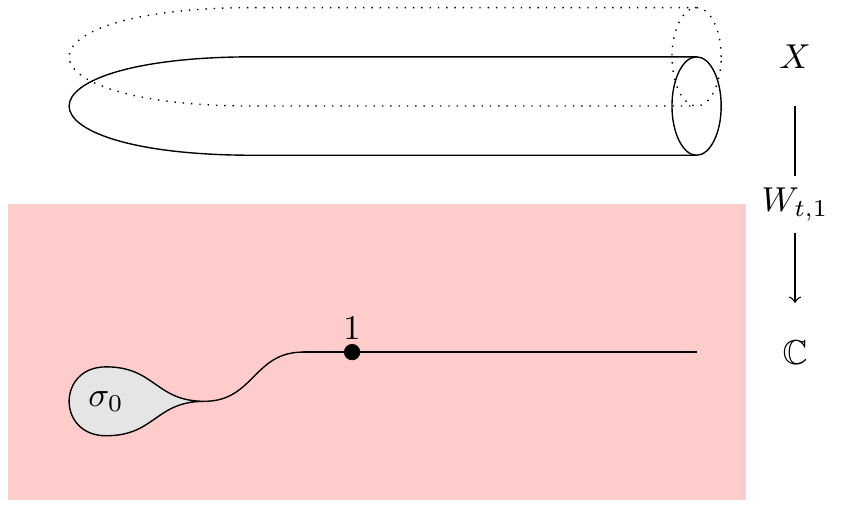}
		\caption{}
	\end{subfigure}\\
	\begin{subfigure}{.4 \linewidth}
		\centering
		\includegraphics[scale=.6]{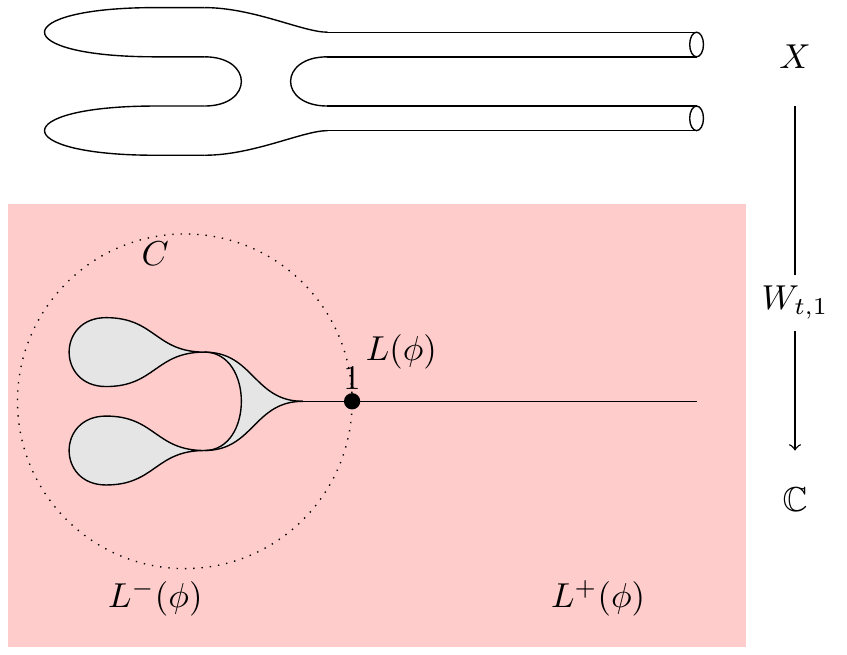}
		\caption{}
		\label{fig:bottleneck3}
	\end{subfigure}\;\;\;\;\;
	\begin{subfigure}{.4 \linewidth}
		\centering
		\includegraphics[scale=.6]{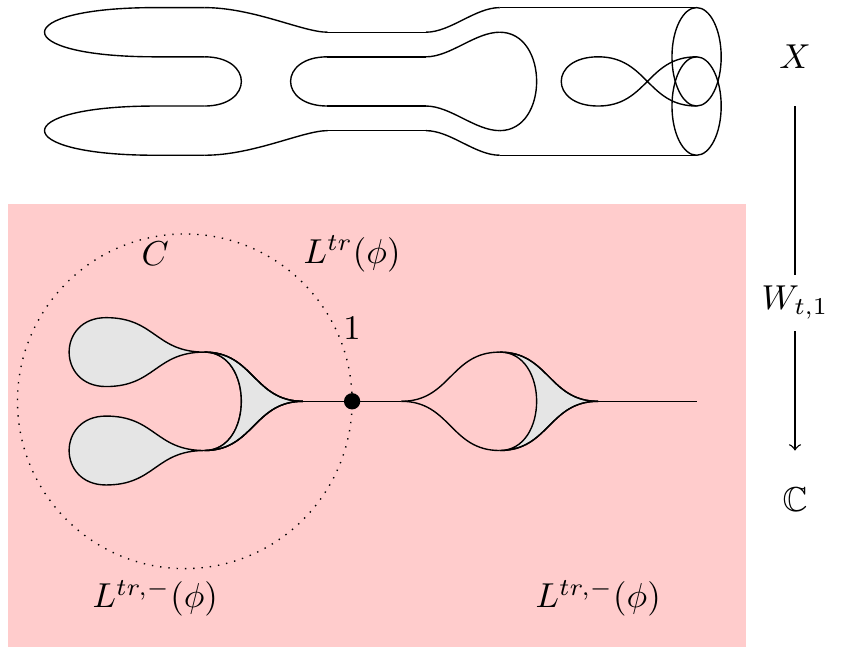}
		\caption{}
	\end{subfigure}
	\caption{The image of some of our Lagrangian submanifolds under the projection $W_{t, 1}$.
	Here, the sections are assumed to intersect at 2 points. 
	The tropical sections $\sigma_{-\phi}$ and $\sigma_0$ fiber over the real axis past the bottleneck. 
	The second pair of pictures compare $L^{tr}(\phi)$ and $L(\phi)$ under the projection of $W_{t, 1}$. 
	The surgery regions to construct $L(\phi)$ lie over real axis (as this is where $\sigma_{-\phi}$ and $\sigma_{0}$ intersect).
	$L(\phi)$ is cylindrical past the bottleneck.
	If the Lagrangians $\sigma_{-\phi}$ and $\sigma_0$ are made transverse before performing the surgery, the resulting Lagrangian has the topology drawn in last figure.
	Over the bottleneck region, this transverse surgery can be made to match the one defining the tropical Lagrangian.
	However, past the bottleneck region, this matches $\sigma_0$ and $\sigma_{-\phi}$. }
	\label{fig:surgerytruncation}
\end{figure}

Since $L^{tr}(\phi)$ matches $L(\phi)$ on the compact set $C_1$,  $L^{tr}(\phi)$ is similarly bottlenecked by this symplectic fibration at the point $z_{0}=1$.
Let $L^{tr, -}(\phi)$ and $L^{tr,+}(\phi)$ be the negative and positive ends of the bottleneck.
Since $L^{tr,-}\subset C_1$, we obtain that $L^{tr,-} = L^-(\phi)$. 
\begin{lemma}
	If $\CF(L^{tr}(\phi))$ has a bounding cochain, then so does $\CF(L(\phi))$. 
\end{lemma}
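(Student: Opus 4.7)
The plan is to transfer a bounding cochain along a filtered $A_\infty$ homomorphism $\CF(L^{tr}(\phi))\to\CF(L(\phi))$ built from the three bottleneck propositions stated just above. Concretely, because $L^{tr}(\phi)$ and $L(\phi)$ agree on the compact set $C_1$ and both are bottlenecked at $z_0=1$ by the tropically localized fibration $W_{t,1}$, their ``negative'' halves satisfy $L^{tr,-}(\phi)=L^-(\phi)$, and we can arrange the Morse functions to agree over this common region.

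First I would apply \prettyref{assum:bottleneckideals} to both Lagrangians to split off the $A_\infty$ ideals $\CF(L^{+}(\phi),\partial L^{+}(\phi))$ and $\CF(L^{tr,+}(\phi),\partial L^{tr,+}(\phi))$, yielding canonical projections
\[
\pi\colon \CF(L^{tr}(\phi))\twoheadrightarrow \CF(L^{tr}(\phi))/\CF(L^{tr,+}(\phi),\partial L^{tr,+}(\phi)),
\]
\[
\pi'\colon \CF(L(\phi))\twoheadrightarrow \CF(L(\phi))/\CF(L^{+}(\phi),\partial L^{+}(\phi))
\]
which are filtered $A_\infty$ homomorphisms. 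By \prettyref{prop:agreebottleneck} the two quotients are canonically identified, since the negative ends and their chosen Morse data coincide; call the common quotient $Q$. Second, since $L^{+}(\phi)$ is topologically $L_{z_0}\times \RR_{>0}$, \prettyref{prop:bottlenecktrivialneck} (equivalently \prettyref{prop:trivialbottleneck}) gives a filtered $A_\infty$ isomorphism $\pi'\colon \CF(L(\phi))\xrightarrow{\sim} Q$, so $(\pi')^{-1}\circ \pi\colon \CF(L^{tr}(\phi))\to\CF(L(\phi))$ is a filtered $A_\infty$ homomorphism.

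Finally, given a bounding cochain $b\in\CF(L^{tr}(\phi))$, the pushforward $((\pi')^{-1}\circ\pi)_*b$ is a bounding cochain on $\CF(L(\phi))$ by \prettyref{lemma:pushforwardmad} (the same lemma invoked in the justification of \prettyref{cor:bottleneckunobstructed}). This produces the required bounding cochain on $\CF(L(\phi))$, finishing the argument.

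The one subtle point, and the main thing to verify carefully, is that the Morse and perturbation data on the two Lagrangians really can be chosen to agree on $C_1$ so that \prettyref{prop:agreebottleneck} applies verbatim; this is where the split-labelling perturbation scheme justified in \prettyref{assum:bottleneckideals} is essential, since it lets us pick perturbations on $L^{tr}(\phi)$ and $L(\phi)$ whose non-split-label moduli spaces literally coincide over the common region $L^{tr,-}(\phi)=L^{-}(\phi)$. Once this is set up, the proof is purely formal manipulation of the filtered $A_\infty$ maps $\pi$ and $\pi'$.
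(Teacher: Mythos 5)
Your proof is correct and follows exactly the same route as the paper: both compose the quotient homomorphism from \prettyref{assum:bottleneckideals}, the identification of quotients over the common negative end from \prettyref{prop:agreebottleneck}, and the isomorphism from \prettyref{prop:trivialbottleneck}, then push the bounding cochain forward via \prettyref{lemma:pushforwardmad}. The extra care you take in flagging that the Morse and perturbation data must be chosen compatibly on $C_1$ for \prettyref{prop:agreebottleneck} to apply is a useful point the paper leaves implicit.
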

This is virtue of the curved $A_\infty$ homomorphisms
\[
\CF(L^{tr}(\phi))\xrightarrow{\text{Assum \ref{assum:bottleneckideals}}} \CF(L^{tr,-}(\phi))\xrightarrow{\text{Prop. \ref{prop:agreebottleneck}}}\CF(L^-(\phi))\xrightarrow{\text{Prop. \ref{prop:trivialbottleneck}}} \CF(L(\phi)).
\]
It remains to prove that $L^{tr}(\phi)$ is unobstructed by bounding cochain. 
We do this by constructing an eventually unobstructed sequence starting at $L^{tr}(\phi)$. 
We now describe a sequence of Hamiltonian isotopic Lagrangian submanifolds $\{L^{tr}_\alpha\}_{\alpha\in \NN}$, with $L^{tr}_0=L^{tr}(\phi)$.

For notation, we denote the union of two tropical sections which have been made transverse by an infinitesimal wrapping Hamiltonian as  $L^{tr}_\infty: = \sigma_0 \cup  (\theta'(\sigma_{-\phi}))$. 
For each $v\in\Delta^\ZZ_\phi$, let $q_v\in L^{tr}_\infty$ be the corresponding self-intersection point. 
Around each $q_v$ there is a standard symplectic neighborhood  $B_\epsilon(q_v)$, which we identify with a neighborhood of the origin in $\CC^n$. 
We take a Hamiltonian isotopy of $L^{tr}_\infty$ so that its restriction to each $B_\epsilon(q_v)$ matches $\RR^n\cup i\RR^n$. 
The sequence of Hamiltonian isotopic Lagrangian submanifolds $L^{tr}_\alpha$ are constructed by replacing 
\[
	L^{tr}_\infty\cap \left( \bigcup_{v\in \Delta^\ZZ_\phi} B_\epsilon(q_v)\right)
\]
with a standard surgery neck of radius $r_\alpha$. The constants $r_\alpha$ are chosen so that $\lim_{\alpha\to\infty} r_\alpha=0$.
In order to make this a  sequence of Hamiltonian isotopic Lagrangian submanifolds, we cancel out the small amount of Lagrangian flux swept out by the surgery necks with an equal amount of Lagrangian isotopy on $L^{tr}_\infty\setminus\left( \bigcup_{v\in \Delta^\ZZ_\phi} B_\epsilon(q_v)\right)$.
These Hamiltonian isotopies are chosen so that $L^{tr}_\alpha \setminus\left( \bigcup_{v\in \Delta^\ZZ_\phi} B_\epsilon(q_v)\right)$ converges smoothly. 

By \prettyref{prop:surgerycomparison}, the first member of this sequence  $L_0^{tr}$ can be constructed in such a way that it is Hamiltonian isotopic to $L^{tr}(\phi)$.
\cite{fukaya2007chapter10} gives us a relation between disks on the $L^{tr}(\phi)$ and the disks on $\sigma_0 \cup  (\theta'(\sigma_{-\phi}))$.
\begin{prop}
	If there exists a sequence of holomorphic disks 
	\[u_\alpha: (D, \partial D)\to (X, L_\alpha^{tr})\]
	 contributing to the $A_\infty$ structure on $\CF(L^{tr}_\alpha)$, then there exists a holomorphic polygon or disk 
	 \label{prop:sequenceonsurgery}
	 \[u_\infty: (D, \partial D)\to (X, L^{tr}_\infty).\] 
\end{prop}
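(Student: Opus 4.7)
The plan is to apply a Gromov compactness argument to the sequence $u_\alpha$, with special care taken at the surgery necks whose radii $r_\alpha$ shrink to zero. This is essentially a non-compact adaptation of the neck-stretching argument in Chapter 10 of \cite{fukaya2007chapter10}, where the surgery model is well understood.

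First I would establish a uniform energy bound on the sequence $u_\alpha$. The disks contribute to the $A_\infty$ structure, so they are constrained to have boundary limits on a fixed collection of critical points of the Morse functions $h_\alpha$ on $L^{tr}_\alpha$. Since the Lagrangians $L^{tr}_\alpha$ are Hamiltonian isotopic with the isotopies supported in a fixed compact region, the Morse functions $h_\alpha$ can be chosen to have matching critical points, and the symplectic area of a disk with fixed input/output asymptotics can be computed (up to a bounded correction from the Hamiltonian term) as a difference of primitives evaluated at these critical points. This gives a uniform energy bound independent of $\alpha$.

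Next, I would note that away from the surgery balls $\bigcup_v B_\epsilon(q_v)$ the Lagrangians $L^{tr}_\alpha$ converge smoothly to $L^{tr}_\infty$, while inside each $B_\epsilon(q_v)$ they converge in a controlled, non-smooth fashion: they are modeled by a Polterovich neck at $\RR^n\cup i\RR^n\subset \CC^n$ with radius $r_\alpha\to 0$. Standard Gromov compactness (combined with removal of singularities for boundary marked points limiting to $q_v$) then gives a subsequential limit $u_\infty$, a nodal configuration of pseudo-holomorphic disks, spheres, and possibly broken flow-line segments, with boundary on $L^{tr}_\infty$ away from the points $\{q_v\}$. The possible boundary corners appear exactly at those $q_v$ where the boundary of $u_\alpha$ enters and exits a surgery ball on different sheets ($\sigma_0$ versus $\theta'(\sigma_{-\phi})$); this is the standard dichotomy in surgery-compactness arguments, and gives precisely the polygon corners claimed.

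The main obstacle is handling what happens inside the shrinking necks. Here I would use the rescaling argument: dilating $B_\epsilon(q_v)$ by $r_\alpha^{-1}$ blows up the neck to a fixed model and the disks $u_\alpha$ either (i) have boundary staying uniformly away from $q_v$, in which case they contribute nothing singular to the limit, or (ii) concentrate energy near $q_v$ producing, after rescaling, a limit profile on the standard Lagrangian surgery in $\CC^n$, which by the classification of such holomorphic strips and the open mapping principle must either converge to a constant corner at $q_v$ or bubble off a disk that is accounted for separately. Combined with standard exclusion of sphere bubbles for generic $J$ and the Maslov-zero property already noted (ruling out disk bubbles on single sheets $\sigma_0$ or $\sigma_{-\phi}$ that are not already corners), one concludes that $u_\infty$ is a holomorphic disk or polygon on $L^{tr}_\infty$ with corners at a subset of $\{q_v\}_{v\in \Delta^\ZZ_\phi}$, as required.
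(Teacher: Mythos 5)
Your proposal and the paper's sketch both lean on the FOOO Chapter 10 machinery (in the paper, \cite[Sections 59, 62, 62.4]{fukaya2007chapter10}) and both correctly identify the shrinking surgery necks as the real obstruction to a naive Gromov compactness argument, so you are aiming at exactly the right target. The difference is organizational, but not trivial: you attempt a direct limiting argument, applying Gromov compactness to $u_\alpha$ with degenerating Lagrangian boundary conditions and then resolving the behavior at the necks via an $r_\alpha^{-1}$-rescaling to recover the limiting corners. The paper instead sidesteps the degenerating-boundary-condition issue entirely by constructing, for each $\alpha$, an explicit approximate solution $u_{\alpha, \text{app}}$ already having boundary on the fixed Lagrangian $L^{tr}_\infty$ --- obtained by excising the portion of $u_\alpha$ inside each $B_\epsilon(q_v)$ and grafting in the standard holomorphic corner model --- and then showing via the neck-stretching estimate that the $\bar\partial$-error of $u_{\alpha,\text{app}}$ tends to zero, at which point Arzel\`a--Ascoli on a fixed boundary condition produces $u_\infty$. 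The paper's route buys you a sequence with a fixed Lagrangian boundary, so the convergence machinery applies cleanly; your route requires justifying Gromov compactness for a sequence with $C^0$-but-not-smoothly converging Lagrangian boundary conditions, which is more delicate and is precisely what the paper states ``cannot'' be done directly. Your rescaling step is the right idea to repair this, and your additional observations (exactness of $(\CC^*)^n$ precluding sphere bubbles is actually a better justification than genericity; the dichotomy of ``stays away from $q_v$'' vs.\ ``concentrates at $q_v$'') are sound, so your argument should succeed if carried out carefully; it just asks for more analytic care than the approximate-solution scheme the paper adopts.
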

\emph{Idea of Proof}
	The proof follows the methods used in \cite[Section 6]{palmer2019invariance}, \cite[Theorem 1.2]{rizell2018refined}, or \cite{fukaya2007chapter10}, which all make comparisons between disks with boundary on the surgery to polygons with boundary on transversely intersecting Lagrangians.
	Let $\{u_\alpha\}$ be a sequence of holomorphic disks of bounded energy and boundary on $L^{tr}_\alpha$ contributing to $\CF(L^{tr}_\alpha)$. 
	Then the images of the $\{u_\alpha\}$ are mutually contained within a compact set of $X$.
	We would like to apply a Gromov-compactness argument on the sequence of $u_\alpha$  but cannot as the family $L^{tr}_\alpha$ does not converge to $L^{tr}_\infty$ in a strong enough sense.
	However, it is the case that $L_\alpha\setminus B_\epsilon(q_v)$ does converge to $L_\infty \setminus B_\epsilon(q_v)$ uniformly.
		
	In \cite[Section 62]{fukaya2007chapter10} it is shown that for such a sequence of disks $u_\alpha: (D^2, \partial D)\to (X, L^{tr}_\alpha)$, one may construct a family of approximate solutions $u_{\alpha, app}:(D^2, \partial D)\to (X, L^{tr}_\infty)$ by replacing the regions of the curve $u_\alpha$ which intersect $B_\epsilon(q_v)$ with holomorphic corners based on a standard model from \cite[Section 59]{fukaya2007chapter10}. 

	The following neck stretching argument is used to show that these approximate solutions approach an honest solution. 
	As $\alpha\to\infty$, the restriction  $L_\alpha \cap (B_\epsilon(q_v)\setminus \{q_v\})$ approaches the cylindrical Lagrangian  $L^{tr}_\infty \cap (B_\epsilon(q_v)\setminus \{q_v\})$. As a result, the holomorphic maps $\{u_\alpha\}$ converge to cylindrical maps in the neck region $B_\epsilon(q_v)\setminus \{q_v\}$ \cite[Section 62.4]{fukaya2007chapter10}.
	This provides an error bound on the failure of $u_{\alpha, app}$ to being a holomorphic polygon.
	As the $\{u_\alpha\}$ converge to cylindrical maps this error approaches zero.

	Since the $\{u_\alpha\}$ have images confined  a compact set of $X$,the maps $\{u_{\alpha, app}\}$ are similarly constrained.
	We can apply Arzela-Ascoli to take a subsequence of $\{u_{\alpha, app}\}$ which converge to a holomorphic map $u_\infty$ with boundary on $L^{tr}_\infty$.
\endproof

In this case, we can rule out the existence of holomorphic polygons with boundary on $L^{tr}_\infty$. 
\begin{prop}
	If we are working in complex dimension greater than $1$, there are no holomorphic polygons with boundary on $L_\infty^{tr}=\sigma_0 \cup  (\theta'(\sigma_{-\phi}))$.
	\label{prop:nodiskindex}
\end{prop}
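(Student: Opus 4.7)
My approach is to pass to the universal cover $T^*\RR^n \to (\CC^*)^n$ and exploit the exactness of both sections together with the strict convexity arising from the admissible perturbation $\theta'$. Both $\sigma_0$ and $\theta'(\sigma_{-\phi})$ are exact Lagrangian sections of $\val$, whose preimages in the universal cover decompose into graphs of the exact 1-forms $d(a \cdot x)$ for $a \in \ZZ^n$ (lifts of $\sigma_0$) and $d(-F + b \cdot x)$ for $b \in \ZZ^n$ (lifts of $\theta'(\sigma_{-\phi})$), where $F$ is the Liouville primitive constructed so that its gradient is a local diffeomorphism onto a slight enlargement of $\Delta_\phi$. The small strictly convex term added in the construction of $\theta'$ ensures that each pair of sheets $(\tilde\sigma_0^a, \tilde\sigma^b_{-\phi})$ meets transversely in at most a single point, realised as the unique critical point of the strictly concave primitive difference $-F + (b-a) \cdot x$.

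Since the domain $\Sigma$ of a holomorphic polygon is simply connected, any polygon $u: (\Sigma, \partial\Sigma) \to ((\CC^*)^n, L^{tr}_\infty)$ lifts uniquely (up to a deck transformation) to a holomorphic polygon $\tilde u: (\Sigma, \partial\Sigma) \to (T^*\RR^n, \tilde L^{tr}_\infty)$ whose boundary traverses a cyclic sequence of sheets with corners at the unique pairwise intersections. I then compute the $\omega$-area of $\tilde u$ via Stokes' theorem: the Liouville form $\lambda$ restricts on each boundary arc to an exact 1-form whose primitive is one of the functions above, and the integral $\int_{\partial\Sigma} \tilde u^*\lambda$ telescopes into a signed sum of primitive values at the corners. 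In the simplest case of a bigon, both corners necessarily lie on the same pair of sheets $(\tilde\sigma_0^a, \tilde\sigma^b_{-\phi})$ and hence coincide, so the area vanishes at once; for a higher polygon I would argue, via a careful bookkeeping of lift indices around the boundary, that the signed sum also collapses to zero because each corner is the global maximum of its corresponding strictly concave primitive difference.

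The main obstacle I anticipate is the combinatorial verification of the telescoping cancellation for polygons of arbitrary valency. I would package this computation as a higher-dimensional Fukaya--Oh style identification: holomorphic polygons in $T^*\RR^n$ between exact Lagrangian sections correspond to Morse gradient flow trees of the primitive differences on $\RR^n$, and for strictly concave primitives these flow trees degenerate to constants since $-F + c \cdot x$ has a unique isolated maximum and no gradient trajectories between distinct critical points for each $c \in \ZZ^n$. The hypothesis $n \geq 2$ enters in ensuring enough transversality to control bubbling configurations and to faithfully separate boundary arcs in the universal cover; for $n = 1$ the two-dimensional universal cover $\CC$ admits genuine holomorphic quadrilaterals bounded by pairs of horizontal and tilted graph Lagrangians, producing the Floer products that realise HMS for toric curves, so the vanishing statement genuinely fails in that case. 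Combined with strict positivity of $\omega$-area for any non-constant pseudo-holomorphic polygon, the area vanishing forces $\tilde u$, and hence $u$, to be constant.
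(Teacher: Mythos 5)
Your approach — lifting to the universal cover $T^*\RR^n$, using exactness of the graph Lagrangians, and trying to show that the $\omega$-area of any polygon is forced to vanish by a telescoping Stokes computation — is genuinely different from what the paper does, but it has a gap that I do not think can be repaired. The paper's proof is a Maslov \emph{index} computation: each corner in $\CF(\sigma_0,\theta'(\sigma_{-\phi}))$ has Floer degree $n$ (since $-\tilde\phi$ is convex, so the intersection is a minimum of the primitive difference), each corner in $\CF(\theta'(\sigma_{-\phi}),\sigma_0)$ has degree $0$, and the expected dimension of the moduli space of polygons is $(2-n)k-3+n$, which is negative once $n\geq 2$. The dependence on $n$ enters through the gradings. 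Your area argument, in contrast, is independent of $n$, which is already a warning sign since you concede that polygons do exist when $n=1$.

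The concrete failure is that the boundary integral does not telescope to zero for polygons with more than two corners. Carrying out the computation: if the lift of $u$ has boundary arcs alternating on $\sigma_0^{a_i}$ (primitive $a_i\cdot x$) and $\sigma_{-\phi}^{b_i}$ (primitive $-\tilde\phi+b_i\cdot x$), and the corners are $p_i$ (with $d\tilde\phi(p_i)=b_i-a_i$) and $q_i$ (with $d\tilde\phi(q_i)=b_i-a_{i+1}$), then Stokes' theorem and a reindexing give
\[
E(u)=\sum_i\bigl(g(p_i)-g(q_i)\bigr), \qquad g(x):=\tilde\phi(x)-d\tilde\phi(x)\cdot x,
\]
where $g$ is essentially (the negative of) the Legendre transform of $\tilde\phi$. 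This is a telescoping sum in the \emph{primitives along arcs}, but it is not a telescoping sum of the \emph{values} $g(p_i)$, $g(q_i)$: the points $p_i$ and $q_i$ are distinct critical points of distinct convex primitive differences, and $g$ takes genuinely different values at them. So $E(u)$ does not vanish on formal grounds. The bigon case works only because there the two corners lie on the same pair of sheets and thus literally coincide; for a quadrilateral the four corners $p_1,q_1,p_2,q_2$ are distinct, and the sum $g(p_1)+g(p_2)-g(q_1)-g(q_2)$ has no reason to be zero. Your proposed fallback via the Fukaya--Oh correspondence to Morse flow trees of the primitive differences also does not rescue the argument: those differences are strictly convex with a unique isolated minimum, but a flow tree has edges labeled by \emph{different} differences with \emph{different} critical points, and gradient tripods/Y-trees between distinct critical points of distinct functions can and do exist. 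What kills them when $n\geq 2$ is not an area or flow-tree degeneration argument but precisely the negative expected dimension the paper computes.
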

\begin{proof}
	This follows from an index computation.
	A holomorphic polygon with boundary contained in $\sigma_0\cup (\theta'(\sigma_{-\phi}))$ has $2k-1$ inputs and $1$ output.
	The inputs must alternate between being an element of $\CF(\sigma_0, \theta'(\sigma_{-\phi}))$ and $\CF( \theta'(\sigma_{-\phi}), \sigma_0)$. 
	We will look at the case where output $p$ lies in $p\in \CF(\sigma_0, \theta'(\sigma_{-\phi}))$ and the inputs $x_i, y_j$ are in
	\begin{align*}
		y_j\in \CF( \theta'(\sigma_{-\phi}), \sigma_0) & & 1 \leq i \leq k-1\\
		x_i\in \CF(\sigma_0, \theta'(\sigma_{-\phi})) & & 1\leq i \leq k.
	\end{align*}
	The dimension of moduli space of regular polygons with these boundary conditions can be explicitly computed based on the index of the points $x_i$ and $y_j$. 
	The degree of the input intersections of the form $x_i$ is $n$, and the degree of each intersection of the form $y_j$ is $0$.
	The output intersection $p$ has degree $n$.
	The dimension of this space of disks is 
	\begin{align*}(2k-1)- 2+\deg (p) - &\left( \sum_{i=1}^{k} \deg (x_i) + \sum_{i=1}^{k-1} \deg(y_i)\right)\\
		=&(2-n)k-3+n
	\end{align*}
	which is negative whenever $n\geq 2$. 

	The argument for when the output marked point $p$ is in $\CF(\theta'(\sigma_{-\phi}), \sigma_0)$ is the same.
\end{proof}
By  \prettyref{prop:sequenceonsurgery} and \prettyref{prop:nodiskindex}, the sequence of Lagrangians submanifolds $L^{tr}_\alpha$ is eventually unobstructed. 

We additionally need to prove that the Lagrangians $K^{tr}_{\alpha, \alpha+1}\subset X\times \CC$ given by the suspension of the Hamiltonian isotopy between $L_\alpha^{tr}$ and $L^{tr}_{\alpha+1}$ are an eventually unobstructed sequence. 
This follows from the same argument. 
A sequence of holomorphic disks with boundary on $K^{tr}_{\alpha, \alpha+1}$ produces a holomorphic disk with boundary on $K_\infty = L^{tr}_\infty\times \RR$. 
Since $L^{tr}_\infty\times \RR$ is a trivial cobordism and the complex structure was chosen to be the standard product structure, every holomorphic disk with boundary on $L^{tr}_\infty\times \RR$ gives us a holomorphic disk with boundary on $L^{tr}_\infty$. 
By \prettyref{prop:nodiskindex}, there are no such disks. Therefore, the Lagrangian cobordisms $K_{\alpha, \alpha+1}$ are eventually unobstructed.

As both $\{L^{tr}_{\alpha}\}_{\alpha\in \NN}$ and $\{K^{tr}_{\alpha,\alpha+1}\}_{\alpha\in \NN}$ are eventually unobstructed sequences of Lagrangians, it follows from \prettyref{lemma:unobstructedinlimit} that  $L^{tr}_0=L^{tr}(\phi)$ is unobstructed, completing the proof of \prettyref{prop:unobstructed}. 
\begin{remark}
	Note that in dimension 1, the Lagrangian sections $\theta'(\sigma_{-\phi})\cup \sigma_0$ may still bound interesting holomorphic disks.
	In dimension 1, see \prettyref{fig:diskwhichsurvives} for an example of a disk which has boundary on tropical sections.
	We now provide some evidence that these disks correspond to higher genus open Gromov-Witten invariants of the tropical Lagrangian. 
	In the 1 dimensional example, the disk in \prettyref{fig:diskwhichsurvives} becomes a holomorphic annulus with boundary on $L(x_1^2)$.

	We can replicate this phenomenon in higher dimensions.
	Let $\phi_{E}(x_1, x_2)$ be the tropical polynomial describing a tropical elliptic curve $V(\phi_E)$ as drawn in \prettyref{fig:internalannuli}. Then the Lagrangian $L(\phi)$ bounds holomorphic annuli which are modelled on the previous example in one dimension higher.
	See \prettyref{fig:internalannuli} for this example (in red) and an example of holomorphic genus 0 curve with four boundaries on $L(\phi_{E})$.
	At this point, it is unclear what the presence of these higher genus open Gromov-Witten invariants entail.
\end{remark}
\begin{figure}
	\centering
	\begin{subfigure}{.4\linewidth}
		\centering
		\includegraphics[scale=.7]{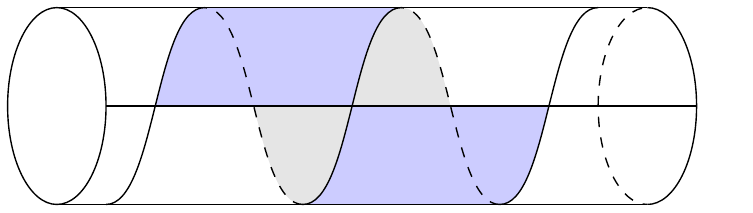}
		\caption
		{
			An example of a disk with boundary on $\sigma_0$ and $\sigma_{x^2}$ in $\CC^*$, corresponding to a holomorphic annulus after applying surgery at 3 points. 
		}
		\label{fig:diskwhichsurvives}
	\end{subfigure}
	\begin{subfigure}{.4\linewidth}
		\centering
		\includegraphics[scale=1]{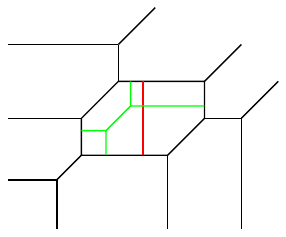}
		\caption
		{
			Two examples of higher genus open curves with boundary on $L(\phi_{E})$.
		}
		\label{fig:internalannuli}
	\end{subfigure}
	\caption{Some speculation on higher genus OGW invariants}
\end{figure}

\section{Homological Mirror Symmetry for $L(\phi)$}
\label{sec:tropicalmirrorsymmetry}
In this last section we look at some applications of our construction to homological mirror symmetry. Our Lagrangian submanifolds will have the additional structure of a Lagrangian brane, meaning that they are equipped with a choice of Morse function, spin structure, and bounding cochain.
To simplify notation, we will often refer to data of a Lagrangian brane by the Lagrangian submanifold $L$.
Since the Lagrangians $L(\phi)$ that we study are not exact, they do not fit into the framework of \cite{hanlon2018monodromy}, and additionally we are required to work with the Fukaya category defined over Novikov coefficients.
\begin{assumption}
	We assume that the monomially admissible Fukaya Seidel category can be extended to include unobstructed Lagrangian submanifolds, and that the appropriate analogues of \prettyref{thm:cobordisms} and \prettyref{thm:mirrorfortoric} hold in this setting.
\end{assumption}
The mirror to the Landau-Ginzburg model $(X, W_\Sigma)$ is the rigid analytic space $\check X^\Lambda_\Sigma$.
The intuition for our constructions should be understood independently of the requirements of Novikov coefficients.

The Novikov toric variety $\check X^\Lambda_\Sigma$  comes with a valuation map $\val: \check X^\Lambda_\Sigma\to Q$ using the valuation on the Novikov ring.
A difference between complex geometry and geometry over the Novikov ring is that in the non-Archimedean setting the valuation of a divisor is described exactly by its tropicalization, as opposed to living in the amoeba of the tropicalization.
\subsection{Mirror Symmetry for Tropical Lagrangian hypersurfaces}
\begin{theorem}
	Let $D_{\check f}\subset \check X_\Sigma^\Lambda$ be a divisor transverse to the toric divisors, defined by the equation $\check f=0$. Let $\phi$ be the tropicalization of $\check f$. The tropical Lagrangian brane $L(\phi)$ is mirror to $\mathcal O_{D'}$, with $D'$ rationally equivalent to $D_{\check f}$.
	\label{thm:tropicalLagrangianhms}
\end{theorem}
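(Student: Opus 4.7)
The plan is to realize $L(\phi)$ as the cone of a morphism between tropical Lagrangian sections via the surgery cobordism, and then translate this cone across homological mirror symmetry for $\check X_\Sigma^\Lambda$. By construction (Definition \ref{def:tropicallagrangian}), $L(\phi)$ is produced by Lagrangian surgery from $\sigma_0$ and $\sigma_{-\phi}$, and Proposition \ref{prop:surgeryprofile} equips this surgery with a Lagrangian cobordism $K:(\sigma_0,\sigma_{-\phi})\rightsquigarrow L(\phi)$. Applying the cobordism theorem (\prettyref{thm:cobordisms}, via the standing extension assumption to monomially admissible and unobstructed Lagrangians) in the case $k=2$ yields an exact triangle
\[\sigma_{-\phi}\xrightarrow{\mu}\sigma_0\to L(\phi)\]
in $\Fuk_{\Delta_\Sigma}(X,W_\Sigma)$, so it suffices to identify the mirror of the cone of $\mu$.

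Next I would identify the ends under HMS. Since the tropicalization $\phi$ of $\check f$ is by construction a deformation of the support function $\phi_{[D_{\check f}]}$ of the class $[D_{\check f}]$ (see \prettyref{subsec:toricbackground}), there is a monomially admissible Hamiltonian isotopy connecting $\sigma_{-\phi}$ with $\sigma_{-\phi_{[D_{\check f}]}}$. Theorem \ref{thm:hmstorics} then identifies $\sigma_0\leftrightarrow\mathcal O_{\check X^\Lambda_\Sigma}$ and $\sigma_{-\phi}\leftrightarrow\mathcal O(-D_\Sigma)$ for some toric representative $D_\Sigma$ of the class $[D_{\check f}]$, and computes
\[\hom\bigl(\mathcal O(-D_\Sigma),\mathcal O\bigr)=H^0\bigl(\mathcal O(D_\Sigma)\bigr)=\bigoplus_{v\in\Delta_\phi\cap\ZZ^n}\Lambda\cdot z^v.\]
Crucially, the basis $\{z^v\}$ corresponds under the HMS equivalence to the intersection points of $\sigma_{-\phi}$ and $\sigma_0$ enumerated by \prettyref{prop:latticepointsareintersections}. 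Under this dictionary the connecting morphism $\mu$ becomes a specific global section $s\in H^0(\mathcal O(D_\Sigma))$, and the cone $\cone(s:\mathcal O(-D_\Sigma)\to\mathcal O)$ on the B-side is the structure sheaf $\mathcal O_{Z(s)}$ of its vanishing locus. Setting $D':=Z(s)$ finishes the argument, since $D'$ and $D_{\check f}$, being zero loci of sections of the same line bundle $\mathcal O(D_\Sigma)$, are rationally equivalent.

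The main obstacle is the identification step: one must show that the morphism $\mu$ produced by the cobordism theorem is actually a \emph{nonzero} section $s$ of $\mathcal O(D_\Sigma)$, with nonvanishing coefficients precisely at the lattice points $v\in\Delta_\phi^\ZZ$ where the surgery was performed. This requires tracing the proof of \prettyref{thm:cobordisms} (which computes $\CF^\bullet(L\times\RR,K)$ for test objects) and matching each surgery region $U_v$ with the corresponding monomial $z^v$; iterating \prettyref{thm:surgerycone} one $U_v$ at a time, the simultaneous surgery should algebraically correspond to the sum $\sum_{v\in\Delta_\phi^\ZZ} c_v z^v$ with all $c_v\neq 0$, so that $Z(s)$ is a genuine divisor. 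A secondary subtlety is that $L(\phi)$ carries a bounding cochain supplied by \prettyref{prop:unobstructed}, and one must check that incorporating this bounding cochain only alters $s$ by terms that preserve its rational equivalence class. In the smooth case $\Delta_\phi^\ZZ=\Delta_\phi\cap\ZZ^n$, and the expected outcome is that $s$ agrees on the nose (up to rescaling of monomials) with the tropicalization of $\check f$, so that in fact $D'=D_{\check f}$; in the non-smooth case the rational equivalence leaves room for the omitted lattice points corresponding to the non-smooth strata.
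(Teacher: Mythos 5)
Your proposal takes essentially the same route as the paper: both use the surgery cobordism to produce the exact triangle $\sigma_{-\phi}\to\sigma_0\to L(\phi)$, apply the HMS equivalence of \prettyref{thm:mirrorfortoric} to identify the ends with $\mathcal O(-D)$ and $\mathcal O$, and conclude that $L(\phi)$ is mirror to the structure sheaf of the zero locus of a section of $\mathcal O(D)$, hence a divisor rationally equivalent to $D_{\check f}$. The caveats you flag---that the connecting morphism must be a nonzero section, and the possible correction from the bounding cochain---are genuine subtleties the paper leaves implicit, deferring the exact identification $D'=D_{\check f}$ to \prettyref{conj:mirror}.
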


We first give a family Floer argument motivating this mirror statement.
From SYZ mirror symmetry we know that the mirror to a point in the complement of the anticanonical divisor $z\in \check X_\Sigma^\Lambda\setminus D$ is a fiber of the SYZ fibration equipped with local system.
One method to compute the mirror sheaf to $L(\phi)$ is to compute $\CF(L(\phi), F_q)$ and assemble this  data into a sheaf over $X$ using techniques from family Floer theory.
This line of proof is rooted in a long-known geometric intuition for mirror symmetry via tropical degeneration (see \prettyref{fig:amodelbmodel}).
However, the precise computation of the support is difficult due to the need to count holomorphic strips contributing to the Floer differential. In \cite{hicks2019tropical}, we compute the support in a few fundamental examples.
\begin{figure}
	\centering
	\includegraphics{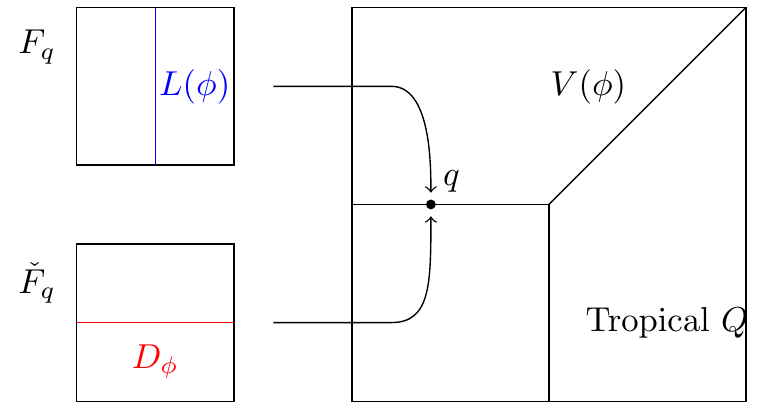}
	\caption[Swapping complex and symplectic geometry]{SYZ mirror symmetry predicts that Lagrangians are swapped with complex subvarieties by fiberwise duality over a tropical curve in the base.}
	\label{fig:amodelbmodel}
\end{figure}
\begin{proof}[of \prettyref{thm:tropicalLagrangianhms}]
	We use Lagrangian cobordisms to prove this theorem. The function $\check f$ associated to the effective divisor $D_{\check f}\subset \check X^\Lambda_\Sigma$ defines a section of the line bundle $\mathcal O_{\check X_\Sigma}(D_{\check f})$, giving us an exact triangle
	\begin{equation}
		\mathcal O_{\check X^\Lambda_\Sigma}(-D_{\check f})\xrightarrow{\check f} \mathcal O_{\check X^\Lambda_\Sigma}\to \mathcal O_{D_{\check f}}.
		\label{eq:bexacttriangle}
	\end{equation}
	This gives us a description of $\mathcal O_{D_{\check f}}$ in terms of line bundles on $\check X^\Lambda_\Sigma$.
	By \prettyref{thm:mirrorfortoric}, we have an identification of $\Fuk((\CC^*)^n, W_\Sigma)$ with $D^b\Coh(\check X_\Sigma^\Lambda)$ giving us the following mirror correspondences between sheaves and Lagrangian submanifolds:
	\begin{align*}
		\mathcal O_{\check X_\Sigma^\Lambda} \leftrightarrow \sigma_0 &  & \mathcal O_{\check X_\Sigma^\Lambda}(-D_{\check f})\leftrightarrow \sigma_{-\phi_{[D]}}.
	\end{align*}
	where $\phi_D$ is the support function of the divisor $D_{\check f}$. The Lagrangians $\sigma_{-\phi}$ and $\sigma_{-\phi_{[D]}}$ are Hamiltonian isotopic. 
	Using an extension of \prettyref{thm:cobordisms} to the unobstructed setting, we obtain an exact triangle
	\begin{equation}
		\sigma_{-\phi}\xrightarrow{\check g}\sigma_{0}\to L(\phi)
		\label{eq:mappingcone}
	\end{equation}
	for some map $\check g$.  $L(\phi)$ is therefore identified under the mirror functor to a sheaf $\mathcal O_{D_{\check g}}$ supported on $D_{\check g}$, an effective divisor for the bundle $\mathcal O(D_{\check f})$.
	The divisors $D_{\check g}$ and $D_{\check f}$ are rationally equivalent.
\end{proof}

If we wish to prove   that $D_{\check g}$ and $D_{\check f}$ match up exactly, we need to better understand the map ${\check g}$ in  equation (\ref{eq:mappingcone}).
Though we cannot determine this map without making a computation of holomorphic strips with boundary on the cobordism $K$, we conjecture
\begin{conjecture}
	Let $\phi$ be the tropicalization of $\check f$. There exists a choice of bounding cochain making the Lagrangian brane $L(\phi)$  mirror to $\mathcal O_{D_{\check f}}$.
	\label{conj:mirror}
\end{conjecture}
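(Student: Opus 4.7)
The strategy is to pin down precisely the attaching map $\check g$ in the exact triangle $\sigma_{-\phi} \xrightarrow{\check g} \sigma_0 \to L(\phi)$ of equation (\ref{eq:mappingcone}), and then exploit the freedom in the bounding cochain on the cobordism $K(\phi)$ (whose existence is guaranteed by \prettyref{thm:tropicalLagrangians}) to arrange $\check g = c \cdot \check f$ for some $c \in \Lambda^*$; the bounding cochain on $L(\phi)$ induced by restriction to the end of $K(\phi)$ then witnesses the conjecture. This reduction suffices because cones on scalar multiples of a section define the same divisor and hence isomorphic structure sheaves.

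The first step is to compute the \emph{default} attaching map $\check g_0$ arising from the canonical bounding cochain on $K(\phi)$ constructed in \prettyref{sec:unobstructedness}. Under the identification $HF^0(\sigma_{-\phi}, \sigma_0) \cong H^0(\check X^\Lambda_\Sigma, \mathcal O(D_{\check f}))$ coming from \prettyref{thm:hmstorics}, the monomial basis $\{z^v\}_{v \in \Delta_\phi \cap \ZZ^n}$ corresponds to intersections of $\sigma_0$ with an admissibly wrapped copy of $\sigma_{-\phi}$. Since $K(\phi)$ is built by iterating the surgery profile of \prettyref{prop:surgeryprofile} at the regions $U_v$, and each such surgery locally contributes a canonical holomorphic strip through the neck whose symplectic area is controlled by the neck width (\prettyref{prop:neckwidth}) and the local primitive of $\tilde\phi$, a local model computation in the spirit of \cite{fukaya2007chapter10} should yield
\[
	\check g_0 = \sum_{v \in \Delta_\phi^\ZZ} \lambda_v\, z^v, \qquad \lambda_v \in \Lambda^*,
\]
with exponents determined explicitly by the chosen surgery data.

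The second and harder step is to vary the bounding cochain on $K(\phi)$ so as to modulate the coefficients $\lambda_v$ until they match those of $c\cdot \check f$. Bounding cochains on $K(\phi)$ form a torsor over the degree-one part of $\CF(K(\phi))$ modulo gauge, and the induced change in the attaching map is computed, at leading order, by an $A_\infty$ structure map involving the cobordism ends. I would seek, for each $v \in \Delta_\phi^\ZZ$, a geometric cycle $\gamma_v$ on $K(\phi)$ supported near the $v$-th surgery neck whose insertion as a bounding cochain rescales only $\lambda_v$. The principal obstacle is establishing that this can be arranged independently at each $v$: contributions from multiple insertions may couple the coefficients nonlinearly, forcing one to solve a system of equations over the Novikov ring. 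A practical workaround is to correct the coefficients iteratively in increasing Novikov valuation, exploiting the completeness of $\Lambda$ in the spirit of standard Maurer--Cartan deformation arguments; once an infinitesimal version of surjectivity onto the tangent space $HF^0(\sigma_{-\phi}, \sigma_0)$ is in place, the formal structure of bounding cochains guarantees convergence to an honest bounding cochain realizing $\check g = c \cdot \check f$.
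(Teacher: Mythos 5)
The statement you are addressing is explicitly a \emph{conjecture} in the paper: \prettyref{conj:mirror} follows \prettyref{thm:tropicalLagrangianhms}, which proves only the weaker claim that $L(\phi)$ is mirror to $\mathcal O_{D'}$ for some $D'$ rationally equivalent to $D_{\check f}$. The text immediately preceding the conjecture says that determining the map $\check g$ in equation (\ref{eq:mappingcone}) ``without making a computation of holomorphic strips with boundary on the cobordism $K$'' is not possible, and that this is precisely why the exact statement is left as a conjecture. So there is no proof in the paper to compare against; the appropriate question is whether your sketch closes the gap the author leaves open. It does not.

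Your reduction is fine: if one exhibits a bounding cochain on the cobordism whose restriction to the end $L(\phi)$ makes $\check g$ a unit multiple of $\check f$, then $D_{\check g}=D_{\check f}$ and the conjecture follows. But both substantive steps are asserted rather than proved. In the first step, the formula $\check g_0=\sum_v \lambda_v z^v$ with explicit $\lambda_v$ \emph{is} the holomorphic-strip count the author declines to carry out; invoking \prettyref{prop:surgeryprofile}, \prettyref{prop:neckwidth}, and a local model ``in the spirit of'' \cite{fukaya2007chapter10} names the right tools but does not perform the count, and in particular nothing here rules out rigid strips on $K(\phi)$ that traverse several necks or contribute at the same Novikov exponent as the local strip. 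Moreover there is no ``canonical'' bounding cochain supplied by \prettyref{sec:unobstructedness}: the unobstructedness proof is an existence argument via an eventually-unobstructed sequence, not the construction of a distinguished element, so ``the default attaching map'' is not well defined without further work.

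The second step is the more serious gap. You need the map from bounding cochains on $K(\phi)$ (modulo gauge) to perturbations of $\check g\in HF^0(\sigma_{-\phi},\sigma_0)$ to be surjective onto (a dense subset of) the space of coefficient tuples $(\lambda_v)$. You acknowledge this as ``the principal obstacle'' and propose to circumvent it by an iterative Maurer--Cartan correction in increasing Novikov valuation, but such an iteration converges only if the linearized map is surjective at every order; that infinitesimal surjectivity is exactly the hypothesis you are missing, not something guaranteed by ``the formal structure of bounding cochains.'' One would need to produce, geometrically, degree-one cycles $\gamma_v$ on $K(\phi)$ supported near each neck, show that each insertion perturbs $\lambda_v$ at leading order without disturbing the others, and verify that these perturbations exhaust all directions in $H^0(\mathcal O(D))$ rather than only moving $D'$ within a subvariety of its linear system (or, worse, only acting by global scalars, which would never correct an initial $\check g_0$ whose zero divisor differs from $D_{\check f}$). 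As written the argument is circular at exactly the point the paper flags as open, so it is a plausible programme, not a proof.
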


\subsection{Twisting by Line Bundles}
Let $\psi:Q\to \RR$ be a piecewise linear function, and let $\theta_\psi$ be the time 1 Hamiltonian flow associated to the pullback of the smoothing, $\tilde \psi \circ \val: X\to \RR$.
This Hamiltonian flow can be compared to fiberwise sum \cite{subotic2010monoidal} with the section $\sigma_\psi$
\[
	\theta_\psi(L)= L+\sigma_\psi.
\]
While wrapping is not an admissible Hamiltonian isotopy, it still sends admissible Lagrangian branes to admissible Lagrangian branes, giving an automorphism of the Fukaya category.
\begin{theorem}[\cite{hanlon2018monodromy}]
	Let $\psi:Q\to \RR$ be the support function for a line bundle $\mathcal L_\psi$.
	The functor on the Fukaya category $L\mapsto L+\sigma_\psi$ is mirror to the functor $\mathcal F_L\mapsto \mathcal F_L\tensor \mathcal L_{\psi}$.
	\label{thm:tensorfiber}
\end{theorem}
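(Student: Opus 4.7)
The plan is to reduce to checking the statement on the generating set of tropical Lagrangian sections, where both functors are explicit. By \prettyref{thm:hmstorics}, tropical Lagrangian sections generate the subcategory whose twisted complexes give $D^b\Coh(\check X_\Sigma)$, with $\sigma_\phi \leftrightarrow \mathcal O(\phi)$ on objects. The first observation I would use is that fiberwise sum of sections is additive on the generating primitives: $\sigma_{\phi_1} + \sigma_{\phi_2}$ is exactly $\sigma_{\phi_1+\phi_2}$, since each $\sigma_\phi$ is the image of $d\tilde\phi$ under the quotient $T^*Q\to X$ and differentials add. On the B-side, tensor products of line bundles correspond to sums of their support functions: $\mathcal L_{\phi}\tensor \mathcal L_{\psi} = \mathcal L_{\phi + \psi}$. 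Combined with the HMS dictionary, this gives a commutative square on objects relating $L\mapsto L+\sigma_\psi$ and $\mathcal F\mapsto \mathcal F\tensor\mathcal L_\psi$ when restricted to tropical sections.

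The second step is to promote this object-level match to a match of $A_\infty$ functors. For pairs of tropical sections, the intersection $\sigma_{\phi_1+\psi}\cap \sigma_{\phi_2+\psi}$ is in canonical bijection with $\sigma_{\phi_1}\cap \sigma_{\phi_2}$, since both are indexed by lattice points in $\Delta_{\phi_2-\phi_1}$ (which is unchanged by translating both primitives by $\psi$). Under the combinatorial identification of the Floer complex of tropical sections from \prettyref{thm:hmstorics}, these generators on both sides are identified with the same $\hom(\mathcal O(\phi_1),\mathcal O(\phi_2))=\hom(\mathcal O(\phi_1)\tensor\mathcal L_\psi,\mathcal O(\phi_2)\tensor\mathcal L_\psi)$. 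Holomorphic polygons contributing to higher $A_\infty$ operations are similarly preserved: after interpolating the smoothings of $\tilde\phi_i$ and $\tilde\phi_i+\tilde\psi$ through a Hamiltonian isotopy, the pearly trajectories (or strips, after admissible perturbation) deform continuously without bubbling, since the tropical model gives a combinatorial count depending only on the affine geometry of the Newton polytopes, which is translation invariant.

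The third step is to check that the functor $L\mapsto L+\sigma_\psi$, defined a priori by a non-admissible Hamiltonian flow $\theta_\psi$ generated by $\tilde\psi\circ\val$, descends to a well-defined autoequivalence of $\Fuk_{\Delta_\Sigma}(X,W_\Sigma)$. The main obstacle here is the admissibility: $\theta_\psi$ shifts the arguments of the monomials $c_\alpha z^\alpha$ in the monomial regions $C_\alpha$ by an amount proportional to $\langle\alpha,d\tilde\psi\rangle$, so $\theta_\psi(L)$ fails to be $\Delta_\Sigma$-admissible in the strict sense. The resolution, following \cite{hanlon2018monodromy}, is to compose $\theta_\psi$ with admissible wrapping isotopies that undo this shift modulo $2\pi$ on each monomial region, producing an admissible representative of the same object in the localized Fukaya category. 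Verifying that this composite procedure is functorial---i.e.\ that the resulting quasi-equivalence is independent of choices and extends coherently to morphisms and higher compositions---is exactly the work carried out in the construction of the monodromy functors of Hanlon, and it is this piece that I expect to be the technical heart of the proof.
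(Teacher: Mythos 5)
This theorem is quoted from \cite{hanlon2018monodromy}; the present paper does not prove it, so there is no internal proof to compare against. Taking your proposal as a reconstruction of Hanlon's argument: the overall skeleton is correct and does match the outline there — check the action on the generating tropical sections via additivity of support functions, identify morphism spaces by lattice points, and deal with the non-admissibility of the wrapping.

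Your second step, though, is more labored than it needs to be, and the ``interpolating the smoothings $\ldots$ without bubbling'' argument is not what carries the day. The operation $L\mapsto L+\sigma_\psi$ is realized by the time-1 flow $\theta_\psi$ of $\tilde\psi\circ\val$, which is an honest Hamiltonian diffeomorphism of $X=(\CC^*)^n$. A symplectomorphism carries holomorphic polygons (for an a.c.s.\ $J$) to holomorphic polygons (for $(\theta_\psi)_*J$) with boundary on the pushed-forward Lagrangians, so the $A_\infty$ data of the tuple $(\theta_\psi L_0,\dots,\theta_\psi L_k)$ is tautologically isomorphic to that of $(L_0,\dots,L_k)$. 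You gain nothing from appealing to the translation-invariance of the combinatorial model; the matching of higher products is automatic once the Lagrangians are acted on by a symplectomorphism. The genuine content — which you correctly flag as the technical heart — is your third step: $\theta_\psi$ destroys the argument constraints over the monomial regions $C_\alpha$, and one must show that composing with admissible wrapping isotopies produces a well-defined object of the \emph{localized} category independently of the choices made, and that this promotion is compatible with the structure maps. That reconciliation between the global symplectomorphism and the localized, perturbed model is where the work of \cite{hanlon2018monodromy} actually lies, and you have identified it correctly even if you leave it as a black box.
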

Provided that $V(\phi)$ and $V(\psi)$ have a pair of pants decompositions with no codimension 2 strata intersecting,  the Lagrangian $L(\phi)+\sigma_\psi$ can be given an explicit description in terms of the pair of pants decomposition.
We outline this construction in complex dimension 2, but the higher dimensional constructions are analogous.
When $V(\phi)$ and $V(\psi)$ have locally planar intersection, the support of  $V(\psi)$ is contained in the cylindrical region between each of the pants in the decomposition of $V(\phi)$.
This means that if the smoothing and construction parameters for the tropical Lagrangians are chosen small enough, the strata
\begin{align*}
	U_{\{v_i, v_j\}}^\psi \cap U_{\{w_i, w_j, w_k\}}^\phi= & \emptyset \\
	U_{\{v_i, v_j\}}^\phi \cap U_{\{w_i, w_j, w_k\}}^\psi= & \emptyset
\end{align*}
are disjoint from one another.
Therefore, the Lagrangian $L(\phi)$ matches $L(\phi)+\sigma_\psi$ over the charts near the vertices of the tropical curve, $U_{\{v_i, v_j, v_k\}}^{\phi}$.

To construct $L(\phi)+\sigma_\psi$ from this pair of pants decomposition, it suffices to modify the cylinders living over regions $U_{\{v_i, v_j\}}^\phi$.
We construct this  modification in a local model where $\phi=0\oplus x_1$ and $\psi= 0 \oplus x_2$.
Topologically  $L(\phi)+ \sigma_\psi|_{U_{\{v_i, v_j\}}^\phi}$ is a cylinder, with an additional twist in the argument direction perpendicular to $V(\psi)$ at the point of intersection between the two tropical varieties, as drawn in \prettyref{fig:3pointtwist}, which shows $L(\phi_{E})+\sigma_{\phi_{pants}}$, where $V(\phi_E)$ is the tropical elliptic curve, and $V(\phi_{pants})$ is a tropical pair of pants meeting the tropical elliptic transversely at 3 points. 
This kind of modification to our tropical Lagrangian was remarked upon in \cite[Remark 5.2]{matessi2018Lagrangian} as a more general way to construct tropical Lagrangians.
This discussion shows that if $L(\phi)$ is mirror to $\mathcal O_D$, then the twisted Lagrangian $L(\phi)+\sigma_\psi$  is mirror to $ \mathcal O_D\tensor \mathcal L_\psi$. This can also be understood as the mirror to the pushforward of the pullback of $L_\psi$.

From the pair of pants description, it is clear that we can ``twist'' our Lagrangian in the argument along edges in ways that do not arise from adding on a section $\sigma_\psi$--- see for instance, \prettyref{fig:1pointtwist}.
These too should be mirror to pushforwards of line bundles on $D$, however these line bundles are not the pullbacks of line bundles on $X$.

This local twisting can be defined more rigorously by working with the sheaf of affine differentials. On $U\subset \RR^n$ these are the sections $\sigma: U\to \val^{-1}(U)$ which are locally described as the differential of tropical polynomials. However, such a section need not be defined globally as the differential of a tropical polynomial.

\begin{definition}
	Let $L$ be a Lagrangian, with $\val(L)\subset U$. Let $\sigma$ be a tropical section defined over the subset $U$. Define  \emph{the tropical Lagrangian  twisted by $\sigma$} to be the Lagrangian submanifold
	\[
		L(\phi;\sigma):=L(\phi)+\sigma.
	\]
\end{definition}

\begin{figure}
	\begin{subfigure}{.48\linewidth}
		\centering
		\includegraphics{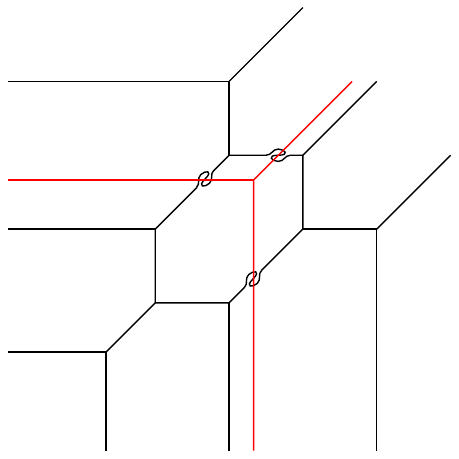}
		\caption{Twisting by a line bundle}
		\label{fig:3pointtwist}
	\end{subfigure}
	\begin{subfigure}{.48\linewidth}
		\centering
		\includegraphics{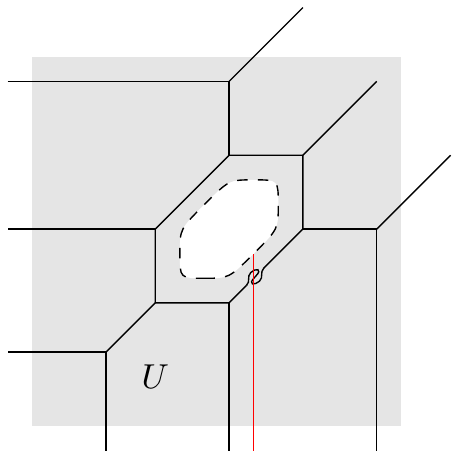}
		\caption{A twist which does not extend to a global section}
		\label{fig:1pointtwist}
	\end{subfigure}
	\caption{Inserting twists into tropical Lagrangians}
\end{figure}
As an example, in the mirror to $\CP^2$  we consider the open set $U$ as drawn in \prettyref{fig:1pointtwist}. There exists a tropical differential on $U$ whose critical locus intersects the tropical elliptic curve $V(\phi_E)$ at a single point.
The Lagrangian given by twisting along this tropical differential is expected to be mirror to the direct image of a degree 1 line bundle on $E$, an elliptic curve whose tropicalization is $V(\phi_E)$.
We expect that we can understand these twistings by employing tropical geometry on the affine structure of $\val(\phi_E)$ itself.
This tropical differential does not extend to a section over the entire base, as degree 1 line bundles on $E$ do not arise from pullback of a line bundle on $\CP^2$.

\begin{conjecture}
	The twisted tropical Lagrangians $L(\phi;\sigma)$ are mirror to the direct image of line bundles on the mirror divisor $D$.
	\label{conj:twisting}
\end{conjecture}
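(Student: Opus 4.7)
The plan is to reduce to \prettyref{thm:tropicalLagrangianhms} (or \prettyref{conj:mirror}) via cobordism, and then to identify local tropical twists with line bundles on $D$ using the SYZ/family Floer picture restricted to $D$. I would first dispose of the global case $\sigma=\sigma_\psi$, where $\psi$ is itself the support function of a line bundle $\mathcal L_\psi$ on $\check X_\Sigma^\Lambda$. In that case the twisted Lagrangian is literally $L(\phi)+\sigma_\psi$, so \prettyref{thm:tensorfiber} identifies the mirror functor $L\mapsto L+\sigma_\psi$ with $\mathcal F\mapsto \mathcal F\otimes \mathcal L_\psi$; combined with the identification $L(\phi)\leftrightarrow \mathcal O_D$ and the projection formula, this yields
\[
L(\phi;\sigma_\psi)\;\leftrightarrow\;\mathcal O_D\otimes \mathcal L_\psi\;\cong\;\iota_\ast\bigl(\iota^\ast\mathcal L_\psi\bigr),
\]
where $\iota\colon D\hookrightarrow \check X_\Sigma^\Lambda$ is the inclusion, establishing the conjecture for pullback line bundles.

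For the general case, the strategy is to realise $L(\phi;\sigma)$ as a surgery cobordism analogous to the one that produced $L(\phi)$, but with one of the input sections replaced by a locally twisted version. Concretely, I would build $L(\phi;\sigma)$ as a surgery $\sigma_0\#_{\{U_v\}}(\sigma_{-\phi}+\sigma)$ over the surgery regions $\{U_v\}$ away from the locus where $\sigma$ is non-trivial, and check that the arguments of \prettyref{prop:surgeryprofile} and \prettyref{prop:unobstructed} go through verbatim since $\sigma$ is a closed one-form and only modifies the section $\sigma_{-\phi}$ by a Hamiltonian perturbation outside the necks. This gives a Lagrangian cobordism $K(\phi;\sigma)\colon(\sigma_0,\sigma_{-\phi}+\sigma)\rightsquigarrow L(\phi;\sigma)$, and hence (by the extension of \prettyref{thm:cobordisms} to the unobstructed setting) an exact triangle
\[
\sigma_{-\phi}+\sigma \xrightarrow{\;\check g_\sigma\;} \sigma_0 \to L(\phi;\sigma).
\]

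The content of the conjecture is then to identify this mapping cone with the pushforward of a line bundle on $D$. The twisting $\sigma$, restricted to a neighbourhood of $V(\phi)\subset Q$, defines an affine section that is globally obstructed but whose obstruction class lives on $V(\phi)$ itself, i.e. in the tropical Picard group of the base of the SYZ fibration on $D\subset \check X_\Sigma^\Lambda$. Under the family Floer correspondence for the restricted SYZ fibration $D\to V(\phi)$, such a tropical affine class should be mirror to a line bundle $\mathcal L_\sigma\in\Pic(D)$; the desired identification $L(\phi;\sigma)\leftrightarrow \iota_\ast \mathcal L_\sigma$ would then follow by checking that the Floer-theoretic map $\check g_\sigma$ above matches, under the identification of \prettyref{thm:mirrorfortoric}, the composition $\mathcal O(-D)\otimes \mathcal L_\sigma'\to \mathcal O \to \iota_\ast\mathcal L_\sigma$ coming from an extension $\mathcal L_\sigma'$ of $\mathcal L_\sigma$ across $\check X_\Sigma^\Lambda$ (when such an extension exists; otherwise one must work with twisted complexes of line bundles representing $\iota_\ast \mathcal L_\sigma$).

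The hard step will be the last one: constructing $\mathcal L_\sigma$ intrinsically from the local tropical data of $\sigma$ on $V(\phi)$, and then computing (or bounding) the Floer cohomology $CF^\bullet(\sigma_0,\sigma_{-\phi}+\sigma)$ finely enough to pin down the class of the map $\check g_\sigma$ rather than only its target up to rational equivalence. In particular, one expects to need a version of family Floer theory along the divisor (in the spirit of the motivating discussion around \prettyref{fig:amodelbmodel}), together with a choice of bounding cochain on $L(\phi;\sigma)$ refining the construction of \prettyref{prop:unobstructed} so that the output of the mirror functor is an honest line bundle on $D$ rather than merely a twisted complex supported on $D$. As in \prettyref{conj:mirror}, this last refinement is what prevents the argument from being a direct corollary of \prettyref{thm:tropicalLagrangianhms}.
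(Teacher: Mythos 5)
The statement you are proving is stated as a \emph{conjecture} in the paper (\prettyref{conj:twisting}); the paper does not prove it, so there is no internal proof to compare your proposal against. The preceding discussion in the paper only motivates the statement: it observes that when the twist comes from a \emph{global} tropical section $\sigma_\psi$, the identification $L(\phi)+\sigma_\psi\leftrightarrow\mathcal O_D\otimes\mathcal L_\psi$ follows from \prettyref{thm:tensorfiber}, and then passes to general local twists by analogy. Your first paragraph essentially reproduces that discussion (with the projection formula made explicit), which is fine but is not new content beyond what the paper already says.

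The new part of your proposal -- the cobordism with ends $(\sigma_0,\sigma_{-\phi}+\sigma)\rightsquigarrow L(\phi;\sigma)$ -- has a genuine gap in exactly the case the conjecture is about. Recall that $\sigma$ is a tropical section defined only over a subset $U\supset\val(L(\phi))$, and the point of \prettyref{fig:1pointtwist} is that such a $\sigma$ need \emph{not} extend to all of $Q$. The cobordism ends $\sigma_0$ and $\sigma_{-\phi}$ in the untwisted construction are honest global sections of $X\to Q$ and are monomially admissible precisely because they are defined over all of $Q$. The object $\sigma_{-\phi}+\sigma$ is not defined on $Q\setminus U$, so it is not a Lagrangian submanifold of $X$, let alone an admissible object of $\Fuk_{\Delta_\Sigma}(X,W_\Sigma)$. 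Consequently, the exact triangle
\[
\sigma_{-\phi}+\sigma\xrightarrow{\;\check g_\sigma\;}\sigma_0\to L(\phi;\sigma)
\]
does not make sense as stated; you only have access to a Lagrangian $L(\phi;\sigma)$ and no canonical presentation of it as a cone over two objects that the mirror functor of \prettyref{thm:mirrorfortoric} understands. (If $\sigma$ \emph{does} extend globally, your triangle is fine, but then $\sigma$ is $d$ of a global tropical polynomial $\psi$, you are back in the pullback case, and the projection-formula argument of your first paragraph already suffices.) Any proof of \prettyref{conj:twisting} for genuinely local $\sigma$ must either (a) first realize $\iota_*\mathcal L_\sigma$ as a twisted complex of line bundles on $\check X^\Lambda_\Sigma$ and match each term to an admissible Lagrangian in a larger iterated-cone presentation of $L(\phi;\sigma)$, or (b) bypass cobordisms entirely and construct the mirror sheaf directly via a family Floer argument over $V(\phi)$, as you suggest in your last paragraph. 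The second option is closer to the paper's own motivating picture (\prettyref{fig:amodelbmodel}) but, as you rightly note, would require a version of family Floer theory for the fibration restricted to $D$ that the paper does not supply.

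A smaller issue: you claim $\sigma$ ``only modifies the section $\sigma_{-\phi}$ by a Hamiltonian perturbation outside the necks,'' but this is not automatic. Adding a closed one-form $\sigma$ with nonzero periods on $H^1(U)$ is a non-Hamiltonian (flux-carrying) isotopy; indeed the whole point of the twist is to change the object in the Fukaya category, which a Hamiltonian perturbation could not do. So the claim that \prettyref{prop:unobstructed} carries over ``verbatim'' also needs an argument; the unobstructedness proof in \prettyref{sec:unobstructedness} does not obviously descend to $L(\phi;\sigma)$ without redoing the bottleneck and eventual-unobstructedness analysis for the twisted Lagrangian.
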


\appendix
	
\section{$A_\infty$ algebras and Bounding Cochains}
\label{app:ainftyrefresher}
In this appendix we review some statements on filtered $A_\infty$ algebras.
In \prettyref{subsec:ainfinitybasics}, we fix notation for filtered $A_\infty$ algebras.
\prettyref{subsec:ainftymorphism} looks at properties of filtered $A_\infty$ homomorphisms, and \prettyref{subsec:ainftydeformations} reviews the definition and construction of bounding cochains.
\subsection{Notation: Filtered $A_\infty$ algebras}
\label{subsec:ainfinitybasics}
We  review curved $A_\infty$ algebras.
In order to ensure convergence of the deformations we develop,  we work with \emph{filtered} $A_\infty$ algebras.
This will mean working over the Novikov field.
\begin{definition}[\cite{fukaya2010Lagrangian}]
	Let $R$ be a commutative ring with unit. The \emph{universal Novikov ring} over $R$ is the set of formal sums
	\[
		\Lambda_{\geq 0}:=\left\{\sum_{i=0}^\infty a_i T^{\lambda_i} \;|\;\lambda_i\in \RR_{\geq 0}, n_i\in \ZZ, \lim_{i\to\infty} \lambda_i=\infty\right\}.
	\]

	Let $k$ be a field. The \emph{Novikov Field} is the set of formal sums
	\[
		\Lambda := \left\{\sum_{i=0}^\infty  a_i T^{\lambda_i}  | \lambda \in \RR, n_i\in \ZZ, \lim_{i\to\infty} \lambda_i = \infty\right \}.
	\]
	This is a non-Archimedean field with the same valuation.
	An \emph{energy filtration} on a graded $\Lambda$-module $A^\bullet$  is a filtration $F^{\lambda_i}A^k$ so that
	\begin{itemize}
		\item
			Each $A^k$ is complete with respect to the filtration, and has a basis with zero valuation over $\Lambda$.
		\item Multiplication by $T^\Lambda$ increases the filtration by $\lambda$.
	\end{itemize}
\end{definition}
The energy filtration will play an important role in the algebraic setting where many of our constructions will either induct on the energy filtration, and in order to obtain some kind of convergence we will have to use the energy filtration.
\begin{definition}
	Let $A^\bullet$ have an energy filtration. A filtered $A_\infty$ structure  $(A, m^k)$ is an enhancement of $A^\bullet$ with $\Lambda_{\geq 0}$ multilinear graded higher products for each $k \geq 0$
	\[
	m^k:A^{\tensor k}\to A[2-k]
	\]
	satisfying the following properties:
	\begin{itemize}
		\item
		      \emph{Energy:} The product respects the energy filtration in the sense that :
		      \[m^k(F^{\lambda_1}A, \cdots , F^{\lambda_k}A)\subset F^{\sum_{i=1}^k \lambda_i}A.\]
		\item
		      \emph{Non-Zero Energy Curvature:}  The obstructing curvature term has positive energy, $m^0\in F^{\lambda>0}C$.
		\item
		      \emph{Quadratic $A_\infty$ relations:} For each $k \geq 0$ we require the following relation hold
		      \[
		      \sum_{j_1+i+j_2=k} (-1)^\clubsuit m^{j_1+j_2+1}(\id^{\tensor j_1}\tensor m^{i}\tensor \id^{\tensor j_2})=0.
		      \]
		      The value of $\clubsuit$ is determined
		      on an input element $a_1\tensor \cdots \tensor a_k$ by 
		      \[
		      \clubsuit = |a_{k-j_1}|+\cdots + |a_k|-i.
		      \]
	\end{itemize}
	We say that $(A, m^k)$ is uncurved or tautologically unobstructed if $m^0=0$. 
	\label{def:filteredainfty}
\end{definition}
For the purposes of exposition, we will work up to signs from here on out. 

\begin{definition}
	Let $A$ be a filtered $A_\infty$ algebra. An \emph{ideal} of $A$ is a subspace $I\subset A$ so that for every $b\in I$ and $a_1, \ldots, a_{k-1}\in A$, 
	\[
		m^k(a_1\tensor\cdots \tensor  a_j\tensor b \tensor a_{j+1}\tensor \cdots \tensor a_{k-1})\in I.
	\]
	Note that we \emph{do not} require $m^0\in I$. 
	\label{def:ideal}
\end{definition}

The quotient of an $A_\infty$ algebra by an ideal is again a filtered $A_\infty$ algebra.
Given $(A,m^k)$ a filtered $A_\infty$ algebra, define the positive filtration ideal 
\[
	A_{>0}:=\{a\in A\;:\; \val(a)>0\}.
\]
We may recover an uncurved $A_\infty$ algebra by taking the quotient, 
\[
	A_{=0}:= A/ A_{>0}.
\]
This is always uncurved as the $m^0$ term is required to  have positive valuation. 

\subsection{Filtered $A_\infty$ algebra homomorphisms}
\label{subsec:ainftymorphism}
The definition of homomorphisms between filtered $A_\infty$ algebras is similar to the definition of homomorphisms of differential graded algebras, except that the homomorphism relation is relaxed by homotopies.
\begin{definition}
	Let $(A, m_A^k)$ and $(B, m_B^k)$ be $A_\infty$ algebras. A \emph{filtered $A_\infty$ homomorphism} from $A$ to $B$ is a collection of graded maps
	\[
		f^k:A^{\tensor k}\to B
	\]
	for $k\geq 0$ satisfying the following conditions:
	\begin{itemize}
		\item
		      \emph{Filtered:} The maps preserve energy
		      \[
			      f^k(F^{\lambda_1}A, \cdots , F^{\lambda_k}A)\subset F^{\sum_{i=1}^k \lambda_i}B.
		      \]
		      
		\item
			\emph{Quadratic $A_\infty$ relations:} The $f^k, m^k_A$ and $m^k_B$ mutually satisfy the quadratic filtered $A_\infty$ homomorphism relations for $k\geq 0$
		      \[
			      \sum_{\substack{(j_1+j+j_2=k)\\ j, j_1, j_2\geq 0}} \pm f^{j_1+1+j_2}(\id^{\tensor j_1}\tensor m^{j}_A\tensor \id^{\tensor j_2}) = \sum_{\substack{i_1+\cdots + i_l =k\\ i_j \geq 0}} \pm m^{l}_B(f^{i_1}\tensor \cdots \tensor f^{i_l})
		      \]
	\end{itemize}
\end{definition}
\begin{prop}
	Let $f^{ k}:A^{\tensor k}\to B$ and $g^{ k}: B^{\tensor k}\to C$ be two filtered $A_\infty$ homomorphisms.
	Then
	\begin{align*}
		(g\circ f)^k:=&\sum_{\substack{j_1+\cdots+j_l=k\\ j_i \geq 0}} g^l(f^{j_1}\tensor \cdots \tensor f^{j_l})
	\end{align*}
	is an $A_\infty$ homomorphism.
\end{prop}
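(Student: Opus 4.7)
The plan is to verify the two axioms of a filtered $A_\infty$ homomorphism for the collection $\{(g\circ f)^k\}_{k\geq 0}$: preservation of the energy filtration (together with convergence of the formal sum that defines each $(g\circ f)^k$), and the quadratic $A_\infty$ homomorphism relations with respect to $m_A^k$ and $m_C^k$.

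The filtration check is routine. Each summand $g^l(f^{j_1}\tensor \cdots \tensor f^{j_l})$ applied to $a_1\tensor \cdots \tensor a_k$ lands in filtration at least $\sum_i \val(a_i)$ because both $f$ and $g$ are filtered, and composition of filtered maps is filtered. Convergence of the (a priori infinite) sum, whose terms are indexed by arbitrarily long compositions $j_1+\cdots+j_l = k$ with $j_i\geq 0$, follows from the fact that any instance of $f^0$ contributes a factor of strictly positive valuation, so only finitely many summands contribute in each valuation band.

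For the quadratic relations, the cleanest route is via the bar construction. A filtered $A_\infty$ structure on $A$ is equivalent to a square-zero coderivation $\hat m_A$ on the completed tensor coalgebra $\widehat{BA}:=\widehat{T}(A[1])$, and a filtered $A_\infty$ homomorphism $f:A\to B$ is equivalent to a filtered coalgebra morphism $\hat f:\widehat{BA}\to\widehat{BB}$ intertwining the coderivations, $\hat f\circ \hat m_A = \hat m_B\circ \hat f$. A direct check shows that the length-$k$ component of the composite $\hat g\circ \hat f$ is exactly the formula defining $(g\circ f)^k$. The proposition then reduces to the chain of equalities
\[
(\hat g\circ \hat f)\circ \hat m_A = \hat g\circ \hat m_B\circ \hat f = \hat m_C\circ(\hat g\circ \hat f),
\]
followed by projection onto the length-one part of $\widehat{BC}$, which recovers the quadratic $A_\infty$ homomorphism relations for $(g\circ f)^k$.

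The main obstacle, which a direct combinatorial proof would have to tackle head-on, is matching the reindexed double sums $\sum_l g^l(f^\bullet\tensor\cdots\tensor f^\bullet)\circ(\id^{\tensor j_1}\tensor m_A^j\tensor \id^{\tensor j_2})$ and $\sum_l m_C^l\big((g\circ f)^{i_1}\tensor\cdots\tensor (g\circ f)^{i_l}\big)$ termwise while tracking Koszul signs through each reindexing. The bar construction packages precisely this combinatorics into the single intertwining identity above, which is why I would adopt it rather than grind out the sign bookkeeping by hand.
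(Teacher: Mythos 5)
The paper states this proposition as standard background without supplying a proof, so there is no internal argument to compare yours against. Your bar-construction route is the usual way this fact is established and it is correct: a filtered $A_\infty$ homomorphism is equivalent to a filtered coalgebra map $\hat f:\widehat{BA}\to\widehat{BB}$ intertwining the bar coderivations, the corestriction of $\hat g\circ\hat f$ to length one reproduces exactly the double sum defining $(g\circ f)^k$, and the intertwining for the composite is immediate from associativity of composition. This avoids the direct combinatorial reindexing and sign-tracking, at the cost of invoking the equivalence between filtered $A_\infty$ homomorphisms and completed-bar coalgebra maps, which in the curved/filtered setting requires that $\widehat{BA}$ include the length-$0$ summand and be completed in the energy filtration.

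One point worth making explicit: your convergence argument rests on $f^0$ (and $g^0$) having \emph{strictly} positive valuation. The paper's \prettyref{def:filteredainfty} imposes this on $m^0$, but the stated definition of a filtered $A_\infty$ homomorphism in \prettyref{subsec:ainftymorphism} only lists the filtration and quadratic-relation axioms, not the positivity of $f^0$. That positivity is implicit in the standard definition (cf.\ Fukaya--Oh--Ohta--Ono) and is indeed exactly what is needed both for the sum defining $(g\circ f)^k$ to converge and for $\hat f$ to be well defined on the completed bar construction, so you are right to invoke it; you should simply flag that you are using this (implicitly assumed) condition rather than deriving it from the axioms as written in the paper.
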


\subsection{Deformations of $A_\infty$ algebras}
\label{subsec:ainftydeformations}
The presence of higher product structures gives us additional wiggle room to deform the product structures on a filtered $A_\infty$ algebra.
We will be mainly interested in the case when we can deform a given filtered $A_\infty$ algebra into an uncurved one to obtain a well defined cohomology theory.
\begin{notation}
	As a shorthand, we write
	\[
		(\id\oplus a)^{{{n+k}\choose n}_a}=\sum_{j_0+\cdots+j_k=n}  (a^{\otimes j_0}\otimes \id \tensor a^{\tensor j_1}\tensor \id \tensor \cdots \tensor a^{\otimes j_{k-1}}\tensor \id \tensor a^{\otimes j_k})
	\]
	for the sum over all monomials containing $n+k$ terms, $n$ of which are $a$ and $k$ for which are $\id$. 
\end{notation}

\begin{definition}
	Let $a\in A$ be an element of positive valuation.
	Define the $a$-deformed product $m^k_a: A^{\tensor k}\to A$ by the sum
	\[
		m^k_a:= \sum_n m^{k+n}\left((\id\oplus a)^{{n+k\choose n}_a}\right).
	\]
	We call this a \emph{graded deformation} if the element $a$ has homological degree 1.
\end{definition}
The convergence of this sum is guaranteed by the positive valuation of the deforming element.
\begin{prop}
	$(A, m^k_a)$ is again a filtered curved $A_\infty$ algebra.
\end{prop}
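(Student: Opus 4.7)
The plan is to verify the three defining properties of a filtered curved $A_\infty$ algebra in order, with the bulk of the work concentrated in the quadratic relation. For the energy-filtration property, I would observe that each summand $m^{k+n}((\id\oplus a)^{\binom{n+k}{n}_a})$ feeds $k$ inputs from the specified filtration levels together with $n$ copies of $a$, which has strictly positive valuation; since the original $m^{k+n}$ respects the filtration, every term lands in $F^{\sum_i\lambda_i + n\cdot\val(a)}A\subset F^{\sum_i\lambda_i}A$. For the curvature condition, I would note that $m^0_a=\sum_{n\geq 0}m^n(a^{\tensor n})$ either inherits the positive energy of $m^0$ (when $n=0$) or picks up $n\cdot\val(a)>0$ from the deforming insertions; in all cases $\val(m^0_a)>0$, and the infinite sum converges by completeness with respect to the filtration.

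The core of the proof is the quadratic $A_\infty$ relation for the deformed operations. I would expand
\[
\sum_{j_1+i+j_2=k}m^{j_1+j_2+1}_a\bigl(\id^{\tensor j_1}\tensor m^{i}_a\tensor \id^{\tensor j_2}\bigr)
\]
using the definition of $m^\bullet_a$, so that each term becomes an original $m^{j_1+j_2+1+n}$ with $n$ extra copies of $a$ distributed among the outer slots, composed with an inner $m^{i+n'}$ carrying $n'$ copies of $a$. Reindexing by the total number $N=n+n'$ of $a$-insertions and by the positions of the non-$a$ inputs, I would organize the double sum so that, for each fixed placement of the $k$ genuine inputs among $k+N$ slots, the coefficient is exactly the original $A_\infty$ relation for $m^{k+N}$ applied to that configuration of inputs-and-$a$'s. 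By the $A_\infty$ relations for $(A,m^\bullet)$, every such coefficient vanishes, yielding the desired identity.

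The main obstacle is precisely the combinatorial bookkeeping in this reindexing: one must check that each way of splitting the $a$-insertions between the ``outer'' and ``inner'' operation in the deformed composition corresponds bijectively to a way of choosing a subtree in the $A_\infty$ relation for $m^{k+N}$ with that same pattern of inputs. The cleanest way I would present this is via the bar-construction language: write $\hat a=\exp(a)=\sum_{n\geq 0}a^{\tensor n}\in \widehat{T}A$ (well-defined by positivity of $\val(a)$), note that $m_a^k(x_1,\dots,x_k)=\hat m(\hat a,x_1,\hat a,x_2,\dots,x_k,\hat a)$, and observe that the $A_\infty$ relation for $\hat m$ translates tensor-by-tensor into the $A_\infty$ relation for $m_a$ after inserting $\hat a$ in every gap, thereby bypassing an explicit index-juggling computation.

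Finally, I would remark that graded-ness of the deformation (when $a$ has degree $1$) is what makes $m_a^k$ land in the correct degree $2-k$ piece: each inserted copy of $a$ shifts the degree by $+1$, while $m^{k+n}$ decreases degree by $n$ relative to $m^k$, so the shifts cancel and the grading is preserved. This, together with the three verifications above, shows that $(A,m_a^k)$ satisfies all the axioms of \prettyref{def:filteredainfty}.
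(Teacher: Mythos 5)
The paper states this proposition without proof (it is a standard fact in the theory of filtered $A_\infty$ algebras, appearing for instance in Fukaya--Oh--Ohta--Ono), so your argument is filling a genuine gap rather than being compared against an existing one.

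Your proof is correct. The filtration and curvature checks are straightforward and you handle them properly, including the convergence point, which is the whole reason the energy filtration is imposed. The heart of the matter is the combinatorial bijection in the quadratic relation: for fixed $N$ and a fixed placement of $N$ copies of $a$ among the $k$ genuine inputs, a choice of inner operation $m^{i'}$ acting on a consecutive substring of the $k+N$ slots corresponds uniquely to a choice of $(j_1,i,j_2)$ with $j_1+i+j_2=k$ together with a splitting of the $N$ copies of $a$ into ``inner'' ones (inside the substring, produced by the inner $m_a^i$) and ``outer'' ones (produced by the outer $m_a^{j_1+1+j_2}$). The subtle point — that an $a$ sitting in the gap adjacent to the boundary of the inner substring may be assigned either to the inner or to the outer operation, and that this ambiguity exactly matches the freedom of where to start and end the consecutive block in the $A_\infty$ relation for $m^{k+N}$ — is precisely what makes the reindexing a bijection, and you correctly flag this as the main bookkeeping obstacle. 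Your bar-construction packaging (inserting $\hat a=\sum_{n\geq 0}a^{\tensor n}$ into every gap and invoking the relation for the completed coderivation) is a clean way to present the same count. The grading check at the end is also right: $m^{k+n}$ has degree $2-(k+n)$ and the insertion of $n$ degree-one elements contributes $+n$, so $m^k_a$ has degree $2-k$ as required.

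One cosmetic quibble: writing $\hat a=\exp(a)$ suggests the presence of factorials, which do not appear; in the tensor algebra the relevant element is the geometric series $\sum_{n\geq 0}a^{\tensor n}$, which you then state correctly in the next clause. Since that is the formula you actually use, this does not affect the argument, but the $\exp$ notation is best avoided or at least flagged as shorthand.
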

We are interested in the cases where $(A, m_a)$  gives us a well defined homology theory even though $A$ itself may be curved.
\begin{definition}
	We say that $a\in A$ is a \emph{bounding cochain} or \emph{Maurer-Cartan solution} if
	\[m^0_a=\sum_k m^k(a^{\tensor k})=0.\]
	If $A$ has a bounding cochain, we say that $A$ is \emph{unobstructed.}
\end{definition}
When $m^0=0$, when we say that $A$ is \emph{tautologically unobstructed} or \emph{uncurved}.
In the unobstructed setting, we have a well defined cohomology theory of $(A, m^k)$.
\begin{definition}
	Let $A$ be an $A_\infty$ algebra. The \emph{space of Maurer-Cartan elements} is defined as
	\[
		\mathcal MC(A) := \{a \in A \;:\; m_a^0 = 0 \}.
	\]
\end{definition}
	The Maurer-Cartan equation is non-linear.
	In the event that the Maurer-Cartan space contains a linear subspace, then $0$ is a Maurer-Cartan element and the algebra $A$ is uncurved.
\begin{lemma}
	Let $f: A\to B$ be a filtered $A_\infty$ homomorphism.
	Then there exists a \emph{pushforward}  map between the bounding cochains on $A$ and the bounding cochains of $B$ given by
	\begin{align*}
		f_*: \mathcal MC (A) \to & \mathcal MC (B)     \\
		b_A\mapsto                         & \sum_{k} f^k(b_A^{\tensor k})
	\end{align*}
	\label{lemma:pushforwardmad}
\end{lemma}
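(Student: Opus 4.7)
The plan is to verify directly from the quadratic $A_\infty$ homomorphism relation that $b_B := \sum_{k \geq 0} f^k(b_A^{\otimes k})$ satisfies the Maurer-Cartan equation in $B$. First I would check that $b_B$ is well defined: since $f$ is filtered and $b_A$ has positive valuation, the term $f^k(b_A^{\otimes k})$ has valuation at least $k \cdot \val(b_A)$ for $k \geq 1$, and $f^0 = f^0(1)$ has positive valuation inherited from the filtered condition. Hence the partial sums converge in the non-Archimedean topology on $B$, the limit $b_B$ has positive valuation, and it has the correct degree since each $f^k$ is a graded map.

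Next I would apply the quadratic $A_\infty$ homomorphism relation in arity $n$ to the input $b_A^{\otimes n}$ and sum over all $n \geq 0$, obtaining
\[
\sum_{n \geq 0}\sum_{j_1+j+j_2=n} f^{j_1+1+j_2}\!\bigl(b_A^{\otimes j_1} \otimes m_A^j(b_A^{\otimes j}) \otimes b_A^{\otimes j_2}\bigr) = \sum_{n \geq 0}\sum_{\substack{l \geq 0 \\ i_1+\cdots+i_l=n}} m_B^l\!\bigl(f^{i_1}(b_A^{\otimes i_1})\otimes \cdots \otimes f^{i_l}(b_A^{\otimes i_l})\bigr).
\]
Regrouping the right-hand side by distributing the sum through each tensor slot gives $\sum_{l \geq 0} m_B^l(b_B^{\otimes l}) = m^0_{B,b_B}$, precisely the quantity that must vanish. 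Regrouping the left-hand side by fixing $j_1$ and $j_2$ and summing over $j$ first produces, by multilinearity of $f^{j_1+1+j_2}$, the expression
\[
\sum_{j_1, j_2 \geq 0} f^{j_1+1+j_2}\!\bigl(b_A^{\otimes j_1} \otimes \bigl(\textstyle\sum_{j \geq 0} m_A^j(b_A^{\otimes j})\bigr) \otimes b_A^{\otimes j_2}\bigr).
\]
Since $b_A$ is Maurer-Cartan, the middle factor $\sum_j m_A^j(b_A^{\otimes j})$ vanishes, so every term on the left-hand side is zero. Therefore $m^0_{B,b_B} = 0$, as required.

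The only step demanding care is the rearrangement of these a priori infinite double sums, but this is handled by the filtration: for any $\lambda > 0$ only finitely many tuples $(n, j_1, j_2, j)$ or $(n, l, i_1, \ldots, i_l)$ contribute terms of valuation at most $\lambda$, because each $b_A$-factor contributes a strictly positive energy. Consequently the identity above holds in the finite quotient $B / F^{>\lambda}B$ where distributivity is automatic, and passing to the inverse limit over $\lambda$ yields the identity in $B$. This is the one potentially subtle point; the algebraic content of the argument is entirely the observation that the $A_\infty$ homomorphism relation summed against $b_A^{\otimes \bullet}$ factors through the Maurer-Cartan equation for $b_A$.
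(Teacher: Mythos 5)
Your proof is correct and is essentially the paper's argument run in reverse: the paper begins with $\sum_k m_B^k(b_B^{\otimes k})$ and unwinds it through the quadratic $A_\infty$ homomorphism relation until the Maurer–Cartan equation for $b_A$ appears as a factor, whereas you start from the $A_\infty$ homomorphism relation applied to $b_A^{\otimes n}$, sum over $n$, and read off both sides; the regroupings are the same at every step. Your explicit discussion of convergence via the energy filtration is a worthwhile supplement that the paper leaves implicit, though note that ``$f^0$ has positive valuation'' is a separate axiom on filtered $A_\infty$ homomorphisms (analogous to the non-zero energy curvature condition on $m^0$) rather than something that follows literally from the filtration-preservation clause as written; this is needed not for the Maurer–Cartan computation itself but to guarantee that $b_B$ has strictly positive valuation and so qualifies as a deforming element.
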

\begin{proof}
	We want to show that $b_B:= \sum_{k} f^k(b_A^{\tensor k})$ satisfies the Maurer-Cartan equation
	\begin{align*}
		\sum_k m^k_B(b_B^{\tensor k})= & \sum_k m^k_B\left(\left( \sum_{j_1} f^{j_1}(b_A^{\tensor j_1})\right)\tensor \cdots \tensor \left(\sum_{j_k} f^{j_k}(b_A^{\tensor j_k})\right) \right)           \\
		=                              & \sum_l \sum_k \left(\sum_{j_1+ \ldots+ j_k=l} m^k_B (f^{j_1}\tensor \cdots \tensor  f^{j_k})  \right)\circ (b_A^{\tensor l})                                                  \\
		=                              & \sum_l \sum_{i_1+j+i_2=l} f^{i_1+i_2+1}(\id^{\tensor i_1}\tensor  m^{j}_A\tensor \id^{\tensor i_2})  \circ  (b_A^{\tensor l})                                    \\
		=                              & \sum_{i_1, i_2} f^{i_1+i_2+1}\left(\id^{\tensor i_1}\tensor \left(\sum_j m^j_A(b_A^{\tensor j})\right)\tensor \id^{\tensor i_2}\right) \circ (b_A)^{\tensor( i_1+i_2)} \\
		=                             0
	\end{align*}
\end{proof}
This gives the following nice characterization of unobstructed $A_\infty$ algebras. 
\begin{corollary}
	Let $B$ be a filtered $A_\infty$ algebra. There exists a filtered $A_\infty$ homomorphism $0: 0\to B$  if and only if $B$ is unobstructed.
\end{corollary}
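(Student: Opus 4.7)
The plan is to unpack what a filtered $A_\infty$ homomorphism $f: 0 \to B$ amounts to when the domain is the zero $A_\infty$ algebra $A = 0$. Since $A^{\otimes k} = 0$ for every $k \geq 1$, the components $f^k: A^{\otimes k} \to B$ are forced to vanish for $k \geq 1$, and the only nontrivial datum is $f^0 \in B$, which (following the convention analogous to the non-zero energy curvature axiom for $m^0$) I will take to lie in positive filtration so that every formal sum $\sum_l m^l_B((f^0)^{\otimes l})$ converges in the ultrametric topology.

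Next I would examine which of the quadratic $A_\infty$ homomorphism relations carry content. For $k \geq 1$, both sides of the relation are maps out of $A^{\otimes k} = 0$ and are automatically trivial. The only nontrivial relation is the $k = 0$ one: the left-hand side contains only $f^1 \circ m^0_A$, which vanishes since $m^0_A = 0$ in the zero algebra, while the right-hand side unfolds to
\[
\sum_{l \geq 0} m^l_B \bigl( f^0 \otimes \cdots \otimes f^0 \bigr) = \sum_{l \geq 0} m^l_B \bigl((f^0)^{\otimes l}\bigr) = m^0_{f^0}.
\]
Thus the homomorphism relation in degree zero is precisely the Maurer-Cartan equation for $f^0$.

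From this identification, both directions follow immediately. If $B$ is unobstructed, choose a bounding cochain $b \in B$ and define $f$ by $f^0 := b$ and $f^k := 0$ for $k \geq 1$; the analysis above shows $f$ satisfies the filtered $A_\infty$ homomorphism axioms. Conversely, any filtered $A_\infty$ homomorphism $f: 0 \to B$ yields $f^0$, which by the same analysis is a bounding cochain, so $B$ is unobstructed. Alternatively, one may view the corollary as the specialization of \prettyref{lemma:pushforwardmad} to the case $A = 0$: the zero algebra has $0 \in \mathcal{MC}(0)$ as its unique Maurer-Cartan element, and its pushforward under $f$ is exactly $f^0$. I do not anticipate any main obstacle here, since everything reduces to comparing two definitions term by term.
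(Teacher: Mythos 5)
Your proof is correct and follows essentially the same route as the paper: the paper invokes the pushforward lemma for the forward direction and unpacks the $k=0$ homomorphism relation into the Maurer--Cartan equation for the converse, while you unpack the definition directly for both directions (and note the pushforward alternative). Your remark about requiring $f^0$ to lie in positive filtration is a reasonable care the paper leaves implicit, since its definition of a bounding cochain already builds in positivity of valuation.
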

\begin{proof}
	Since $0$ is unobstructed, the presence of a filtered $A_\infty$ homomorphism implies that $B$ is unobstructed by pushforward.
	
	Suppose now that $B$ is unobstructed by bounding cochain $b$. 
	The filtered $A_\infty$ homomorphism relation for the map $0_b: 0\to B$, whose curvature term is $0^0_b=b$, is exactly the Maurer-Cartan equation for $B$.
\end{proof}
Surprisingly, deformations commute with each other in the following sense:
\begin{prop}
	Let $a_1, a_2$ be elements of $A$. Then $(A, (m_{a_1})_{a_2})=(A, m_{a_1+a_2})$.
\end{prop}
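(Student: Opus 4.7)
The plan is to prove the identity by unravelling both sides as explicit sums of operations of the form $m^{k+p}$ applied to tensor products built out of copies of $a_1$, $a_2$, and identity slots, and then checking that the same monomials appear on both sides with the same multiplicity.

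First I would fix the total output arity $k$ and expand the right-hand side by definition,
\[
(m_{a_1+a_2})^k \;=\; \sum_{p \geq 0} m^{k+p}\bigl((\id \oplus (a_1+a_2))^{{{p+k}\choose p}_{a_1+a_2}}\bigr),
\]
which is a sum over all ordered arrangements of $p$ copies of $(a_1+a_2)$ together with $k$ identity slots in a sequence of length $k+p$. Using $\Lambda$-multilinearity of $m^{k+p}$ in each tensor slot, every $(a_1+a_2)$ factor splits as $a_1+a_2$, and the resulting sum may be reindexed: for each pair of integers $(n,m)$ and each ordered arrangement of $n$ copies of $a_1$, $m$ copies of $a_2$, and $k$ identity slots into a sequence of length $k+n+m$, we obtain exactly one application of $m^{k+n+m}$ to that sequence.

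Then I would perform the analogous unravelling on the left-hand side $((m_{a_1})_{a_2})^k$, applying the deformation definition first in $a_2$ and then in $a_1$, producing a double sum over $(m,n)$ of terms $m^{k+m+n}(\cdots)$. The inner tensor is assembled in two stages: first one places $m$ copies of $a_2$ among $k+m$ slots, leaving $k$ identity slots; then one inserts $n$ copies of $a_1$ into the resulting sequence, producing a sequence of length $k+n+m$ containing $m$ slots marked $a_2$, $k$ identities, and $n$ slots marked $a_1$. The combinatorial key is that each ordered arrangement of $n$ copies of $a_1$, $m$ copies of $a_2$, and $k$ identities is recovered in exactly one way from this two-stage procedure, yielding an equality term-by-term with the expansion of the right-hand side.

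The argument is purely combinatorial, so the main (and only) obstacle is keeping the bookkeeping straight enough to verify the multiplicity-one claim on both sides. A cleaner packaging is to introduce the informal shorthand $m^k_a = \sum_{p \geq 0} m^{k+p}((\id + a)^{\tensor(k+p)})$, interpreted as a sum restricted to those monomials containing exactly $k$ identity factors. The proposition then reduces to the trivial identity $(\id + a_2) + a_1 = \id + (a_1+a_2)$ inside the tensor power. Convergence of all sums is automatic in the $T$-adic topology since $a_1, a_2$ have strictly positive valuation and $A$ is complete with respect to its filtration.
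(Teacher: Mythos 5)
Your proof is correct and follows essentially the same route as the paper: both arguments expand $((m_{a_1})_{a_2})^k$ into a double sum by definition, unravel the composition of insertion operators $(\id\oplus a_1)\circ(\id\oplus a_2)$, and match each ordered arrangement of $a_1$'s, $a_2$'s, and identity slots bijectively against the multilinear expansion of $(\id\oplus(a_1+a_2))$. Your note on $T$-adic convergence is a welcome (if standard) addition that the paper leaves implicit.
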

\begin{proof}
	A calculation shows that
	\begin{align*}
		(m_{a_1}^k)_{a_2}= & \sum_n  m^{k+n}_{a_1}(\id\oplus a_2)^{{n+k\choose n}_{a_2}} \\
		=&\sum_m \sum_n m^{k+m+n} (\id\oplus a_1)^{{n+k+m\choose m}_{a_1}}\circ (\id\oplus a_2)^{{n+k\choose n}_{a_2}}\\
		=& \sum_{m+n} m^{k+m+n}(\id\oplus( a_1+ a_2))^{{n+m+k\choose m+n}_{a_1+a_2}}\\
		=& m^k_{a_1+a_2}
	\end{align*}
\end{proof}
\begin{prop}
	Let $f: A\to B$ be a filtered $A_\infty$ algebra homomorphism.
	Let $a\in A$ be a deforming element.
	Then the map
	\begin{align*}
		f_a: (A, (m^k_A)_a)\to& (B, m^k_B)\\
		f^k_a:=& \sum_{n} f^{k+n}\circ (\id\oplus a)^{{n+k \choose n }_a}.
	\end{align*}
	is an $A_\infty$ homomorphism.
\end{prop}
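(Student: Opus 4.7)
The plan is to verify directly that $f_a$ satisfies the two defining properties of a filtered $A_\infty$ homomorphism. The filtered property is immediate: for each $k$ and $n$, the summand
\[
f^{k+n}\circ(\id\oplus a)^{{n+k\choose n}_a}
\]
maps $F^{\lambda_1}A \tensor \cdots \tensor F^{\lambda_k}A$ into $F^{(\sum_i \lambda_i) + n\val(a)}B$, so since $\val(a) > 0$ the defining sum converges in the Novikov topology and the result preserves the filtration.

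For the quadratic $A_\infty$ homomorphism relations, I would derive them from the undeformed relations for $f$ by a combinatorial reorganization parallel to, and reusing, the preceding proposition that $(A, (m^k_A)_a)$ is itself a filtered $A_\infty$ algebra. The key observation is that the passage $T \mapsto T_a$ extends uniformly to any multilinear operation built from the $f^k$, $m_A^k$, and $m_B^k$: given such an operation with $k$ inputs, one sums over all ways of inserting finite tuples of copies of $a$ into each of the $k+1$ input gaps. This operation is compatible with composition, in the sense that $(S\circ T)_a = S_a \circ T_a$ whenever $S$ has one distinguished input slot occupied by $T$. Applying $T \mapsto T_a$ term-by-term to both sides of the undeformed homomorphism identity
\[
\sum_{j_1+j+j_2=k} f^{j_1+1+j_2}(\id^{\tensor j_1}\tensor m_A^j \tensor \id^{\tensor j_2}) = \sum_{i_1+\cdots+i_l=k} m_B^l(f^{i_1}\tensor\cdots\tensor f^{i_l})
\]
and then redistributing the insertions should yield precisely the deformed relation, with $(m_A)_a$ appearing on the left and the undeformed $m_B$ on the right.

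The main obstacle, and essentially the whole content of the proof, is the combinatorial bookkeeping in this redistribution. On the left, each inserted string of $a$'s either falls among the inputs to $m_A^j$ (contributing to $(m_A)_a^j$) or among the remaining slots of $f^{j_1+1+j_2}$ (contributing to $f_a^{j_1+1+j_2}$); the two options partition the possible insertions. On the right, each inserted string must fall within the input slots of one single $f^{i_s}$, so that the resulting sum regroups as $\sum m_B^l(f_a^{i_1}\tensor\cdots\tensor f_a^{i_l})$. Crucially, no insertions ever reach $m_B$ directly, which is why $m_B$ remains undeformed on the target. I would verify that these two partitions are exhaustive and are in bijection with the summands of the deformed identity; this is entirely analogous to the bookkeeping behind $(m_{a_1})_{a_2} = m_{a_1+a_2}$ and behind the composition formula for filtered $A_\infty$ homomorphisms already recorded in the appendix. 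Once the bijection is set up, matching signs are a routine consequence of the Koszul convention used throughout, and the proposition follows.
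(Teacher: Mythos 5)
Your proposal is correct and follows essentially the same approach as the paper. The paper's proof is the chain of equalities
\[
\sum f^{j_1+1+j_2}_a\bigl(\id^{\tensor j_1}\tensor (m^j_A)_a\tensor \id^{\tensor{j_2}}\bigr)
= \bigl(\cdots\bigr)\circ(\id\oplus a)^{\binom{k+n}{n}_a}
= \sum m^l_B\bigl(f^{\hat i_1}_a\tensor\cdots\tensor f^{\hat i_l}_a\bigr),
\]
which expands the deformed left-hand side, pulls all $a$-insertions out in front, applies the undeformed homomorphism relation, and then redistributes the insertions into the $f$'s — exactly the ``collect, apply, redistribute'' bookkeeping you describe, with $m_B$ left untouched because the $a$-insertions live on the $A$-side. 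Your framing via a general compatibility principle $(S\circ T)_a = S_a\circ T_a$ (applied to families, not individual maps) is a cleaner way to organize the same computation; the paper simply carries out that compatibility inline rather than isolating it as a lemma. Your remark that the heart of the matter is the combinatorics of redistributing insertions between $m_A$ and $f$ on the left, and among the $f^{i_s}$ on the right, is accurate — note that this is not literally a partition since $a$'s sitting in a gap adjacent to the $m_A$-block can be split between $m_A$ and $f$, but the sum over all such splittings is exactly what the index bookkeeping in the paper's chain of equalities is doing.
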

\begin{proof}
	We check the $A_\infty$ homomorphism relations. 
	\begin{align*}
		\sum_{\substack{j_1+j+j_2=k\\ j, j_1, j_2\geq 0}} &f^{j_1+1+j_2}_a (\id^{\tensor j_1}\tensor (m^j_A)_a\tensor \id^{\tensor{j_2}})\\
		=		& \sum_{\substack{j_1+j+j_2=k\\ j, j_1, j_2, n_1, n_2 \geq 0\\ \alpha=j_1+1+j_2 }} f^{\alpha+n_1}\circ (\id\oplus a)^{{\alpha+n_1\choose n_1}_{a}}\circ\left(\id^{\tensor j_1}\tensor\left( m_A^{j+n_2}\circ (\id\oplus a)^{{j+n_2\choose n_2}_{a}} \right)\tensor \id^{\tensor j_2}\right)\\
		\intertext{Collecting all of the $a$ and $\id$ terms so that we compose with them first,}
		=& \sum_{\substack{j_1+j+j_2=k\\ j, j_1, j_2, n_1, n_2\geq 0\\ \alpha=j_1+1+j_2}}\left(f^{\alpha+n_1}\circ (\id^{\tensor j_1}\tensor m_A^{j+n_2} \tensor \id^{\tensor j_2})\right)\circ (\id\oplus a)^{{k+n_1+n_2\choose n_1+n_2}_{a}} \\
		\intertext{Applying the quadratic $A_\infty$ homomorphism relation to the first composition, setting $n_1+n_2=n$}
		=& \sum_{\substack{i_1+\cdots+i_l=k+n\\ i_j, n_1, n_2\geq 0}} m^l_B\circ (f^{i_1}\tensor \cdots \tensor f^{i_l})\circ (\id\oplus a)^{{k+n\choose n}_{a}} 
		\intertext{Pulling back the $a$ and $\id$ terms into a composition with the $f^{i_j}$}
		=& \sum_{\substack{i_1+\cdots+i_l=k+n\\ n_1+ \cdots n_l=n\\ i_j, n_j, n\geq 0}} m^l_B\circ \left(\left(f^{i_1}\circ (\id\oplus a)^{{n_1+j_1 \choose n_1}_a}\right)\tensor \cdots \tensor \left(f^{i_l}\circ (\id\oplus a)^{{n_l+j_l \choose n_l}_a}\right)\right)\\
		=& \sum_{\substack{\hat i_1+\cdots \hat i_l =k\\ \hat i_j, k \geq 0}}m^l_B(f^{\hat i_1}_a\tensor \cdots \tensor f^{\hat i_l}_a).
	\end{align*}
\end{proof}
One may use the previous claim to construct the pushforward map on bounding cochains, as
\[
	f_*(b)=(f_b)_*(0).
\]
\printbibliography

\affiliationone{
   Jeff Hicks\\
   Department of Pure Mathematics and Mathematical Statistics\\
   University of Cambridge\\
   Wilberforce Road\\
   Cambridge CB3 0WB
   \email{jh2234@cam.ac.uk}}

\end{document}